\tikzset{
  altstackar/.style={decorate, decoration={show path construction,
    lineto code={
      \path (\tikzinputsegmentfirst); \pgfgetlastxy{\xstart}{\ystart}
      \path (\tikzinputsegmentlast); \pgfgetlastxy{\xend}{\yend}
      \path ($(0,0)!1.5pt!(\ystart-\yend,\xend-\xstart)$); \pgfgetlastxy{\xperp}{\yperp}
      \foreach \n[evaluate=\n as \k using .5*#1-\n+.5] in {1,...,#1}{
        \ifodd\n{\draw[->, shorten <=2pt, shift={($\k*(\xperp,\yperp)$)}](\xstart,\ystart)--(\xend,\yend);}
        \else{\draw[<-, shorten >=2pt, shift={($\k*(\xperp,\yperp)$)}](\xstart,\ystart)--(\xend,\yend);}\fi
      }
    }
  }}, altstackar/.default={1}
}
\DeclareMathOperator{\Cone}{Cone}
\DeclareMathOperator{\pt}{pt}
\DeclareMathOperator{\Ker}{Ker}
\DeclareMathOperator{\Coker}{Coker}
\DeclareMathOperator{\id}{id}
\DeclareMathOperator{\im}{Im}
\DeclareMathOperator{\Hom}{Hom}
\DeclareMathOperator{\Ext}{Ext}
\DeclareMathOperator{\rk}{rk}
\DeclareMathOperator{\diag}{diag}
\DeclareMathOperator{\Cy}{Cy}
\DeclareMathOperator{\End}{End}
\DeclareMathOperator{\OpSets}{OpSets}
\DeclareMathOperator{\ConvHull}{\mathbf{ConvHull}}
\DeclareMathOperator{\Ob}{Ob}
\DeclareMathOperator{\ord}{ord}
\DeclareMathOperator{\op}{op}
\DeclareMathOperator{\Cells}{Cells}
\DeclareMathOperator{\sd}{sd}
\DeclareMathOperator{\pr}{pr}
\DeclareMathOperator{\Iso}{Iso}
\DeclareMathOperator{\ChCpx}{ChCpx}
\DeclareMathOperator{\CochCpx}{CochCpx}
\DeclareMathOperator{\AChCpx}{AugChCpx}
\DeclareMathOperator{\Dif}{Dif}
\newcommand{\low}[2]{{_{\lceil}#1_{\rceil #2}}}
\newcommand{\ST}[1]{\mathbf{#1}}
\newcommand{\cat}{\ST{cat}}
\newcommand{\Sets}{\ST{Sets}}
\newcommand{\Top}{\ST{Top}}
\newcommand{\PrePos}{\ST{PrePos}}
\newcommand{\AlexTop}{\ST{AlexTop}}
\newcommand{\Diag}{\ST{Diag}}
\newcommand{\Vect}{\ST{Vect}}
\newcommand{\Abel}{\ST{Abel}}
\newcommand{\Mod}{\ST{Mod}}
\newcommand{\Cochain}{\ST{Cochain}}
\newcommand{\PreShvs}{\ST{PreShvs}}
\newcommand{\Shvs}{\ST{Shvs}}
\newcommand{\FinSets}{\ST{FinSets}}
\newcommand{\FinVect}{\ST{FinVect}}
\newcommand{\FinMod}{\ST{FinMod}}
\newcommand{\FinAbel}{\ST{FinAbel}}
\newcommand{\Topoi}{\ST{Topoi}}
\newcommand{\AbCat}{\ST{AbCat}}
\newcommand{\ConDiag}{\ST{ConDiag}}
\newcommand{\Repr}{\ST{Repr}}
\newcommand{\ko}{\Bbbk}
\newcommand{\Zo}{\mathbb{Z}}
\newcommand{\Ro}{\mathbb{R}}
\newcommand{\Qo}{\mathbb{Q}}
\newcommand{\Rg}{\mathbb{R}_{\geqslant 0}}
\newcommand{\Zg}{\mathbb{Z}_{\geqslant 0}}
\newcommand{\ca}[1]{\mathcal{#1}}
\newcommand{\inc}[2]{[#1\colon #2]}
\newcommand{\face}{\trianglelefteqslant}
\newcommand{\Fc}{\mathcal{F}}
\newcommand{\Der}{\mathscr{D}}
\newcommand{\Vv}{\mathbf{V}}
\newcommand{\Ww}{\mathbf{W}}
\newcommand{\Cc}{\mathbf{C}}
\newcommand{\Fu}{\mathfrak{F}}
\newcommand{\dd}{\partial}
\newcommand{\Hr}{\tilde{H}}
\newcommand{\br}{\tilde{\beta}}
\newcommand{\mult}{\mu}
\newcounter{stmcounter}[section]
\newcounter{problcounter}
\numberwithin{equation}{section}
\theoremstyle{plain}
\newtheorem{thm}[stmcounter]{Theorem}
\newtheorem{cor}[stmcounter]{Corollary}
\newtheorem{prop}[stmcounter]{Proposition}
\newtheorem{lem}[stmcounter]{Lemma}
\newtheorem{req}[stmcounter]{Requirement}
\newtheorem{probl}[problcounter]{Problem}
\theoremstyle{definition}
\newtheorem{defin}[stmcounter]{Definition}
\newtheorem{difin}[stmcounter]{Informal definition}
\theoremstyle{remark}
\newtheorem{ex}[stmcounter]{Example}
\newtheorem{rem}[stmcounter]{Remark}
\newtheorem{con}[stmcounter]{Construction}
\begin{document}

\begin{center}
    \rule{\linewidth}{1.5pt} \\[10pt]
    {\Large\textbf{Sheaf theory: from deep geometry to deep learning}} \\[2pt]
    \rule{\linewidth}{1.5pt}
\end{center}

\title[Sheaf theory: from deep geometry to deep learning]{}

%
%
%

\author{
\begin{center}
\vspace{1cm}
    \begin{tabular}{ccc}
        \textbf{Anton Ayzenberg} & \hspace{2cm} & \textbf{German Magai} \\
        Noeon Research, Japan & \hspace{2cm} & Noeon Research, Japan \\
        anton@noeon.ai & \hspace{2cm} & german@noeon.ai\\
        \\
        \textbf{Thomas Gebhart} & \hspace{2cm} & \textbf{Grigory Solomadin} \\
        University of Minnesota, USA & \hspace{2cm} & University of Strasbourg, France \\
        gebharttom8@gmail.com & \hspace{2cm} & grigorysolomadin@gmail.com \\
    \end{tabular}
\end{center}
\vspace{1cm}
}


\thanks{This work is supported by Noeon Research}

\subjclass[2020]{Primary: 01A65, 68T07, 06A06, 06A11, 13P20, 54B40, 55-08, 55N30, 05C50, Secondary: 68T09, 68T30, 35R02, 90C35, 91D30, 94C15, 93D50, 06A07, 13D02, 14F06, 18F20, 55U15, 55N25, 55N31, 80M35, 05C22, 05C60, 05C65, 68R12, 18C50}

\keywords{sheaf theory, sheaf cohomology, sheaf Laplacian, graph Laplacian, heat diffusion, sheaf learning, sheaf neural network, graph neural network, data representation, geometric deep learning, CW-complex, simplicial complex, poset, finite topology, cell poset, Morse inequalities, connection sheaf, relations representation, cohomology computations algorithm}

\begin{abstract}
This paper provides an overview of the applications of sheaf theory in deep learning, data science, and computer science in general. The primary text of this work serves as a friendly introduction to applied and computational sheaf theory accessible to those with modest mathematical familiarity. We describe intuitions and motivations underlying sheaf theory shared by both theoretical researchers and practitioners, bridging classical mathematical theory and its more recent implementations within signal processing and deep learning. We observe that most notions commonly considered specific to cellular sheaves translate to sheaves on arbitrary posets, providing an interesting avenue for further generalization of these methods in applications, and we present a new algorithm to compute sheaf cohomology on arbitrary finite posets in response. By integrating classical theory with recent applications, this work reveals certain blind spots in current machine learning practices. We conclude with a list of problems related to sheaf-theoretic applications that we find mathematically insightful and practically instructive to solve. To ensure the exposition of sheaf theory is self-contained, a rigorous mathematical introduction is provided in appendices which moves from an introduction of diagrams and sheaves to the definition of derived functors, higher order cohomology, sheaf Laplacians, sheaf diffusion, and interconnections of these subjects therein.
\end{abstract}

\maketitle

\newpage

\tableofcontents

\section{Introduction}\label{secIntro}

Scientific development and the enrichment of human knowledge are predicated on the existence of rigorous mathematical foundations from which new insights may be constructed. This foundation not only systematizes navigation in a particular field by fixing terminology and sharpening intuitions but also links practices from across fields, leading to novel research directions and the emergence of new techniques. Applied mathematics, when practiced correctly, facilitates this multiplicative growth within and across scientific domains. 

Sheaf theory is a highly technical branch of mathematics. The foundation of this discipline was laid by Jean Leray when he was a prisoner of war camp in Austria, and later published in~\cite{Leray}. Sheaf theory was conceptualized by Alexander Gro\-then\-dieck in 1957: his celebrated Tohoku paper~\cite{grothendieck1957tohoku} provided a general categorical perspective on many phenomena known in algebraic geometry, algebraic topology, and commutative algebra. Geometrical structures are ubiquitous in both theoretical mathematics and computer science. It may seem at first glance that geometrical structures studied in mathematics such as topologies, manifolds, varieties, and schemes differ significantly from those studied in applied mathematics: graphs, metric spaces, point clouds, probability distributions. However, it seems that the power of sheaf theory is universal. Not only is this theory formally applicable to finite data structures, but it also solves the very same problem as the one addressed in the classical papers: to put geometrical intuitions on a firm ground by using algebraical tools.

In recent years, there has been a fascinating fusion of methods of sheaf theory with the methods of spectral graph theory, random walks, and classical graph algorithms, leading to the notions of sheaf Laplacian and heat diffusion on a sheaf~\cite{hansen2019toward}. The methods and practices used to design new neural networks architectures have enriched sheaf theory with the ideas of sheaf learning~\cite{hansen2019learning}, message passing on sheaves~\cite{bodnar2023topological}, and sheaf attention~\cite{barbero2022sheaf1}. Sheaf neural networks~\cite{hansensheaf} were proposed as a novel architecture to overcome some problems inherent to classical graph neural architectures. In the past decades, sheaf theory found applications in a variety of other areas of computer science and applied mathematics away from deep learning (DL). The theory provides a powerful tool for the design of algorithms, such as normalization by evaluation~\cite{NormByEvalCoproducts2001} and pattern matching~\cite{Srinivas1993,Conghaile2022CohomologyIC}; it provides an expressive language to describe signal propagation~\cite{robinson2014topological}, obstructions to consistency in databases and models' behavior~\cite{AbramskyContextuality,Abramsky2022presh}, ambiguities in natural languages~\cite{SheafLanguageAmbig}, causal nets~\cite{Robinson2016SheafAC,Rosiak}, scheduling in production chains~\cite{Macfarlane04072014}, structural engineering~\cite{CooperbandThesis}, and many more.

Schematically, the path from the formal definition of a sheaf to the construction of sheaf neural architectures is shown on the left vertical trunk in Fig.~\ref{figPipeline}. The hierarchy goes from more abstract theories to more concrete, but there is no way to strictly define notions towards the bottom of the diagram without defining their top-level predecessors. The same scheme roughly resembles the chronological order in which the mentioned ideas have found applications. The application and contextualization of sheaf theory within data science may also be traced chronologically. Curry's thesis~\cite{Curry} perfectly covers the cohomological part of the scheme with a focus on cellular structures, Hansen's thesis equips cellular sheaves with a spectral theory~\cite{hansen2020laplacians}, Gebhart's thesis discusses sheaf-theoretic inductive biases in the context of deep learning theory~\cite{gebhart2023sheaf}, and Bodnar's thesis~\cite{bodnar2023topological} covers the passage from the mathematical theory of cellular sheaves to the design of neural networks architectures. However, to our knowledge, no existing work contains a detailed description of the entire pipeline from category theory to application in sufficient generality. 

\begin{figure}
  \centering
  \includegraphics[scale=0.24]{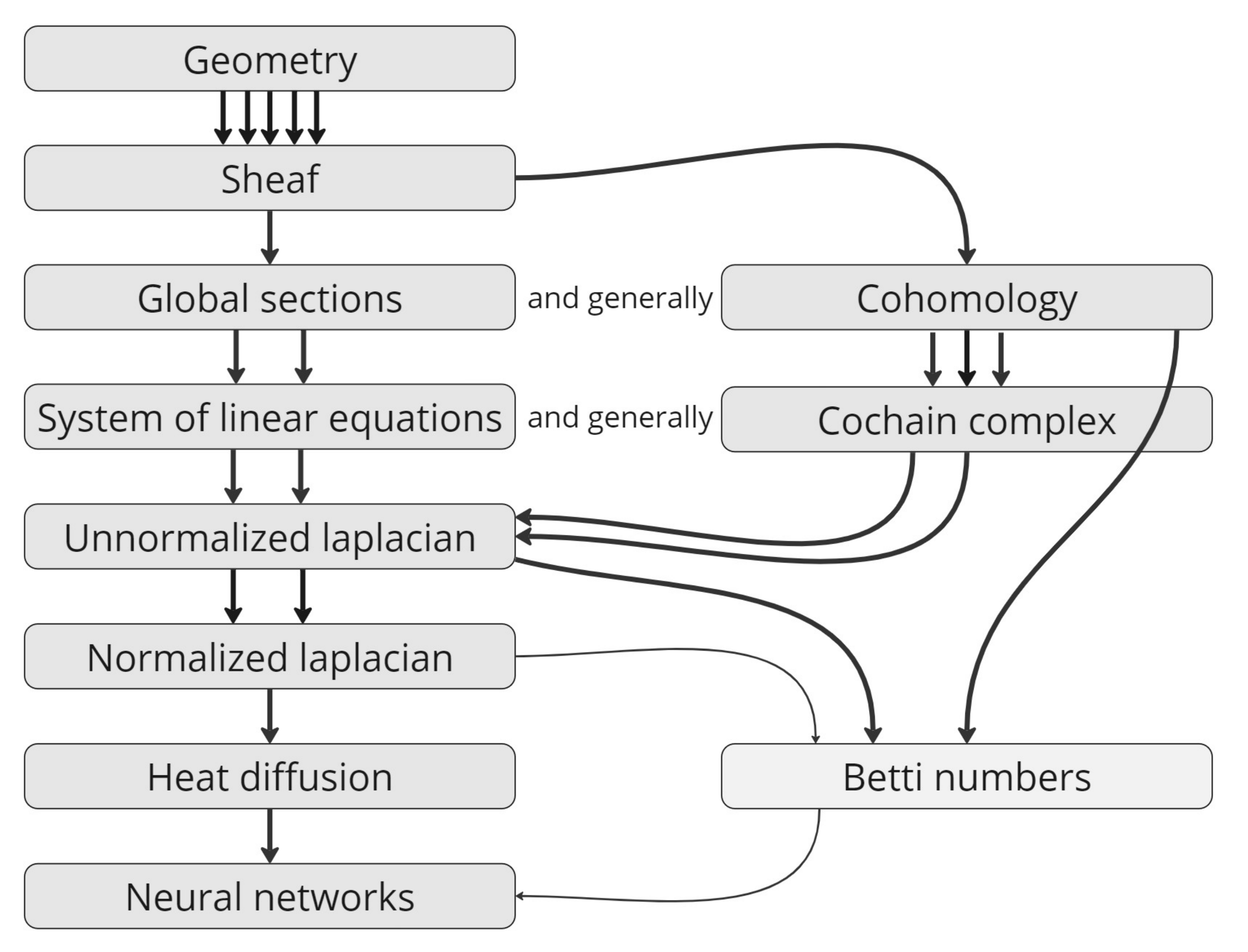}
  \caption{A way from geometry to neural networks}\label{figPipeline}
\end{figure}

In this review we center partially ordered sets (posets) as the fundamental data structure supporting a sheaf. All items shown on Fig.~\ref{figPipeline} exist and make perfect sense for general posets, not just for cell complexes as is assumed in much of the existing applied literature. In fact, most applications of sheaves in deep learning deal with graphs, which form a highly restricted subclass of cell complexes and hence can be studied with cellular sheaves. However, a number of applications have been recently proposed which seek to extend sheaves to hypergraphs~\cite{duta2024sheaf,nguyen2024sheaf} which, while combinatorial in structure, are not formally cell complexes. Hypergraph sheaf constructions are redesigned each time from scratch by analogy with graphs, but thus far lack a precise mathematical foundation. In a number of applied fields, such as formal concept analysis, knowledge representation, causal nets, and complex systems, general posets play important role. We expect that potential applications of sheaf theory are not bound merely to cellular structures, and this review considers such generalizations accordingly. 

We pursue several goals which together motivate the paper's structure.
\begin{enumerate}
  \item We give an overview of basic notions and positions of sheaf theory. This exposition is given in Section~\ref{secIntuitions} and targeted at a general audience.
  \item In Sections~\ref{secReview} and~\ref{secReviewShvsML} we survey the literature applying sheaf theory in deep learning and computer science in general. This survey may be of interest to researchers working in these areas. Section~\ref{secReviewShvsML} is devoted to sheaves applications in modern deep learning and design of neural networks. Section~\ref{secReview} surveys other ideas of sheaves' applications, most of them preceded the applications in deep learning. 
  \item A number of open research problems is gathered in Section~\ref{secProblems}. We address them to a broad community of researchers, including both homology fans and ML enjoyers.
  \item The rigorous and (mostly) self-contained exposition of the mathematical theory is placed in the Appendix sections~\ref{secMathStructures}--\ref{secMathSpaceRestoredFromShvs}. This exposition serves two goals. First, it formalizes some known practices by providing precise claims. Most of the claims are known but scattered in the mathematical literature; we gather the claims, the proofs, and references in a single place. Second, we introduce several new notions and prove a number of new results, mainly in Appendix~\ref{secMathCohomology}. 
\end{enumerate}

See the details on the overall contribution of this paper in subsection~\ref{subsecContribution}. The narration generally mirrors Fig.~\ref{figPipeline}, moving from more abstract topics to more specific topics; from earlier applications to later. Readers interested primarily in applications of sheaves within deep learning are free to start with subsection~\ref{subsecSheafLearning} having the necessary definitions and proceed to Section~\ref{secReviewShvsML}.

The preparation of this paper was greatly influenced by discussions with our colleagues and collaborators: Grigory Kondyrev, Nikita Repeev, Gregory Gelfond, Daniel Rogozin, and Andrei Krutikov. We thank Mikhail Mironov, Nadezhda Khoroshavkina, and Artem Malko for their help in writing some parts of this review. We also thank Gregory Kirgizov for bringing our attention to several extremely important papers and Andrey Filchenkov for providing valuable feedback, which helped improve the presentation.

\subsection{Fundamental notions and ideas}\label{subsecFundNotions}

As a first glimpse into sheaf theory, we provide a brief explanation of the basic terms used in the area, in particular those appearing on the scheme~\ref{figPipeline}. This description aims to introduce a common terminological basis used in sheaf-related papers, either theoretical or applied. More details and examples are provided in Section~\ref{secIntuitions}. The formal claims and proofs are gathered in the Appendices. 

By a \textbf{geometrical structure}, we mean a directed graph (digraph), i.e. a set $S$ together with a collection of ordered pairs of elements $e=(s_1,s_2)$, denoted by $e\colon s_1\to s_2$. Some further restrictions or additional structures are usually imposed on a digraph $S$. In this review we restrict ourselves to directed acyclic graphs of posets, see subsection~\ref{subsecGeometryIsPoset} for the definition and the explanation of this choice. It should be noted however that most theoretical notions described in our paper are well defined in a more general case when $S$ is a finite category.

In order to define sheaves over a structure $S$, one needs to specify a \textbf{target category} $\Vv$. We will be mainly concerned with specific categories: (1) the category $\Sets$ whose objects are sets, and morphisms are maps between sets; (2) the category $\Ro\Vect$, whose objects are real vector spaces, and morphisms are $\Ro$-linear maps. We look at such categories as theories where certain types of calculations can be performedOne can interpret such categories as theories where certain types of calculations can be performed. For example, in the category $\Ro\Vect$ we have systems of linear equations, while equations in the category $\Sets$ are constraint satisfaction problems. 

A sheaf on a geometrical structure $S$ valued in a target category $\Vv$ is a rule $D$, which associates, with any element $s\in S$ an object $D(s)$ of $\Vv$, and with any arrow $e\colon s_1\to s_2$ a morphism $D(e)\colon D(s_1)\to D(s_2)$ in $\Vv$ (i.e. a map of sets in $\Sets$ or a linear map in $\Ro\Vect$). It is assumed that compositionality restrictions (see~\eqref{eqCompositionalityMainPart}) hold for $D$, so that $D$ is a \textbf{functor} from $S$ to $\Vv$. An assignment of a sheaf to a structure $S$ may be understood as turning $S$ into a medium capable of transmitting signals of the type specified by a category $\Vv$.

A given geometrical structure $S$ supports infinitely many sheaves. The whole collection $\Shvs(S,\Vv)$, viewed as a category, faithfully remembers $S$ for meaningful choices of $\Vv$, see Appendix~\ref{secMathSpaceRestoredFromShvs}. However, this collection is too huge to process in practice. Usually, in each particular application, researchers manually choose a sheaf (or a feasible collection of sheaves) which is believed to faithfully resemble properties of $S$. 

Picking a specific sheaf on $S$ allows one to borrow computational techniques available in the category $\Vv$ to describe the properties of $S$. Given a $\Vv$-valued sheaf $D$ on $S$, we can define the \textbf{coherence equations} for $D$, see~\eqref{eqCoherentStates}. The set of solutions $\Gamma(S;D)\in \Vv$ to these equations is called \textbf{the set of global sections}, or the space of global sections if the target category is $\Ro\Vect$. The elements of $\Gamma(S;D)$ can be interpreted as consensus, or equilibrium, states of the medium represented by a sheaf $D$.

Given a sheaf $D$ on $S$, a \textbf{sheaf diffusion} on $D$ is a dynamical system which brings an arbitrary state of the medium to an equilibrium state. For $\Ro\Vect$-valued sheaves $D$ on graphs, the standard way to formally define sheaf diffusion is by applying a gradient descent to a certain quadratic function, called the \textbf{Dirichlet energy} of a sheaf. This functions measures how far is the given state from being an equilibrium state. The symmetric operator corresponding to this quadratic function is called \textbf{sheaf Laplacian}. The convergence rate of sheaf diffusion is determined by the spectrum of the sheaf Laplacian, see subsection~\ref{subsecLaplaciansMainPart}. Spectral characteristics of Laplacians reflect quantitative (i.e. metric) properties of $S$. A variation of a Laplacian, the \textbf{normalized Laplacian}, is often used in practice.

For an $\Ro\Vect$-valued sheaf $D$ on $S$, the space $\Gamma(S;D)$ of global sections enumerates consensus states of the medium $D$. This space belongs to a potentially infinite sequence of vector spaces $H^0(S;D)=\Gamma(S;D)$, $H^1(S;D)$, $H^2(S;D)$, etc., called \textbf{sheaf cohomology}. These spaces describe, respectively, consensus states, relations between consensus states, relations between relations between consensus states, and so on. The sheaf Betti numbers, $\beta_j(S;D)=\dim H^j(S;D)$ reflect qualitative (i.e. topological) properties of $S$.

In order to compute either sheaf Laplacian or cohomology of sheaves, one needs to use the device from homological algebra called the \textbf{cochain complex} $(C^*,d)$, i.e. a sequence of linear maps $C^0\stackrel{d_0}{\rightarrow} C^1\stackrel{d_1}{\rightarrow} C^2 \stackrel{d_2}{\rightarrow}\cdots$, satisfying $d_j\circ d_{j-1}=0$, see Definition~\ref{definCochainCpx}. If the cochain complex is chosen in a topologically correct way, then the sheaf Laplacian is defined by the operator $d_0^*d_0$, and cohomology $H^j(S;D)$ is isomorphic to $\Ker d_j/\im d_{j-1}$. \textbf{Discrete Hodge theory} claims that $\Ker(d_j^*d_j+d_{j-1}d_{j-1}^*)$ is isomorphic to $H^j(S;D)$. The operator $d_j^*d_j+d_{j-1}d_{j-1}^*$ appearing in this expression is called \textbf{higher order Laplacian}. Its spectral characteristics and the corresponding diffusion processes reflect both metric and topological properties of $S$ at the same time.

The story above provides the basic vocabulary of the mathematical side of sheaf theory. Recent advances in deep learning enriched applied sheaf theory with a number of important notions and ideas.

Given a sheaf $D$ on $S$, a single step of sheaf diffusion, may be used as a single layer or a building block in the construction of \textbf{sheaf-type neural networks}. Given a task formulated over a certain geometrical structure, one seeks a sheaf which can provide informative inductive bias for solving the task. One can either construct such a sheaf over the geometrical structure manually, use a predefined sheaf (if one exists), or perform \textbf{sheaf learning} to infer the sheaf structure from task data during training. To learn a sheaf, one often fixes a finite-dimensional family of potential candidate sheaves along with a suitable loss function which is minimized through gradient descent to arrive at a particular sheaf within this family, see subsection~\ref{subsecSheafLearning}. 

Sheaf-type neural networks are particularly effective for tasks on \textbf{heterophilic graphs} and are more resilient to \textbf{oversmoothing} compared to predecessor graph neural networks.


\subsection{Overall contribution of the paper}\label{subsecContribution}

We provide an informal and application-oriented exposition of sheaf theory in Section~\ref{secIntuitions}. This part is targeted at a general audience: it explains the basic philosophy behind the subject, its core concepts, what can and what cannot be done with sheaves. 

In order to make the exposition self-contained, we standardize basic terminology and provide a mathematical background for each part of Fig.~\ref{figPipeline}. This is done in the Appendices. This mathematical part is necessary to accurately transition between the various levels of generality at which sheaf theory may be expressed and applied. 

Sections~\ref{secReview} and~\ref{secReviewShvsML} act as a review of the papers on the applications of sheaf theory within computer science and deep learning. Many fascinating applications of sheaf theory in theoretical computer science and natural sciences have been proposed in the literature; Section~\ref{secReview} surveys this class of papers. We believe that some of these papers attracted less attention in applied sheaf community than they actually deserve; sheaf-theoretical ideas emerged in a variety of areas of computer science. We provide brief exposition of these areas, and how sheaves work in each particular case. 

More recent and data-driven applications of sheaves in deep learning are described in Section~\ref{secReviewShvsML}. Most applications of this sort are based, explicitly or implicitly, on the prior works in applied topology. This is the main reason why this section is placed after the description of non-DL applications. In this section we survey the papers introducing various sheaf-type neural networks as well as the papers that apply sheaf-type neural networks for solving practical problems. We concentrate on the verbal explanations of the novelty of such papers and reduce the number of formulas to a minimum: the precise details on implementation of architectures can be found in the original papers.

The content of this review motivates a number of open problems related to the application of sheaf theory which is the focus of Section~\ref{secProblems}. Some of these problems have emerged from our attempts to formalize commonly used practices, while others are natural questions which arose from mathematical and industrial practice. We address them towards a broad community of mathematicians, computer scientists, and specialists in geometric and topological deep learning. 

It is highly likely that the list of applications and ideas presented in our survey is incomplete. We would be happy to know if some important papers or ideas are missing, especially if they already address or solve the stated problems. 

As already mentioned above, most modern applications of sheaves are restricted to the class of cellular sheaves: sheaves defined on posets of cells of regular CW-complexes. In this case there is a canonical\footnote{up to change of cells' orientations} choice for a cochain complex, called the cellular cochain complex, that is suitable for computations of laplacians and cohomology. The mathematical contributions of this view were motivated, in part, by the question ``Does the pipeline of Fig.~\ref{figPipeline} make sense for arbitrary finite posets instead of CW-complexes?''. We answer this question in the affirmative and prove a number of results, mainly within algorithmic homological algebra. These results are present in Appendix~\ref{secMathCohomology}. The main theoretical contributions of the paper are described below.

We introduce the following notions.
\begin{itemize}
  \item Morse cell poset, Definition~\ref{definMorseHomologyCellPoset}.
  \item Cochain complex which honestly computes cohomology, Definition~\ref{definHonestlyComputes}.
  \item One-shot cohomology computation, Definition~\ref{definOneShotComplex}.
\end{itemize}
Every cell poset, in particular a poset of cells of a regular CW-complex, is a Morse cell poset. However, there exist geometrically meaningful examples of Morse cell posets which are not cell posets. See Example ~\ref{exMorseExample} for an example. We note the similarity of the two classical constructions:
\begin{enumerate}
  \item Cellular cochain complex defined over any cell poset (subsection~\ref{subsecMathCohomologyCellular}).
  \item Roos cochain complex (also called the standard simplicial resolution, or bar-construction) is defined over arbitrary posets (Subsection~\ref{subsecMathCohomologySimplicial}).
\end{enumerate} 
Both complexes honestly compute cohomology, but the visible advantage of cellular cochain complexes is that they provide one-shot computation.
\begin{enumerate}
  \item A poset $S$ supports a one-shot cohomology computation if and only if $S$ is a Morse cell poset, see Theorem~\ref{thmOneShotTheorem}.
  \item For any poset $S$ we prove a lower bound on complexity of sheaf cohomology computation (Theorem~\ref{thmMorseBound}) and prove that this bound is exact (Theorem~\ref{thmExactBound}) by providing the construction of a minimal cochain complex (Algorithm~\ref{algIncMatrixMain}). 
\end{enumerate}
The latter construction allows for the computation of sheaf cohomology over general finite posets in a way that is more optimal than with the Roos complex. In the case of cell posets (or Morse cell posets), the minimal cochain complex coincides with the cellular cochain complex, and the provided algorithm reduces to an algorithm which outputs incidence numbers between adjacent cells. In general, this algorithm defines an analogue to incidence numbers for non-cellular posets.

Finally, in Appendix~\ref{secMathLaplacians}, we develop a theory of sheaf Laplacians and diffusion in the generality of finite posets. It is shown that several known non-cellular laplacians, such as the hypergraph laplacian, fit in the constructed formalism.

\section{An intuitive introduction to sheaves}\label{secIntuitions}

\subsection{Geometrical structures}\label{subsecGeometryIsPoset}

\begin{defin}\label{definPoset}
A \textbf{partially ordered set (poset)} is a set $S$ together with a binary relation $\leq$, called the (non-strict) partial order, which satisfies
\begin{enumerate}
  \item $s\leq s$ for any $s\in S$;
  \item $s_1\leq s_2$ and $s_2\leq s_3$ imply $s_1\leq s_3$;
  \item $s_1\leq s_2$ and $s_2\leq s_1$ imply $s_1=s_2$.
\end{enumerate}  
\end{defin}

If $s_1\leq s_2$ and $s_1\neq s_2$ we simply write $s_1<s_2$.

We focus on posets as our primary geometric basis. There are several reasons for this choice.
\begin{itemize}
  \item Common geometrical data structures can be universally encoded as posets, see Examples~\ref{exGraphToPoset}--\ref{exGridToPoset} below.
  \item The majority of applied works covered in this review can be described as sheaf-like objects on posets.
  \item Sheaf theory in mathematics is originally formulated for topological spaces~\cite{BredonSheaves}. A finite topological space can be encoded as a finite poset and vice versa. See Proposition~\ref{propPosTop} for the precise equivalence.
\end{itemize}


\begin{ex}\label{exGraphToPoset}
A simple undirected \emph{graph} $G=(V,E)$ is treated as a poset $\Cells(G)=(V\sqcup E,\leq)$ where the partial order is given by inclusion: if $v\in V$, $e\in E$, then $v<e$ if $v$ is a vertex\footnote{A symbol $v\face e$ is used in modern applied papers in this context.} of $e$. See Figure~\ref{figPosets}.
\end{ex}

Simple graphs are restricted to two levels in their posetal order and are therefore limited in their ability to represent more complex, higher-order relationships that may exist within a space. To address this limitation, various discrete geometric structures have been suggested as a means to encode higher-order relationships in data, assuming that the data is supported on topological domains. These, too, may be represented as posets.

\begin{ex}\label{exHypergraphToPoset}
A \emph{hypergraph} $H$ on a vertex set $V$ is a collection of subsets of $V$ (which is commonly assumed to contain all singletons $\{v\}$ for $v\in V$). Then we have a partially ordered set $S(H)=(H,\subset)$ with the order given by inclusion. See Figure~\ref{figPosets}.
\end{ex}

\begin{ex}\label{exBinRelToPoset}
Another way to transform a hypergraph $H$ into a poset is similar to Example~\ref{exGraphToPoset}: we can assume all hyperedges incomparable and set $\bar{S}(H)=V\sqcup H$, with relation $v<h$ if a vertex $v$ belongs to a hyperedge $h$. More generally, one can take any \emph{binary relation} between two sets $R\subseteq A\times B$. This relation defines a strict order on $A\sqcup B$, if we assume that all elements of $A$ are pairwise incomparable and all elements of $B$ are pairwise incomparable. 
\end{ex}

\begin{ex}\label{exSimpCpxToPoset}
A \emph{simplicial complex} $\ca{K}$ on a finite vertex set $V=\{1,\ldots,m\}$ is a collection of subsets of $V$, which is closed under inclusion, i.e. $\sigma\in\ca{K}$ and $\tau\subset \sigma$ implies $\tau\in\ca{K}$. The elements of $\ca{K}$ are called simplices. The collection $\Cells(\ca{K})$ of non-empty simplices of $\ca{K}$ is partially ordered by inclusion. Any simplicial complex $\ca{K}$ defines a poset $\Cells(\ca{K})$. See Figure~\ref{figPosets}.
\end{ex}

\begin{ex}\label{ex2pts2edges}
There exist natural examples of ``point-free'' posets, which do not originate from partial order in a Boolean cube. For example, consider a finite set $\{1,2,a,b\}$ and impose a partial order by setting $1<a$, $1<b$, $2<a$, $2<b$.
\end{ex}

\begin{ex}\label{exCellCpxToPoset}
Consider a \emph{regular CW-complex} $\ca{X}$, see Definition~\ref{difinCWhausdorff}. The collection $\Cells(\ca{X})$ of its cells is partially ordered by inclusion. See Figure~\ref{figPosets}.
\end{ex}

\begin{ex}\label{exGridToPoset}
A pixel grid $P$ can be considered as a 2-dimensional CW-complex with square cells (\emph{quadrillage}). This gives rise to the poset $\Cells(P)$. This poset (and the corresponding Alexandrov topology) is the main object of study in digital topology~\cite{HermanBook,KovalevskyBook}
\end{ex}

Different geometric structures are well suited to represent different types of relationships. In particular, hypergraphs are well-suited to represent social networks ~\cite{amato2017influence}, where hyperedges encode groups of people. Hypergraphs have also been used to model the structure of thematic corpora in document analysis applications ~\cite{lee2024hints}. Simplicial complexes are used to represent coauthorship graphs ~\cite{ebli2020simplicial}, molecular graphs in computational chemistry and biology ~\cite{lan2023simplicial}, and in encoding 3D shapes as meshes ~\cite{lee2019implementation}. 

\begin{figure}
  \centering
  \includegraphics[scale=0.24]{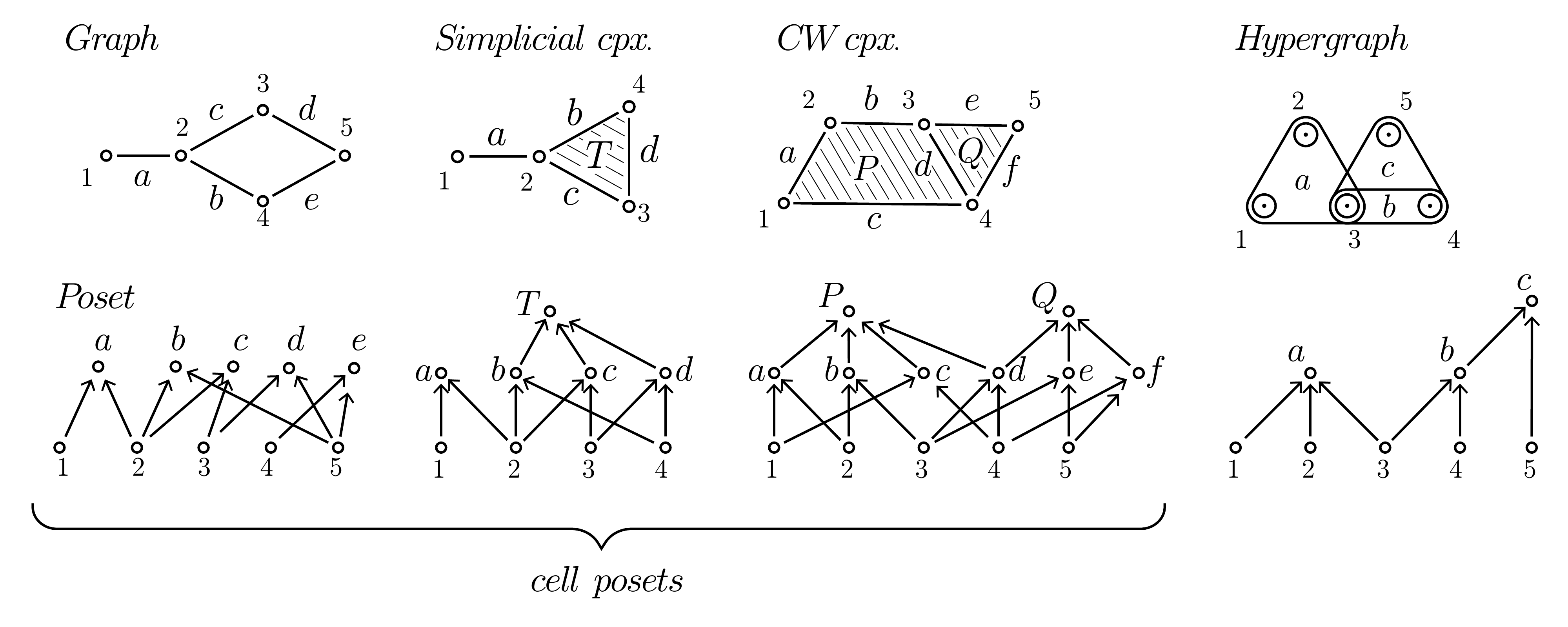}
  \caption{Common geometrical structures encoded as posets}\label{figPosets}
\end{figure}

\begin{con}\label{conHasseDiag}
It is convenient to represent finite posets graphically by means of Hasse diagrams. The \emph{Hasse diagram} $G(S)$ of a poset $S$ is a finite directed graph $(S,E)$ with $(s_1,s_2)\in E$ if and only if $s_1<s_2$ and there is no element $s'\in S$ such that $s_1<s'<s_2$. In other words, we represent a relation $s_1<s_2$ by an edge from $s_1$ to $s_2$ if there are no other intermediate elements between $s_1$ and $s_2$. The original order relation is recovered\footnote{In the finite case, of course.} as the transitive closure of $G(S)$: we have $s_1\leq s_2$ in $S$ if and only if there is a directed path from $s_1$ to $s_2$ in $G(S)$. 

The Hasse diagram is the particular case of transitive reduction of a graph known in computer science. Since transitive reductions are known to be the most memory-optimal data structures to represent a directed graph~\cite{TransitiveReduction}, Hasse diagrams are the most optimal structures to store posets. All figures in the paper show not the posets themselves (not all relations), but the corresponding Hasse diagrams.
\end{con}

\begin{rem}\label{remManyCellPosets}
There exists a subclass of particularly well-behaved posets, called \emph{cell posets}, see Definition~\ref{definCellPoset}. It serves as a finite data structure, which captures the topological intuitions about compact CW-complexes. The posets introduced in Examples~\ref{exGraphToPoset},\ref{exSimpCpxToPoset},\ref{exCellCpxToPoset},\ref{exGridToPoset} are cell posets, following Proposition~\ref{propCellPosetToCpx}. Examples~\ref{exHypergraphToPoset} and~\ref{exBinRelToPoset} are generally not cell posets. 
\end{rem}

\subsection{Exploration of geometrical structure}\label{subsecGeometryExploration}

\begin{ex}\label{exBat}
A bat navigates an environment by means of echolocation. How is it even possible to understand something about the global geometry of an environment by staying at a single point? There is an obvious answer. An environment is capable to conduct signals: there are PDE's telling how a signal at a single point propagates to infinitely close points. At a more fundamental level sheaves, as informally introduced in subsection~\ref{subsecFundNotions}, are involved. A sheaf\footnote{In the continuous mathematical model where a bat flies in a manifold $X\subseteq\Ro^3$, we deal with the sheaf of differential functions $\ca{O}^\infty(X)$.} is needed to mathematically model two claims: (1) every point of the environment is capable to store some information, or ``the state of matters'' in this point, (2) if points are close, there is a specified relation connecting their states.

A somewhat similar but a way more classical example is~\cite{HearDrum}, hearing the shape of a drum. Essentially, we speak about eigenvalues of Laplace--Beltrami operator on Riemannian manifold $X$. But on the more fundamental level we need a drum to store local information and conduct signals. This is why we are working not with the drum $X$ itself, but with a sheaf on $X$.  
\end{ex}

\begin{figure}
  \centering
  \includegraphics[scale=0.24]{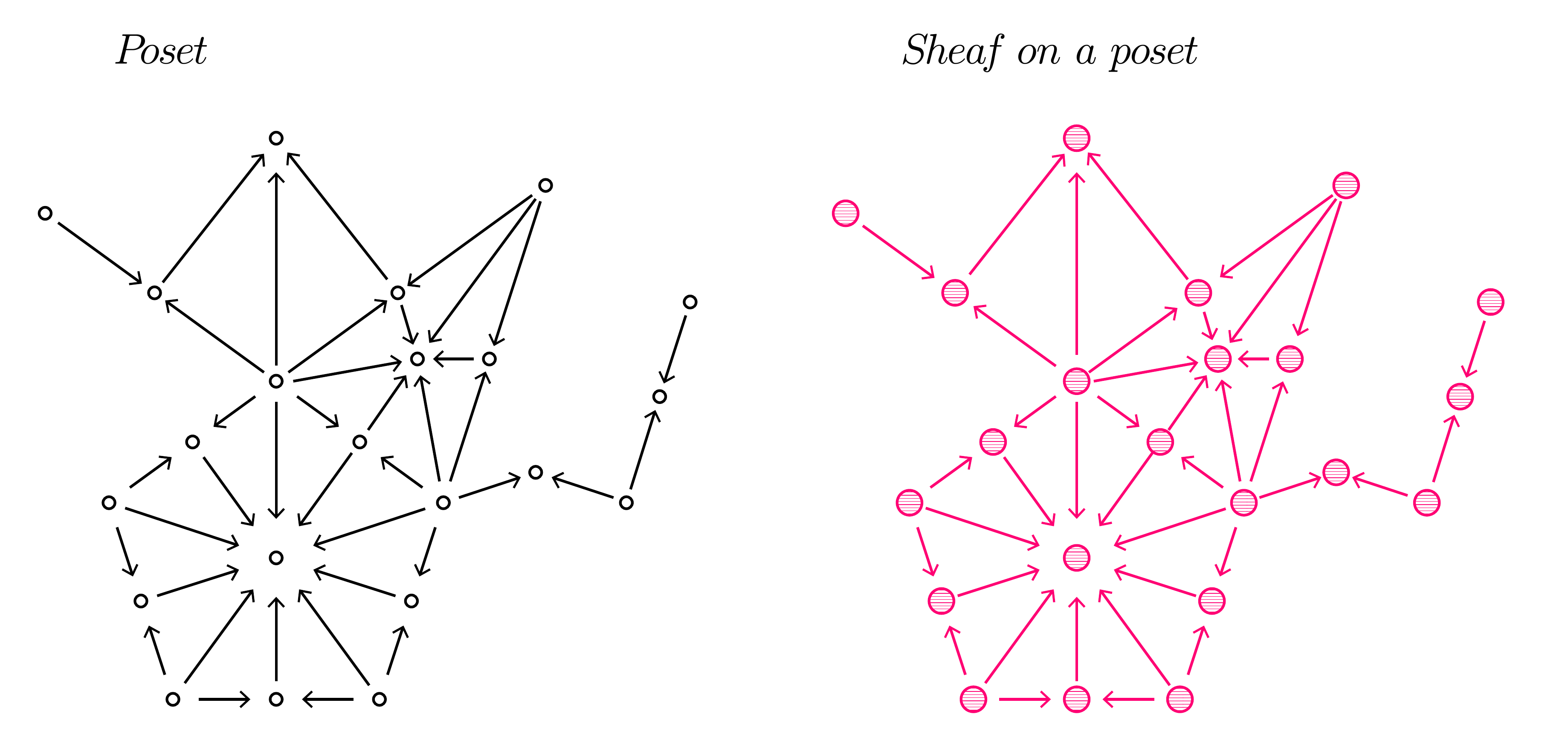}
  \caption{Poset is just a pure structure. Sheaf assigns some information container to each node and specifies how information flows between containers.}\label{figPosetSheaf}
\end{figure}

It is difficult to perform exact computations about continual spaces and differential sheaves so in practice we are interested in discretized problems. In the previous subsection we assumed that geometry is encoded in a (finite) poset. The point highlighted by the previous two examples stays valid for finite posets: in order to say something about a poset, we should first impose an additional structure on it, i.e. a sheaf, and then run some calculations over this structure.

\begin{con}\label{conInformalSheaf}
The information stored in a poset $S$ has purely syntactic nature: each claim $s_1<s_2$ means precisely ``there is a relation between $s_1$ and $s_2$''. In order to be able to say something more meaningful about the geometry of $S$, we impose some sort of semantics on $S$, and work with this semantics. This means that some mathematical entity $D(s)$ should be specified for every point $s\in S$, and, whenever $s_1<s_2$ in $S$, some specific relation $D(s_1<s_2)$ between the $D(s_1)$ and $D(s_2)$ is defined. Informally this means the following.
\begin{enumerate}
  \item The data type $D(s)$ prescribed to $s\in S$ serves as a local information container about possible ``states'' at the point $s$.
  \item The relation $D(s_1<s_2)\colon D(s_1)\to D(s_2)$ describes information flow or interaction between related points of $S$.
\end{enumerate}
See Figure~\ref{figPosetSheaf} as a general illustration. The compositionality is required from the information flow: whenever $s_1<s_2<s_3$, we should be able to compose the information flow $D(s_1<s_2)$ from $D(s_1)$ to $D(s_2)$ with the information flow $D(s_2<s_3)$ from $D(s_2)$ to $D(s_3)$ and get the information flow $D(s_1<s_3)$ from $D(s_1)$ to $D(s_3)$.

The most general framework formalizing these intuitions is \emph{category theory}~\cite{MacLane}. With a category $\Vv$ chosen, the data containers $D(s)$ are just objects of $\Vv$, and $D(s_1<s_2)\colon D(s_1)\to D(s_2)$ are morphisms of $\Vv$. Every poset $(S,\leq)$ may be treated as a syntactic category $\cat(S)$, whose objects are the elements of $S$, and morphisms are the relations $a\leq b$. The compositionality requirement states that $D$ should be a functor from $\cat(S)$ to $\Vv$, i.e. a diagram on $S$. In the case of posets, a sheaf is synonymous with a diagram: diagrams on $S$ correspond to sheaves on the related Alexandrov topology on $S$ (Proposition~\ref{propDiagSheaf}). Henceforth, the words `sheaf' and `diagram' are used interchangeably in the main part of the text.
\end{con}

\begin{defin}\label{definSheafDiagramMainText}
A $\Vv$-valued \emph{diagram}, or \emph{sheaf}, $D$ on a poset $S$ is a functor from $\cat(S)$ to $\Vv$. In more detail, a sheaf consists of
\begin{enumerate}
  \item an assignment, to each element $s\in S$ of an object $D(s)\in \Vv$, called the stalk of $D$ at $s$;
  \item an assignment, to each inequality $s<t$, of a morphism $D(s<t)\colon D(s)\to D(t)$, called the structure maps or restriction maps~\footnote{In applied papers these maps are called restriction maps, however this may create a confusion with the term used in the definition of a presheaf on a topology, see Remark~\ref{remPresheafInformally}.} of the sheaf $D$. Structure maps should satisfy the \emph{compositionality equations}: for any $s_0<s_1<s_2$, the equality\footnote{Here and below we use the notation for the composition borrowed from theoretical programming: $f;g=g\circ f$.} holds
      \begin{equation}\label{eqCompositionalityMainPart}
      D(s_0<s_1);D(s_1<s_2)=D(s_0<s_2).
      \end{equation}
\end{enumerate}
\end{defin}

For example, if $\Vv=\Sets$ is the category of finite sets, then each stalk is a set, and structure maps are mappings between sets. If $\Vv=\Ro\Vect$ is the category of real vector spaces, then each stalk $D(s)$ is just a vector space, and structure maps are linear maps of vector spaces. In finite-dimensional case each stalk corresponds to $\Ro^{d_s}$ and structure maps $D(s<t)$ are matrices of size $d_t\times d_s$.

\begin{ex}
If $G=(V,E)$ is a graph, and $S=\Cells(G)$ is the corresponding poset, defined in Example~\ref{exGraphToPoset}, then assigning a $\Ro\Vect$-valued sheaf $D$ on $S$ means that every vertex $v\in V$ is assigned a vector space $D(v)$, every edge $e\in E$ is assigned a vector space $D(e)$, and, whenever $v\in e$ (also denoted $v\face e$), there is a linear map $f_{v,e}\colon D(v)\to D(e)$.
\end{ex}


\begin{rem}
Imposing a $\Vv$-valued sheaf on $S$ makes it possible to borrow computational abilities of the target category $\Vv$ and use them to mine geometrical features of $S$. Sheaves algebraize geometry.
\end{rem}

The remark above is informal but motivates subsequent discussion providing evidence towards its veracity. 

\subsection{Global sections and coherent states}\label{subsecGlobalSectionsAndCoherence}

The choice of target category $\Vv$ has significant computational ramifications. The compositionality requirements imply a notion of equality in $\Vv$ and an ability to solve the system of equations defined by the underlying poset structure. This is covered in more detail in Construction~\ref{conOtherCategoriesInvLim}.

\begin{rem}\label{remConcrete}
In most applications, the target category is assumed \emph{concrete}~\cite[p.26]{MacLane}, meaning its objects are treated as sets and morphisms as structure-preserving maps between sets. In this case, the notion of choosing an element $x\in X$ of an object $X\in\ST{C}$ is a well-defined operation.
\end{rem}

The categories $\Sets$ and $\Ro\Vect$ are concrete. The category $\cat(S)$ of a poset is not concrete. To make it concrete would mean to define some good $\Sets$-valued sheaf on $S$. In the following, $\Vv$ denotes a concrete category.

\begin{defin}\label{definGlobalSectionsConcrete}
Consider a sheaf $D$ on a poset $S$. A \emph{global section} of $D$ is a choice, for each $s\in S$, of an element $x_s\in D(s)$, such that whenever $s_1<s_2$ in $S$, the following equality holds true
\begin{equation}\label{eqCoherentStates}
D(s_1<s_2)\colon x_{s_1}\mapsto x_{s_2}.
\end{equation}
The set of all global sections is denoted by $\Gamma(S;D)$. The set $\Gamma(S;D)$ is a subset of the Cartesian product $\prod_{s\in S}D(s)$. For each $t\in S$, there is a map $\Gamma(S;D)\to D(t)$ which picks the $t$-th component of a global section: $(x_s\mid s\in S)\mapsto x_t$. 
\end{defin}

\begin{figure}
  \centering
  \includegraphics[scale=0.24]{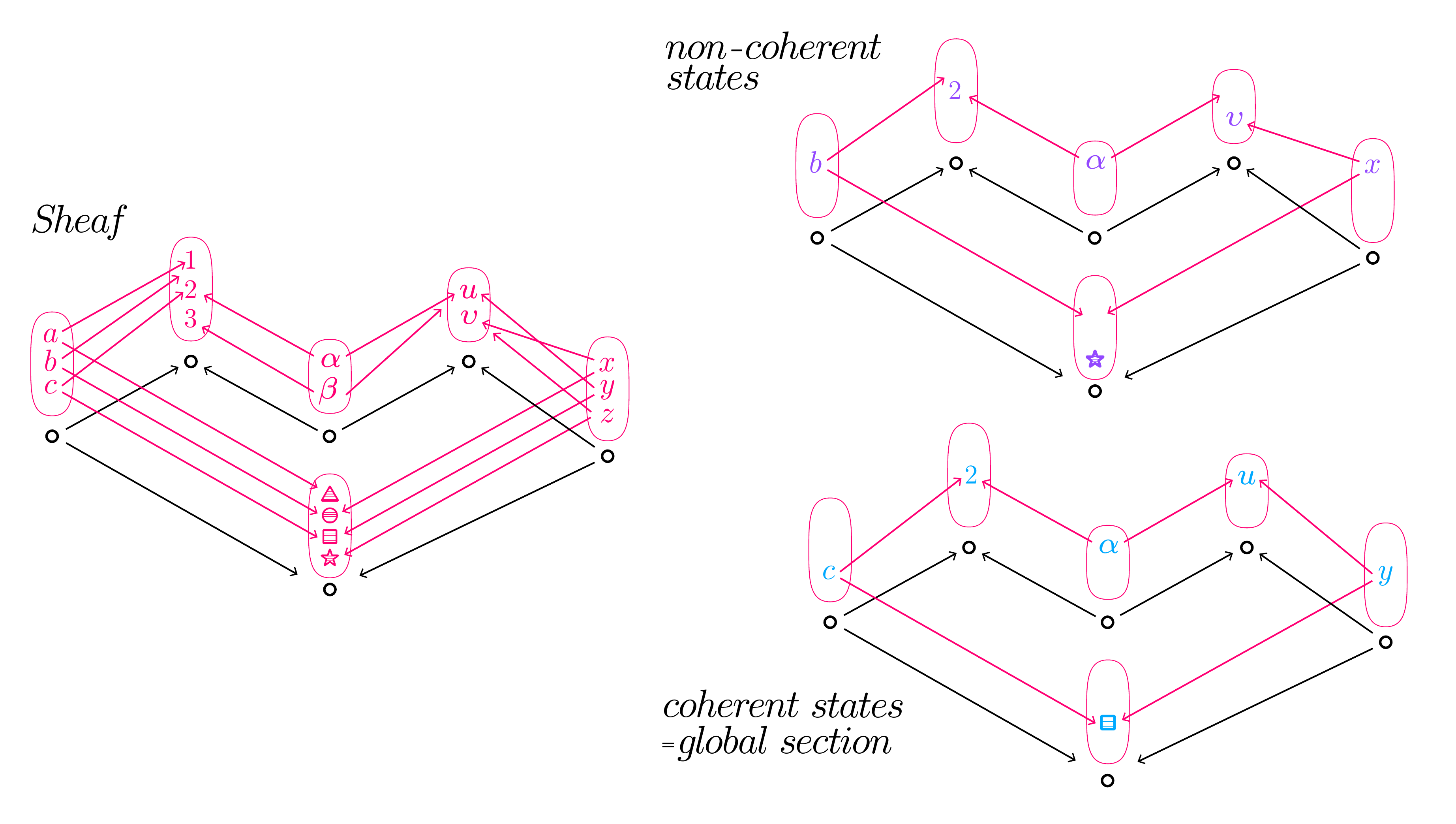}
  \caption{Example of non-coherent states of a given sheaf, and an example of coherent collection of states, i.e. global section.}\label{figGlobalSections}
\end{figure}

\begin{rem}\label{remCoherentStates}
The collection of elements $(x_s\mid s\in S)\in\prod_{s\in S}D(s)$ satisfying equations~\eqref{eqCoherentStates} may be interpreted as the choice of local states $x_s\in D(s)$ which are coherent with respect to information transfer rules provided by $D$, see Fig.~\ref{figGlobalSections}
\end{rem}

\begin{rem}\label{remReduceEquationsToGenerators}
In the system of equations defined by an arbitrary poset $S$~\eqref{eqCoherentStates}, we must only consider the equations corresponding to edges of the Hasse diagram $G(S)$ to determine global sections: the set of solutions will not change. Indeed, if $D(s_1<s_2)x_{s_1}=x_{s_2}$ and $D(s_2<s_3)x_{s_2}=x_{s_3}$ hold true, then the equality
\[
D(s_1<s_3)x_{s_1}=D(s_2<s_3)\circ D(s_1<s_2)x_{s_1}=x_{s_3},
\]
follows from the compositionality assumption~\eqref{eqCompositionalityEquations}. This remark is a straightforward way to reduce the number of equations defining a global section. A less straightforward way to simplify calculations even further is to utilize the local topological structure of $S$, e.g. if $S$ is a cell poset, see Remark~\ref{remCompatibilityReducesToCells}.
\end{rem}

\begin{rem}\label{remLocalSection}
The term ``global section'' suggests, there should be a notion of a ``local section''. It exists indeed, and it is quite natural: the coherent relations may be defined and satisfied only on a chosen subset $U$ of $S$. A collection $(x_s\mid s\in U)$ is called coherent on $U$ if it satisfies equations~\eqref{eqCoherentStates} restricted to $U$.

Notice that, whenever $x_s$ is chosen, equations~\eqref{eqCoherentStates} allow to compute $x_t=D(s<t)x_s$ for any $t>s$. For this reason, the subset $U$ in this construction is usually assumed to satisfy the assumption that whenever $s\in U$ and $t>s$, then $t\in U$. Such subsets are precisely the open subsets of the Alexandrov topology $X_S$ corresponding to $S$, see Construction~\ref{conDiagToSheaf}. In general, sheaves are defined on a topology --- with each open set $U\subset S$ we associate the set $\ca{F}(U)=\Gamma(U;S|_U)$ of sections coherent on $U$, see Construction~\ref{conDiagToSheaf}.
\end{rem}

\begin{con}\label{conOtherCategoriesInvLim}
Consider the category $\Ro\Vect$. All equations~\eqref{eqCoherentStates} are linear, so that $\Gamma(S;D)$ is a vector subspace in the direct sum $\bigoplus_{s\in S}D(s)$. This means $\Gamma(S;D)$ lives in the same category $\Ro\Vect$, it is not just a set, but a vector space itself. 

Instead of making such remark for any category $\Vv$ under consideration, a universal categorical construction of inverse limit is used for the formal definition of ``the global sections object'' in arbitrary category, 
see Constructions~\ref{conGlobalSections} and~\ref{conInverseLimit} and Remark~\ref{remConcreteLimSets}. The ability to take (finite) inverse limits in a category reduces to two basic operations: formation of finite direct products 
and formation of finite equalizers, i.e. the ability to solve equations. Both opportunities are present in the common categories, such as $\Sets$ or $\ko\Vect$, see Remark~\ref{remLimsToProductsEqualizers} for more details. From this perspective, all the subsequent constructions such as Laplacians and sheaf diffusion can be seen as numerical approaches to find the equalizer.
\end{con}


For the purpose of this brief exposition we restrict to the target category $\Vv=\Ro\Vect$ of real vector spaces.

\begin{con}\label{conConstantSheaf}
Consider a vector space $V\in\Ro\Vect$. The easiest and most natural way to define a sheaf on $S$ is to set all its stalks equal to the same object $V$, and all structure maps to be the identity maps $\id_V$. This sheaf is denoted $\bar{V}$ and called \emph{the constant sheaf}. Such sheaves are frequently used in algebraic topology.
\end{con}

\begin{ex}\label{exConsensusAlongApath}
Consider a path graph $G_p$ and its corresponding poset $S_p=\Cells(G_p)$ see Fig.~\ref{figConstSheaves}, left. The constant sheaf $\overline{\Ro^1}$ is shown schematically on the same figure. In this example the vector space of global sections is the diagonal subspace
\[
\Gamma(S_p;\overline{\Ro^1})=\left\{(x,x,\ldots)\in\prod\nolimits_{s\in S_p}\overline{\Ro^1}(s)\right\}\cong \Ro^1.
\]
Indeed, the states are coherent if and only if they coincide on the endpoints of each arrow of the Hasse diagram, since all structure maps of $\overline{\Ro^1}$ are just the identity maps.
\end{ex}

\begin{figure}
  \centering
  \includegraphics[scale=0.23]{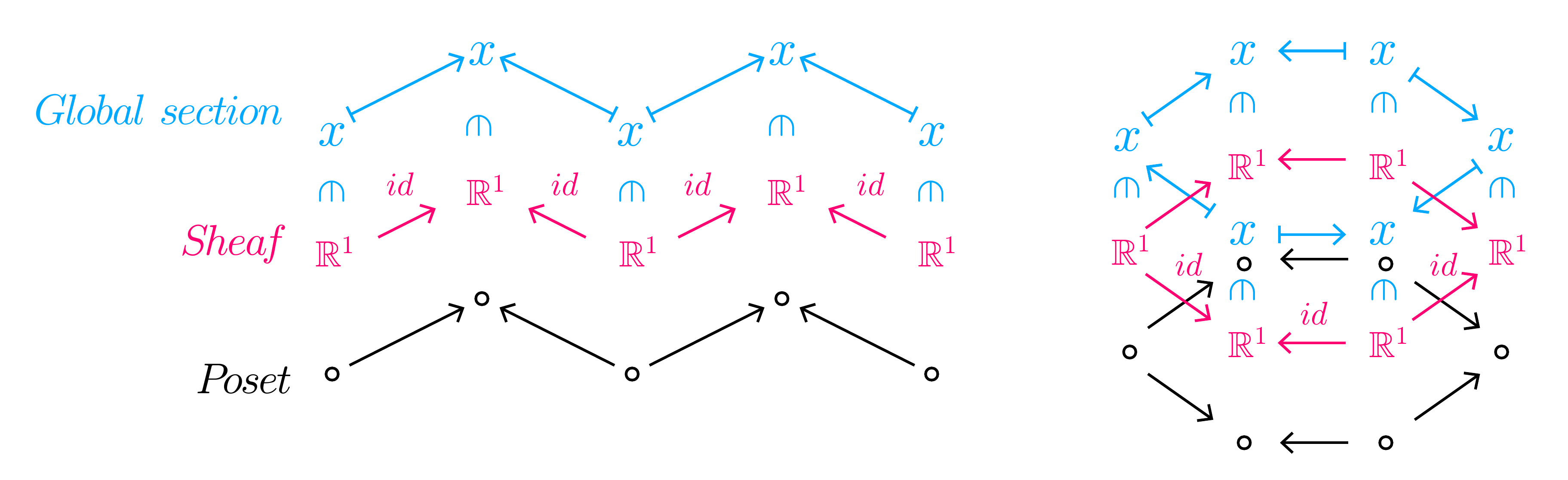}
  \caption{Constant sheaves over a path graph, and over a cycle graph, and their global sections.}\label{figConstSheaves}
\end{figure}

\begin{ex}\label{exConsensusAlongAcircle}
Consider a similar example with a cycle graph $\Cy_n$ on $n$ vertices, the corresponding poset $S_n=\Cells(\Cy_n)$ and, the constant sheaf $\overline{\Ro^1}$ as before, see Fig.~\ref{figConstSheaves}, right. Again, we have $\Gamma(S_n;\overline{\Ro^1})\cong \Ro^1$.
\end{ex}

\begin{ex}\label{exConsensusGraphCase}
In general, if $G$ is a graph and $\overline{\Ro^1}$ is a constant sheaf on $\Cells(G)$, then we have
\begin{equation}\label{eqNumberOfConComponents}
\Gamma(\Cells(G);\overline{\Ro^1})\cong\Ro^{c(G)},
\end{equation}
where $c(G)$ is the number of connected components of $G$. Indeed, the coherence relations~\eqref{eqCoherentStates} imply the equality of states over graph elements (vertices and edges) from the same connected component. Distinct connected components may have distinct coherent states.

More generally $\dim\Gamma(S;\overline{\Ro^1})$ equals the number of connected components of the geometric realization $|S|$ of $S$, see Definition~\ref{definGeomRealPoset} and Example~\ref{exConstantSheaf}.
\end{ex}

\begin{figure}
  \centering
  \includegraphics[scale=0.25]{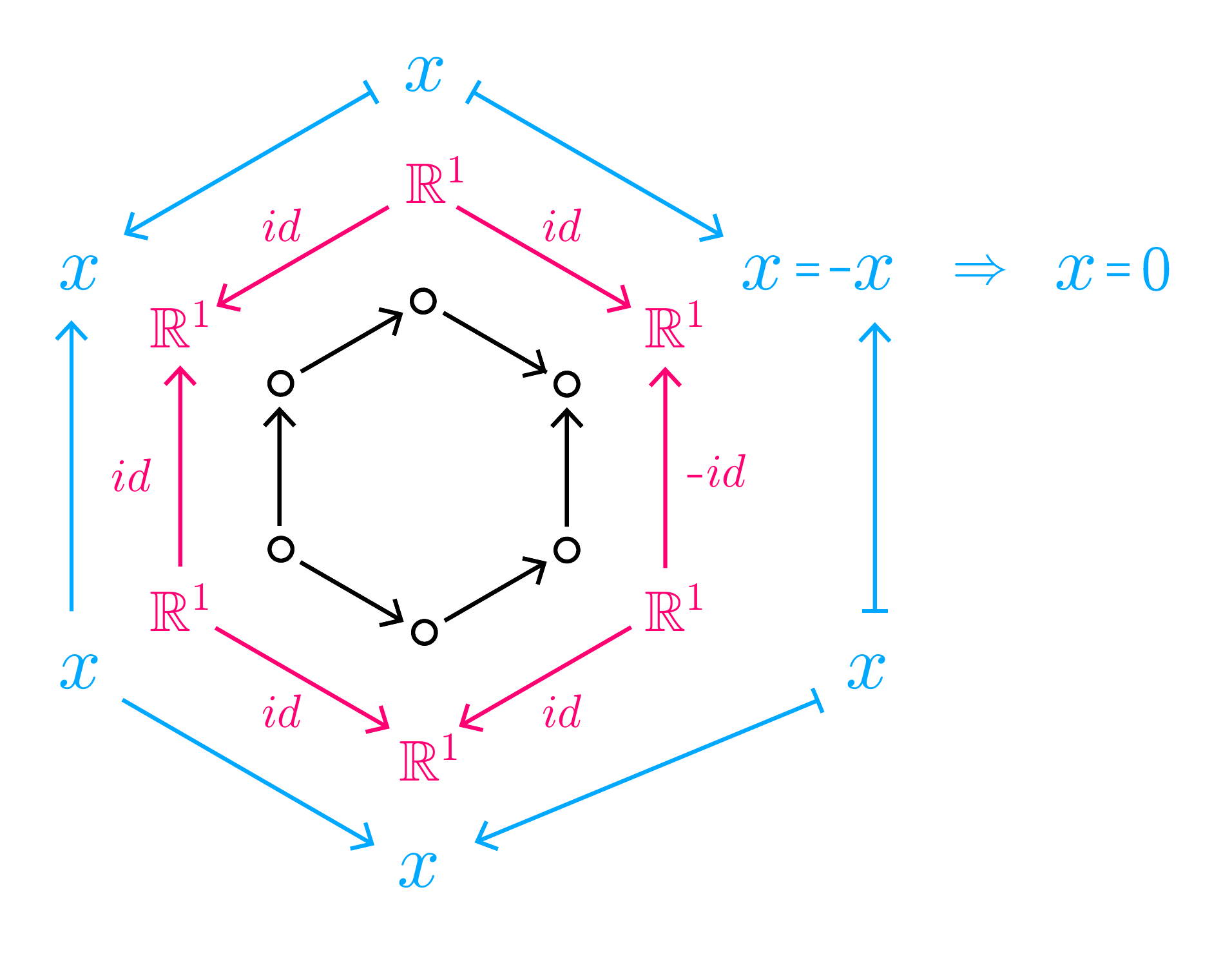}
  \caption{M\"{o}bius sheaf on a cycle graph.}\label{figMoebius}
\end{figure}

\begin{ex}\label{exConsensusAlongMoebius}
Consider the same cycle graph $\Cy_n$ as in Example~\ref{exConsensusAlongAcircle}, but this time with the different sheaf $D_\mu$. All stalks of $D_\mu$ are $\Ro^1$ and all maps but one are identity maps. The exceptional map is the multiplication by $-1$, see Fig.~\ref{figMoebius}. We can call it a Möbius sheaf, as the analogy with the Möbius strip is transparent. In this case, the system of coherence equations~\eqref{eqCoherentStates} only has a trivial zero solution: propagating the equality along the circle, we end up with the equation $c=-c$, which has a unique solution $c=0$ (since $\Ro$ is not a field of characteristic $2$). Therefore $\Gamma(S_n;D_\mu)=0$.
\end{ex}

\subsection{Consensus higher and wider}\label{subsecConsensusGoesWild}

We have seen that the notion of a global section allows one to reason about coherent (or consensual, or compatible) states of some system over a geometrical structure. This connection of possible system state with system structure is encoded by a sheaf. 

However, the collection $\Gamma(S;D)$ of global sections, as a stand-alone object, is not very informative. Examples~\ref{exConsensusAlongApath} and~\ref{exConsensusAlongAcircle} demonstrate that the difference between path-graph and cycle-graph can't be seen by only observing the consensus states of the respective constant sheaves. The two classical ways to squeeze more information from the consensus equations are the following.
\begin{enumerate}
  \item Estimate how ``proofs of consensus'' (equations~\eqref{eqCoherentStates}) relate to each other. We have not only proofs of consensus, but also proofs between proofs, proofs between proofs between proofs, etc. This naturally leads to the notion of higher-order cohomology $H^*(S;D)$ of a sheaf, allowing one to estimate qualitative (i.e. topological) properties of $S$. This cohomological framework could potentially be replaced with higher categories and homotopy theory, but this latter framework is syntactical and algorithmically undecidable problems quickly emerge, such as the word problem. Although complicated, homological algebra is algorithmically feasible.
  \item Use ``finding consensus'' as an algorithmic prior. We can estimate the speed and direction of convergence to consensus using the spectral theory of Laplacians. Or we can use the process of reaching the consensus (or the combination of several such processes) to model unknown dynamical systems on a geometrical structure, leading to the collection of sheaf neural network architectures which have been proposed within the deep learning literature.  
\end{enumerate}

These two items are covered in more detail in the next subsections.

\subsection{Sheaf cohomology}\label{subsecCohomologyMainPart}

The general idea of cohomology can be understood by analyzing the difference between Examples~\ref{exConsensusAlongApath} (consensus on a segment) and~\ref{exConsensusAlongAcircle} (consensus on a circle).

\begin{ex}\label{exPathVsCircle}
Informally, the difference between a segment and a circle lies in the number of substantially different ways the consensus is propagated. In the path graph, the consensus between any two nodes $s_0$ and $s_n$ propagates along a single path. A value $x\in\Ro^1=\overline{\Ro^1}(s_0)$ can be copied into to the stalk $D(s_n)$ by the consecutive solution of linear equations $x_s=x_t$ with $s<t$ along the unique path $s_0<t_1>s_1<t_2>s_2<\cdots>s_n$. In the cycle case, given a value $x\in\Ro^1=\overline{\Ro^1}(s_0)$, another point $s_n$ receives the signal from two sides. It is the same signal ``Set $x_{s_n}$ equal to $x$'', however, it arrives at $s_n$ by two distinct routes: clockwise and counterclockwise, see Fig.~\ref{figConstSheaves} for reference.

Algebraically, this intuition can be quantified by the fact that, in the cycle, there are redundant linear equations among~\eqref{eqCoherentStates}--the coherence equations have dependencies. These dependencies may be formalized with higher-order cohomology. We have
\[
H^1(S_p;\overline{\Ro^1})=0,\mbox{ for a path, and } H^1(S_c;\overline{\Ro^1})\cong\Ro^1\mbox{ for a cycle.}
\]
In other words, higher cohomology of the constant sheaf can distinguish a path graph from a cycle graph.
\end{ex}

Relations between relations are sometimes called \emph{syzygies}. To have a notion of a syzygy, one needs the target category $\Vv$ to be an abelian category. Abelian categories integrate nicely into the toolbox of linear and homological algebra. Sufficiently well-behaved abelian categories such as the category $\ko\Vect$ of vector spaces over a fixed ground field $\ko$, or algebras, or sheaves, support the machinery of derived functors, introduced in mathematics by Cartan and Eilenberg~\cite{careil1956homalg}.

\begin{defin}\label{definCohomologyMainPart}
Let $S$ be a poset, $D$ a sheaf on $S$ valued in a sufficiently nice abelian category $\Vv$, and $j\geq 0$ a nonnegative integer. Then \emph{the cohomology} $H^j(S;D)\in\Vv$ is the \emph{$j$-th right derived functor} $R^j\Gamma(S;D)$ of the functor of global sections.
\end{defin}

In most deep learning-related papers, a different definition of cohomology can be found: the one based on cellular cochain complex. It is only applicable to sheaves over CW-complexes, but in this case it coincides with the above definition (see Theorem~\ref{thmCWcohomologyIsAGcohomology}). The posetal perspective employed thus far is more general than the CW-complex case, so we provide the above fundamental definition of cohomology and move all associated details to Appendix~\ref{secMathCohomology}.  

\begin{con}\label{conCochainMainPart}
Despite its fundamental theoretical importance, Definition~\ref{definCohomologyMainPart} is inconvenient from computational perspective. In order to actually compute sheaf cohomology, one needs to associate, with a sheaf $D$, a cochain complex $(C^*(S;D);d)$ (see Definition~\ref{definCochainCpx}):
\begin{equation}\label{eqCochainSheafMainPart}
0\stackrel{d}{\rightarrow} C^0(S;D)\stackrel{d_0}{\rightarrow} C^1(S;D)\stackrel{d_1}{\rightarrow} C^2(S;D)\stackrel{d_2}{\rightarrow} \cdots,
\end{equation}
with a theoretical guarantee that its cohomology
\[
H^j(C^*(S;D);d)=\Ker d_j\colon C^j(S;D)\to C^{j+1}(S;D)/\im d_{j-1} C^{j-1}(S;D)\to C^j(S;D)
\]
is isomorphic to the sheaf cohomology $H^j(S;D)$ as given by Definition~\ref{definCohomologyMainPart}. If this is the case, we say that the cochain complex $C^*(S;\cdot)$ \emph{honestly computes cohomology} of sheaves on $S$, see Definition~\ref{definHonestlyComputes}. 

There are infinitely many cochain complexes which honestly compute cohomology, the examples can be found in subsections~\ref{subsecMathCohomologySimplicial} and~\ref{subsecMathCohomologyCellular}. Luckily, there is a convenient criterion which checks if a given cochain complex honestly computes sheaf cohomology, given by the famous Grothendieck's universal delta-functor theorem~\cite{grothendieck1957tohoku}. We reformulate it in a less technically loaded manner as Theorem~\ref{thmHonestCohomology}.
\end{con}

Cohomology $H^j(S;D)$ in degrees $j>0$ is typically not found in deep learning applications outside of the computation of persistent homology. The $0$-th cohomology $H^0(S;D)$ coincides with the global sections $\Gamma(S;D)$ of a sheaf, and this is typically the cohomological information employed by most applications. The cochain complex~\eqref{eqCochainSheafMainPart} is important for the actual computation of global sections, since any complex honestly computing cohomology satisfies
\[
\Gamma(S;D)=H^0(S;D)\cong H^0(C^*(S;D),d)=\Ker d_0\colon C^0(S;D)\to C^1(S;D).
\]
Although only the components of degrees $0$ and $1$ are involved in the above formula, the choice of the suitable cochain complex affects the construction of objects like the sheaf Laplacian and consequently influences signal processing applications based on this spectral information.

The construction of a cochain complex that honestly computes cohomology constitutes a nontrivial procedure and heavily depends on the type of geometrical structure $S$.

\begin{ex}\label{exCellPosetsSupportCanonicalCCpx}
Cell posets $\ca{X}$ (see Remark~\ref{remManyCellPosets}) are considered the most convenient class of discrete geometrical structures, because, among all cochain complexes honestly computing cohomology of sheaves on $\ca{X}$, there exists the distinguished one --- \emph{the cellular cochain complex} $C_{CW}^*(\ca{X};D)$ (Definition~\ref{definCWcochainComplex}). Cell complexes are used, among other applications, in the definitions of the cellular sheaf Laplacian, commonly used in deep learning applications. See Construction~\ref{conCellComplexDiffusion} for further discussion.

A similar construction, called a Morse cochain complex, is suggested in subsection~\ref{subsecMathOneShot}, which generalizes cellular cochain complexes to more general posets. The construction, as its name implies, is motivated by Morse theory.
\end{ex}

\begin{ex}\label{exHypergraphsRoosCpx}
Hypergraphs, viewed as two-layered posets like in Example~\ref{exBinRelToPoset}, are not cell posets. A cellular cochain complex does not exist for such structures because incidence numbers cannot be canonically defined for a hypergraph as is natural for graphs. Nevertheless, there is a classical construction of Roos complex, also known as standard simplicial resolution, which honestly computes cohomology (see Theorem~\ref{thmRoosIsAGcohomology}) on any poset, including those derived from hypergraphs. It can be shown that the definition of the hypergraph sheaf Laplacian, appearing in the recent papers on hypergraph sheaf networks, are basically the Laplacians defined over Roos complexes. See Construction~\ref{conHypergraphDiffusion} for further discussion.
\end{ex}

It should be mentioned that Roos complexes are sub-optimal for computing cohomology and for defining Laplacians due to their large size. Other cochain complexes can be used, leading to matrices of smaller size. This is an important issue in practice as well: both sheaf neural networks and more general cohomology computations suffer from large computational complexity. In subsection~\ref{subsecMathMinimalComputations} we prove the lower bound for the complexity of cochain complexes honestly computing cohomology, and propose a new construction of the minimal cochain complex $C^*_{\min}(S;D)$ which is well-defined for any finite poset $S$ and achieves the lower complexity bound. 

\subsection{Laplacians and heat diffusion}\label{subsecLaplaciansMainPart}

The general utility of Laplacians can be understood with a basic example.

\begin{ex}\label{exConsensusAlongVariousCycles}
Consider a poset $S_n$ corresponding to a cycle graph $\Cy_n$ on $n$ vertices, as in Example~\ref{exConsensusAlongAcircle}, and the constant $\Ro^1$-valued sheaf on $S_n$. Its cohomology is given by
\[
(H^0(S_n;\Ro^1)\cong \Ro^1,H^1(S_n;\Ro^1)\cong \Ro^1,0,0,\ldots)
\]
which is independent of the number of vertices composing the cycle. This is not surprising: all graphs $\Cy_n$ are homeomorphic, and have the same qualitative properties. 

One distinguishing feature between the graph $\Cy_3$ and the graph $\Cy_{100}$ is the time needed for a signal to spread across each graph. The process of signal propagation is modeled by heat diffusion (Definition~\ref{definHeatDiffusion}). The convergence rate of the process is determined by the smallest positive eigenvalue $\lambda_{\min}$ of the Laplacian matrix of the graph: the bigger $\lambda_{\min}$ the faster the process converges, as explained in Subsection~\ref{subsecMathDiffusion}. It is known~(see e.g.~\cite[ex.1.5]{Chung}) that the (unnormalized) Laplacian of $\Cy_n$ has eigenvalues $2-2\cos\frac{2\pi k}{n}$, for $k=0,1,\ldots,n-1$. Hence we have
\[
\lambda_{\min}(\Cy_n)=2-2\cos\frac{2\pi}{n} \approx \frac{C}{n^2}
\]
which is smaller for $n=100$ than for $n=3$. The heat spreads faster over a cycle of smaller length--an intuitive result.
\end{ex}

\begin{figure}
  \centering
  \includegraphics[scale=0.24]{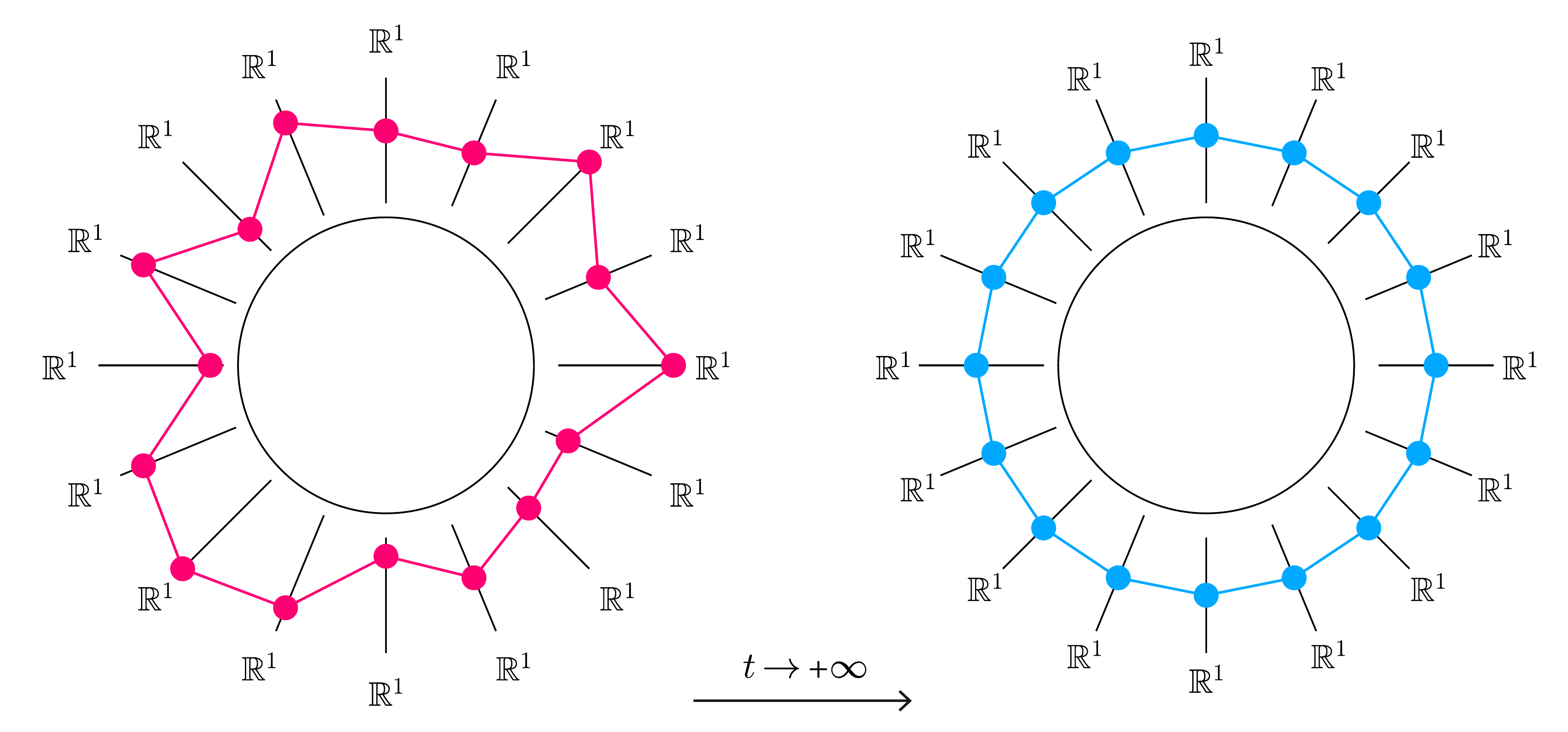}
  \caption{Heat diffuses in a constant sheaf, resulting in a system of coherent states}\label{figDiffusion}
\end{figure}

Let $D$ be $\Ro\Vect$-valued sheaf on $S$. Laplacians appear as a byproduct of solving the equations on global sections of $D$. Assume that some cochain complex $(C^*(S;D),d)$ (see~\eqref{eqCochainSheafMainPart}) honestly computes cohomology of $D$. Then the computation of $j$-th cohomology reduces to finding the subquotient $\ker d_j/\im d_{j-1}$ for the sequence
\begin{equation}\label{eqShortExactMainPart}
C^{j-1}(S;D)\stackrel{d_{j-1}}{\rightarrow} C^j(S;D)\stackrel{d_j}{\rightarrow} C^{j+1}(S;D).
\end{equation}
If inner products are chosen on the vector spaces $C^i(S;D)$, we can construct the sheaf Laplacian.

\begin{defin}\label{definSheafLaplacianMainPart}
\emph{The sheaf Laplacian} of $j$-th order of sheaf $D$ on a poset $S$ (defined on a suitable cochain complex $(C^*(S;D),d)$) is the operator
\[
\Delta_j=d_j^*d_j+d_{j-1}d_{j-1}^*\colon C^j(S;D)\to C^j(S;D).
\]
The subspace $\ca{H}^j(S;D)=\Ker\Delta_j\subseteq C^j(S;D)$ is called \emph{the subspace of harmonic cochains} of $D$.
\end{defin}

Here $A^*$ denotes the adjoint of $A$, or the transposed matrix $A^\top$ if an orthogonal basis is chosen. The basic linear algebra implies the isomorphism $\ca{H}^j(S;D)\cong H^j(S;D)$, see Corollary~\ref{corHarmonicIsCohomology}. 

\begin{rem}\label{remVanillaLaplacian}
When $j=0$, we have $\Delta_0=d_0^*d_0$, since there is no component of degree $-1$ in the cochain complex. In this case we have
\[
\ca{H}^0(S;D)=\Ker\Delta_0=\Ker d_0=H^0(S;D)\cong \Gamma(S;D).
\]
The subspace of global sections is isomorphic to, and coincides with, the subspace of harmonic $0$-cochains, which is the zero eigenspace of $\Delta_0$. In most sources the term ``sheaf Laplacian'' refers to $\Delta_0$. We call $\Delta_0$ \emph{the vanilla sheaf Laplacian}, to distinguish it from higher order sheaf Laplacians $\Delta_j$ with $j>0$ described above.
\end{rem}


\begin{con}\label{conSheafEnergyMainPart}
The vanilla sheaf Laplacian $\Delta_0$ is a nonnegative self-adjoint operator. It is diagonalizable with all eigenvalues nonnegative (Subsection~\ref{subsecMathHodge}). Henceforth the quadratic function $Q_{\Delta_0}(x)=\langle \Delta_0x,x\rangle$ defined on $C^0(S;D)$ is nonnegative and convex. The function $Q_{\Delta_0}(x)$ is called the (\emph{Dirichlet}) \emph{energy function} of the sheaf $D$. The zero-set of $Q_{\Delta_0}$ coincides with its set of points of local (or global) minima and also coincides with the harmonic subspace $\ca{H}^0(S;D)=\Ker \Delta_0$ isomorphic to the space $\Gamma(S;D)$ of global sections.
\end{con}

\begin{defin}\label{definHeatDiffusion}
The flow of gradient descent, either discrete or continuous, of the energy function $Q_{\Delta_0}$ on the euclidean space $C_0(S;D)$ is the (\emph{$0$-th order}) \emph{heat diffusion equation} on the sheaf $D$.
\end{defin}


In practice, the normalized version of the Laplacian (Construction~\ref{conNormalizationOfLaplacian}) is used in calculating diffusion. Independently of the initial point, the flow converges to a harmonic cochain in $C_0(S;D)$. The convergence rate of heat diffusion is determined by the eigenvalues of $\Delta_0$.

\begin{con}\label{conLambda1asymptotics}
Let $\lambda_{\min}$ denote the minimal positive eigenvalue of $\Delta_0$. For an initial point $x_0$ let $x_{+\infty}$ denote the limiting harmonic cochain of the heat diffusion process starting at $x_0$. Then, for generic initial value $x_0$, the distance $\|x(t)-x_{+\infty}\|$ is asymptotically equivalent to $Ce^{-2\lambda_{\min}\eta t}$ --- in the case of continuous heat diffusion and  $(1-2\eta\lambda_{\min})^k$ --- in the discrete case. See Construction~\ref{conSolutionDiagonalized} for the proof. The remaining positive eigenvalues of the Laplacian $\Delta_0$ measure the lower asymptotic terms of the heat diffusion.
\end{con}

Eigenvalues of sheaf Laplacians provide real numerical characteristics which can be used in applications or as auxiliary information for various machine learning algorithms. These characteristics are interpretable in the sense that they have clear geometrical and physical meaning. A warning about spectral determinability is warranted, however.

\begin{rem}
A Laplacian itself is not an invariant of a sheaf $D$. First of all, to make things well-defined one needs to impose inner product on all the stalks $D(s)$ --- the construction concerns \emph{Euclidean sheaves}, not sheaves in general. This step in the definition is often skipped because the vector spaces in most applications are just the coordinate vectors spaces $\Ro^k$; the basis comes for free and is assumed orthonormal. The second and probably even more important remark is that the Laplacian is defined on a particular cochain complex $(C^*(S;D),d)$. As discussed in Construction~\ref{conCochainMainPart}, there are many cochain complexes which honestly compute cohomology of $D$ (and many more cochain complexes which compute something else!). Which complex to choose to define Laplacians is a modeling problem. If $S=\ca{X}$ is a cell poset, there is a canonical choice of the cellular cochain complex $(C^*_{CW}(S;D);d_{CW})$, as in Example~\ref{exCellPosetsSupportCanonicalCCpx}. The term \emph{sheaf Laplacian} usually refers to the Laplacian defined on this particular cochain complex. The ambiguity in the definition is the choice of cells' orientations: luckily, it is easily proved that Laplacians, up to conjugation, do not depend on the orientations of cells. 
For hypergraphs, Roos complexes may be chosen to define a Laplacian, as described in Construction~\ref{conHypergraphDiffusion}. 
\end{rem}

\subsection{Sheaf learning}\label{subsecSheafLearning}

Using sheaf theory as a descriptive language to speak algebraically about geometrical objects is prominent in both theoretical mathematics and computer science. In this subsection we show how the language of sheaf theory has recently fused with deep learning. The following specific constructions mirror the specific implementations of sheaf theory that have, thus far, been applied in deep learning applications.

\begin{defin}\label{definSheafDiffusion}
Let $\Delta_0=d_0^*d_0\colon C^0(S;D)\to C^0(S;D)$ be the vanilla sheaf Laplacian of a sheaf $D$ on $S$, and $\eta>0$ a hyperparameter. Then the generator of the discrete heat diffusion, i.e. the map
\[
\sd_D\colon C^0(S;D)\to C^0(S;D),\qquad \sd_D(x)=x-2\eta \Delta_0x
\]
is called \emph{the sheaf diffusion} determined by $D$.
\end{defin}

Heat diffusion on a graph well known in spectral graph theory~\cite{Chung} is the instance of this general definition as the following example shows. 

\begin{ex}
Let $S=\Cells(G)$ be a graph (the poset associated with a graph $G=(V,E)$ as in Example~\ref{exGraphToPoset}), and $\overline{\Ro^1}$ be the constant sheaf on $S$, so that the vector space $C^0(S;\overline{\Ro^1})=\bigoplus_{v\in V}\Ro^1$ is freely spanned by the vectors $\varepsilon_v\in V$. This space can be understood as assigning a single real value, like the temperature, to each vertex of a graph. In this case, the map $\sd_D$, for the unnormalized Laplacian, is defined on the basis vectors by
\[
\sd_D(\varepsilon_v)=\varepsilon_v(1-2\eta \deg v)-\sum\nolimits_{u\sim v}\eta \varepsilon_u,
\]
where $\deg v$ is the degree of the vertex $v$ and the summation is taken over all vertices $u$ adjacent to $v$. The diffusion operates as averaging the temperature of $v$ and all its neighbors.
\end{ex}

\begin{defin}\label{definSheafNN}
A \emph{sheaf diffusion layer} determined by a sheaf $D$ is the linear map $\sd_D\colon C^0(S;D)\to C^0(S;D)$. A \emph{sheaf-type neural network} is an architecture that uses sheaf diffusion layers, probably in combination with other types of layers, in its construction.
\end{defin}

Sheaves $D_i$ (the collection of matrices $D_i(s<t)$) used in the layers are sometimes learned during the training of the neural network.

\begin{ex}\label{exShNNlinear}
The basic example of a sheaf-type neural network is given by a composition $F_{D_1,\ldots,D_l}=\sd_{D_1};\cdots;\sd_{D_l}$ of sheaf diffusion layers determined by a finite sequence of sheaves. This network, as written, is not particularly expressive since the map $F_{D_1,\ldots,D_l}$ is linear. In practice, some non-linear operators (e.g. coordinate-wise sigmoidal or ReLU operations) are inserted between sheaf diffusion layers to make the resulting function class of the network more expressive.
\end{ex}

\begin{rem}\label{remNNdynamical}
Sheaf-type neural networks are related to the mathematical theory of heat diffusion in a similar manner to how transformer networks are related to dynamical properties of Kuramoto models~\cite{TransformersKuramoto} and Vlasov equations~\cite{VlasovTransformers}. In both cases, if all layers of a network are the same, and there are infinitely many infinitesimal layers, the network becomes a mathematically reasonable object: a dynamical system with nice asymptotical properties. It should be noted, however that this perspective implies sheaf-type neural networks are much simpler than transformers because they correspond to linear dynamical systems, while the mathematical models of transformers are non-linear. The applied potential of sheaf networks seem to arise from the fact that the domain $C^0(S;D)$ of their definition has a rich semantical structure determined by the underlying geometry of $S$. 
\end{rem}

\begin{rem}\label{remMessagePassing}
Sheaf diffusion on a graph may be seen as a form of linear \emph{message passing}. In general, message passing updates the state $x_v$ of a vertex by the formula
\begin{equation}
x_v^{(t)} = \phi\left(x_v^{(t-1)}, \biguplus\limits_{u \in N_v}\psi(x_u^{(t-1)}, x_v^{(t-1)}, w_{uv}^{(t-1)}) \right)
\end{equation} 
where $x_v^{(t)} \in \Ro^d$ is the subsequent state of $v$, $N_v$ is the graph neighborhood i.e. the set of nodes adjacent to $v$, $w_{uv}^{(t-1)}$ is a learnable vector associated to an edge $\{u,v\}$, $\psi$ and $\phi$ are fixed piecewise differentiable functions called \emph{message} function, and \emph{update} function respectively, and $\biguplus$ is a permutation invariant aggregation function\footnote{It is common in machine learning to use symbol $\bigoplus$ for such aggregation functions, however this usage contradicts the more classical notation of a direct sum in abelian categories, which plays an important role in the mathematical part of a paper.} of arbitrary number of inputs, e.g. coordinate-wise sum or max or similar. General graph neural networks consist of several layers performing message passing.   

Message passing can be defined in a similar way on arbitrary posets $S$ instead of graphs as long as the neighborhood relation between elements of $S$ is somehow defined from the structure of the poset. For example, if $S$ is a cell poset, then two $k$-dimensional cells $\sigma,\tau$ may be called adjacent if they either cover a common $(k-1)$-cell, or are covered by a common $(k+1)$-cell. In this case, heat diffusion for a higher-order Laplacian given by Definition~\ref{definSheafLaplacianMainPart} becomes an example of linear message passing. Simply speaking, the neighborhood is defined by the 2-hop distance on the Hasse diagram of $S$, and the message passing between cells is performed by summation of matrix products over all such 2-hop routes. General constructions of message passing of this sort can be found in~\cite{TDLbeyond} or~\cite{DemystifyingHOGNN}.
\end{rem}

\subsection{Neural sheaf diffusion}

In practice, sheaf-type neural networks combine the mechanism of sheaf diffusion with other standard engineering practices, such as adding intermediate learnable weight matrices, adding auxiliary channels, and reducing the number of parameters by picking a suitable non-linear submanifold in the manifold of all sheaves.

\begin{con}\label{conNeuralSheafDiffusion}
The \emph{neural sheaf diffusion} (NSD) model~\cite{bodnar2022neural} learns a sheaf $D$ on a graph $G$ with $n$ vertices and all vertex-stalks $D(s)$  isomorphic to $\Ro^d$. Additionally there are $f_1$ input channels and $f_2$ output channels. A single layer of neural sheaf diffusion is given by the function $F\colon C^0(S;G)\otimes \Ro^{f_1}\to C^0(S;G)\otimes \Ro^{f_2}$ defined by
\begin{equation}\label{eqNeuralSheafDiff}
F=\sigma\circ ((\sd_D\circ W_1^{\oplus n})\otimes W_2),
\end{equation} 
where $W_2\colon \Ro^{f_1}\to \Ro^{f_2}$ is a learnable weight matrix mixing the channels, $W_1\colon \Ro^d\to \Ro^d$ is a learnable matrix applied independently in each vertex-stalk, $\sd_D$ is the diffusion operator as in Definition~\ref{definSheafDiffusion}, and $\sigma$ is the coordinate-wise sigmoid, providing nonlinearity. 
\end{con}

\begin{rem}\label{remGCN}
If $D$ is the constant sheaf $\overline{\Ro^1}$, $W_1=\id$, then $W_2$ is the only parameter collection to be learned. In this case formula~\ref{eqNeuralSheafDiff} defines the layer of the classical Graph Convolution Network (GCN) of Welling and Kipf~\cite{kipf2017semisupervised}.
\end{rem}

One can estimate space and time complexity of sheaf-type neural networks, in particular, the complexity of NSD.

\begin{con}
Consider a graph with $m$ edges and $n$ nodes and NSD layer over this graph with stalk dimension $d$, $f_1$ input channels and $f_2$ output channels. Let us introduce $c = f_1 \cdot d$. If we were to learn the matrices $W_1$, $W_2$ and all the restriction maps independently, the number of parameters (i.e. the space complexity) would be $d^2+f_1f_2+2md^2$ as follows easily from Construction~\ref{conNeuralSheafDiffusion}. This complexity depends linearly on the size of the graph. To overcome this issue, the paper~\cite{bodnar2022neural} proposes to learn not every restriction map separately, but a single function $F\colon (\Ro^c\oplus \Ro^c)\to \Ro^{d^2}$ of the form $F(x_v, x_u) = \sigma(V(x_v\oplus x_u))$ with the learnable matrix $V$ of size $dc\times d^2$, where $x_v\oplus x_u$ is the concatenation of the states of vertices computed at runtime. The space complexity of a single NSD layer becomes $d^2+f_1f_2+2cd^2$. 

Notice that GCN, the classical graph neural network described in Remark~\ref{remGCN}, has total number of parameters equal to $c^2$ which is in some cases even bigger than the space complexity of NSD.
\end{con}

One of the main limitations of NSD is computational complexity, by which we mean the time complexity of an inference of a single layer.

\begin{rem}\label{remComputationalComplexity}
For $d\times d$ general matrices serving as restriction maps we have computational complexity $O(n(c^2 + d^3) + m(cd^2 + d^3))$, see~\cite{bodnar2022neural}. Assuming $c\gg d$ this complexity becomes $O(nc^2 + mcd^2)$. It is claimed in~\cite{bodnar2022neural} that in practice it is better to use $1 \leq d \leq 5$, but experiments were conducted only for small graphs; the optimal $d$ for large graphs has to be found. For a GCN with $c$ many channels, the complexity equals $O(nc^2 + mc)$, so we see that NSD has $d^2$ times higher computational complexity than GCN. This could be a serious limiting factor for practical applications of NSD for really large graphs. Some of the modern NSD-based architectures surveyed in subsection~\ref{subsecFurtherArchitectures} were designed to overcome this limitation.
\end{rem}

\section{Review of applications}\label{secReview}

This section reviews the existing applications of sheaf theory in computer science and deep learning. To add order to the wide variety of applications, we introduce an organizing scale. The applications are roughly clustered into six categories starting with the most ``theoretical'' and ending with the most ``practical''. This scale roughly resembles the history of subject, with applications of sheaves within category theory and topology appearing at the beginning and applications of sheaves in the design of neural networks appearing towards the end --- they are moved to a separate section. 

The scale is as follows:
\begin{enumerate}
  \item Categorical level: sheaves as a categorical language to substitute geometry. Subsection~\ref{subsecReviewCatLevel}.
  \item Topological level: sheaves as a framework to study local-to-global properties and interactions. Subsection~\ref{subsecProblTopologyLevel}.
  \item Abelian level: sheaves valued in abelian categories reduce problems to homological algebra. Subsection~\ref{subsecReviewHomologyLevel}.
  \item Real level: sheaves valued in $\Ro$-vector spaces reduce problems to numerical linear algebra and combinatorial Hodge theory. Subsection~\ref{subsecReviewRealLevel}
  \item Manifold level: connection sheaves as a discrete model for vector bundles over smooth manifolds. Subsection~\ref{subsecReviewManifoldLevel}
  \item Data level: sheaves and traces of algorithms operating on sheaves as the source of learning signals, applications in deep learning, and design of neural network architectures. Section~\ref{secReviewShvsML}.
\end{enumerate}


\subsection{Categorical level}\label{subsecReviewCatLevel}

The attempts to use sheaves as a tool to describe behavior of complex systems, either natural or originating in computer science, and translate this behavior into the language of category theory seem to have been made from the very moment that sheaves were invented. The early applications of sheaf theory were based on its tight connection with mathematical logic. We do not give a review of all advances in this direction, and refer to classical sources: the chapter of Fourman and Scott in~\cite[Ch.``Sheaves and Logic'']{ApplOfSheaves} and the book of MacLane and Moerdijk~\cite{MacLaneMoerdijk}. The general utility of sheaves in logic can be roughly explained by the fact that the category $\Shvs(X;\Sets)$ of $\Sets$-valued sheaves on a given space $X$ forms a topos, (see Appendix~\ref{secMathSpaceRestoredFromShvs}), a categorical abstraction corresponding, in a certain sense, to a logical theory. Topos-theoretical foundations of mathematics are more flexible than the classical Zermelo--Frænkel set theory in the sense that they are capable to incorporate ideas of intuitionistic logic. Sheaf theory provides a semantics for intuitionistic logic~\cite[Ch.VI]{MacLaneMoerdijk} and~\cite[Ch.XIV-XV]{troelstra2014constructivism}. Since intuitionistic logic and constructive mathematics are closely related to type theories and programming language theory, sheaves play foundational role in the field.

In modal logic, the space $X$ parameterizes the collection of possible worlds. Sheaves on $X$ (valued in $\Sets$ or other conventional categories) are used as models of the predicate modal logic. Roughly speaking, the topos $\Shvs(X;\Sets)$ is treated as a logic parameterized by possible worlds. In more down-to-earth applications, the topos $\Shvs(X;\Sets)$ is treated as a logic parameterized by points or regions of some understandable topological space $X$. These ideas motivated a number of papers in computer science where sheaves were used to model interacting systems, concurrent processes, or generic ``objects'' which may be spread in physical space, time, or more abstract spaces of possibilities or observations. This category of applications was proposed by Goguen~\cite{GoguenEarly} and developed in subsequent works~\cite{SheafSemConcObjects, ObjAsProcesses, Airchinnigh1997TheGO, HUT-TCS-A23, SheafSemAgents, Malcolm2006, Sendroiu, Malcolm2009} to list just a few. All these papers may be thought of as a part of even more general program on categorical foundations of computer science. See also~\cite{SpivakTemporal} and~\cite[Ch.7]{Spivak} for the modern exposition and treatment of these ideas.

Sheaf semantics of programming languages proved useful in the construction of normalization by evaluation (NbE) algorithms for certain important classes of lambda-calculi~\cite{NormByEval1995,NormByEvalCoproducts2001}. Given a $\lambda$-expression in a specified language, the algorithm seeks to reduce the expression to its normal form, which is needed e.g. to compare two equivalent expression. Instead of doing the reduction syntactically (i.e. performing a long sequence of term rewritings), the normalization by evaluation technique translates the string into a semantical representation, from which the normal form is extracted more easily. Semantics of presheaves over small categories proved useful for a simply typed $\lambda$-calculus~\cite{NormByEval1995}, while sheaves over Grothendieck topologies solve the decision problem for a simply typed $\lambda$-calculus with binary coproducts~\cite{NormByEvalCoproducts2001}. 

\subsection{Topological level}\label{subsecReviewTopLevel}

Most modern papers on applied sheaf theory emphasize that sheaves are about interaction between local and global properties of geometric objects. A sheaf -- understood not as a diagram on a poset, but rather as a diagram on the lattice of open subsets of a topological space -- satisfies the axioms of gluing and locality, see Definition~\ref{definSheaf}. These properties make sheaves a suitable mathematical abstraction to describe situations when compatible local entities can be uniquely assembled into a global entity. Such situations often occur in practice. Goguen~\cite[p.165]{SheafSemConcObjects} even formulated a ``sheaf hypothesis''\footnote{This hypothesis is similar in its epistemic nature to Church thesis in computer science, or the assumption that data are sampled i.i.d. from an unknown distribution which constitutes the basis of statistical learning theory.}: all objects are understood through their observable computational behavior; the observations satisfy the gluing axiom, hence can be axiomatized with sheaves. 

An old but quite impressive example of using local-to-global principle in the design of algorithms belongs to Srinivas~\cite{Srinivas1993}. He conceptualized the classical Knuth--Morris--Pratt string matching algorithm in the language of sheaves over Grothendieck topologies, and generalized this algorithm from strings to general data structures satisfying certain category-theoretical assumptions. From the practical perspective, each generalized pattern-matching problem reduces to the problem of subgraph matching (SGM). These ideas were further developed in~\cite{SrinivasSpecWare} and fused into a number of subsequent papers on application of category-theoretical methods in high-level software engineering.

The ideas of algorithm design based on sheaves similar in spirit to those proposed by Srinivas~\cite{Srinivas1993} are reborn in the modern approaches to the pattern matching and structure isomorphism tests in the works of Conghaile~\cite{Conghaile2022CohomologyIC} and Abramsky~\cite{Abramsky2022presh}. These methods take origin in the theory of finite models in logic and its interactions with category theory~\cite{AmbramskyStrPower}. The pattern matching problems $\ca{P}$, such as SGM or constraint satisfaction (CSP), are known to be NP-hard. In practical applications, they are replaced with algorithmically feasible approximations $\ca{P}_k$; the general construction being motivated by Ehrenfeucht--Fraïssé games, also called spoiler-and-duplicator games. The well-known example is the famous Weisfeiler--Leman (WL) graph isomorphism test\footnote{There is no mathematical result that graph isomorphism problem is NP-hard, however it is believed that this problem do not belong to PTime.}, or its generalizations, the $k$-WL tests for $k>1$. We refer the reader to the general modern machine learning-related review~\cite{WLinMLsoFar} and~\cite{WLandLogic} for the result connecting $k$-WL to the first-order logic.

We can gain an informal understanding of the above approximations their relationship to sheaf theory through the following example. Let $\Hom(H,G)$ denote the set of embeddings of a graph $H$ into a graph $G$, while $\Hom_k(H,G)$ consists of collections of embeddings of ``subgraphs of $H$ of size $\leq k$'' in $G$ which are compatible on the intersections. To solve the problem of subgraph matching means to find a coherent collection in $\Hom_k(H,G)$; it can be naturally formulated as finding a global section of a certain presheaf $\ca{H}_k(H,G)$ associated with the problem, see~\cite[Obs.1]{Conghaile2022CohomologyIC}. While finding the global section is still computationally hard, the construction and size of $\ca{H}_k(H,G)$ is polynomial in $|H|\cdot|G|$. This opens a way to improvements of $k$-approximate pattern-matching algorithms, which are still polynomial in time, but are stronger than their standard versions.

\subsection{Abelian level}\label{subsecReviewHomologyLevel}

The modern era of machine learning and statistics is characterized by using arrays of real numbers or the real vector spaces $\Ro\Vect$ as the basic abstractions--as opposed to core computer science where booleans and $\Sets$ play a primary role. Unlike $\Sets$-valued sheaves used in logic, sheaves valued in abelian categories, such as the category $\Ro\Vect$, form an abelian category themselves and support the notion of higher-order cohomology. Sheaves valued in abelian categories can be analyzed by tools from homological and linear algebra. Most methods and intuitions currently used in deep learning applications of sheaves, such as the notions of cellular sheaves and incidence numbers, originate from algebraic topology.

\subsubsection{Applied topology and signal processing}

While applications of algebraic topology may be found throughout the physical sciences at least as far back to the 1930s, the explicit usage of topological concepts within the context of data processing saw a growth in popularity in the early 21st century. Following the popularity of algorithmic techniques like persistent homology~\cite{edelsbrunner2002topological}, the field of applied topology and its data-scientific sub-field topological data analysis (TDA) have emerged as active areas of research within the applied mathematics community.

Early data-driven applications of algebraic topology were motivated by signal processing problems, especially those for which a non-trivial interaction exists between a topological domain and the signals measured on that domain. Problems like signal localization~\cite{de2007coverage}, image reconstruction~\cite{singh2007topological}, shape estimation~\cite{carlsson2004persistence}, or clustering~\cite{carlsson2009topology} benefitted from the global and compositional perspective offered by early TDA methods. While sheaf theory has been used to study and extend many of the theoretical guarantees enjoyed by TDA techniques~\cite{Curry,curry2015topological,kashiwara2018persistent}, the explicit application of sheaf-theoretic methods to data-driven problems only began to appear within the applied topology literature in the 2010s.

This applied sheaf theory program, emerging largely from the work of Curry, Ghrist, and Robinson, further generalized applied topological methods in network and algebraic signal processing through the language of sheaf cohomology. This application of sheaf theory flows primarily from the observation that the assignment of data to points within a space can be generalized from the assignment of single numbers to an assignment of more complex mathematical objects like vectors, groups or lattices to each point. Sheaf cohomology then provides machinery for organizing and observing obstructions to consistent assignment of this data, regardless of its structural complexity as explained in Subsection~\ref{subsecCohomologyMainPart}. 

In~\cite{curry2012euler}, a sheaf-theoretic presentation of Euler calculus is given, along with a number of potential application domains, including target enumeration, sampling, and signal convolution, among others. Such signal processing perspectives were further developed in~\cite{robinson2013nyquist,robinson2013understanding,robinson2015sheaf}, and~\cite{robinson2014topological}, resulting in an encompassing theoretical reinterpretation of signal processing within the language of sheaf theory and algebraic topology. This sheaf-theoretic modeling approach to signal processing has continued to receive modest interest through time, with applications in stratification learning~\cite{brown2021sheaf} and uncertainty quantification~\cite{joslyn2020sheaf} emerging more recently in the literature.

Concurrent to early work in sheaf-theoretic signal processing, Ghrist and Hiraoka~\cite{ghrist2011applications} proposed a reinterpretation of network coding problems through the language of cellular sheaf theory, targeting applications in maximum flow calculations, network extendibility, and network robustness analyses. This network coding application of sheaf theory was later extended in~\cite{ghrist2013topological} and~\cite{krishnan2014flow}, generalizing the max-flow min-cut theorem on networks to sheaves of semimodules over a network and using directed homology to measure realized flow and cut values. This generalization provides a theory for working with multicommodity or multi-source and multi-target flows on network, in addition to expanding the flow data type from real-valued flows to more complex mathematical objects like group- or lattice-valued flows. This network coding application of sheaf theory has persisted into more recent literature on network flows and routing, most notably in the study of delay-tolerant networking~\cite{short2021towards,short2022sheaf}.

Despite early work emerging from the applied topology community showing potential for the application of sheaf theory to data, early work in the area was primarily theoretical, employing sheaves as a model for phrasing desired compositional measurements within an application domain. This early lack of data-driven work was due primarily to the fact that sheaves require the definition of a topological base space upon which data may be associated and measured, and such base spaces are generally difficult to define and measure in many existing signal processing domains. It took the development of \emph{cellular sheaves}, largely accomplished within Curry's Ph.D. thesis~\cite{Curry}, for these sheaf-theoretic concepts to find their way into more realistic data-driven domains. Curry showed that one may define a type of constructible sheaf on a cell complex by endowing that complex with the Alexandrov topology, see details in Appendices~\ref{secMathStructures} and~\ref{secMathSheaves} for a precise definition. Since many signal processing problems are encoded discretely and often in a relational form, the cell complex provides a convenient topological base space for defining sheaves derived from real data thanks to its discretized, combinatorial nature and its ability to encapsulate graph-structured data.

\subsubsection{Utility of cohomology}

While the notion of cohomology comes for free once one defines a sheaf valued in the category of vector spaces, there is a dearth of papers depicting actual utility of cohomology in positive degrees for general sheaves. There are many papers applying (co)homology of constant sheaves and persistence sheaves. In the case of a constant sheaf, the cohomology coincides with the usual (singular) cohomology of a space, see Example~\ref{exConstantSheaf}. Betti numbers, the dimensions of cohomology of such sheaves, measure meaningful topological characteristics of a space: the number of connected components, the number of holes, 2-dimensional cavities, etc. This motivates further constructions of persistence sheaves and Euler calculus, and is explained in any basic book on TDA. The review of all such applications would require writing a separate paper; instead we refer to~\cite{DONUT} for a searchable database of works in applied topology. None of the papers listed in the previous subsection, proposes an actual application of Betti numbers of non-constant (non-persistent) sheaves.


The main feature of sheaf theory is that individualized vector spaces are associated with various parts of a structure (whether the structure is understood as a purely mathematical entity or an abstraction for a physical mechanism). The parts of various mechanisms and engineering constructions may be modelled by 0-, 1- and 2-dimensional rigid cells of a polyhedral complex embedded in $\Ro^3$; each part has its own vector space representing the degrees of freedom allowed for this part; the way how parts are connected with each other provides linear maps between them. These vector spaces and linear relations naturally patch together into a cellular sheaf or cosheaf. The (co)homology of this sheaf then resemble degrees of freedom of the whole structure. 

Historically, the idea of using sheaves for analyzing degrees of freedom appeared in toric geometry~\cite{HowIsAGraph, GKMZara}, where the ``parallel redrawing problem'' (or deformation problem) was posed for a linear embedding of a graph in $\Ro^n$. The space of all parallel redrawings of a graph, --- those in which edges stay parallel to themselves, --- is isomorphic to the space of global sections of the sheaf which is now called the GKM\footnote{Named after Goresky--Kottwitz--MacPherson theorem known in toric geometry.} sheaf of a graph~\cite{JabeaBaird}. 

Cooperband et al.~\cite{CooperbandThesis,HomGraphStat,reciprocalFigures} have shown a great potential of using sheaves and related homological theory in the structural engineering, as an effective language to speak about degrees of freedom. Force cosheaves, linkage sheaves, and position sheaves introduced in~\cite{HomGraphStat} provide an elegant description for many familiar phenomena in graphic statics. The recent paper~\cite{origami2025} applies sheaves to flexions of origami constructions. In~\cite{physicsinformed}, the idea of sheaf-theoretical engineering was applied to study dynamics of molecules viewed as graphs with rigid edges: anisotropic sheaf on a graph is introduced, so that Normal Mode Analysis --- a technique in biophysics --- becomes fully interpreted in terms of the spectrum of the corresponding sheaf Laplacian.

Whenever the underlying geometrical structure with a sheaf carries the action of a monoid $G$, the cohomology becomes $G$-representation. This allows to squeeze more features from the data since $G$-representations are more informative than vector spaces~\footnote{If $G=\Zo_{\geq 0}$ or $\Ro_{\geq 0}$ we get persistent cohomology instead of the ordinary version.}. Utilizing representations of finite groups of symmetries in graphic static was recently proposed in~\cite{equivariantSheaf}. 

An interesting category of applications of higher-order sheaf cohomology exists, which is similar in spirit to the obstruction theory known in algebraic topology (see e.g.~\cite{Adhikari2016}). Here, we make use of the first four nontrivial terms in the long exact sequence~\ref{eqLongExactDerivedCohomology}, i.e.
\[
0\to H^0(S;F)\to H^0(S;D) \to H^0(S;D/F) \to H^1(S;D)
\]
for a subsheaf $F$ of $D$. This sequence lets us interpret the 1-st cohomology group $H^1(S;D)$ as an obstruction to extend a global section of the quotient sheaf $D/F$ to a global section of $D$. Such extension problems appear in seemingly unrelated areas. In the field of topological signal processing, Robinson~\cite[Thm.4.4]{robinson2014topological} used the vanishing of $H^1(S;D)$ as a theoretical guarantee that a signal can be restored from a collection of samples. In the field of applications of finite models to CSP, Conghaile~\cite{Conghaile2022CohomologyIC} applied the vanishing condition of $H^1(S;D)$ to strengthen the classical $k$-WL and $k$-CSP tests. This idea is based on the subsequent work of Abramsky et al.~\cite{ContextParadox}, who used the $1$-st \v{C}ech cohomology group of a sheaf to characterize and measure the contextuality in quantum systems, databases, and CSP-problems. 

GKM-sheaves on non-cellular posets have a number of applications in toric topology. Ayzenberg et al. in~\cite{AyzMasSol,AyzBuchCluster,AyzSor} used computations of cohomology of such sheaves in combination with Morse theory to prove non-existence of asymptotical diagonalization algorithms for matrices of specific sparsity classes.

\subsection{Real level}\label{subsecReviewRealLevel}

Global sections are defined for sheaves valued in all reasonable categories, while the computation of cohomology is specific to abelian categories. Sheaf cohomology is defined for any ground field. The field $\Ro$ of real numbers is specific since it supports the features needed to pose and solve optimization problems, model physical processes, analyze spectral characteristics of objects, and so on. This basic functionality of real numbers and euclidean vector spaces led to the notion of sheaf Laplacians and the application of combinatorial Hodge theory within a sheaf-theoretic context.

Hansen and Ghrist~\cite{hansen2019toward, hansen2020laplacians} developed a spectral theory for cellular sheaves valued in $\Ro\Vect$ and showing that the sheaf Laplacian, a generalization of the Hodge Laplacian for sheaf-structured data, has a number of desirable properties which make it useful as an object for e.g. computing or approximating global sections of cellular sheaves. The sheaf Laplacian also serves as a drop-in replacement for the graph Laplacian in settings where graph-structured data is decorated with more complex assignments of objects to points or inherits functional relationships between points which may be encoded by restriction maps. Towards this end, the sheaf Laplacian has been used to define a distributed optimization technique, as the kernel of this operator provides a globally consistent assignment of sections to a space from a collection of local differencing operations~\cite{hansen2019distributed}. Inversely, given smooth signals assumed to be organized into a cellular sheaf structure, Hansen also showed that one may infer a sheaf Laplacian from these signals through linear optimization over the convex cone of sheaf Laplacians on a graph, mimicking more traditional graphical lasso techniques~\cite{hansen2019learning}. This intersection of TDA and sheaf theory continues to develop, with more recent works investigating persistent sheaf Laplacians~\cite{wei2021persistent}, sheaf-theoretic shape discrimination~\cite{arya2024sheaf}, and community detection~\cite{wolf2023topological}.

Interestingly, the versions of sheaf Laplacians, in particular higher order sheaf Laplacians, exist for non-abelian categories, --- in the situations when higher order cohomology itself is not defined. Ghrist and Riess~\cite{TarskiLapl} introduced the notion of Tarski Laplacian for a cellular sheaf valued in the category of lattices: in this case the convergence of the ``diffusion'' process to the ``harmonic'' cochain is guaranteed by the Tarski's fixed point theorem. The recent work~\cite{CatDiff} introduces the notions of Lawvere Laplacian and categorical diffusion by generalizing from the category of lattices to an arbitrary category enriched over quantales, where a generalization of Tarski's theorem holds. Such categorical diffusion allows one to treat some classical algorithms, such as Dijkstra's algorithm, as a form of sheaf diffusion, --- despite the fact that historically such algorithms are considered irrelevant to sheaf theory due to their non-linear nature. The mixture of sheaf-theoretical (maybe even homological!) methods with tropical mathematics, as it originally appeared in the study of algorithms, seems a promising area of future research.

\subsection{Manifold level}\label{subsecReviewManifoldLevel}

While a theoretical foundation for applied sheaf theory was developing within the applied algebraic topology and TDA literature, another more application-oriented thread of applied sheaf theory began to concurrently emerge from the graph theory literature. This line of work generally exploits generalizations of the graph Laplacian, derived from its original differential-geometric form, to process and manipulate higher-dimensional signals over a graphical base space. Here, a graph is treated as discrete substitute for a smooth manifold; the stalks $D(s)\cong \Ro^d$ are assumed to have the same dimension, and are treated as the ``fibers'' of a vector bundle, e.g. the tangent bundle. The structure maps $D(s_1)\to D(s_2)$ are treated as connection maps between fibers; they model the notion of a geometrical connection on a vector bundle, hence should be invertible; or even lie in a fixed gauge group, e.g. the orthogonal group $O(n)$. In practice, this abstraction provides the ability to integrate more complex signal data types into traditional graph signal processing approaches while permitting the expression of functional constraints within the base space. See Appendix~\ref{secMathConnection} for mathematical details.


Singer and Wu~\cite{singer2012vector} employed the graph connection Laplacian in the construction of a more general vector diffusion map technique which accounts for parallel transport of tangent vectors in the distance calculation between points. Given a finite set of points $X=\{x_1,\ldots,x_m\}$ in $\Ro^d$ sampled from an unknown submanifold $M\subset \Ro^d$ of dimension $n$, one may want to mine some geometrical properties of $M$. The procedure proposed in~\cite{singer2012vector} runs as follows. First, a proximity graph $G$ on $\{1,\ldots,m\}$ is built: it models the abstract (i.e. non-embedded) structure of the manifold. The tangent spaces $T_{x_i}M$ are modelled by the linear span of $n$ principal components of the subset $\{x_j\}$, where $j$ lies in the $G$-neighborhood of $i$. As abstract vector spaces all $T_{x_iM}$ have dimension $d$ (we forget their embedding in the ambient $\Ro^d$). The orthogonal matrices $O_{ij}\colon T_{x_iM}\to T_{x_jM}$ can be defined by a natural optimization procedure and constitute a discretized version of parallel transport maps that store information about the intrinsic geometry of $M$. The collection $\{D(i),O_{ij}\}$ may be viewed as a cellular $O(n)-$connection sheaf $D$ on a graph $G$. This abstraction faithfully models the latent manifold $M$.

In particular, the vanilla sheaf Laplacian of an $O(n)$-connection sheaf on a graph may be seen an discretized version of the Laplace--Beltrami operator on a Riemannian manifold. Peach et al.~\cite{peachimplicit} used this idea to generalize Gaussian processes for learning vector-valued signals over latent manifolds. The authors consider pairs $\{(x_i, v_i) \mid i = 1, \dots, n\}$, of training data where $x_i$ are samples from a submanifold $M\subset \Ro^d$ and $v_i$ are sampled from its tangent bundle $TM$, where both $M$ and $TM$ are unknown. One then seeks to infer a Gaussian process which can provide an implicit description of the vector field over $M$ that agrees with the training set and provides a continuous interpolation at out-of-sample test points with controllable smoothness properties. To do this, the authors develop a discrete version of vector-valued Matérn Gaussian process, known for Laplace--Beltrami operators, to a discretized setting of Singer and Wu described above. Vector-valued Gaussian processes allow for the interpolation of vector fields which are consistent with respect to some underlying manifold structure. This has applications in a number of computer graphics and data reconstruction domains like, as is shown empirically by Peach et al., spatiotemporal brain activity reconstruction from EEG data.

The general modeling technique proposed by Singer and Wu has since been extended to improvements on iterative least squares in multi-object matching~\cite{shi2020robust}, matrix-weighted consensus~\cite{trinh2018matrix}, generalizations of random walks and centrality to connection graphs~\cite{kempton2015high, cloninger2024random}, among others~\cite{tuna2016synchronization, huroyan2020solving}. 

It should be mentioned that the original paper~\cite{singer2012vector} does not mention sheaves; however we formulated their results in the general terminology chosen for this review. A number of alternative terminologies can be found in the literature. A similar structure is called an $O(n)$-bundle on a graph by Gao et al.~\cite{gao2021geometry}. On a more theoretical level, similar structures on generic finite posets appear in the work of Barmak and Minian~\cite{Barmak2012GcoloringsOP} where they are called $O(n)$-colorings of posets.

\section{Sheaves and deep learning}\label{secReviewShvsML}

This section describes how sheaf theory has been integrated within deep learning theory and applied to develop new deep learning algorithms.

\subsection{From graphs to sheaves}

Interest in the graph deep learning techniques surged in the mid-2010s following developments in deep learning and the proliferation of graph-structured data from social media and computational biology, resulting in the area of graph deep learning or, more generally, geometric deep learning~\cite{bronstein2021geometric}. The foundational architecture for modern graph deep learning is the graph neural network (GNN), whose layers are determined by a mechanism of message passing on a graph, see Remark~\ref{remMessagePassing}. There exists a variety of GNN architectures implementing the above message passing mechanism, the general review can be found elsewhere~\cite{velivckovic2022message}. We concentrate on the part of this story adjacent to sheaves. 

One of the earliest examples of message passing architectures is the graph convolutional network (GCN)~\cite{kipf2017semisupervised}. It implements the message passing operation as an approximation of a localized spectral filter. In the terminology of this review, GCN is a network based on the sheaf diffusion for the classical graph Laplacian, i.e. the vanilla Laplacian of the constant 1-dimensional sheaf on a graph, see Remark~\ref{remGCN}. 


In~\cite{hansensheaf} Hansen and Gebhart proposed the idea of Sheaf Neural Network architecture as a generalization of GCN. The architecture implements linear message passing, based on sheaf diffusion on a graph, but this time a sheaf may be arbitrary, not just constant. Arbitrary sheaves allow more freedom than the constant sheaf: nontrivial semantical information on relations between data assigned to adjacent nodes may be encoded in the structure maps of a sheaf. By the manual choice of a problem-related sheaf, the authors showed, through a simple example of a node classification task, the improved performance of sheaf neural networks over GCNs, which fail to perform above random chance in a heterophilous network. This performance discrepancy between Sheaf neural networks and GCN is due to the fact that for properly chosen restriction maps, i.e. when working over non-constant sheaves, the kernel of the laplaican is not the assignment of a constant cochain as in the case with GCNs~\cite{gebhart2022graph}. The idea of learning a sheaf from the data was proposed in~\cite{hansensheaf} but not implemented.


Bodnar et al.~\cite{bodnar2022neural} substantially extended the idea of using sheaves in the design of neural architectures. It was known that GCN architecture (as well as more general GNN architectures proposed until 2022) suffers from oversmoothing and exhibits poor performance on heterophilic graphs. 
Following~\cite{hansensheaf}, the authors defined Sheaf Convolutional Network (SCN) which uses sheaf diffusion in a predefined sheaf, several signal channels, and auxiliary weight matrices (see Construction~\ref{conNeuralSheafDiffusion}) and overcomes the stated problem appearing in existing GNNs\footnote{The paper claims that all GNN architectures suffer from oversmoothing and poor performance on heterophilic data. However, since sheaf diffusion on a graph is a particular case of message passing, SCN becomes a subclass of GNNs, in their modern understanding. This makes the claim ``sheaf networks overcome the problems of GNN'' a logical contradiction, so we avoid stating it in this particular form.}. 

The paper~\cite{bodnar2022neural} justified that SCNs (and sheaf Laplacian) are more expressive than GCNs (and graph Laplacian) on the node classification task, by analyzing their linear separation power. For any connected graph $G$ with $C\geq 3$ classes of nodes GCN cannot linearly separate the classes of $G$ for any initial node features. At the same time SCN with $d \geq C$ and diagonal invertible restriction maps can linearly separate the classes of $G$ for almost any initial node features. They considered various constrains on structure maps in a sheaf, resulting in different types of sheaves: symmetric invertible, non-symmetric invertible, diagonal invertible, and orthogonal. These types of sheaves show different separation capabilities, that is, the node classification task on a graph can be solved by performing diffusion with the specific type of sheaves.

The novelty of the work~\cite{bodnar2022neural} was in the implementation of sheaf learning. The paper proposed the model called Neural Sheaf Diffusion (NSD). The most important difference from SCN is that restriction maps of sheaves depend on the layer, and are considered learnable parameters. The training of a neural sheaf diffusion model can be described as learning a finite set of sheaves from data. Experiments show that Neural Sheaf Diffusion overcomes many limitations of classical graph diffusion equations (and corresponding GNN models) and provides competitive results on node classification tasks, specifically in heterophilic settings.

The combination of cellular sheaf theory with ideas of message passing appeared in Bodnar's thesis~\cite{bodnar2023topological}. Bodnar takes a perspective on topological deep learning based on the two postulates. \textbf{Locality Postulate.} The data is attached to neighbourhoods of a topological space. \textbf{Structure Postulate.} The data has a relational structure, which is induced by the overlaps between the neighbourhoods of the topological space. The author proposes Message Passing Simplicial Networks (MPSNs) and CW Networks operating on simplicial and cell complexes respectively. 
Topological pooling algorithm Deep Graph Mapper is introduced in~\cite{bodnar2023topological}. The algorithm works as some sort of decrease the resolution on a graph in a way that higher-resolution version of a geometrical structure gets compressed into a lower resolution structure with a non-trivial sheaf on top of it. This procedure mimics the existing pooling approaches in classical convolutional neural networks, but may be applied to arbitrary graphs instead of grids. 


\subsection{Development of sheaf-based architectures}\label{subsecFurtherArchitectures}

A number of new sheaf-inspired architectures was proposed after the work~\cite{bodnar2022neural}: they either deal with computational complexity described in Remark~\ref{remComputationalComplexity}, or generalize to non-graphical data structures, or propose some architectural innovations to enhance expressivity of a network.

%

Barbero et al.~\cite{barbero2022sheaf2} noticed that there were two main approaches to construct a sheaf over a graph described in subsequent works: either by manually constructing a sheaf based on domain knowledge or learn the sheaf end-to-end using gradient-based methods~\cite{bodnar2022neural}. However domain knowledge is often insufficient, and learning a sheaf could be computationally costly. Thus~\cite{barbero2022sheaf2} proposed to construct a sheaf from data in a deterministic way by adapting the method of Singer and Wu~\cite{singer2012vector} described in subsection~\ref{subsecReviewManifoldLevel}. The experiments show that the use of the resulting sheaf before the model-training phase improves the results on node classification tasks. Such an approach seems most well-suited for graphs with a small number of nodes.


For two similar but different local neighbourhoods GNN may generate identical node embeddings, thus GNN could fail to distinguish different graphs. Node positional encodings (PE) can prevent such undesirable GNN behavior by informing every node of its global position in the graph, thus breaking the similarity between nodes with similar neighbourhoods. Positional encodings are also required by Graph Transformers. He et al. in~\cite{he2023sheaf} introduced a novel way to construct PE based on eigenvectors of the sheaf Laplacian having the smallest eigenvalues. The authors consider several ways to obtain sheaf and the corresponding PE. The most simple option is to use just a graph Laplacian, generated by the constant sheaf with $d=1$. The other alternatives are to use the precomputed sheaf~\cite{barbero2022sheaf2} or the learned sheaf~\cite{bodnar2022neural}. The authors compared performance of GNNs without PE, with graph Laplacian based PE, and with precomputed sheaf based and learned sheaf based PE for node-level and graph-level tasks. GNNs equipped with precomputed or learned sheaf-based PE showed better results than GNNs without PE or with graph Laplacian based PE.


It was noticed by Barbero et al.~\cite{barbero2022sheaf1} that Graph Attention Networks (GATs)~\cite{velivckovic2018graph} suffer from the same two main problems as many other GNNs: oversmoothing and poor performance in heterophilic graphs. To keep the benefits of attention mechanism and fight these two problems at the same time, the authors introduced the Sheaf Attention Network (SheafAN), a generalization of the GAT utilizing sheaves on graphs. The key idea of SheafAN is to take a learned sheaf~\cite{bodnar2022neural} or a precomputed sheaf~\cite{barbero2022sheaf2} and, instead of using the vanilla sheaf Laplacian for sheaf diffusion, the blocks of the Laplacian are multiplied by attention weights. The authors also propose Res-SheafAN layer which is slightly different from SheafAN. The proposed approach shows good performance on heterophilic datasets, in particular it outperforms GAT. It also seems to be effective against oversmoothing, that is, the performance of SheafAN does not decrease as the number of layers increases.

In his thesis~\cite{barberoattention} Barbero studied the ability of sheaves and especially attention-based sheaves to solve the problems of oversmoothing and poor performance on heterophilic graphs. The thesis is built around the results of~\cite{barbero2022sheaf1} described above. The important novel part is an empirical evaluation of the influence of dimensionality (width) of stalks on the performance of the models on node classification task. The higher stalk width provides higher expressive power of the model, while also increasing the risk of overfitting. The experiments show that stalk width $d=4$ is often optimal. The used datasets are relatively small which amplify overfitting, so for larger datasets higher $d$ could be optimal. Another problem with a high stalk width $d$ is the additional memory costs, as the restriction maps grow quadratically with $d$.


The construction of local homology sheaf on a simplicial complex is well known in topology, in particular it was applied by McCrory~\cite{McCrory} to generalize Poincare duality to arbitrary simplicial complexes. The stalks and restriction maps of the local homology sheaf store important topological information about local neighborhoods of points in the geometrical structure. This notion appeared to be useful in applied topology~\cite{Robinson2018LocalHO}. The paper of Cesa and Arash~\cite{cesa2023algebraic} proposes using local homology sheaf for a clique complex of a graph to define sheaf diffusion and sheaf neural networks. While the idea of this architecture appears promising in capturing important topological information of the graph, its practical utility haven't shown to be useful so far.


In~\cite{bamberger2024bundle} Bamberger et al. introduced Bundle Neural Networks (BuNNs), which are the versions of GNNs operating via ``message diffusion'' instead of message passing. On a poset $S$ corresponding to a simple graph, a specific class of bundles ($O(d)$-connection sheaves) is defined, the so called flat vector bundles. In general, the definition of $O(d)$-connection sheaf $D$ requires to specify an orthogonal matrix $O_{v,e}=D(v<e)\in O(d)$ for each vertex-edge pair, see Appendix~\ref{secMathConnection}. The bundle is called flat if the matrices $O_{v,e}$ are all equal to a single matrix $O_v$ independently on the edge $e$. The monodromy of such bundle (see Remark~\ref{remMonodromy}), is trivial which makes the term consistent with the usual mathematical notation. Learning a flat bundle is easier than a general bundle simply because there are fewer parameters to learn. Still BuNN is reported to keep the advantages of general sheaf neural architectures.


Duta et al.~\cite{duta2024sheaf} consider sheaves on hypergraphs~\footnote{They are called cellular sheaves in the paper which is a terminological mistake. Since a hypergraph is not a cellular complex in any sense, the phrase ``cellular sheaf on a hypergraph'' makes as much sense as ``strong non-alcoholic beer''.} i.e. diagrams on 1-dimensional posets defined in Example~\ref{exBinRelToPoset}. Two types of Laplacians, linear and non-linear, are defined for such sheaves. The linear Laplacian is just the vanilla Laplacian as defined in Remark~\ref{remVanillaLaplacian} for the Roos complex, see Construction~\ref{conHypergraphDiffusion} and Remark~\ref{remRelationGrouping}. The linear Laplacian gives rise to sheaf diffusion in the standard manner described in subsection~\ref{subsecLaplaciansMainPart}. The non-linear Laplacian doesn't have a mathematical counterpart in the abelian sheaf theory, and resembles more general message passing mechanism on sheaves. Using these message passing mechanisms, the authors define Sheaf Hypergraph Neural Network (for linear Laplacian) and Sheaf Hypergraph Convolutional Network (for non-linear Laplacian). Their experiments confirm the advantages of using sheaf Laplacians for node classification task on hypergraphs.

The idea of using $O(n)$-connection sheaves on graphs, a natural discrete abstraction for a tangent bundle of a manifold, see subsection~\ref{subsecReviewManifoldLevel}), to define an analogue of convolution on arbitrary manifolds was proposed by Battiloro et al.~\cite{battiloro2024tangent}. Collecting these convolutions into layers combined with pointwise non-linearities gives rise to a continuous Tangent Bundle Neural Networks (TNN) architecture. Following a point cloud discretization procedure similar to that in Singer and Wu~\cite{singer2012vector}, the authors prove that the space-discretized architecture over the cellular sheaf converges to the underlying TNN as the number of sample points increases. Thus, this work reinforces the fact that the sheaf neural network discretizations applied in Barbero et al.~\cite{barbero2022sheaf2} and Bamberger et al.~\cite{bamberger2024bundle} are asymptotically appropriate. The authors also apply their discretized TNN architecture to a number of unique datasets generated by interesting manifold structures like torus denoising, wind field reconstruction and forecasting, and manifold classification. The discretized TNN architecture generally outperforms alternative manifold processing architectures.

Gillespie et al.~\cite{gillespie2024bayesian} also note that learning an informative restriction map structure for a task can risk overparametrization and overspecification to a particular learned sheaf structure during training, especially in scenarios when training data is limited. The authors address this problem by treating the sheaf Laplacian as a latent variable within the sheaf neural network architecture. This distributional perspective leads to the definition of a Bayesian Sheaf Neural Network which models the conditional probability of a classification problem into classes $y$ as $p_{\theta}(y \mid X)$ where $X$ are the input node features and $\theta$ are the parameters of the neural network. In this architecture, the input graph and features are passed into a variational sheaf learning process which approximates the posterior $p_{\theta}(\Fc \mid X, y)$. This results in a distribution over restriction maps that are assumed to be structured as either general linear, special orthogonal, or invertible diagonal transformations. This distribution is then sampled to create the restriction maps for each message passing layer of the underlying sheaf neural network architecture. This sampling procedure forces the parameters of the sheaf neural network to tune to classes of restriction maps instead of particular restriction map observations. This Bayesian approach appears to outperform deterministic learned restriction map structures on small datasets.

A plethora of other alterations to the sheaf neural network architecture have also been proposed within the literature. For example, Zaghen~\cite{zaghen2024nonlinear} inserts an edge-wise nonlinear function $\Phi\colon C^1(S,D)\to C^1(S;D)$ in the definition of the sheaf Laplacian to produce a nonlinear sheaf Laplacian operator $\Delta' = d^*\circ\Phi\circ d$, as in Hansen and Ghrist~\cite{hansen2021opinion}, and introduces the corresponding sheaf neural network architecture. 
Similarly, Caralt et al.~\cite{caralt2024joint} consider a number of restriction map update operations for sheaf neural networks inspired by opinion dynamics models~\cite{hansen2021opinion}. He et al.~\cite{he2023sheaf} integrate positional encoding within the sheaf neural network architecture to improve distinctiveness during the diffusion operations between neighbors in the network with equivalent restriction maps. Work~\cite{di2025learning} presents a new approach to learning sheaf Laplacians. The authors develop a method to jointly infer both the graph topology and the restriction maps of a sheaf, optimizing them to minimize the total variation of observed data. The main idea of the paper is that distance between signals on different nodes depends only on their cross-correlation and subspace dimensions, and does not depend on the specific structure of local dictionaries. Their approach is more computationally efficient than previous methods based on semidefinite programming, since it uses closed-form solutions for the basic optimization steps. The experiments show that their sheaf-based method achieves better data smoothness compared to conventional approaches.

\subsection{Applications of sheaf-type neural networks}

Recently, the field of application of sheaf theory in combination with deep learning methods in real-world applications has been actively developing. Various features of the subject area and gaps of existing solutions motivate the use of the corresponding advantages and strengths of sheaves.

Atri et al.~\cite{atri2023promoting} solves the problem of multi-document summarization, i.e. aggregation of information from several documents into a short summary. The FABRIC model is proposed, which combines simplicial complex layers to capture non-binary relations, BART as a text encoder and Sheaf Graph Attention to model complex semantic relations. The authors emphasize that for summarization, many documents are encoded by heterophilic graphs; in such conditions, vanilla GNNs have difficulties, but Sheaf Graph Attention is very effective. This ensures that local document relationships will be effectively represented in the global summary, addressing pressing linguistic consistency issues common in existing document summarization systems. FABRIC achieves the best results on the Multinews and CQASumm datasets and shows the usefulness of sheaf theory in handling complex inter-document relationships.

The Sheaf4Rec framework proposed by Purificato et al.~\cite{purificato2023sheaf4rec} applies Sheaf Neural Networks and their corresponding Laplacians to recommended systems. It provides a more efficient representation of complex user-item interactions using Cellular Sheaf than classical GNNs. Sheaf4Rec shows notable performance improvements on the Yahoo and MovieLens datasets.

In networks where nodes model people and edges correspond to relations, the community detection problem reduces to identifying cohesive groups of similar nodes. Wolf et al.~\cite{wolf2023topological} proposes a sheaf-theoretic approach to this problem. Empirical evaluations on the Karate Club benchmark show the robustness of the approach and high modularity scores.

Several papers explore the application of sheaf theory to Personalized Federated Learning (PFL), the training paradigm that delivers unique parameters to clients in a federation based on their local data distribution, enabling parameter sharing across clients. Nguyen et al.~\cite{nguyen2024sheaf} presents the Sheaf Hyper Networks (SHN) approach, which solves the over-smoothing and heterophily problems by integrating Sheaf structure in classical Graph Hyper Networks (GHN) architecture. SHN outperforms baseline methods accuracy by 2.7\%  in non-IID settings. Meanwhile, Liang et al \cite{liang2024fedsheafhn} propose for the Personalized subgraph Federated Learning task the approach FedSheafHN, which constructs a server-side collaboration graph where client embeddings are enriched using Neural Sheaf Diffusion integrated with an attention-based hypernetwork. FedSheafHN achieves high performance, demonstrating fast convergence and effective new clients generalization. Work \cite{issaid2025tackling} propose to use sheaves on graphs for federated multi-task learning (FMTL). FMTL aims to train separate but related models simultaneously across multiple clients (nodes), each potentially focusing on different but related tasks. The clients do not share raw data and can have different model sizes. Sheaves allow to address feature heterogeneity between clients, which is one of the main obstacles for previously used FMTL algorithms.

Huntsman et al.~\cite{huntsman2024prospects} applies sheaf theory in combination with large language models (LLM) to develop an approach for solving the problem of logical consistency assessment in hypertexts in the domains of jurisprudence documents and social networks. Note that it does not use Sheaf Graph Network, but uses the classical formalism of sheaf theory to ``glue'' local consistency judgments assessed by LLMs into a global consistency structure. This application demonstrates the power of sheaves in formalizing local-to-global reasoning problems.

\subsection{Representation Learning and Interpretability}

Sheaf theory lies at the boundary of geometry, algebra, category theory, and topology. Because it is situated on the border of computational tractability and abstract mathematics, sheaf theory has also been used as a lens through which to elicit better understanding for the performance and behavior of many modern deep learning pipelines.

\emph{Representation learning} is a common motif within machine learning which seeks to translate data in a raw format into some information dense latent representation that is better suited for solving down-stream tasks. Representation learning over graphs or network-like structures often requires the translation of elements of the graph, the nodes and edges, into vectorized representations which preserve information about the original graph topology while admitting a more computationally tractable form. This graph representation learning procedure is typically facilitated by the introduction of a self-supervised learning task wherein node representations are used to predict other nodes within their surrounding neighborhoods or along paths passing through that node. While such an approach is sufficient for simple, homogeneous graphs, this homophily-biased approach is ill-suited for heterogeneous graph structures like knowledge graphs wherein disparate nodes may be connected across typed relations whose type conceptually bridges this disparity~\cite{gebhart2023sheaf}.

\emph{Knowledge graph embedding} is a graph-based self-supervised learning approach which seeks to account for this discrepancy between topology and representation by learning node representations which are transformed across learned edge representations. These functional representations which connect adjacent nodes permit the graph embedding to be locally non-constant, allowing heterogeneity to be captured by proper transformations in the embedding space as one moves along the underlying graph's topology. Gebhart et al.~\cite{gebhart2023knowledge} showed that cellular sheaf theory provides a framework within which many pre-existing knowledge graph embedding approaches may be viewed as sheaf learning problems for a particular specification of an underlying sheaf. In other words, the authors show that many popular transductive knowledge graph embedding techniques may be reinterpreted as learning the restriction maps of a sheaf (the edge, or relation, representations) in conjunction with a particular cochain (the node, or entity, representations). In addition, Gebhart et al. show that many of the loss functions employed by knowledge graph embedding techniques may be interpreted as minimizing the inconsistency of the sheaf and cochain observation via implicit minimization using the sheaf Laplacian. Gebhart and Cobb~\cite{gebhart2023extending} later extended this approach to inductive learning, showing that optimal representations for new nodes included in the knowledge graph after the original self-supervised embedding procedure may be inferred by phrasing a harmonic extension problem using the sheaf Laplacian. This harmonic extension treats pre-existing nodes as the fixed, boundary values and the newly-introduced nodes as the interior for which a cochain representation must be inferred which is as close to a global section as possible with respect to the previously-learned relation representations connecting the boundary and the interior nodes. The authors show that this process can be solved exactly via (pseudo)inversion of the sheaf Laplacian, or approximated through iterative methods, again involving the sheaf Laplacian.

Kvinge et al.~\cite{kvinge2021sheaves} propose a sheaf-theoretical approach to understanding machine learning model fit. To do this, they define sheaves on the data by constructing a topology with open sets corresponding to subsets of related points. They build a data sheaf $\ca{D}$ consisting of all possible data value observations and a model presheaf $\ca{M}$ consisting of a specified family of functions, --- those derivable from the model, --- associated with each open set. The authors define a map $\Phi^{\ca{M}}_U: \ca{D}(U) \to \ca{M}(U)$ for each open set $U$ corresponding to a model for a data observation associated with $U$. Measuring the extent to which $\Phi^{\ca{M}}$ fails to satisfy the axioms of a presheaf morphism, in other words, measuring the inconsistency of the data-model association, allows the authors to point to specific subpopulations of a dataset on which a given model’s ability to fit the data changes, providing a local measure of model behavior and performance which can further inform global performance measures like accuracy.

\subsection{Beyond geometry and ML}\label{subsecBeyondML}

A number of applications of sheaf theory can be found in the literature, which do not fall directly into the categories listed above. The study of contextuality in quantum systems by sheaf-theoretical methods was initiated by Abramsky~\cite{AbramskyContextuality, ContextParadox}. The methods of quantitative analysis of contextuality based on sheaf theory fused into the study of lexical ambiguities in a natural language, in particular in anaphora resolution~\cite{SemanticUnification, SheafLanguageAmbig}.

Robinson~\cite{Robinson2016SheafAC} proposed a homological approach to the analysis of graphical causal models based on sheaf cohomology (on general posets), as defined in subsection~\ref{subsecMathCohomologySimplicial}). The approach is based on the observation that compositionality equations~\eqref{eqCompositionalityMainPart} remind chain rule of probability. The solutions to a graphical model can be treated as global sections of certain sheaf associated with the model. Belief propagation, a particular instance of message passing known in graphical models, is described in the terminology of pushforward sheaves; it happens that reliability of belief propagation can be measured in homological terms. Rosiak~\cite[Ch.9]{Rosiak} describes a related idea; a bayesian network gives rise to a pair (sheaf, cosheaf) on a simplex, and the assignment of joint probabilities to the nodes is treated as a simultaneous global section of both a sheaf and a cosheaf.

A research on using sheaves in networks' science initiated by Robinson, Ghrist, and Hansen had raised interest in engineering applications~\cite{SheafNetworkingNASA}. Macfarlane~\cite{Macfarlane04072014,Macfarlane17022017} introduced the category of ``combinatorial systems'' as an elegant mathematical model for scheduling and resource management in supply and production networks; the notion is potentially applicable to describe more general complex systems, as long as their formal ontology contains quantifiable inputs, outputs and production time. Sheaves over combinatorial systems describe collections of schedules; while higher order \v{C}ech cohomology modules measure discrepancy between local and global schedules.


The idea of sheaf theory as a language describing how local entities patch into global had found application in modern cognitive science. Philips~\cite{Phillips2018GoingBT} uses sheaves to describe generalization ability: the ability to patch knowledge from a finite set of observations. These ideas could be of interest to specialists in ML and general AI. In deep learning, a different, more statistical, toolbox is applied to measure this notion. Basically, generalization ability of a model is its ability to guess the outputs for inputs which were not used during the training. Such vague definition of generalization ability is independent on a theory describing knowledge representation inside a model\footnote{This reminds of IQ-tests: they neither measure intelligence, nor explain anything about intelligence theoretically. They measure the ability to solve IQ-tests; no more and no less.}. Sheaf theory provides more intrinsic and logical view on generalization ability, which may potentially find its place in theoretical ML and AI alongside with Vapnik--Chervonenkis theory, singular learning theory, and other elegant formalisms.

\section{Proposals and problems}\label{secProblems}

In this section we gather a number of proposals and problems, which in our opinion, may stimulate further work in the area of applied sheaf theory. The solutions may result in both the improvement of practical algorithms, and the further development of theory. It should be noted, however, that none of below problems are problems (or conjectures) in a strict mathematical sense, and their formulations may be imprecise. However, these problems can be placed on the same scale introduced in Section~\ref{secReview}: we believe that these problems may be of certain interest to specialists at each of the corresponding levels.

\subsection{Categorical level}\label{subsecProblCategoryLevel}

By geometrical structures we mean either of the structures mentioned in Section~\ref{secIntuitions}: undirected graphs, simplicial complexes, CW-complexes, posets or topologies.

\begin{probl}\label{problDescribeGeometry}
For a given finite geometrical structure $X$ provide a general mathematical basis for sheaf learning.
\end{probl}

Informally, sheaf learning, as understood in sheaf neural networks, is a procedure of picking a finite number of sheaves from $\Shvs(S;\Ro\Vect)$ in a way that sufficiently solves some task (whatever it is). A straightforward formalization of this procedure means that $\Shvs(S;\Ro\Vect)$ is treated as a smooth manifold, and gradient optimization is performed on a single component of this manifold (the one with prescribed dimensions of stalks) to optimize a given loss function. Sheaves learned this way stay unrelated to each other; the procedure simply provides a sequence of sheaf diffusion operators stacked together in a network.

In other words, the existing learning procedure is totally agnostic of the fact that the collection $\Shvs(S;\Ro\Vect)$ has a rich structure: it is an abelian category. To our knowledge, the whole area of ``learning over a category'', --- when we want not only to learn some objects, but also learn morphisms between them, --- does not yet exist. One could extend this to infusing the specific structure of the abelian category directly into the learning paradigm.

\subsection{Topology level}\label{subsecProblTopologyLevel}

The dimension of a (geometric realization of a) poset $S$ equals the largest length $n$ of a chain $s_0<s_1<\cdots<s_n$ in $S$. Henceforth, higher-dimensional topological notions come into play when the underlying structure $S$ is not just a binary relation (as in Examples~\ref{exGraphToPoset} and~\ref{exBinRelToPoset}). The problem here is that the mechanics of sheaf learning described in subsection~\ref{subsecSheafLearning}, while being clear and natural, is only applied in practice to 1-dimensional structures. It may seem at a first glance that high-dimensional structures such as simplicial or cellular complexes are used in topological deep learning~\cite{TDLbeyond}. However, unwinding the constructions, we see that each such structure $S$ is first disassembled into a number of 1-dimensional structures $S_1,\ldots,S_p$, and sheaf learning is performed independently on these structures, not on the original $S$.

The formal definition of a sheaf poses a conceptual challenge if one wants to learn a sheaf. Assume that the structure maps $D(s<t)$ of an $\Ro\Vect$-valued sheaf $D$ are represented by real matrices $D_{st}$ whose entries are to be learned by the gradient descent. By Definition~\ref{definSheafDiagramMainText}, a sheaf should satisfy compositionality~\eqref{eqCompositionalityMainPart} which is a system of nonlinear equations
\begin{equation}\label{eqCompositionalityEquations}
D_{s_1s_2}D_{s_0s_1}=D_{s_0s_2}\mbox{ for each }s_0<s_1<s_2 \mbox{ in }S
\end{equation}
on the matrix entries. How do we ensure that the learned matrices satisfy these equations? In other words, how do we know that a learned sheaf is indeed a sheaf? There are two standard answers, we consider both quite unsatisfactory:

\begin{enumerate}
  \item In practice, sheaf learning is applied to graphs or hypergraphs, which are 1-dimensional, so system~\eqref{eqCompositionalityEquations} is empty. Even if the data were originally represented by higher-order structure, the practical implementation forgets the higher order relations and treats data as a finite collection of binary relations on which sheaf diffusion or message passing is performed.
  \item System~\eqref{eqCompositionalityEquations} is ignored. In this case the learned collection of matrices is not a sheaf but rather a quiver representation. Again, this approach loses the information of commutativity relations which was present in the original diagram, and made all topology essential. The language of quiver representations contains neither global sections nor higher-order cohomology, so one should be careful in applying the terms which are not applicable.
\end{enumerate}

Related to this observation, we highlight the following proposal to take commutativity relations into account.

\begin{probl}\label{problRelations}
Approach the design of ML algorithms in the way that commutativity relations inherent to data are explicitly reflected in the architecture of a neural network.
\end{probl}

\begin{rem}
As a naive solution, the degree of non-commutativity 
\[
\sum_{s_0<s_1<s_2}\|D_{s_1s_2}D_{s_0s_1}-D_{s_0s_2}\|^2
\] 
can be used as an additional regularization term of the loss function when training a sheaf. The precise form of such regularization term may vary. For example, in the case of regular CW-complexes it could be of the form $\sum_{s_0<s_2}\|D_{s_1's_2}D_{s_0s_1'}-D_{s_1''s_2}D_{s_0s_1''}\|^2$, where $\dim s_2=\dim s_0+2$ and $s_1',s_1''$ are the two cells in between $s_0$ and $s_2$; this particular form doesn't use auxiliary ``skip connections'' $D(s_0<s_2)$.

From the mathematical perspective, the construction of such regularization terms is the subject of quiver representations with relations: the set of basic relations may be given in the problem's formal ontology together with the quiver itself. This leads to the consideration of 2-categories instead of 1-categories, so if things are taken seriously, a 2-categorical version of message passing should be developed. There may be even more general relations between relations, and so on, encoded in a structure called $n$-computad~\cite{Makkai} or a polygraph~\cite{BURRONI199343}, which suggests a nontrivial fusion of deep learning with higher categories.
\end{rem}

\subsection{Homology level}\label{subsecProblHomologyLevel}

While most of the theory described in Appendices~\ref{secMathCohomology} and~\ref{secMathLaplacians} is more or less folklore and well known to mathematicians, there is no good working implementation so far. Existing symbolic algebra packages such as Sage~\cite{Sage} and Macaulay2~\cite{Macaulay} do not support sheaf cohomology computations on arbitrary finite posets. Most existing homology-related packages are developed in the framework of topological data analysis, and support neither general geometrical structures, nor general sheaves, nor Laplacian-related numerical linear algebra.

\begin{probl}\label{problPackage}
Develop a computational system with the following functionality.
\begin{enumerate}
  \item Construction of diagrams over arbitrary finite posets (and maybe even finite categories) valued in common computationally feasible categories $\Vv$ such as $\FinSets$, $\ko\FinVect$, $\ko[t]\Mod$. 
  \item The ability to check commutativity of a diagram.
  \item The ability to find some/all global sections of a diagram in an optimal way.
  \item Over abelian categories $\Vv$, the ability to compute cohomology of sheaves and related notions of homological algebra including (minimal) injective resolutions, derived functors, etc.
  \item Over $\Ro\Vect$ the ability to form Laplace operators, compute their eigendecomposition, simulate heat diffusion, and construct sheaf diffusion operators.
\end{enumerate}
\end{probl}

Similar problems were stated in~\cite{Macfarlane04072014,Macfarlane17022017} motivated by the analysis and scheduling of production networks. The problem is motivated not only by practical considerations reviewed in Section~\ref{secReview} but also by the needs originating in theoretical mathematics\footnote{The first author faced the need to compute cohomology of GKM-sheaves in~\cite{AyzSor})}.

\begin{rem}
Although Problem~\ref{problPackage} seems to be pure engineering, we believe that its solution will stimulate further study in a broad area of hardware optimization for scientific computing. If the system is designed to work with arbitrary categories by design, then the required optimization patterns will not be specific to $\Ro\FinVect$ (as in applied linear algebra), or $\FinSets$ (as in SAT- and CSP-solvers), but will have more universal categorical nature.
\end{rem}

\subsection{Real level}\label{subsecProblRealLevel}

In modern ML, there exists a linear representation hypothesis~\cite{park2023LinRepr} which claims that even in nonlinear language models, the high-level semantical concepts are represented by directions or linear functionals in the representation space. The number of meaningful concepts may be bigger than the dimension of the representation space, so that a space $\Ro^l$ is capable of storing more features than the dimensionality $l$. This is confirmed on toy examples~\cite{ToySuperposition} and by the general success of vector embeddings~\cite{Mikolov2013EfficientEO} where the number of words in the vocabulary is usually much bigger than dimensionality of the embedding space. From a topological perspective, the phenomenon is also not surprising: a simplicial complex $\ca{K}$ on $m$ vertices can be embedded in the space $\Ro^m$ (Construction~\ref{conStandardGeomRealization}), but it can also be embedded in the euclidean space of dimension $2\dim\ca{K}+1$ or even less~\cite{Horvatic}. The embedding dimensionality depends on the maximal size of relations between nodes, not their total number.

In Section~\ref{secIntuitions} we discussed that there is nothing magical in the process of heat diffusion on a sheaf $D$ on a cell poset $\ca{X}$: it is just the optimization of a nonnegative quadratic function on the real space $C^0_{CW}(\ca{X};D)$ of cellular cochains (Remark~\ref{remNNdynamical}). What makes this procedure efficient is that $C^0_{CW}(\ca{X};D)$ has a rich semantic structure by design; it comes from its decomposition into the direct sum $\bigoplus_{v\colon V}D(v)$ (see Remark~\ref{remNNdynamical}). Looking at a vector from $C^0_{CW}(\ca{X};D)$, we know which part of it stores the state of each particular node. If the stalks $D(s)$, for $s\in S$, have large dimension, the state vector may be more informative, but the cost of the model becoming computationally inefficient. However, if we don't need precise higher cohomological information about a structure, and only need approximate features of heat diffusion, what if we squeeze the semantic information into a lower-dimensional space instead of keeping stalks orthogonal in the direct sum $\bigoplus_{v\colon V}D(v)$? This question suggests the following framework.

\begin{defin}\label{definRepresentationEvolution}
Let $\ca{X}$ be a cell poset. A working representation $W$ for $\ca{X}$ is a euclidean space $W_{tot}\cong\Ro^d$, and a choice, for each cell $\sigma\in\ca{X}$, of a euclidean subspace $W(\sigma)\subset W_{tot}$ together with the orthogonal projection $p_\sigma\colon W_{tot}\to W(\sigma)$. An evolution process $\Theta$ is a dynamical system on the space $W_{tot}$. We call the pair $(W,\Theta)$ a \emph{condensed sheaf} on $\ca{X}$.
\end{defin}

A similar definition may be given not only for cell posets, but for any meaningful geometrical structure. If a condensed sheaf is chosen well-aligned with the structure of $\ca{X}$, then performing a flow $\Theta$, then projecting to the semantical subspace $W(\sigma)$, we would get the evolution of the state of $\sigma$ which may be a valuable source of learning signal.

\begin{ex}
For a cellular sheaf $D$ on $\ca{X}$, define a working representation $W^{CW}_{tot}=\bigoplus_jC^j(\ca{X};D)$, with $W(\sigma)$ being the corresponding direct summand, and the evolution $\Theta^{CW}$ being the heat diffusion in all degrees at once. This working representation and evolution had proven useful in practice, as confirmed by the papers listed in Section~\ref{secReviewShvsML}.
\end{ex}

In general, there exist subspace arrangements $\{W(\sigma)\subset W_{tot}\mid \sigma\in\ca{X}\}$ which exhibit superposition (non-transversality) similar to the described in~\cite{ToySuperposition}. However, so far there is no working example demonstrating practical viability of condensed sheaves.

\begin{probl}\label{problCondensedSheaves}
Construct a condensed sheaf which has better expressivity compared to cellular sheaves, and the dimension $d=\dim W_{tot}$ of the working representation smaller than the size of $\ca{X}$. Devise methods to construct/learn such condensed sheaves where both operations $\Theta$ and $p_\sigma$ are computationally simple compared to sheaf-type neural networks.
\end{probl}

This problem stands in line with the whole general idea of machine learning as it is understood nowadays: ``the space $\Ro^d$ is capacious enough to store data, gradient descent is enough to process the data''. It may be a good idea to bring this philosophy to geometry.

\subsection{Manifold level}\label{subsecProblManifoldLevel}

The manifold hypothesis is well-known in modern machine learning~\cite{ManHypo} and statistics, although it implicitly originated in 1970's as a part of applied catastrophe theory (and already that time criticized, see~\cite[Sec.``Spurious quantification'']{ApplCatastrophIsShit}).

A data set is a finite subset $X$ of $\Ro^n$. As is common in statistical learning, the data points are assumed to be sampled i.i.d. from some unknown distribution on $\Ro^n$. The manifold hypothesis asserts there exists a submanifold $M$ of $\Ro^n$, with $m=\dim M<n$ such that the distribution is continuously supported on $M$. Such manifold $M$ is called a data manifold, or a latent manifold. The task then becomes to describe $M$, or at least study its mathematical properties, given a finite set $X$ of samples. This research area is called \emph{manifold learning}. It should be noticed however that in machine learning, unlike mathematics, there is no generally accepted definition of a manifold. Some papers treat $M$ as a solution to a system of equations; others consider $M$ to be an immersion of a single chart $\Ro^m$ into $\Ro^n$; in topological data analysis manifolds are replaced by triangulations and filtrations; in statistics probability distributions are the first-class citizens. Usually the definition of a manifold is not even given and in practice one is interested in some vague characteristics of $M$ as there is no practical need to define the space rigorously.

We believe that the problem of manifold learning should be taken more seriously from geometrical perspective. Recall that the classical definition of a (smooth closed) manifold $M$ involves an atlas of open charts $\{U_\alpha\subset\Ro^m\}$, a collection of open subspaces $V_{\alpha\beta}\subseteq U_{\alpha}$, and a collection of smooth invertible gluing maps $g_{\alpha\beta}\colon V_{\alpha\beta}\to V_{\beta\alpha}$ satisfying the cocycle equations $g_{\alpha\beta}=g_{\beta\alpha}^{-1}$ and $g_{\alpha\beta};g_{\beta\gamma};g_{\gamma\alpha}=1$ whenever they make sense. A smooth map (in particular, an embedding) $f\colon M\to \Ro^n$ is encoded by a collection of smooth maps $f_\alpha\colon U_\alpha\to \Ro^n$ which are compatible with respect to gluing maps $f_\alpha=g_{\alpha\beta};f_\beta$.

\begin{probl}\label{problManifoldHypothesis}
Given a finite set of points $X\subset \Ro^n$ and $m<n$, find a manifold $M$, its embedding $f\colon M\to\Ro^n$, and a finite subset $L\subset M$, such that $f(L)=X$ and the data $(M,f)$ have the lowest possible complexity.
\end{probl}

By complexity we mean the complexity of the total description in terms of an atlas and the gluing maps, this is similar to the classical regularization in machine learning. The main difficulty in learning a manifold, as stated in Problem~\ref{problManifoldHypothesis}, is to enforce the cocycle equations, at least approximately, within the output structure. This is similar in nature to Problem~\ref{problRelations}: in order to actually learn a sheaf on a high-dimensional structure one needs to attend to compositionality relations. 

The main tool of smooth geometry is the tangent bundle of a smooth manifold. We expect solution to Problem~\ref{problManifoldHypothesis} should be related to the method of Singer and Wu~\cite{singer2012vector} outlined in subsection~\ref{subsecReviewManifoldLevel}. It will likely make use of connection sheaves, since the latter are an expressive mathematical abstraction for vector bundles.

\begin{rem}\label{remNeuroSymbPlayground}
Problem~\ref{problManifoldHypothesis} does not seem to be solvable in higher dimensions. First of all, manifold learning is usually considered a method of dimensionality reduction, whereas the internal dimension $m$ in Problem~\ref{problManifoldHypothesis} is already given. Second, the curse of dimensionality applies here as well. In high dimensions, even large datasets are too sparse for the problem of manifold reconstruction to make practical sense. Nevertheless, the potential solution of Problem~\ref{problManifoldHypothesis}, in the way it is stated, seems valuable even in low dimensions. The problem natively suggests the fusion of neural and discrete methods. While the cocycle relations can be solved by running gradient descent, the topological structure of an atlas requires discrete optimization methods such as genetical algorithms, and their topological manifestations such as Morse surgery. For this reason we consider Problem~\ref{problManifoldHypothesis} a perfect playground to test neuro-symbolic approaches in machine learning.
\end{rem}
%

\subsection{Data level}\label{subsecProblDataLevel} 
In this subsection we pose two problems, both of which having relatively clear specification.

\textbf{Improvement of algorithms.} As discussed in the introduction and explained in Section~\ref{secReview}, sheaf theory provides an extremely powerful toolbox for the representation of geometrical data in the way that captures geometrical, topological, and human intuitions. Sheaves offer an efficient geometrical data representation for feature extraction. However, most of such applications improve high-level characteristics of neural networks (such as accuracy, precision, generalization ability, etc.) which depend on a large number of factors, many of which have little obvious relationship to the topological properties of the underlying space. It is difficult to quantify the gain of using sheaves in each particular case and, given the computational overhead of many sheaf-related methods, as described in subsection~\ref{subsecFurtherArchitectures}, it is not always clear that the abstraction is worth these costs. 

In the literature, we couldn't find any example, when a computationally heavy precise algorithm achieves higher speed by utilizing concepts of sheaf theory. By heavy precise algorithms we mean empirical algorithms solving NP-hard problems, such as boolean satisfiability problem (SAT), constraint satisfaction problem (CSP), subgraph matching problem (SGM), etc. In all these problems there is no accuracy/precision, and the only characteristic needed to be optimized is the time of performance on a given class of examples. There exist extremely efficient solvers that can empirically solve such problems despite their complexity class. None explicitly utilize sheaves in their internal mechanisms.

\begin{probl}
Find an NP-hard problem for which an application of concepts from sheaf theory beats state-of-the-art solutions in terms of total computation time.
\end{probl}

We believe that solution of this problem will not only be of practical value, but also stimulate new research on interconnections of sheaf and category theory with computer science and complexity theory.

\textbf{Large graphs.} The existing research focuses primarily on the application of sheaf-type neural networks to relatively small graphs. At the same time, many real-world datasets come in the form of large graphs: knowledge graphs, social networks, Wikipedia links' structures, and other large-scale information networks. Applicability of sheaf-type neural networks to problems posed on such graphs remains an open challenge. Understanding the effectiveness and limitations of sheaf-type neural networks in these contexts is crucial for advancing its practical utility.

\begin{probl}
Extend the architecture of sheaf-type neural networks to large graphs, i.e. graphs with more than million nodes.
\end{probl}

%
%
%
%
%
%
\clearpage

\appendix

\section{Geometrical structures} \label{secMathStructures}

In the appendix sections we give rigorous definitions of the data structures and sheaf-related theory mentioned in the main part of the text, and provide a self-contained exposition of the related theory. Here is the quick guide to the appendices.
\begin{enumerate}
  \item The remaining part of Appendix~\ref{secMathStructures}. We start with the review of geometrical structures: partially ordered sets (posets), simplicial complexes, CW-complexes, introduce the notion of cell poset and its variations, among which the most important is homology Morse cell poset. We recall the classical connection of posets with Alexandrov topologies.
  \item Appendix~\ref{secMathSheaves}. We proceed to diagrams over posets and sheaves over topologies, and describe their connection. Global sections and functorial properties of sheaves are defined.
  \item Appendix~\ref{secMathCohomology}. We recall the basic machinery of homological algebra: cochain complexes, exact sequences. We construct enough injective objects in the category of diagrams over a poset, and define sheaf cohomology as right derived functors of the global sections. The classical constructions of cochain complexes are presented: Roos complex (also known as standard simplicial resolution or cobar construction) and cellular cochain complex. In subsection~\ref{subsecMathOneShot} and~\ref{subsecMathMinimalComputations} we prove new results. A notion of one-shot sheaf cohomology computation is introduced; we prove that a poset $S$ allows for one-shot computation if and only if $S$ is a homology Morse cell poset. For an arbitrary poset $S$ we prove a theoretical bound for the complexity of sheaf computations on $S$, which is similar in spirit to Morse inequalities. We prove the bound is exact by introducing a minimal cochain complex on $S$. In case of cell posets, this minimal cochain complex coincides with cellular cochain complex.
  \item Appendix~\ref{secMathLaplacians}. Here we recall some basic linear algebra and differential equations, and introduce the notions of euclidean sheaves, Laplacians, higher-order Laplacians, energy functions, and heat/sheaf diffusion. All notions make sense for sheaves on arbitrary posets, not just cellular sheaves, which is important, if one wants to consider hypergraphs. In subsection~\ref{subsecMathDiffuseExamples} we provide examples how the mathematical formalism introduced in the previous subsection describes the internal mechanisms of neural networks used in topological deep learning. Namely, we describe, in a unified manner, diffusion in sheaf neural networks, and linear diffusion in sheaf hypergraph neural networks, and propose the way to things can be generalized.
  \item Appendix~\ref{secMathConnection}. We recall the notion of a connection sheaf (also known as a local system or a vector bundle) and provide basic mathematical claims about them.
\end{enumerate}

\begin{rem}
A small disclaimer: the mathematical appendices do not contain a list of notions despite their relevance to the subject of the review. We make no mention of Grothendieck topologies and sheaves over them; this subject can be found in the book of Rosiak~\cite{Rosiak}. We intentionally avoid mentioning cosheaves' homology (because this story is about reverting the order in a poset) as well as sheaf homology and cosheaf cohomology (which are more interesting); this subject is covered by Curry's thesis~\cite{Curry}. We don't review any of the topics of topological data analysis and applied topology in general, as there are many other sources~\cite{Zom,EdelHarer,Oudot,CarlssonVejdemo,DeyWang}. We also assume that the reader is familiar with the basics of category theory: objects, morphisms, categories, functors. More specific terms will be either explained or provided with a reference as they appear in the text. 
\end{rem}

\subsection{Posets}

Let $S$ be a partially ordered set (poset), or a preordered set (preposet). The non-strict order relation on $S$ is denoted $\leq$, while its strict version is denoted by $<$ (the latter makes sense for posets). 

\begin{con}\label{conCatOfPoset}
For each preposet $S$ define a small category $\cat(S)$, whose objects are elements from $S$, and there is only one morphism from $s_1$ to $s_2$ if $s_1\leq s_2$, and no morphisms otherwise.
\end{con}

A map between (pre)posets $f\colon S\to T$ is called \emph{monotonic}, or a morphism of (pre)posets if the inequality $s_1\leq s_2$ in $S$ implies the inequality $f(s_1)\leq f(s_2)$ in $T$. Each such morphism induces a functor $\cat(f)\colon \cat(S)\to \cat(T)$ between the corresponding small categories.

\begin{con}\label{conLowerUpperSetNotation}
For any (pre)poset $S$ and any element $s\in S$ we define
\[
S_{\geq s}=\{t\in S\mid t\geq s\},\qquad S_{>s}=\{t\in S\mid t>s\}.
\]
Similarly, we define $S_{\leq s}$, $S_{<s}$, etc. Poset $S$ with the dual order is denoted by $S^{\op}$.
\end{con}

\begin{con}\label{conPreposetToPoset}
For a preordered set $(S,\leq)$, let $\sim$ denote the equivalence relation defined by $s_1\sim s_2$ if and only if $s_1\leq s_2$ and $s_2\leq s_1$. The quotient set $S/\sim$ inherits the partial preorder ($[s_1]\leq[s_2]\Leftrightarrow s_1\leq s_2$), which is a partial order. This poset $S/\sim$ with this order is denoted $\bar{S}$. We have a couple of morphisms $S\leftrightarrows \bar{S}$, where one is the natural projection, and another one is inclusion of a representative in each class (for infinite sets the existence of such map follows from the axiom of choice). On the level of categories, $\cat(\bar{S})$ is the category obtained from $\cat{S}$ by taking one representative in each isomorphism class of objects.
\end{con}

\subsection{Alexandrov topologies}\label{subsecMathAlexandrov}

\begin{defin}\label{defAlexandrovTopology}
A topology $\Omega$ on a set $X$ is called an \emph{Alexandrov topology} if the intersection of any (possibly infinite) number of open subsets is open.
\end{defin}

\begin{defin}\label{defT0}
A topology $\Omega$ on a set $X$ \emph{satisfies the Kolmogorov axiom} (separation axiom) $T_0$ if, for each pair of points $x_1\neq x_2\in X$, there exists an open neighborhood $U$ of $x_1$ which does not contain $x_2$, or an open neighborhood $U$ of $x_2$ which does not contain $x_1$.
\end{defin}

The following correspondence is well known in general topology and mathematical logic.

\begin{prop}\label{propPosTop}
The category $\PrePos$ of preordered sets and their morphisms is equivalent to the category $\AlexTop$ of Alexandrov topological spaces with continuous maps. Under this equivalence, posets correspond to Alexandrov topologies satisfying separation axiom $T_0$.
\end{prop}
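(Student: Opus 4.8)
The plan is to exhibit two mutually inverse functors between $\PrePos$ and $\AlexTop$, check that each respects morphisms, and observe that the two constructions are inverse to one another on the nose; this yields not merely an equivalence but an isomorphism of categories, from which the $T_0$ refinement follows by a direct translation of antisymmetry. First I would define the functor $\PrePos\to\AlexTop$: given a preorder $(S,\leq)$, declare $U\subseteq S$ open exactly when it is an up-set, i.e. $s\in U$ and $s\leq t$ imply $t\in U$. Arbitrary unions and arbitrary intersections of up-sets are again up-sets, so this is an Alexandrov topology; and a monotonic map $f$ pulls back up-sets to up-sets, hence is continuous, with identities and composites manifestly preserved.

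In the other direction I would send an Alexandrov space $(X,\Omega)$ to its specialization preorder. The essential input is that, by the Alexandrov axiom, for each $x\in X$ the intersection $U_x=\bigcap\{U\in\Omega\mid x\in U\}$ of all open neighborhoods of $x$ is itself open, hence is the smallest open set containing $x$. I then set $x\leq y$ iff $y\in U_x$. Reflexivity is immediate, and transitivity holds because $y\in U_x$ forces $U_y\subseteq U_x$ by minimality of $U_y$, so $z\in U_y$ gives $z\in U_x$. A continuous $f$ is monotonic: if $y\in U_x$ then $f^{-1}(U_{f(x)})$ is an open neighborhood of $x$ and therefore contains $U_x$, whence $f(y)\in U_{f(x)}$, i.e. $f(x)\leq f(y)$.

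Next I would verify that the two functors are inverse. Starting from a preorder, the minimal neighborhood of $x$ in the up-set topology is exactly $S_{\geq x}$, so the recovered relation $y\in U_x$ is literally $x\leq y$; this is the identity on objects, and both functors act as the identity on underlying maps. Starting from an Alexandrov space, I would show $\Omega$ coincides with the up-set topology of its specialization preorder: every $U\in\Omega$ is an up-set since $x\in U$ gives $U_x\subseteq U$, and conversely every up-set $V$ satisfies $V=\bigcup_{x\in V}U_x$, a union of open sets, hence $V\in\Omega$. This identifies the two topologies and completes the isomorphism of categories, which in particular is an equivalence.

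Finally, for the refined statement I would translate antisymmetry into $T_0$. If the preorder is a partial order and $x\neq y$, then $x\leq y$ and $y\leq x$ cannot both hold, so (say) $y\notin U_x$, and $U_x$ separates the pair in the sense of $T_0$; conversely, a failure of antisymmetry gives $x\neq y$ with $x\leq y\leq x$, whence $U_x=U_y$ and the points are topologically indistinguishable, so $T_0$ fails. The only nontrivial point, and hence the main obstacle, is the existence of the minimal open neighborhoods $U_x$; everything else is a formal unwinding of definitions. I expect the real care to lie in pinning down the order convention — taking open sets to be up-sets, so that $U_x=S_{\geq x}$ — so that the specialization preorder recovered from the topology agrees with the original order and not with its opposite $S^{\op}$.
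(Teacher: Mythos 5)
Your proof is correct and follows essentially the same route as the paper, which exhibits the same pair of constructions (the up-set topology on a preorder, and the specialization preorder via minimal open neighborhoods $U_x$) and leaves the mutual-inverseness, functoriality, and $T_0$ checks as exercises that you have carried out in full. Note that your convention $x\leq y \Leftrightarrow y\in U_x$ (equivalently $U_y\subseteq U_x$) is indeed the one that makes the two constructions mutually inverse and consistent with $U_x=S_{\geq x}$; the paper's Construction~\ref{conSpaceToPrepos} states the inclusion $U_{x_1}\subseteq U_{x_2}$, which as written would recover the opposite order, so your explicit care with this convention resolves a genuine slip in the text.
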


The proof can be found in the literature, for example,~\cite{Arenas}. We present the main constructions from the proof since they are used in what follows.

\begin{con}\label{conPrePosToSpace}
Let $S$ be a set with a preorder $\leq$. Consider the topology $\Omega_S$ (the family of subsets, which are called open) on the set $S$, which consists of upper order ideals: $U\in \Omega_S$ if and only if $x\in U$ and $y\geq x$ implies $y\in U$. The unions and intersections of any number of upper order ideals is an upper order ideal as well. Hence $X_S=(S,\Omega_S)$ is an Alexandrov topological space.
\end{con}

\begin{con}\label{conSpaceToPrepos}
Let us now review the reverse construction: given an Alexandrov topological space, we build a preorder. Let $X=(M,\Omega_X)$ be an Alexandrov topological space. The key property of Alexandrov spaces is the existence of an open neighbourhood of any point, minimal by inclusion. Indeed, let $x\in M$. Consider the family $\mathcal{U}=\{U\in \Omega_X\mid x\in U\}$ of all open neighbourhoods of a chosen point. Their intersection
\begin{equation}\label{eqDefUx}
U_x=\bigcap_{U\in \mathcal{U}}U
\end{equation}
is an open set by the definition of the Alexandrov topology. It is easy to see that this intersection belongs to any open neighbourhood of $x$, therefore we can call this intersection the minimal open neighbourhood of $x$. Now we can define the partial preorder on the set $M$: we put $x_1\leq x_2$ if $U_{x_1}\subseteq U_{x_2}$.
\end{con}

Checking that the Constructions ~\ref{conPrePosToSpace} and \ref{conSpaceToPrepos} are mutually inverses is a simple exercise. Both constructions are functorial (i.e. monotonic functions are continuous and vice versa), which is another exercise. The last exercise left to the reader is to check that partial orders correspond to $T_0$-spaces under this correspondence.

\begin{rem}\label{remMinNbhdIsACone}
In terms of the (pre)order on $S$, \emph{the minimal open neighbourhood} $U_x$ defined in Construction~\ref{conSpaceToPrepos} is an upper cone over $x$:
\[
U_x=\{s\in S\mid s\geq x\}.
\]
This is the minimal upper order ideal, which contains a point $x$.
\end{rem}

\subsection{Simplicial and cellular complexes}

In this section we mainly deal with finite structures: finite posets, finite simplicial and cell complexes. However, many definitions make sense for infinite sets.

\begin{defin}\label{definSimpComp}
A(n abstract) \emph{simplicial complex} on a (finite) vertex set $M$ is a collection $\ca{K}$ of subsets of $M$ such that
\begin{enumerate}
  \item any singleton\footnote{For our exposition we prefer to forbid ghost vertices.} $\{i\}$ for $i\in M$ lies in $\ca{K}$;
  \item if $I\in \ca{K}$ and $J\subset I$ then $J\in \ca{K}$.
\end{enumerate}
\end{defin}

The element $I\in \ca{K}$ is called \emph{a simplex}, the number $\dim I=\#I-1$ its dimension. The set of nonempty simplices of $\ca{K}$ is partially ordered by inclusion, we denote this poset by $\Cells(\ca{K})$. It is usually assumed that the vertex set $M$ is well ordered, hence may be identified with $[m]=\{1,\ldots,m\}$.

\begin{con}\label{conStandardGeomRealization}
The \emph{standard geometrical realization} of a simplicial complex $\ca{K}$ is the compact space
\begin{equation}\label{eqStdGeomRealiz}
|\ca{K}|=\bigcup_{I\in \Cells(\ca{K})}\triangle_I\subset\Ro^m,
\end{equation}
where $\triangle_I=\ConvHull\{e_i\mid i\in I\}$ and $e_1,\ldots,e_m$ is the standard basis of $\Ro^m$. Each $\Delta_I$ is a (geometrical) simplex, hence $|\ca{K}|$ is a geometrical simplicial complex, that is a space decomposed into cells isomorphic to simplices.
\end{con}

\begin{rem}\label{remAbuseSimpComp}
There exists a common abuse of notation: we call two simplicial complexes $\ca{K}_1,\ca{K}_2$ homeomorphic (homotopically equivalent, etc.) if their geometrical realizations are homeomorphic (resp. homotopically equivalent, etc.).
\end{rem}

\subsection{Order complex}

In combinatorial topology, there is a classical way of turning a (finite) poset into a simplicial complex.

\begin{defin}\label{definGeomRealPoset}
Let $S$ be a poset. \emph{The order complex of} $S$ is a simplicial complex $\ord S$ on the vertex set $S$ whose simplices are chains (i.e. well ordered subsets) of $S$. \emph{The geometric realization of} $S$ denoted by $|S|$ is the geometric realization $|\ord S|$ of the order complex $\ord S$.
\end{defin}

\begin{rem}\label{remSimplexOfOrd}
It follows from the definition, that a $j$-dimensional simplex $\sigma$ of $\ord S$ is encoded by a sequence $s_0<s_1<\ldots<s_j$ in $S$. Equivalently, it is a sequence of $j$ non-identical morphisms in $\cat(S)$.
\end{rem}

\begin{rem}\label{remBarycentric}
If $\ca{K}$ is a simplicial complex, then the simplicial complex
\[
\ca{K}'=\Cells\ord(\ca{K})
\]
is \emph{the barycentric subdivision} of $\ca{K}$. Simplicial complexes $\ca{K}$ and $\ca{K}'$ are homeomorphic. See details in~\cite[\S2.1.5]{KozlovBook}.
\end{rem}

There is a fundamental McCord's theorem, which establishes a connection between Alexandrov spaces and geometric realizations of finite posets.

\begin{rem}\label{remMcCord}
Let $S$ be an arbitrary finite poset. Then there is a canonical map:
\[
h_S\colon |S|\to X_S.
\]
If $x\in |S|$ is a point in the relative interior of a geometrical simplex $\triangle_{\{s_0,\ldots,s_k\}}$ corresponding to a chain $s_0<s_1<\cdots<s_k$, then $h_S(x)=s_k$, the maximal element of the chain in $S$. It is not difficult to check that $h_S$ is continuous (with respect to the Alexandrov topology on $X_S$ and Hausdorff topology on $|S|$). McCord's theorem~\cite{McCord} states that $h_S$ is a weak homotopy equivalence, i.e. induces isomorphism of all standard homotopy invariants. Among other applications, this theorem allows one to think of a finite topology as a nice substitute for a finite cell complexes with a comparable homotopical expressiveness. However, it should be noted that McCord theorem doesn't mention sheaves or their cohomology, so certain care is needed in this topic.
\end{rem}


\subsection{Cell complexes} 

Now we switch to cellular complexes. In our review this notion is essentially needed to define a cellular sheaf introduced by Zeeman and Shepard, see~\cite{Curry}. However, there is a certain discrepancy between formal definition of a (regular finite) CW-complex known in topology (see e.g.~\cite{bjorner1984poset}) and how this notion is actually used in applied papers. The intuition that matches both formal definition and the way how it is commonly understood is expressed by the following informal definition.

\begin{difin}\label{difinCWhausdorff}
A \emph{CW-complex} (or a \emph{cell complex} or a \emph{cellular complex}) is a filtered Hausdorff topological space
\[
X=\bigcup_nX_n, \qquad X_0\subseteq X_1\subseteq\cdots
\]
where $X_n$, $n\geq 0$, is called \emph{the $n$-skeleton of} $X$. The space is obtained inductively, that is $X=\lim_{t,\to} X^{(t)}$. The base of induction $X^{(0)}$ is the empty topological space with the trivial filtration; and the space $X^{(t)}$ is the result of attaching a topological disc $D^k$ to the previously constructed space $X^{(t-1)}$ along some attaching map $a_s\colon \dd D^k\to X^{(t-1)}_{k-1}$ in a way that the attached disc contributes to skeleta $X^{(t)}_n$ with $n\geq k$. A cell complex is called \emph{regular} if the attaching maps are homeomorphisms to the image.

In other words, a cell complex is a Hausdorff space constructed inductively from relatively simple pieces $(D^k,\dd D^k)$ called the \emph{cells} of $X$.
\end{difin}

\begin{rem}\label{remBuildingBlocks}
In the sense of the above definition, cell complexes generalize simplicial complexes: instead of simplices we are allowed to use e.g. cubes, pentagons, and almost anything, as long as each building block is homeomorphic to a topological disc $D^k$, and the attachment of this disc is made along its boundary $(k-1)$-sphere $\dd D^k$.
\end{rem}

\begin{rem}\label{remNovikovForman}
Definition~\ref{difinCWhausdorff} is bad from algorithmical perspective since it operates with (usually infinite) Hausdorff spaces. If one chooses to encode all spaces $X_n^{(s)}$ as finite simplicial complexes and all the attachment maps $a_s$ as simplicial maps (or piecewise linear maps), the resulting structure may be too heavy to store and process. Besides, it is algorithmically impossible to say if a given collection of simplicial complexes $\{X_n^{(s)}\}$ is a cell complex or not due to Novikov's theorem~\cite{volodin1974sphere} which states that homeomorphism to a sphere is algorithmically undecidable. 
\end{rem}

\subsection{Cell posets} 

In the practice of machine learning, attachment maps are usually not considered at all. A cell complex is thought of as an abstract finite poset of cells satisfying certain restrictions. We refer to~\cite{bjorner1984poset} for the extensive review of the related notions and constructions~\footnote{It should be noted that Bj\"{o}rner's formal definition of a cell poset differs from the one given below.}. 

\begin{defin}\label{definGrading}\cite{StanleyComb}
A \emph{grading} on a (finite) poset $S$ is a function $\rk\colon S\to\Zo$ such that
\begin{enumerate}
  \item $\rk$ is strictly monotonic: if $s_1<s_2$ then $\rk s_1<\rk s_2$.
  \item $\rk$ agrees with covering relation: if $s_1<s_2$ is a covering relation i.e. there is no $t$ such that $s_1<t<s_2$, then $\rk s_2=\rk s_1+1$.
  \item All minimal elements have rank $0$.
\end{enumerate}
A pair $(S,\rk)$, where $\rk$ is a grading on $S$, is called \emph{a graded poset}.
\end{defin}

\begin{rem}\label{remRankUnique}
It can be easily proven that whenever a poset $S$ has a grading, this grading is unique. So far, grading is a property of a poset rather than a complementary structure. A poset is \emph{graded} if, for any $s\in S$, all maximal (saturated) chains descending from $s$:
\[
s_1<s_2<\cdots<s_k<s
\]
have the same length $k$. In this case we can set $\rk s=k$. However, in practice it is more convenient to store the rank function as a complementary data structure.
\end{rem}

\begin{ex}\label{exGradingExistsNot}
The poset $\Cells(\ca{K})$ of nonempty simplices of a simplicial complex $\ca{K}$ is naturally graded by $\rk I=\dim I$. Similarly, the poset of faces of any convex polytope is graded by their dimensions. Geometric lattices~\cite{CrapoComb} constitute another important class of graded posets. On the other hand, there exist posets which do not admit any grading at all, see e.g. the rightmost poset on Fig.~\ref{figPosets}.
\end{ex}

Cells of a cell complex will be encoded by elements of a poset. However, the condition that each cell is a disc attached along the boundary sphere should be somehow described in the language of posets. This is done in the standard way.

\begin{con}\label{conCones}
The \emph{cone} of an element $s\in S$ is the subposet $C(s)=S_{\leq s}=\{t\in S\mid t\leq s\}$. The \emph{boundary} of an element $s\in S$ is the subposet $\dd C(s)=S_{<s}=\{t\in S\mid t<s\}$. Notice that $S_{\leq s}$ is obtained from $S_{<s}$ by adding the greatest element, the point $s$ itself. Therefore, for the geometrical realizations we have $|C(s)|=\Cone|\dd C(s)|$, where $|\dd C(s)|$ is the base and $s$ --- the apex of a cone.
\end{con}

\begin{ex}\label{exDownConeSimplex}
If $S=\Cells(\ca{K})$, and $I=\{i_0,\ldots,i_k\}\in S$ is a simplex of dimension $k$, then $C(I)$ is the poset of faces of the simplex $I$, equiv. the poset of subsets of $I$, equiv. the boolean lattice of rank $k+1$ with the bottom element removed. Its boundary $\dd C(I)$ is the poset of proper faces of $I$, which coincides with $\Cells(\dd\triangle_I)$. This observation explains the name of $\dd C(s)$ in general.
\end{ex}

The above mentioned intuition leads to the following definition of a cell complex, given in the language of posets. We reserve the symbols $\ca{X}$, $\ca{Y}$, etc. for cell posets, and $\sigma$,$\tau$ etc. for their elements, the cells.

\begin{defin}\label{definCellPoset}
A graded poset $(\ca{X},\rk)$ is called a \emph{cell poset} if, for any $\sigma\in \ca{X}$, the boundary $|\dd C(\sigma)|$ is homeomorphic\footnote{We recall the standard formalism known in topology. The sphere $S^{-1}$ is the empty space. Any set homeomorphic to the empty space is the empty space.} to the sphere $S^{\rk\sigma-1}$. If this is the case, an element $\sigma$ of rank $k$ is called a $k$-\emph{dimensional cell}.
\end{defin}

\begin{rem}
As a consequence of Construction~\ref{conCones}, the cone $|C(\sigma)|$ of each cell $\sigma$ of a cell poset is homeomorphic to a disc $|C(\sigma)|=\Cone|\dd C(\sigma)|\cong \Cone S^{\rk \sigma-1}\cong D^{\rk \sigma}$. The pair $(|C(\sigma)|,|\dd C(\sigma)|)$ resembles the building block $(D^k,S^{k-1})$ of a cell complex in terms of Definition~\ref{difinCWhausdorff}.
\end{rem}

\begin{con}\label{conCellPosetSkeleton}
The subposet $\ca{X}_n=\{\sigma\in \ca{X}\mid \rk \sigma\leq n\}$ is called \emph{the $k$-skeleton} of a cell poset $\ca{X}$. It resembles the topological skeleton of a cell complex as made transparent in the proof of the following proposition.
\end{con}

The following claim provides the connection between CW-complexes and cell posets.

\begin{prop}\label{propCellPosetToCpx}
If $X$ is a regular cell complex, then the poset $\Cells(X)$ is a cell poset. If $\ca{X}$ is a cell poset, then the Hausdorff space $|\ca{X}|$ filtered by $|\ca{X}_n|$ is a regular cell complex.
\end{prop}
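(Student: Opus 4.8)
The plan is to prove the two implications separately, routing both through the fact (the cellular analogue of Remark~\ref{remBarycentric}) that passing to the order complex realizes a barycentric subdivision cell by cell, so that homeomorphism type of cell boundaries is preserved.

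For the first implication, suppose $X$ is a regular cell complex and set $\ca{X}=\Cells(X)$. First I would verify that $\rk\sigma=\dim\sigma$ is a grading in the sense of Definition~\ref{definGrading}: strict monotonicity holds because a proper face of a cell in a regular complex has strictly smaller dimension; the minimal elements are exactly the $0$-cells; and covering relations raise the dimension by one, which is the standard thinness property that in a regular CW-complex an interval $[\sigma,\tau]$ with $\dim\tau=\dim\sigma+k$ has length $k$. It remains to check the sphere condition. For a cell $\sigma$ of dimension $k$, regularity guarantees that the characteristic map $\Phi_\sigma\colon D^k\to\bar{e}_\sigma$ onto the closed cell is a homeomorphism and that the topological boundary $\dd\bar{e}_\sigma$ is a subcomplex homeomorphic to $S^{k-1}$. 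The subposet $\dd C(\sigma)=\ca{X}_{<\sigma}$ is precisely the face poset of this boundary subcomplex, so its order complex $\ord(\dd C(\sigma))$ is the barycentric subdivision of $\dd\bar{e}_\sigma$; by Remark~\ref{remBarycentric} the realization $|\dd C(\sigma)|$ is therefore homeomorphic to $\dd\bar{e}_\sigma\cong S^{k-1}=S^{\rk\sigma-1}$, which is exactly the condition of Definition~\ref{definCellPoset}.

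For the second implication, suppose $\ca{X}$ is a cell poset and consider $|\ca{X}|=|\ord\ca{X}|$ filtered by $|\ca{X}_n|$ as in Construction~\ref{conCellPosetSkeleton}. The key input is Construction~\ref{conCones}: for each cell $\sigma$ of rank $k$ one has $|C(\sigma)|=\Cone|\dd C(\sigma)|$, and the defining sphere condition gives $|\dd C(\sigma)|\cong S^{k-1}$, whence $|C(\sigma)|\cong\Cone S^{k-1}\cong D^k$. I would declare the closed cell of $\sigma$ to be the subcomplex $|C(\sigma)|\subseteq|\ca{X}|$ with characteristic map the composite of this homeomorphism $D^k\cong|C(\sigma)|$ with the inclusion into $|\ca{X}|$. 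Then I would argue by induction on $n$ that $|\ca{X}_n|$ is obtained from $|\ca{X}_{n-1}|$ by attaching the disks $|C(\sigma)|$ with $\rk\sigma=n$ along the maps $\dd D^n\cong|\dd C(\sigma)|\hookrightarrow|\ca{X}_{n-1}|$; each such attaching map is a homeomorphism onto its image, since $|\dd C(\sigma)|$ is an honest subcomplex of the order complex sitting inside $|\ca{X}_{n-1}|$ as the realization of $\ca{X}_{<\sigma}\subseteq\ca{X}_{n-1}$. This exhibits $|\ca{X}|$ as built by the inductive procedure of Definition~\ref{difinCWhausdorff} with homeomorphic attaching maps — hence a regular cell complex — and simultaneously identifies $|\ca{X}_n|$ as its $n$-skeleton. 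Finiteness removes any point-set subtlety about the weak topology.

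The main obstacle, in both directions, is the careful identification of the order-complex realization $|\dd C(\sigma)|$ with the topological cell boundary as a genuine barycentric subdivision, i.e.\ establishing a \emph{homeomorphism} and not merely a weak homotopy equivalence. The weaker statement would follow at once from McCord's theorem (Remark~\ref{remMcCord}), but Definition~\ref{definCellPoset} demands an actual homeomorphism to a sphere and Definition~\ref{difinCWhausdorff} demands genuine disks glued along genuine spheres. Regularity is exactly what makes this possible: without it the attaching maps need not be injective, the closed cells need not be subcomplexes, and the order complex would fail to subdivide the cell boundaries. I would therefore isolate as a lemma the statement that for a regular cell complex $\ord(\Cells(X))$ is a genuine triangulation of $X$ restricting, for each cell, to a triangulation of its boundary sphere, and feed both implications through it.
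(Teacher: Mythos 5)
Your proposal is correct and takes essentially the same route as the paper, which proves this proposition simply by citing Björner~\cite[Prop.~3.1]{bjorner1984poset}: that proof runs exactly through your two steps, namely identifying $|\ord(\Cells(X))|$ with the barycentric subdivision of a regular CW complex (so that $|\dd C(\sigma)|$ triangulates the boundary sphere of each cell) for the forward direction, and inductively attaching the cones $|C(\sigma)|\cong\Cone|\dd C(\sigma)|\cong D^{\rk\sigma}$ along the embeddings $|\dd C(\sigma)|\cong S^{\rk\sigma-1}\hookrightarrow|\ca{X}_{n-1}|$ for the converse. The grading and thinness facts you flag as standard are likewise established in that reference, so nothing is missing.
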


See~\cite[Prop.3.1]{bjorner1984poset} for the proof.
%
%

\subsection{Variations of cell posets}\label{subsecMathCellVariations}

When dealing with specific aspects of cell complexes, for example homotopical or homological, Definition~\ref{definCellPoset} can be relaxed in one way or another. For example, we may forget the requirement for all spaces to be filtered. This situation appears in Morse theory~\cite{MilnorMorse} (Morse decomposition of a manifold does not usually give the structure a CW-complex), and in its algebro-geometrical version Bialynicki-Birula theory \cite{BialynizkiDecomp} (the analogues of cell complexes are called affine pavings). The requirement that $|\dd C(\sigma)|$ is homeomorphic to a sphere may be weakened in several obvious ways.

\begin{defin}\label{definHomotopyCellPoset}
A graded poset $(\ca{X},\rk)$ is called a \emph{homotopy cell poset} if, for any $\sigma\in \ca{X}$, the boundary $|\dd C(\sigma)|$ is homotopy equivalent to the sphere $S^{\rk\sigma-1}$.
\end{defin}

\begin{defin}\label{definHomologyCellPoset}
A graded poset $(\ca{X},\rk)$ is called a \emph{homology cell poset} if, for any $\sigma\in \ca{X}$, the boundary $|\dd C(\sigma)|$ has the same $\Zo$-homology as the sphere $S^{\rk\sigma-1}$:
\[
H_i(|\dd C(\sigma)|;\Zo)\begin{cases}
                          \cong\Zo, & \mbox{if } i=\rk\sigma-1 \\
                          =0, & \mbox{otherwise}.
                        \end{cases}
\]
\end{defin}

Definition~\ref{definHomologyCellPoset} has an advantage over Definitions~\ref{definHomotopyCellPoset} and~\ref{definCellPoset}: the property of a graded poset $(\ca{X},\rk)$ to be a homology cell poset is algorithmically decidable, while homotopy cell posets and cell posets are not, see Remark~\ref{remNovikovForman}. Yet another way to weaken the definition of a cell poset is to drop the assumption of being graded. We will see in Subsection~\ref{subsecMathOneShot} that the basic properties of cellular sheaves and their cohomology remain the same if these more general notions are used instead of cell posets.

\begin{defin}\label{definMorseHomologyCellPoset}
A poset $\ca{X}$ is called a \emph{(homology) Morse cell poset} if, for any $\sigma\in \ca{X}$, the boundary $|\dd C(\sigma)|$ has the same singular homology as a sphere $S^{d_s-1}$. In this case, the elements $s\in S$ are called \emph{cells}, and the corresponding number $d_s$ is called \emph{the dimension of a cell} $s$.
\end{defin}

\begin{rem}\label{remHomologyInContext}
Depending on the context, singular homology is taken with specific coefficients. If the target abelian category is $\Abel=\Zo\Mod$, then the coefficients are taken in $\Zo$, so that $|\dd C(\sigma)|$ should have the same $\Zo$-homology as a sphere. If the target abelian category is the category $\ko\Vect$ of vector spaces over a field $\ko$, then $|\dd C(\sigma)|$ should have the same $\ko$-homology as a sphere. The same remark relates to Definition~\ref{definHomologyCellPoset} as well.
\end{rem}

The definition of Morse homotopy cell poset may be given in a straightforward way.

\begin{figure}
  \centering
  \includegraphics[scale=0.25]{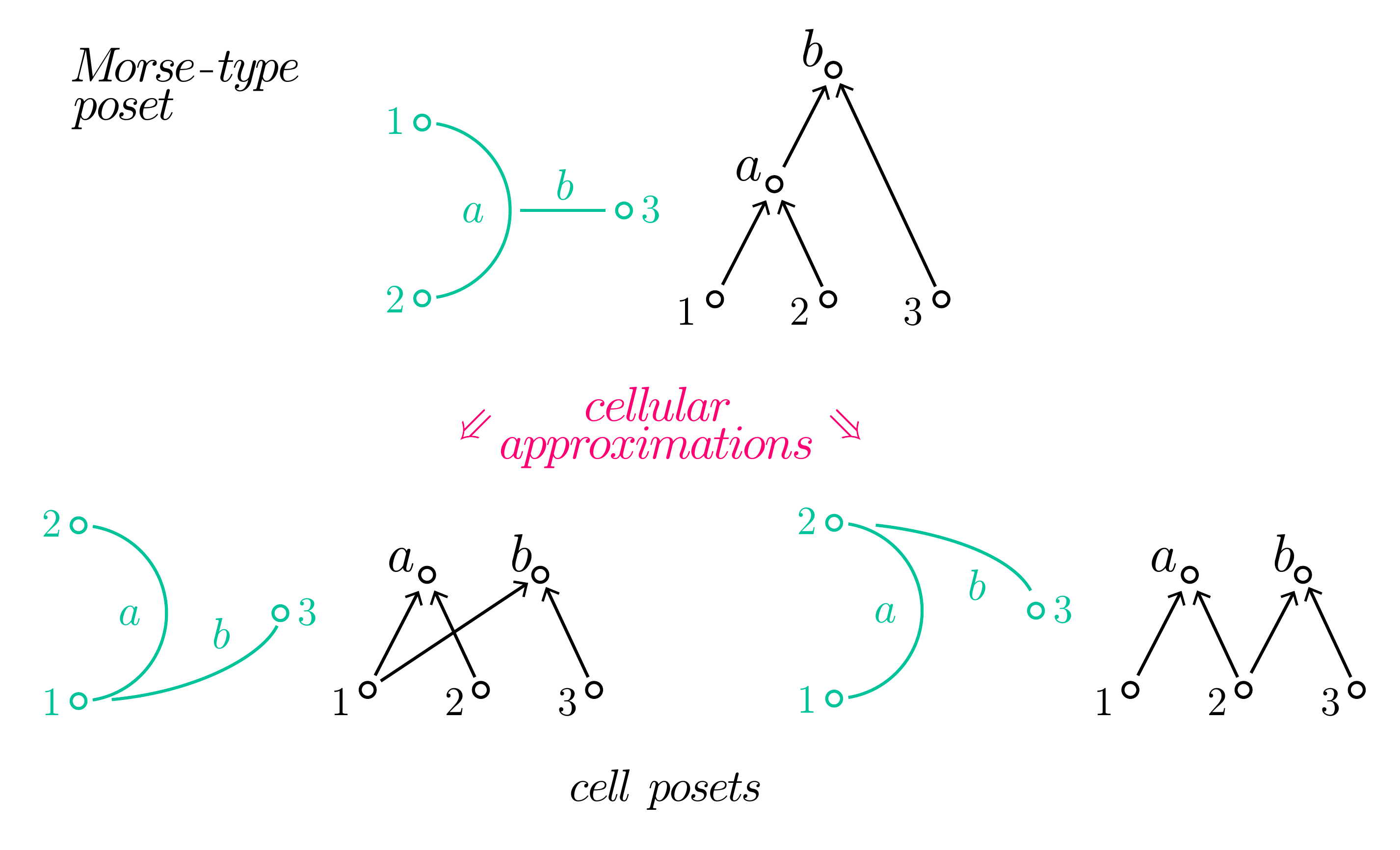}
  \caption{Example of Morse sell poset and the corresponding ``topological intuition''. There are two cellular approximations available for this poset.}\label{figElementaryMorse}
\end{figure}

\begin{ex}\label{exMorseExample}
Fig.~\ref{figElementaryMorse} shows an example of a Morse cell poset which is not graded. There is a geometrical intuition behind such a poset: assume we have 3 vertices $1,2,3$ and two edges $a,b$, and $a$ is attached to $1$ and $2$ in a natural way, while endpoints of $b$ are attached to some middle-point of $a$ and $3$ respectively. This geometrical figure is not a CW-complex, because it contradicts to the assumption that boundaries of cells are attached to lower-dimensional skeleta. However, such situations often (if not always) occur in the classical Morse theory: the (un)stable manifolds of the gradient flow of a Morse function do not form a CW-decomposition.
\end{ex}

\begin{rem}\label{remSubposet}
The posets defined by either of the Definitions~\eqref{definCellPoset},\ref{definHomotopyCellPoset},\ref{definHomologyCellPoset}, or~\ref{definMorseHomologyCellPoset} will be called cellular-like posets. If $\ca{X}$ is cellular-like, then lower order ideals $\ca{Y}\subseteq\ca{X}$ will be called cell subposets. The condition of lower order ideal means that whenever $\sigma\in\ca{Y}$ and $\tau<\sigma$, then $\tau\in\ca{Y}$. It is easily seen that cell subposets inherit the defining property of $\ca{X}$: for example, if $\ca{X}$ is a homology Morse cell poset, then so are its subposets $\ca{Y}$.
\end{rem}

A convention used in the following: notation $S_{\leq s}$, $S_{<s}$ is used for generic posets, while $C(s)$, $\dd C(s)$ is used for cell posets to be defined below. The letter $C$ stands here for ``cell'', and cell is usually assumed to be bounded by a sphere, or something like a sphere.


\section{Sheaves and diagrams}\label{secMathSheaves}

\subsection{Diagrams} 

In this section we recall the notion of a diagram on a poset, sheaf on a topological space, and describe their basic relations. We start with a more common and more intuitive notion of a diagram on a poset.

We recall some basic terminology from category theory to fix the notation. If $\Vv$ is a category, $\Ob(\Vv)$ denotes the class of objects of $\Vv$ and $\Hom_\Vv(a,b)$ the set of morphisms from an object $a$ to an object $b$. We usually abuse the notation and write $a\in\Vv$ instead of $a\in\Ob(\Vv)$. The symbol $\Cc$ will denotes generic small categories, while $\Vv$ is reserved for big categories such as
\begin{enumerate}
  \item the category $\Sets$ of sets and their maps;
  \item the category $\Abel$ of abelian groups and group homomorphisms;
  \item the category $\ko\Vect$ of vector spaces over a ground field $\ko$ and linear maps;
  \item the category $R\Mod$ of $R$-modules and module homomorphisms.
\end{enumerate}
Usually it is assumed that $\Vv$ is \emph{complete and cocomplete}, i.e. has all direct and inverse limits.

\begin{defin}\label{definDiagramOnPoset}
A \emph{diagram on a preposet} $S$ valued in a category $\Vv$ is a functor $D\colon \cat(S)\to \Vv$. 
\end{defin}

\begin{rem}\label{remDiagramInformally}
In simple words, a diagram on a preposet consists of two assignments. (1) To each element $s\in S$ we assign some object $D(s)=V_s$ of a category $\Vv$ (e.g., if the category $\Vv$ is $\ko\Vect$, $V_s$ is a vector space). (2) To each pair $s_1,s_2\in S$ such that $s_1\leq s_2$, we assign a map
\[
f_{s_1s_2}=D(s_1\leq s_2)\colon V_{s_1}\to V_{s_2}.
\]
These maps satisfy compositionality: $f_{ss}=\id_{V_s}$ for any $s\in S$, and $f_{s_1s_2};f_{s_2s_3}=f_{s_1s_3}$ for any $s_1\leq s_2\leq s_3$.
\end{rem}

All diagrams on $S$ form the category $\Diag(S,\Vv)$, in which the morphisms are natural transformations of functors.

\begin{rem}\label{remPreposetsNotNeeded}
Let $S$ be a preordered set, and $\bar{S}$ the corresponding quotient poset, as described in Construction~\ref{conPreposetToPoset}. Assume $D$ is a diagram on $S$. Whenever $s_1\sim s_2$ in $S$, we have two maps
\[
D(s_1\leq s_2)\colon D(s_1)\leftrightarrows D(s_2)\colon D(s_2\leq s_1)
\]
which are inverses of each other, hence provide the isomorphism between $D(s_1)$ and $D(s_2)$. Therefore we can meaningfully define a diagram $\bar{D}$ on $\bar{S}=S/\sim$ by setting $\bar{D}([s])=D(s)$, $\bar{D}([s_1]\leq [s_2])=D(s_1\leq s_2)$. Two maps $S\leftrightarrows \bar{S}$ described in Construction~\ref{conPreposetToPoset} prove equivalence of categories $\Diag(S;\Vv)=\Diag(\bar{S};\Vv)$. For this reason, when we speak of diagrams, sheaves or whatever, we can restrict consideration to posets. Every preordered set can be naturally transformed into a poset so that the category of diagrams do not change.
\end{rem}

\subsection{Presheaves and sheaves} 
Now we define the notion of a sheaf and presheaf. For a general topological space $X$ let $\OpSets(X)$ denote the set of all open subsets of $X$. The set $\OpSets(X)$ is partially ordered by inclusion.

\begin{defin}\label{definPresheaf}
A \emph{presheaf} on a topological space $X$ valued in $\Vv$ is a contravariant functor $\ca{F}\colon \cat(\OpSets(X)\setminus\{\varnothing\})^{\op}\to \Vv$.
\end{defin}

\begin{rem}\label{remPresheafInformally}
$\cat(\OpSets(X)\setminus\{\varnothing\})^{\op}$ is a category, opposite to the category of the poset of open subsets of $X$. In other words, presheaf can be considered as a rule which associates, to any non-empty open subset $U\in\Omega_X$, an object $V_U=\ca{F}(U)$ of the category $\Vv$, and, to each pair of open subsets $U_1\subseteq U_2$, a morphism
\[
r_{U_2\supseteq U_1}=\ca{F}(U_2\supseteq U_1)\colon V_{U_2}\to V_{U_1}.
\]
These maps are called \emph{restriction morphisms} of a presheaf. All restriction morphisms should satisfy the compositionality conditions: $r_{U\supseteq U}=\id_{V_U}$ and $r_{U_3\supseteq U_2};r_{U_2\supseteq U_1}=r_{U_3\supseteq U_1}$.
\end{rem}

\begin{rem}\label{remSectionOfPresheaf}
In the common categories $\Vv$ like $\Sets$ or $\ko\Vect$, the notion of an element of an object $V\in\Ob(\Vv)$ make sense. The elements $x\in\ca{F}(U)$ are called \emph{sections} of a presheaf $\ca{F}$ on a set $U$. If $U_1\subseteq U_2$ and $x\in\ca{F}(U_2)$, then $\ca{F}(U_2\supseteq U_1)(x)\in \ca{F}(U_1)$ is called the restriction of the section $x$ to the subset $U_1$ and is denoted by $x|_{U_1}$.
\end{rem}

\begin{defin}\label{definSheaf}
A presheaf $\ca{F}$ is called a \emph{sheaf} if the following two properties are satisfied:
\begin{enumerate}
  \item (Locality) Let $U$ be an open set, which is covered by a family of open subsets, $\{U_{i}\}_{i\in I}$, $U_{i}\subseteq U$ for each $i\in I$, and suppose we are given two elements $s,t\in \ca{F}(U)$. If $s|_{U_{i}}=t|_{U_{i}}$ for each $i\in I$, then $s=t$.
  \item (Gluing) Let $U$ be an open set and $\{U_{i}\}_{i\in I}$ be its open covering by subsets $U_{i}\subseteq U$ and let $\{s_{i}\in \ca{F}(U_{i})\}_{i\in I}$ be a collection of sections. Suppose the sections agree on the overlap of their domains, i.e. $s_{i}|_{U_{i}\cap U_{j}}=s_{j}|_{U_{i}\cap U_{j}}$ for each $i,j\in I$. Then there exists a section $s\in \ca{F}(U)$ such that $s|_{U_{i}}=s_{i}$ for each $i\in I$.
\end{enumerate}
\end{defin}

All $\Vv$-valued presheaves on a topology $X$ form the category $\PreShvs(X,\Vv)$, whose morphisms are natural transformations of functors from $\cat(\OpSets(X)\setminus\{\varnothing\})^{\op}$ to $\Vv$. Sheaves form a complete subcategory $\Shvs(X,\Vv)$ in the category $\PreShvs(X,\Vv)$.

\begin{defin}\label{defStalk}
If $\ca{F}$ is a presheaf on a topological space $X$ and $x\in X$, then
\[
\ca{F}_x=\lim\limits_{\substack{\longrightarrow\\U\in\Omega_X, U\ni x}}\ca{F}(U)\hspace{6pt}\in \Ob(\Vv)
\]
(the direct limit of values of the presheaf over all open subsets which contain $x$), is called a \emph{stalk} of the presheaf $\ca{F}$ at a point $x$.
\end{defin}

The definition makes sense only if the category $\Vv$ has direct limits. From the general categorical definition of a direct limit~\cite{MacLane} it follows that for each $U\ni x$, $U\in \Omega_X$ there exists a canonical morphism $\ca{F}(U\ni x)\colon \ca{F}(U)\to \ca{F}_x$.


\subsection{Relation between sheaves and diagrams} 

Apparently, diagrams on posets and sheaves on topologies are connected via the notion Alexandrov topology and Kan's extension. According to Proposition~\ref{propPosTop}, each poset $S$ corresponds to a $T_0$-Alexandrov topology $X_S$. There is a natural one-to-one correspondence between diagrams on $S$ valued in $\Vv$ and sheaves on $X_S$ valued in $\Vv$.

\begin{prop}\label{propDiagSheaf}
The category $\Diag(S,\Vv)$ of diagrams on a poset $S$ is naturally equivalent to the category $\Shvs(X_S,\Vv)$ of sheaves on the corresponding Alexandrov topology.
\end{prop}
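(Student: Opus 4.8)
The plan is to construct a pair of functors between $\Diag(S,\Vv)$ and $\Shvs(X_S,\Vv)$ and to show they are mutually quasi-inverse. The whole argument hinges on the existence, in an Alexandrov space, of a minimal open neighbourhood $U_s = S_{\geq s}$ of each point $s$ (Remark~\ref{remMinNbhdIsACone}), together with the fact that the open sets of $X_S$ are exactly the upward-closed subsets (Construction~\ref{conPrePosToSpace}).

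First I would define the forward functor $D \mapsto \ca{F}_D$. For an open (i.e.\ upward-closed) set $U$ put $\ca{F}_D(U) = \Gamma(U; D|_U) = \varprojlim_{\cat(U)} D|_U$, the object of coherent families as in Remark~\ref{remLocalSection}; for an inclusion $U' \subseteq U$ of open sets the restriction morphism is the canonical map between these limits. In the reverse direction I would define $\ca{F} \mapsto D_{\ca{F}}$ by $D_{\ca{F}}(s) = \ca{F}_s = \ca{F}(U_s)$, using that the stalk (Definition~\ref{defStalk}) is computed by the minimal neighbourhood, so the direct limit stabilises at $\ca{F}(U_s)$; for $s \leq t$ one has $U_t \subseteq U_s$, and $D_{\ca{F}}(s \leq t)$ is the restriction morphism $\ca{F}(U_s) \to \ca{F}(U_t)$. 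Both assignments are visibly functorial on morphisms, natural transformations of diagrams corresponding to morphisms of (pre)sheaves and vice versa.

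Next I would verify that $\ca{F}_D$ is genuinely a sheaf, not merely a presheaf. For a cover $U = \bigcup_i U_i$ the locality axiom of Definition~\ref{definSheaf} is immediate, since a coherent family $(x_s)_{s\in U}$ is determined pointwise and every $s$ lies in some $U_i$. The gluing axiom is where the Alexandrov structure enters decisively: given compatible coherent families $\sigma_i$ on the $U_i$, one sets $x_s$ equal to the value of any $\sigma_i$ with $s \in U_i$, and to check coherence on a relation $s < t$ one uses that $U_i$ is upward closed, so $s \in U_i$ forces $t \in U_i$; thus $s$ and $t$ lie in a common $U_i$ and the required identity $D(s<t)(x_s) = x_t$ holds there. (For a general target category the same conclusion is obtained by expressing the sheaf condition as a limit and interchanging it with the defining limit of $\ca{F}_D$.)

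Finally I would establish that the two composites are naturally isomorphic to the identities. For $D \mapsto \ca{F}_D \mapsto D_{\ca{F}_D}$, one has $D_{\ca{F}_D}(s) = \ca{F}_D(U_s) = \Gamma(U_s; D|_{U_s})$; since $U_s = S_{\geq s}$ has $s$ as its least element, a coherent family on $U_s$ is forced by its value at $s$, giving a natural isomorphism $\Gamma(U_s; D|_{U_s}) \cong D(s)$, and these assemble into an isomorphism of diagrams. The main obstacle is the other composite $\ca{F} \mapsto D_{\ca{F}} \mapsto \ca{F}_{D_{\ca{F}}}$, which requires the reconstruction isomorphism $\ca{F}(U) \cong \varprojlim_{s \in U} \ca{F}_s = \Gamma(U; D_{\ca{F}}|_U)$: one covers $U$ by its minimal neighbourhoods $\{U_s\}_{s\in U}$ and argues that the locality and gluing axioms force $\ca{F}(U)$ to be precisely the limit of the stalks over this cover. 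Verifying this identification, and checking that it together with the previous one is natural in $\ca{F}$ and in $D$, is the substantive step; everything else reduces to the bookkeeping of limits and the upward-closed character of Alexandrov opens.
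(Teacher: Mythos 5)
Your proposal is correct and follows essentially the same route as the paper: the paper's Construction~\ref{conSheafToDiag} (the diagram of stalks, read off on minimal open neighbourhoods $U_s$) and Construction~\ref{conDiagToSheaf} (the sheaf of sections, defined as $\varprojlim_{s\in U}D(s)$) are exactly your two functors, and the paper then leaves the quasi-inverse and functoriality verifications as exercises for the reader. You have simply carried out those checks explicitly --- the sheaf axioms via upward-closedness of Alexandrov opens, the isomorphism $\Gamma(U_s;D|_{U_s})\cong D(s)$ from $s$ being initial in $U_s$, and the reconstruction $\ca{F}(U)\cong\varprojlim_{s\in U}\ca{F}(U_s)$ --- in more detail than the paper does.
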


For completenes, we review two standard constructions providing transformation of a diagram into a sheaf and vice versa.

\begin{con}\label{conSheafToDiag}
Let $\ca{F}$ be a presheaf on $X_S$. Recall that for each point $x\in S$ there is a minimal open neighbourhood $U_x$ in the topology on $X_S$ (see Construction~\ref{conSpaceToPrepos} and Remark~\ref{remMinNbhdIsACone}). If $x\leq y$, then we have $U_x\supseteq U_y$. Define a diagram $F$ on $S$ by
\[
F(x)=\ca{F}(U_x),\qquad F(x\leq y)=\ca{F}(U_x\supseteq U_y).
\]
It is easy to check that this diagram on $S$ is well-defined.
\end{con}

\begin{rem}
For a presheaf $\ca{F}$ on the Alexandrov topology we have $\ca{F}_x=\lim\limits_{\substack{\longrightarrow\\x\in U}}\ca{F}(U)=\ca{F}(U_x)$ since $U_x$ is the minimal open set containing a point $x$, thus is the terminal object of the diagram, on which the limit is taken. Therefore, Construction~\ref{conSheafToDiag} means that $F$ is actually the diagram of stalks of a presheaf $\ca{F}$.
\end{rem}

\begin{con}\label{conDiagToSheaf}
Let $D$ be a diagram on $S$. We construct the corresponding sheaf $\ca{D}$ on the Alexandrov space $X_S$ with $\Omega_S=\OpSets(X_S)$ the collection of upper order ideals. For each open set $U\in\Omega_S$ define
\begin{equation}\label{eqSheafFromDiagram}
\ca{D}(U)=\lim\limits_{\substack{\leftarrow \\ s\in U}}D(s),
\end{equation}
--- the inverse limit of the restriction of the diagram $D$ to a subcategory $\cat(U)$. For each pair $U_1\subset U_2$ we define the map $\ca{D}(U_2\supset U_1)\colon \ca{D}(U_2)\to \ca{D}(U_1)$ in a natural way, by the universal property of the inverse limit. The uniqueness of the universal object and the corresponding morphisms guarantee that $\ca{D}$ is a presheaf on $X_S$ (i.e. a commutative diagram on $\cat(\Omega_S\setminus\{\varnothing\})^{\op}$). Moreover, simple arguments involving the uniqueness and naturality of the inverse limit establish the properties of locality and gluing, which ensure that $\ca{D}$ is a sheaf on $X_S$. We call it \emph{the sheaf of sections} of the diagram $D$.
\end{con}

\begin{rem}
In more abstract terms, Construction~\ref{conDiagToSheaf} can be defined by the right Kan extension, see~\cite[Theorem 4.2.10]{Curry}.
\end{rem}

Checking that Constructions~\ref{conSheafToDiag} and~\ref{conDiagToSheaf} define the bijective correspondence between sheaves on $X_S$ and diagrams on $S$ is yet another simple exercise for the reader. Functoriality of both constructions is easily checked easily as well proving Proposition~\ref{propDiagSheaf}.

\begin{rem}\label{remSheafification}
We note that Construction~\ref{conSheafToDiag} is formulated for an arbitrary presheaf $\ca{F}$. Let us take a presheaf $\ca{F}$ on an Alexandrov space $X$ and build the diagram $F$ of its stalks by Construction~\ref{conSheafToDiag}, and then build its sheaf of sections $\overline{\ca{F}}$ by Construction~\ref{conDiagToSheaf}. Then the sheaf $\overline{\ca{F}}$ has the same stalks as the presheaf $\ca{F}$. Generally speaking, a sheaf with such properties is called the \emph{sheafification} of a presheaf $\ca{F}$. It is the left adjoint functor of the inclusion functor from $\Shvs(X;\Vv)$ to $\PreShvs(X;\Vv)$.
\end{rem}

\subsection{Global sections}

\begin{con}\label{conGlobalSections}
Let $\ca{F}\in\Shvs(X_S;\Vv)$ be a sheaf. The value
\[
\Gamma(X_S;\ca{F})=\ca{F}(X_S)\in\Vv
\]
of this sheaf on the whole space $X_S$ is called the set\footnote{Formally it is not the set, but rather an object of a category $\Vv$. However, since most common examples of categories $\Vv$ are concrete, it is not a big mistake to call it a set.} of global sections of the sheaf $\ca{F}$. From the definition of a sheaf it follows that global sections are functorial: if we have a morphism $f\colon \ca{D}\to\ca{F}$ of sheaves, we naturally have a map $\Gamma(X_S;f)\colon \ca{D}(X_S)\to\ca{F}(X_S)$ of their values. Henceforth, we have a functor
\begin{equation}\label{eqFunctorOfGlobalSects}
\Gamma(X_S,\cdot)\colon \Shvs(X_S;\Vv)\to \Vv,
\end{equation}
which is called \emph{the functor of global sections}.
\end{con}

\begin{con}\label{conInverseLimit}
Proposition~\ref{propDiagSheaf} states $\Shvs(X_S;\Vv)=\Diag(S;\Vv)$. In the language of diagrams over $S$, we have the following description of the global sections. Let $D\in \Diag(S;\Vv)$ be a diagram. Then $\Gamma(S,D)$ is the value of the corresponding sheaf $\ca{D}$ on the space $X_S$. This value is defined as the inverse limit
\[
\Gamma(S,D)=\lim\limits_{\substack{\leftarrow \\ s\in S}}D(s)
\]
according to the general Construction~\ref{conDiagToSheaf}. This is the reason why the functor of global sections is called \emph{the inverse limit functor} in topological and algebraical sources, see~\cite{careil1956homalg, grothendieck1957tohoku}.
\end{con}

\begin{rem}\label{remConcreteLimSets}
We provide an explicit construction of the inverse limit in the category $\ko\Vect$ in order to make formula~\eqref{eqSheafFromDiagram} more meaningful for readers not familiar with abstract nonsense. Essentially the procedure of taking inverse limits is the most essential ingredient in the applications of sheaves: this part of the theory deals with solving equations. We have
\begin{equation}\label{eqInvLimit}
\ca{D}(U)=\lim\limits_{\substack{\leftarrow \\ s\in U}}D(s)=\left\{(x_s\mid s\in U)\in\prod\nolimits_{s\in U}D(s)\mid D(s_1\leq s_2)x_{s_1}=x_{s_2} \text{ for any }s_1\leq s_2 \right\}.
\end{equation}
In other words, a section $x\in \ca{D}(U)$ on an open set $U$ coincides with a collection of vectors $x_s\in D(s)$, one for each $s\in U$, which are compatible with respect to the maps of the diagram. We also call such collection of vectors coherent. In the form~\eqref{eqInvLimit}, the restriction map $\ca{D}(U_2\supset U_1)$ is written in an straightforward manner, because the family of compatible stalks on the set $U_2$ is also compatible on its subset $U_1$.
\end{rem}

We give an important methodological remark, to which we refer in the analysis of the sheaves' applications.

\begin{rem}\label{remLimsToProductsEqualizers}
A construction similar to~\eqref{eqInvLimit} makes sense in other common categories such us $\Sets$, $R\Mod$, $\Top$, etc. In more general categorical terms, there is a standard statement (see e.g.~\cite[\S V.2]{MacLane}) that procedure of taking inverse limits (over small or finite, respectively, categories) reduces to taking small or finite, respectively, products and equalizers. In practice, this means that dealing with general data-structures $\Vv$ as the values of a (finitely supported) sheaf $\ca{F}$, the generation of a global section $\ca{F}(U)$ reduces to two procedures as follows.
\begin{enumerate}
  \item Generation of finite products. Usually not a problem, since Cartesian products are supported by all common data structures. E.g. concatenation of vectors is ubiquitous in data science, and is rarely explicitly mentioned at all.
  \item Generation of equalizers. In practice this means solving (systems of) equations of the form ``Find $x\in V$, such that $F(x)=G(x)$'' where $V,W\in \Ob(\Vv)$ and $F,G\in \Hom(V, W)$ are given. This step is usually hard, but it makes the theory substantial. The solution of equations (as a process) is the substantial part of sheaves' inspired architectures of neural networks, as defined in subsection~\ref{subsecSheafLearning} and reviewed in Section~\ref{secReviewShvsML}.  
\end{enumerate}
\end{rem}


\subsection{Functorial properties}\label{subsecMathFunctorial}

Here we discuss how the maps of spaces and posets interact with sheaves and diagrams. We won't retell the whole six-functor formalism of Grothendieck here and restrict to the necessary minimum. In general, a continuous map of topological spaces $f\colon X\to Y$ induces direct and inverse image functors on categories of sheaves
\[
f_*\colon \Shvs(X;\Vv)\to \Shvs(Y;\Vv), \qquad f^*\colon \Shvs(Y;\Vv)\to \Shvs(X;\Vv).
\]
For the details of this construction in general, we refer to~\cite{Iversen}. In case of posets the definition of the functor $f^*$ (inverse image of a sheaf) is significantly simplified at the conceptual level and, as a result, finds numerous applications in applied topology and contemporary approaches to neural networks.

Let $f\colon S\to T$ be a morphism of posets (equiv, a continuous map of the corresponding Alexandrov topological spaces), see Proposition~\ref{propPosTop}.

\begin{defin}\label{defDirectImage}
Let $\ca{F}\in\Shvs(X_S;\Vv)$ be a sheaf on the Alexandrov space $X_S$. Then its \emph{direct image} is a presheaf $f_*\ca{F}$ on the topological space $X_T$ whose values on open subsets $U\in\OpSets(X_S)$ and inclusions $U_1\supseteq U_2$ are defined by the following formulas:
\[
f_*\ca{F}(U)=\ca{F}(f^{-1}(U)),\qquad f_*\ca{F}(U_1\supseteq U_2)=\ca{F}(f^{-1}(U_1)\supseteq f^{-1}(U_2)).
\]
\end{defin}

A quite standard proposition states that the direct image $f_*\ca{F}$ is a sheaf. The map $f_*$ is functorial, i.e. it is a well-defined functor from the category $\Shvs(X_S;\Vv)$ to the category $\Shvs(X_T;\Vv)$. The inverse image functor is more intricate, since an image of an open set under a continuous map may fail to be open. However, for posets and Alexandrov topologies, the construction of the inverse image functor is simplified due to the equivalence between sheaves and diagrams over posets. Assume again that $f\colon S\to T$ is a morphism of posets and $\ca{D}\in\Shvs(X_T;\Vv)$ is a sheaf on the topological space $X_T$. According to Proposition~\ref{propDiagSheaf}, sheaf $\ca{D}$ corresponds to a diagram $D\colon \cat(T)\to\Vv$.

\begin{defin}\label{defInverseImage}
The \emph{inverse image} of a diagram $D$ on $T$ (or the corresponding sheaf $\ca{D}$ on $X_T$) under a morphism $f\colon S\to T$ is
a diagram $f^*D$ on the poset $S$ (or the corresponding sheaf $f^*\ca{D}$ on a space $X_S$), defined by the formulas\footnote{In algebraic geometry there is a difference between $f^{-1}$ and $f^*$, and here we are actually defining $f^{-1}$. Since we do not work with ringed spaces in this review, we prefer to keep notation simple and use symbol $f^*$ for inverse image.}:
\[
f^*D(s)=D(f(s)),\qquad f^*D(s\leq s')=D(f(s)\leq f(s')).
\]
\end{defin}

It can be seen that $f^*$ is functorial, so that we have a functor $f^*\colon\Shvs(X_T;\Vv)\to \Shvs(X_S;\Vv)$. The proof of the next statement can be found in~\cite[Theorem 5.3.1]{Curry} for sheaves on finite spaces, or in~\cite[Theorem 4.8]{Iversen} for general topological spaces.

\begin{prop}\label{propAdjointPair}
The functors of direct and inverse image form an adjoint pair. This means there is a natural bijection
\[
\Hom_{\Shvs(X_T,\Vv)}(\ca{D},f_*\ca{F})=\Hom_{\Shvs(X_S,\Vv)}(f^*\ca{D},\ca{F})
\]
for any $\ca{F}\in\Shvs(X_S,\Vv)$ and $\ca{D}\in\Shvs(X_T,\Vv)$.
\end{prop}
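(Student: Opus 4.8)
The plan is to transport the whole statement to the side of diagrams via the equivalence $\Shvs(X_S,\Vv)\cong\Diag(S,\Vv)$ of Proposition~\ref{propDiagSheaf}, where both functors acquire transparent descriptions. Under this equivalence the inverse image $f^*$ is mere precomposition with the functor $\cat(f)$: for a diagram $D$ on $T$ one has $f^*D=D\circ\cat(f)$, so $f^*D(s)=D(f(s))$ as in Definition~\ref{defInverseImage}. The direct image $f_*$ becomes a fibrewise inverse limit: translating Definition~\ref{defDirectImage} through Construction~\ref{conSheafToDiag} and using that the minimal open neighbourhood of $t\in T$ is $U_t=T_{\geq t}$, one computes for a diagram $F$ on $S$
\[
(f_*F)(t)=\ca{F}\bigl(f^{-1}(T_{\geq t})\bigr)=\lim\limits_{\substack{\leftarrow \\ s\colon f(s)\geq t}} F(s),
\]
where $\ca{F}$ is the sheaf of sections of $F$ from Construction~\ref{conDiagToSheaf}. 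Conceptually this identifies $f_*$ with the right Kan extension of $F$ along $\cat(f)$ (as already noted in the remark following Construction~\ref{conDiagToSheaf}), so that the adjunction $f^*\dashv f_*$ is nothing but the defining universal property of right Kan extensions. I would nonetheless exhibit the bijection by hand, since the explicit formulas make the naturality checks elementary and keep the argument self-contained.

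The bijection is built from the universal property of the limit defining $f_*F$; write $\pi^t_s\colon (f_*F)(t)\to F(s)$ for the projection indexed by $s$ with $f(s)\geq t$. Going forward, a morphism $\alpha\colon D\to f_*F$ consists of maps $\alpha_t\colon D(t)\to (f_*F)(t)$, each of which is by the universal property a cone with legs $\pi^t_s\circ\alpha_t$. I would set $\beta_s:=\pi^{f(s)}_s\circ\alpha_{f(s)}\colon D(f(s))\to F(s)$ (legitimate since $f(s)\geq f(s)$), producing a candidate $\beta\colon f^*D\to F$. Going backward, given $\beta\colon f^*D\to F$ with components $\beta_s\colon D(f(s))\to F(s)$, I would define $\alpha_t$ by prescribing its legs
\[
D(t)\xrightarrow{D(t\leq f(s))} D(f(s))\xrightarrow{\;\beta_s\;} F(s),\qquad s\colon f(s)\geq t,
\]
and invoking the universal property to obtain the unique induced map $\alpha_t$.

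The real work lies in three verifications, all of which are diagram chases relying only on functoriality of $D$ and $F$ and the universal property of limits. First, that the backward-defined legs genuinely form a cone over the diagram $s\mapsto F(s)$ restricted to $\{s\colon f(s)\geq t\}$: this uses naturality of $\beta$ with respect to $s\leq s'$ together with the compositionality $D(t\leq f(s));D(f(s)\leq f(s'))=D(t\leq f(s'))$. Second, that the forward-defined $\beta$ is a bona fide natural transformation of diagrams on $S$, i.e. compatible with $D(f(s)\leq f(s'))$ and $F(s\leq s')$. Third, that the two assignments are mutually inverse and natural in both $D$ and $F$. The step I expect to be the main obstacle is the second one, because it forces one to match the restriction maps of the direct-image diagram $f_*F$ with the limit projections: concretely, for $t\leq t'$ the inclusion of index sets $\{s\colon f(s)\geq t'\}\subseteq\{s\colon f(s)\geq t\}$ induces the structure map $(f_*F)(t)\to (f_*F)(t')$, and one must check this map intertwines the legs $\pi^t_s$ and $\pi^{t'}_s$ in exactly the way required by naturality of $\alpha$. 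Once this compatibility between projections and restriction maps is pinned down, the remaining identities collapse to routine applications of the universal property.
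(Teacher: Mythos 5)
Your proposal is correct, but note that the paper does not actually prove Proposition~\ref{propAdjointPair}: it defers entirely to \cite[Theorem 5.3.1]{Curry} (finite spaces) and \cite[Theorem 4.8]{Iversen} (general spaces). So your argument is a self-contained replacement for a citation, and it follows essentially the route the cited source takes for finite spaces: identify $f^*$ with precomposition along $\cat(f)$, identify $f_*$ with the right Kan extension $t\mapsto\lim_{\leftarrow}\{F(s)\mid f(s)\geq t\}$ (which is exactly what Definition~\ref{defDirectImage} becomes after evaluating on the minimal open neighbourhoods $U_t=T_{\geq t}$ via Construction~\ref{conSheafToDiag}), and then read off the adjunction from the universal property of the limit. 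Your three verification steps are precisely the ones needed, and they all go through: the cone condition for the backward map follows from naturality of $\beta$ plus compositionality of $D$; naturality of the forward-defined $\beta$ and of the backward-defined $\alpha$ both reduce to the compatibility $\pi^{t'}_s\circ(f_*F)(t\leq t')=\pi^{t}_s$, which holds by the very definition of the restriction maps of $\ca{F}$ (and hence of $f_*F$) via the universal property in Construction~\ref{conDiagToSheaf}; and mutual inversion follows from uniqueness of limit-induced maps, using naturality of $\alpha$ at $t\leq f(s)$ in one direction and $D(f(s)\leq f(s))=\id$ in the other. The only hypotheses you are silently using — completeness of $\Vv$ (so the limits defining $f_*F$ exist) and the equivalence $\Shvs(X_S,\Vv)\cong\Diag(S,\Vv)$ of Proposition~\ref{propDiagSheaf} — are both assumptions the paper makes, so nothing is missing.
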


Recalling Proposition~\ref{propDiagSheaf}, we have a sequence of natural bijections
\begin{multline}\label{eqAllBijections}
\Hom_{\Diag(T,\Vv)}(D,f_*F)=\Hom_{\Shvs(X_T,\Vv)}(\ca{D},f_*\ca{F})=\\
\Hom_{\Shvs(X_S,\Vv)}(f^*\ca{D},\ca{F})=\Hom_{\Diag(S,\Vv)}(f^*D,F).
\end{multline}

\section{Sheaf cohomology}\label{secMathCohomology}

This section describes the definition of sheaf cohomology, and introduces various computational approaches to cohomology calculations. An important role here is played by additive, and more generally abelian categories, and additive functors between them. The basic examples of abelian categories are the categories $\Abel$ of abelian groups, the category $\ko\Vect$ of vector spaces over a field $\ko$, or the category $R\Mod$ of modules (e.g. left modules) over a ring $R$. For a general introduction to abelian categories we refer to e.g.~\cite{GelMan}. We are mainly interested in Grothendieck categories, the particularly well-behaved subclass of abelian categories, to which the listed examples belong~\cite[Ch.5,\SS8-9]{BucurDeleanu}.

\subsection{Recap on (co)chain complexes}\label{subsecMathCochainRecap}

In this subsection we review the basic toolbox of homological algebra: cochain complexes and exact sequences.

\begin{defin}\label{definCochainCpx}
Assume $\Vv$ is an abelian category. The sequence
\begin{equation}\label{eqCochainCpx}
0\to C^0\stackrel{d_0}{\to} C^1 \stackrel{d_1}{\to} C^2 \stackrel{d_2}{\to}\cdots
\end{equation}
of objects of $\Vv$ and morphisms between them is called a \emph{(connective) cochain complex}\footnote{We consider only connective cochain complexes, i.e. indexed with nonnegative integers, because only these complexes appear in the review.} if $d_j;d_{j+1}=0$ for any $j=0,1,\ldots$ (sometimes the numeration starts with $-1$). For short, a cochain complex~\eqref{eqCochainCpx} is denoted $(C^*,d)$.
\end{defin}

The definition implies the natural inclusion $p_j\colon \im d_j\hookrightarrow \Ker d_{j+1}$.

\begin{defin}\label{definCohomologyCochain}
The \emph{cohomology of the cochain complex} $(C^*,d)$ are defined as the objects
\[
H^j(C^*,d)=\Ker d_{j+1}/\im d_j = \Coker p_j \mbox{ for }j=0,1,\ldots
\]
of the category $\Vv$, indexed by nonnegative numbers.
\end{defin}

A cochain complex $(C^*,d)$ --- or any sequence of the form~\eqref{eqCochainCpx} --- is called \emph{exact} (resp. exact at position $i$), if $H^j(C^*,d)=0$ for any $j$ (resp. for $j=i$). Equivalently, $\im d_j=\Ker d_{j+1}$.

\begin{con}\label{conCategoryCochain}
Connective cochain complexes over $\Vv$ form a category, in which the maps (called \emph{cochain maps}) are defined componentwise. A cochain map $f\colon (C^*,d_C)\to (B^*,d_B)$ is defined if the maps $f_j\colon C^j\to B^j$ are defined for any $j$, commuting with the differentials $f_j;d_b=d_C;f_{j+1}$. Each such map induces the map of cohomology $f_*\colon H^j(C^*,d_C)\to H^j(B^*,d_B)$. Let us denote the category of cochain complexes over $\Vv$ by $\Cochain(\Vv)$. The category $\Cochain(\Vv)$ is an abelian category itself with kernels and cokernels defined componentwise in a straightforward manner.
\end{con}

The following lemma is classical, see e.g.~\cite[Lemma 1.3.2]{weibel1994homalg}.

\begin{lem}[Zig-zag lemma]\label{lemZigZag}
A short exact sequence
\[
0\to (C^*,d_C)\to (B^*,d_B)\to (A^*,d_A)\to 0
\]
in $\Cochain(\Vv)$ induces the long exact sequence of cohomology
\begin{equation}\label{eqLongExactCochain}
\begin{tikzpicture}[descr/.style={fill=white,inner sep=1.5pt}]
        \matrix (m) [
            matrix of math nodes,
            row sep=1em,
            column sep=2.5em,
            text height=1.5ex, text depth=0.25ex
        ]
        { 0 & H^0(C^*,d_C) & H^0(B^*,d_B) & H^0(A^*,d_A) & \\
            & H^1(C^*,d_C) & H^1(B^*,d_B) & H^1(A^*,d_A) & \\
            & H^2(C^*,d_C) & H^2(B^*,d_B) & H^2(A^*,d_A) & \cdots\\
        };

        \path[overlay,->, font=\scriptsize,>=latex]
        (m-1-1) edge (m-1-2)
        (m-1-2) edge (m-1-3)
        (m-1-3) edge (m-1-4)
        (m-1-4) edge[out=355,in=175] node[descr,yshift=0.3ex] {} (m-2-2)
        (m-2-2) edge (m-2-3)
        (m-2-3) edge (m-2-4)
        (m-2-4) edge[out=355,in=175] node[descr,yshift=0.3ex] {} (m-3-2)
        (m-3-2) edge (m-3-3)
        (m-3-3) edge (m-3-4)
        (m-3-4) edge (m-3-5);
\end{tikzpicture}
\end{equation}
\end{lem}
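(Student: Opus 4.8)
The plan is to produce two families of maps and then verify exactness at every spot. The ``horizontal'' maps $H^j(C^*,d_C)\to H^j(B^*,d_B)\to H^j(A^*,d_A)$ are already available: by Construction~\ref{conCategoryCochain} every cochain map induces a map on cohomology, and I would simply apply this to the two maps of the short exact sequence. The entire content of the lemma is therefore the construction of the connecting (``zig-zag'') homomorphisms $\delta\colon H^j(A^*,d_A)\to H^{j+1}(C^*,d_C)$ together with the verification of exactness. I would carry this out by an element-level diagram chase; this is fully rigorous because the categories of interest here ($\ko\Vect$, $\Abel$, $R\Mod$) are concrete, and for a general Grothendieck category $\Vv$ one reduces to the module case by the Freyd--Mitchell embedding. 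An equivalent, more structural route packages the whole argument into a single application of the Snake Lemma, which I describe at the end.

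\emph{Construction of $\delta$.} Represent a class in $H^j(A^*,d_A)$ by a cocycle $a\in A^j$, so $d_A a=0$. Since $B^j\to A^j$ is surjective, choose a lift $b\in B^j$ mapping to $a$. Then $d_B b\in B^{j+1}$ maps to $d_A a=0$ in $A^{j+1}$, so by exactness of $0\to C^{j+1}\to B^{j+1}\to A^{j+1}\to 0$ there is a unique $c\in C^{j+1}$ mapping to $d_B b$. Because $C^*\to B^*$ commutes with differentials and is injective, $d_C c$ maps to $d_B d_B b=0$, forcing $d_C c=0$; thus $c$ is a cocycle, and I set $\delta[a]=[c]\in H^{j+1}(C^*,d_C)$. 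The two well-definedness checks are: changing the lift $b$ by an element in the image of $C^j$ alters $c$ only by an element of $\im d_C$; and replacing $a$ by $a+d_A a'$ changes $b$ by a lift of $a'$ plus a cocycle, again altering $c$ only by a coboundary. Both are short chases.

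\emph{Exactness.} There are three positions. At $H^j(B^*,d_B)$ one shows the composite $H^j(C)\to H^j(B)\to H^j(A)$ vanishes and that any class in $\Ker\bigl(H^j(B)\to H^j(A)\bigr)$ comes from $H^j(C)$; this uses only exactness of the rows and commutation with the differentials. At $H^j(A^*,d_A)$ and at $H^{j+1}(C^*,d_C)$ the map $\delta$ appears, and one verifies $\im=\Ker$ by unwinding the recipe: a class killed by $\delta$ is exactly one whose lift can be taken to be a cocycle, hence lies in the image of $H^j(B)$, and dually for the image of $\delta$. Rather than run these three chases separately, I would set up, for each $j$, the commutative diagram with exact top row the cokernels $\Coker d_{j-1}$ and exact bottom row the kernels $\Ker d_{j+1}$ of the three complexes, connected by the maps $\bar d_j$ induced by $d_j$ on these sub- and quotient objects; the rows are exact by the right-exactness of $\Coker$ and left-exactness of $\Ker$ applied degreewise, and since $\Ker\bar d_j=H^j$ and $\Coker\bar d_j=H^{j+1}$ for each complex, the Snake Lemma produces exactly the six-term exact sequence $H^j(C)\to H^j(B)\to H^j(A)\xrightarrow{\delta}H^{j+1}(C)\to H^{j+1}(B)\to H^{j+1}(A)$. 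These segments splice together into~\eqref{eqLongExactCochain}.

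The main obstacle is bookkeeping rather than depth: one must check that every choice (the lift $b$, the pullback $c$) washes out modulo coboundaries, and that the two rows of the snake diagram are genuinely exact. In a purely abstract abelian category the real subtlety is that one cannot literally manipulate elements, and this is precisely what the Snake Lemma (proved once, diagram-theoretically) or the Freyd--Mitchell embedding is there to absorb; once that is in place, no further creativity is required.
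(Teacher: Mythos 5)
Your proof is correct and is essentially the classical argument that the paper itself relies on: the paper gives no proof of this lemma, deferring to~\cite[Lemma 1.3.2]{weibel1994homalg}, and your snake-lemma packaging (the rows $\Coker d_{j-1}$ and $\Ker d_{j+1}$ with induced maps $\bar d_j$, noting $\Ker\bar d_j\cong H^j$ and $\Coker\bar d_j\cong H^{j+1}$, then splicing the resulting six-term sequences) is precisely the standard proof found there, with your explicit construction of $\delta$ being the usual lift-and-chase description of the connecting map. Your appeal to Freyd--Mitchell (applied to a small abelian subcategory containing the objects and morphisms in play) to justify element-level chases in an abstract $\Vv$ is the standard and sound way to absorb that subtlety, so no gap remains.
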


\subsection{Sheaf cohomology as derived functors}\label{subsecMathCohomologyDerived}

Sheaf cohomology, as they are understood in algebraic geometry and homological algebra, are the derived functor of the global sections' functor. In this subsection we give the necessary definitions, constructions, and explain the specifics of finite topological spaces in terms of this definition. This subsection is entirely covered by Curry's thesis~\cite{Curry}: it is provided here for convenience and also to fix the notation for some important objects to be defined later. 

Let $\Vv$ be an abelian category. Then, for any topological space $X$, the category $\Shvs(X,\Vv)$ is an abelian category as well. If $X=X_S$ is the Alexandrov space corresponding to a poset $S$, then we have an abelian category of diagrams $\Diag(S,\Vv)=\Shvs(X_S,\Vv)$ (see Propositions~\ref{propPosTop} and~\ref{propDiagSheaf}).

\begin{con}\label{conDiagIsAbelian}
In terms of diagrams over posets, the characteristic properties of the abelian category $\Diag(S,\Vv)$ are the following. As the illustrative example one can imagine diagrams of vector spaces $\Diag(S,\ko\Vect)$.
\begin{enumerate}
  \item The \emph{zero diagram} $0\in \Diag(S,\Vv)$ is the diagram made of zero vector spaces and zero maps.
  \item There exist finite \emph{biproducts}, also called \emph{direct sums}. If $D_1,D_2\in \Diag(S,\Vv)$ are two diagrams, then their direct sum $D_1\oplus D_2$ is defined component-wise:
      \[
      (D_1\oplus D_2)(s)=D_1(s)\oplus D_2(s)
      \]
      and
      \[
      (D_1\oplus D_2)(s\leq t)=D_1(s\leq t)\oplus D_2(s\leq t).
      \]
  \item \emph{Kernel} and \emph{cokernel} of a morphism of diagrams are defined component-wise. If $G\colon D_1\to D_2$ is a morphism of diagrams, then the kernel $K\colon \Ker G\hookrightarrow D_1$ is defined by
      \[
      (\Ker G)(s)=\Ker G(s),\mbox{ the maps } (\Ker G)(s\leq t)\colon \Ker G(s)\to \Ker G(t)
      \]
      are naturally induced by $D_1(s\leq t)$, and $K(s)\colon (\Ker G)(s)\hookrightarrow D_1(s)$ is a natural inclusion. Cokernels are completely similar.
  \item Every monomorphism is a kernel of its cokernel, and every epimorphism is a cokernel of its kernel. Again, easily verified component-wise.
\end{enumerate}
\end{con}


The next proposition follows from the 3-rd item of Construction~\ref{conDiagIsAbelian}. It constitutes an important property of diagrams over posets: exactness of sequences of sheaves can be verified stalk-wise. 

\begin{prop}\label{propExactSequenceDiagrams}
A sequence $0\to D_0\to D_1\to D_2 \to\cdots$ of diagrams over a poset $S$ is exact if and only if the corresponding sequence of stalks
\[
0\to D_0(s)\to D_1(s)\to D_2(s)\to\cdots
\]
is exact for each $s\in S$.
\end{prop}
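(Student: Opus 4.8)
The plan is to reduce everything to the fact, recorded in Construction~\ref{conDiagIsAbelian}, that the abelian structure of $\Diag(S,\Vv)$ is computed stalkwise. Exactness of the sequence at a given position is, by definition, the vanishing of the corresponding subquotient $\Ker d_i/\im d_{i-1}$, so it suffices to show that this subquotient is formed stalkwise and that a diagram is the zero object precisely when all of its stalks vanish.

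First I would recall that, in any abelian category, the image of a morphism $G$ is the kernel of its cokernel, $\im G=\Ker(\Coker G)$. Item 3 of Construction~\ref{conDiagIsAbelian} asserts that both $\Ker$ and $\Coker$ of a morphism of diagrams are computed componentwise, i.e. $(\Ker G)(s)=\Ker G(s)$ and $(\Coker G)(s)=\Coker G(s)$, with structure maps induced in the evident way. Composing these two facts, I obtain $(\im G)(s)=\im G(s)$ for every $s\in S$; that is, images are also formed stalkwise.

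Next I would form the cohomology objects of the sequence. For each $i$ there is a monomorphism $p_{i}\colon\im d_{i-1}\hookrightarrow\Ker d_{i}$ of diagrams, and the $i$-th cohomology diagram is $H^i=\Coker p_i=\Ker d_i/\im d_{i-1}$. By the preceding two paragraphs both $\Ker d_i$ and $\im d_{i-1}$ are stalkwise, and since cokernels are stalkwise as well, evaluating at $s$ yields
\[
H^i(s)=\Ker\bigl(d_i(s)\bigr)/\im\bigl(d_{i-1}(s)\bigr),
\]
which is exactly the $i$-th cohomology of the stalk sequence $0\to D_0(s)\to D_1(s)\to\cdots$. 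Finally, by item 1 of Construction~\ref{conDiagIsAbelian} the zero diagram is the one with all zero stalks, and a diagram $D$ is isomorphic to $0$ if and only if $D(s)=0$ for every $s\in S$ (an object of an abelian category is zero iff its identity equals its zero endomorphism, a condition checked stalkwise). Hence $H^i=0$ in $\Diag(S,\Vv)$ if and only if $H^i(s)=0$ for all $s\in S$, which is precisely the assertion that the full sequence is exact iff each stalk sequence is exact.

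The only point requiring genuine care is the second paragraph: Construction~\ref{conDiagIsAbelian} states the componentwise nature of kernels and cokernels explicitly but not that of images, so one must invoke the abelian-category identity $\im G=\Ker(\Coker G)$ to promote the stalkwise formation of kernels and cokernels to a statement about images, and thence to the cohomology subquotients. Everything else is a routine unwinding of the definitions of exactness and of the zero object.
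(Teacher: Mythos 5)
Your proof is correct and takes essentially the same route as the paper: the paper simply asserts that the proposition ``follows from the 3-rd item of Construction~\ref{conDiagIsAbelian}'' (kernels and cokernels of diagram morphisms are computed stalkwise), and your argument is exactly the careful unwinding of that remark, promoting stalkwise kernels and cokernels to stalkwise images via $\im G=\Ker(\Coker G)$, hence to stalkwise cohomology objects, and finishing with the observation that a diagram is zero iff all its stalks are zero.
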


\textbf{Sheaves have enough injectives.} In order to deal with derived functors, one needs to assure that the abelian category of sheaves has enough injectives. For a general definition of injective object in an abelian category see~\cite[\S2.3]{weibel1994homalg}. An abelian category $\ST{A}$ is said to \emph{have enough injectives} if, for any object $A\in\ST{A}$ there exists a monomorphism from $A$ to an injective object of $\ST{A}$. The categories $\ko\Vect$, $\Abel$, $R\Mod$ are known to have enough injectives~\cite[\S 2.3]{weibel1994homalg}. For example, in the ``most applied'' category $\ko\Vect$ every object is injective. This simplifies many arguments and constructions in this case.

\begin{prop}[{\cite[Thm.1.10.1 and Prop.3.1.1]{grothendieck1957tohoku}}]\label{propEnoughInjectives}
If $\Vv$ has enough injectives, then $\Shvs(X,\Vv)$ also has enough injectives.
\end{prop}

Below, we provide the proof for finite topologies, following Curry~\cite[Def.7.1.3]{Curry}.

\begin{con}\label{conInjSheaves}
Let $s\in S$ and $V\in\Vv$. Consider the diagram $\low{s}{V}$ on $S$, defined by
\begin{equation}\label{eqDefinElementaryInj}
\low{s}{V}(t)=\begin{cases}
               V, & \mbox{if } t\leqslant s \\
               0, & \mbox{otherwise}.
             \end{cases}
\end{equation}
Mappings between the non-zero elements of the diagram are set to be the identity isomorphisms. The diagrams of this form, and their corresponding sheaves, will be called \emph{cone-shaped sheaves} on $S$ (with apex $s$ and value $V$). Notice the analogy with the definition of the cone of an element, Construction~\ref{conCones}. It can be seen that $\low{s}{V}$ coincides with \emph{the skyscraper sheaf} on $X_S$, concentrated in $s$, known in the classical homological algebra~\cite[Ex.2.3.12]{weibel1994homalg}.
\end{con}

\begin{lem}\label{lemElementaryInjective}
If $W$ is an injective object in $\Vv$, then $\low{s}{W}$ is an injective object in $\Diag(S,\Vv)$ (equiv. an injective sheaf on the space $X_S$).
\end{lem}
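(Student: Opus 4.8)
The plan is to exhibit $\low{s}{W}$ as the image of $W$ under the right adjoint of an exact functor, and then invoke the general principle that a right adjoint of an exact functor preserves injectives. The relevant functor is the evaluation (stalk) functor $\mathrm{ev}_s\colon \Diag(S,\Vv)\to\Vv$, $D\mapsto D(s)$, whose right adjoint is the cone-shaped-sheaf construction $\low{s}{(-)}$ of Construction~\ref{conInjSheaves}.

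First I would establish the natural isomorphism
\[
\Hom_{\Diag(S,\Vv)}(D,\low{s}{W})\cong\Hom_{\Vv}(D(s),W)
\]
for every diagram $D$ and object $W\in\Vv$, which says precisely that $\low{s}{(-)}$ is right adjoint to $\mathrm{ev}_s$. A morphism $\varphi\colon D\to\low{s}{W}$ consists of components $\varphi_t\colon D(t)\to\low{s}{W}(t)$ compatible with the structure maps. Since $\low{s}{W}(t)=0$ for $t\not\leq s$, necessarily $\varphi_t=0$ there; and for $t\leq s$ the compatibility condition—using that all structure maps of $\low{s}{W}$ between nonzero stalks are identities—forces $\varphi_t=\varphi_s\circ D(t\leq s)$. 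Conversely, any $\psi\colon D(s)\to W$ defines a morphism by $\varphi_s:=\psi$ and $\varphi_t:=\psi\circ D(t\leq s)$ for $t\leq s$, with $\varphi_t:=0$ otherwise; the compositionality relations of $D$ (Remark~\ref{remDiagramInformally}) guarantee that these components commute with every structure map. This yields the bijection, whose naturality in $D$ is immediate.

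Next I would note that $\mathrm{ev}_s$ is exact: by Construction~\ref{conDiagIsAbelian} kernels and cokernels in $\Diag(S,\Vv)$ are computed component-wise, so a short exact sequence of diagrams evaluated at $s$ is a short exact sequence in $\Vv$ (this is exactly Proposition~\ref{propExactSequenceDiagrams}). To finish, let $A\hookrightarrow B$ be a monomorphism of diagrams and $\alpha\colon A\to\low{s}{W}$ a morphism. Under the adjunction, $\alpha$ corresponds to a map $\alpha_s\colon A(s)\to W$; exactness of $\mathrm{ev}_s$ makes $A(s)\hookrightarrow B(s)$ a monomorphism in $\Vv$, and injectivity of $W$ extends $\alpha_s$ to some $\beta_s\colon B(s)\to W$. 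Transporting $\beta_s$ back through the adjunction produces $\beta\colon B\to\low{s}{W}$ extending $\alpha$, which is exactly the lifting property defining injectivity of $\low{s}{W}$.

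The only delicate point is the bookkeeping in the adjunction: one must check that the assignment $\psi\mapsto(\varphi_t)_{t\leq s}$ respects every structure map $D(t_1\leq t_2)$ with $t_1\leq t_2\leq s$, which reduces to the identity $D(t_1\leq s)=D(t_2\leq s)\circ D(t_1\leq t_2)$—that is, functoriality of $D$. Everything else is formal, and for readers who prefer to avoid adjunction language the same argument can be carried out directly by defining the extension as $\beta_t:=\beta_s\circ B(t\leq s)$ for $t\leq s$ and $\beta_t:=0$ otherwise, then verifying that $\beta$ is a morphism of diagrams restricting to $\alpha$ on the subdiagram $A$.
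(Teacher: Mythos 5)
Your proposal is correct and is essentially the paper's own argument: the paper likewise rests on the natural isomorphism $\Hom_{\Diag(S,\Vv)}(F,\low{s}{W})\cong\Hom_{\Vv}(F(s),W)$ (obtained there by writing $\low{s}{W}=(i_s)_*W$ for the inclusion $i_s\colon\pt\to S$ and citing the direct/inverse image adjunction of Proposition~\ref{propAdjointPair}), combined with exactness of the stalk functor (Proposition~\ref{propExactSequenceDiagrams}) and injectivity of $W$. The only cosmetic differences are that you verify the adjunction component-by-component instead of quoting it, and you conclude via the lifting property rather than the equivalent criterion that $\Hom_{\ST{A}}(\cdot,I)$ is exact.
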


\begin{proof}
Let $\pt$ be the one-point poset. The category of sheaves on $\pt$ can be naturally identified with the category $\Vv$. Thus, $W$ can be considered as injective sheaf on $\pt$. Consider the inclusion map $i_s\colon \pt\to S$, $i_s(\pt)=s$. It is easily verified that $\low{s}{W}=(i_s)_*W$. Hence
\begin{equation}\label{eqIdentificationHoms}
\Hom_{\Diag(S,\Vv)}(F,\low{s}{W})= \Hom_{\Diag(S,\Vv)}(F,(i_s)_*W)=\Hom_{\Diag(\pt,\Vv)}(i_s^*F,W)=\Hom_{\Vv}(F(s),W).
\end{equation}
It is known~\cite[Lm.2.3.4]{weibel1994homalg} that an object $I$ of an abelian category $\ST{A}$ is injective if and only if the functor $\Hom_{\ST{A}}(\cdot, I)$ is exact. Taking stalks at the point $s$ is an exact functor from $\Diag(S,\Vv)$ to $\Vv$, see Proposition~\ref{propExactSequenceDiagrams}. Therefore, since $W$ is an injective object of $\Vv$, the functor
\[
(\ca{F}\in\Diag(S,\Vv))\mapsto \Hom_{\Vv}(\ca{F}(s),W)=\Hom_{\Diag(S;\Vv)}(\ca{F},\low{s}{W})
\]
is exact, hence the diagram (the sheaf) $\low{s}{W}$ is injective.
\end{proof}

Proposition~\ref{propEnoughInjectives} is proved by the following construction, which is a particular case of the Godement's construction~\cite{Godement1973} applied to a finite topology~\footnote{It should be noticed that general Godement's construction works with direct products which, for infinite indexing sets, do not coincide with direct sums, or coproducts. Therefore the assumption of finiteness of $S$ is important in this and subsequent constructions.}.

\begin{con}\label{conGodementConstruction}
Let $D\in\Diag(S,\Vv)$ be an arbitrary diagram on a poset. For each element $s\in S$ consider a monomorphism $j_s\colon D(s)\to J_s$ to some injective object of $\Vv$ (which exists since we assumed $\Vv$ has enough injectives). Consider the following map
\begin{equation}\label{eqGodementPoset}
j\colon D\to \bigoplus_{s\in S}\low{s}{J_s}, \quad j=\bigoplus_{s\in S}\tilde{j}_s,
\end{equation}
where $\tilde{j}_s\colon D\to\low{s}{J_s}$ is the map which corresponds to $j_s\colon D(s)\to J_s$ via the natural correspondence described in~\eqref{eqIdentificationHoms}. The map $j$ is monomorphic stalk-wise: it is monomorphic on an element $s$ since already its summand $\tilde{j}_s$ (which is basically $j_s$) is monomorphic.

In short, we mapped a diagram $D$ into the direct sum of the cone-shaped diagrams valued by objects in which the values of $D$ inject.
\end{con}

\begin{rem}\label{remInjIsSumOfCones}
Construction~\ref{conGodementConstruction} asserts, that every sheaf embeds monomorphically into a direct sum of cone-shaped sheaves, which is injective. In the following, when we speak about injective sheaves on Alexandrov topologies, we usually mean ``sums of cone-shaped sheaves'' since no other concrete constructions of injective sheaves appear in our work.
\end{rem}

\textbf{Derived functors.} The most important functor in sheaf theory is the functor of global sections described in Construction~\ref{conGlobalSections}. If $\Vv$ is abelian, then the functor of global sections $\Gamma(X_S;\cdot)$ (or $\Gamma(S;\cdot)=\lim\limits_{\substack{\leftarrow, S}}(\cdot)$ in terms of posets) can be seen to be additive.

In homological algebra, given two sufficiently nice abelian categories $\Ww$, $\Vv$ and an additive functor $\Fu$ between them, it is possible to define an infinite sequence of functors, called the derived functors of $\Fu$. 
Some steps of this definition appear useful in the following, so we recall the abstract definitions with a moderate level of mathematical details. The main purpose of this rather classical description is to check its algorithmical feasibility for finite diagrams of finite-dimensional vector spaces.

\begin{defin}\label{definExactFunctor}
Let $\Fu\colon \Ww\to \Vv$ be an additive functor between two abelian categories. The functor $\Fu$ is called \emph{left exact} if every short exact sequence in $\Ww$
\[
0\to W_1 \to W_2\to W_3\to 0
\]
induces the sequence
\begin{equation}\label{eqLeftExact}
0\to \Fu(W_1) \to \Fu(W_2) \to \Fu(W_1)
\end{equation}
which is exact at all positions except the rightmost.
\end{defin}

Assume that $\Ww$ has enough injectives. Let us define the right derived functors $R^j\Fu\colon\Ww\to\Vv$. Actually, we will concentrate on constructing $R^j\Fu(W)$ for a given object $W\in\Ww$, not a morphism. The tedious check of functoriality is based on the horseshoe lemma and may be found elsewhere~\cite[Theorem 2.3.7]{weibel1994homalg}.

\begin{con}\label{conDerivedFunctorGeneral}
Let $W\in\Ww$ be an object. Since $\Ww$ has enough injectives, there exists a monomorphic map $\imath_0\colon M\to I_0$ into some injective object. Now let us take an injective hull of $\Coker \imath_0$, which gives a map $\imath_1\colon I_0\to I_1$. Proceeding the same way inductively, we get a potentially infinite exact sequence
\begin{equation}\label{eqInjectiveResolGen}
0\to W\stackrel{i_0}{\to}I_0\stackrel{i_1}{\to}I_1\stackrel{i_2}{\to}I_2\stackrel{i_3}{\to}\cdots
\end{equation}
in which all objects $I_j$ are injective. Such a sequence is called an \emph{injective resolution} of $W$; it may be non-unique. Now apply the functor $\Fu$ to the sequence~\eqref{eqInjectiveResolGen} cut at the leftmost position:
\begin{equation}\label{eqFuncOfInjective}
0\to\Fu(I_0)\stackrel{\Fu(i_1)}{\longrightarrow}\Fu(I_1)\stackrel{\Fu(i_2)}{\longrightarrow}\Fu(I_2)\stackrel{\Fu(i_3)}{\to}\cdots
\end{equation}
This sequence may fail to be exact, but it is a cochain complex in $\Vv$. The cohomology of this cochain complex (see Definition~\ref{definCohomologyCochain}) are called the \emph{values of right derived functors} of $\Fu$ on $W$:
\[
R^j\Fu(W)=H^j(\Fu(I^*,\imath)).
\]
\end{con}

For convenience we provide the list of basic claims about derived functors in general, the proofs can be found e.g. in~\cite[\S 2.5]{weibel1994homalg}.

\begin{prop}\label{propDerivedFunctorProps}
The following properties hold for the derived functors $R^j\Fu$.
\begin{enumerate}
  \item $R^j\Fu(W)$ are well-defined up to natural isomorphism. This basically means that these objects of $\Vv$ do not depend on the choice of injective resolution of $W$.
  \item $R^0\Fu=\Fu$.
  \item Each short exact sequence $0\to W_1\to W_2\to W_3\to 0$ in $\Ww$ induces the long exact sequence
\begin{equation}\label{eqLongExactDerived}
\begin{tikzpicture}[descr/.style={fill=white,inner sep=1.5pt}]
        \matrix (m) [
            matrix of math nodes,
            row sep=1em,
            column sep=2.5em,
            text height=1.5ex, text depth=0.25ex
        ]
        { 0 & R^0\Fu(W_1) & R^0\Fu(W_2) & R^0\Fu(W_3) & \\
            & R^1\Fu(W_1) & R^2\Fu(W_2) & R^1\Fu(W_3) & \\
            & R^2\Fu(W_1) & R^2\Fu(W_2) & R^3\Fu(W_1) & \cdots\\
        };

        \path[overlay,->, font=\scriptsize,>=latex]
        (m-1-1) edge (m-1-2)
        (m-1-2) edge (m-1-3)
        (m-1-3) edge (m-1-4)
        (m-1-4) edge[out=355,in=175] node[descr,yshift=0.3ex] {} (m-2-2)
        (m-2-2) edge (m-2-3)
        (m-2-3) edge (m-2-4)
        (m-2-4) edge[out=355,in=175] node[descr,yshift=0.3ex] {} (m-3-2)
        (m-3-2) edge (m-3-3)
        (m-3-3) edge (m-3-4)
        (m-3-4) edge (m-3-5);
\end{tikzpicture}
\end{equation}
  which extends the sequence~\eqref{eqLeftExact}.
\end{enumerate}
\end{prop}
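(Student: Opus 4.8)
The plan is to establish the three items separately, using the standard homological toolbox of~\cite[\S2.3--2.5]{weibel1994homalg} specialized to the ambient category $\Ww=\Diag(S,\Vv)$, which has enough injectives by Proposition~\ref{propEnoughInjectives} (concretely, by sums of cone-shaped sheaves, Remark~\ref{remInjIsSumOfCones}). Throughout I would use that $\Fu$ is additive, so that it preserves finite biproducts, cochain maps, and cochain homotopies, and that it is left exact, so that it sends monomorphisms to monomorphisms.

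For item (1), I would invoke the comparison theorem for injective resolutions. Given two injective resolutions $(I^*,\imath)$ and $(J^*,\jmath)$ of the same object $W$, the identity $\id_W$ lifts to a cochain map $\varphi\colon I^*\to J^*$, unique up to cochain homotopy; lifting $\id_W$ in the reverse direction yields $\psi\colon J^*\to I^*$, and $\psi\varphi$, $\varphi\psi$ are cochain-homotopic to the respective identities. Applying $\Fu$ and using that an additive functor carries a cochain homotopy to a cochain homotopy, the maps $\Fu(\varphi)$ and $\Fu(\psi)$ induce mutually inverse isomorphisms $H^j(\Fu(I^*))\cong H^j(\Fu(J^*))$. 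This gives the canonical isomorphism showing $R^j\Fu(W)$ is independent of the resolution; naturality in $W$ follows from the functoriality of the lifting.

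For item (2), take any injective resolution $0\to W\xrightarrow{\imath_0} I_0\xrightarrow{\imath_1} I_1\to\cdots$. Exactness at $I_0$ identifies $\Coker\imath_0$ with a subobject of $I_1$, so the induced map $\Coker\imath_0\hookrightarrow I_1$ is monic. Applying the left-exact functor $\Fu$ to the short exact sequence $0\to W\to I_0\to\Coker\imath_0\to 0$ yields exactness of $0\to\Fu(W)\to\Fu(I_0)\to\Fu(\Coker\imath_0)$, and $\Fu$ preserves the monomorphism $\Coker\imath_0\hookrightarrow I_1$; since $\Fu(\imath_1)$ factors through $\Fu(\Coker\imath_0)\hookrightarrow\Fu(I_1)$, I get $\Ker\Fu(\imath_1)\cong\Fu(W)$, that is $R^0\Fu(W)=H^0(\Fu(I^*,\imath))\cong\Fu(W)$.

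For item (3), I would start from injective resolutions $I_1^*$ of $W_1$ and $I_3^*$ of $W_3$ and apply the (dual) horseshoe lemma to build a resolution $I_2^*$ of $W_2$, with $I_2^j\cong I_1^j\oplus I_3^j$, fitting into a short exact sequence of cochain complexes $0\to I_1^*\to I_2^*\to I_3^*\to 0$ that is \emph{split} in each degree. The degreewise splitting is the crucial point: an additive functor preserves split exact sequences, so even though $\Fu$ is only left exact, the sequence $0\to\Fu(I_1^*)\to\Fu(I_2^*)\to\Fu(I_3^*)\to 0$ remains exact as a sequence of complexes in $\Vv$. The Zig-zag lemma (Lemma~\ref{lemZigZag}) applied to this short exact sequence then produces precisely the long exact sequence~\eqref{eqLongExactDerived}, with the stated natural connecting homomorphisms. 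The main obstacle I expect is the horseshoe construction of the split resolution $I_2^*$ of $W_2$, where injectivity of the objects $I_3^j$ is used to extend the liftings compatibly; verifying degreewise splitting (so that the non-exact $\Fu$ is harmless) and the naturality of the resulting connecting maps is where the bookkeeping concentrates, though none of it is specific to $\Diag(S,\Vv)$.
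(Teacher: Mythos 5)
Your proposal is correct and matches the paper's approach: the paper does not spell out an argument but simply defers to \cite[\S 2.5]{weibel1994homalg}, and your three steps --- the comparison theorem for injective resolutions, the left-exactness argument identifying $H^0(\Fu(I^*))$ with $\Fu(W)$, and the horseshoe lemma producing a degreewise split exact sequence of resolutions to which the Zig-zag lemma applies --- are exactly the standard proofs found there. Your closing remark that none of this is specific to $\Diag(S,\Vv)$ is also consistent with the paper, which states the proposition for general abelian $\Ww$ with enough injectives before specializing to sheaves.
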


\textbf{Sheaf cohomology.} In order for the definition~\ref{definCohomologyMainPart} of sheaf cohomology to make sense, a bit more preparatory work should be done.

\begin{prop}\label{propGlobSectLeftExact}
The functor $\Gamma(S,\cdot)$ is left exact.
\end{prop}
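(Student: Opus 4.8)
The plan is to exploit the fact, recorded in Construction~\ref{conGlobalSections} and Construction~\ref{conInverseLimit}, that the global sections functor is nothing but the direct image along the collapse map, and hence is a right adjoint. Concretely, let $p\colon S\to\pt$ be the unique morphism of posets to the one-point poset. Unwinding Definition~\ref{defDirectImage}, the direct image satisfies $p_*\ca{F}(\pt)=\ca{F}(p^{-1}(\pt))=\ca{F}(X_S)=\Gamma(S;\ca{F})$, and since $\Shvs(X_\pt;\Vv)$ is just $\Vv$, we get $\Gamma(S,\cdot)=p_*$. By Proposition~\ref{propAdjointPair}, $p_*$ is right adjoint to the inverse image $p^*$ (which is the constant diagram functor). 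A right adjoint preserves all limits that exist, in particular kernels; and for an additive functor between abelian categories, preserving kernels is precisely left exactness in the sense of Definition~\ref{definExactFunctor}. This would finish the proof in one stroke, with essentially no computation.

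If one prefers a self-contained, elementary argument that does not invoke the adjunction, I would instead verify left exactness directly from the stalk-wise description of sections in Remark~\ref{remConcreteLimSets} together with Proposition~\ref{propExactSequenceDiagrams}. Given a short exact sequence $0\to D_1\xrightarrow{\alpha}D_2\xrightarrow{\beta}D_3\to 0$ in $\Diag(S,\Vv)$, Proposition~\ref{propExactSequenceDiagrams} says each stalk sequence $0\to D_1(s)\to D_2(s)\to D_3(s)\to 0$ is exact, so every $\alpha(s)$ is a monomorphism and $\Ker\beta(s)=\im\alpha(s)$. There are then three things to check: that $\Gamma\alpha$ is injective, that $\Gamma\beta\circ\Gamma\alpha=0$, and that $\Ker\Gamma\beta\subseteq\im\Gamma\alpha$. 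The first two are immediate, being inherited stalk-by-stalk from the injectivity of $\alpha(s)$ and from $\beta\circ\alpha=0$.

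The only step requiring care --- and the one I expect to be the main obstacle --- is the third: showing that a compatible family $(y_s)\in\Gamma(S;D_2)$ with $\beta(s)(y_s)=0$ for all $s$ actually lifts to a global section of $D_1$. For each $s$ one uses $\Ker\beta(s)=\im\alpha(s)$ and injectivity of $\alpha(s)$ to produce a unique $x_s\in D_1(s)$ with $\alpha(s)(x_s)=y_s$; the crux is to verify that this family $(x_s)$ is coherent, i.e. $D_1(s_1\le s_2)x_{s_1}=x_{s_2}$ whenever $s_1\le s_2$. This follows by applying the monomorphism $\alpha(s_2)$ to both sides and using naturality of $\alpha$ together with the coherence of $(y_s)$:
\[
\alpha(s_2)\bigl(D_1(s_1\le s_2)x_{s_1}\bigr)=D_2(s_1\le s_2)\,\alpha(s_1)(x_{s_1})=D_2(s_1\le s_2)\,y_{s_1}=y_{s_2}=\alpha(s_2)(x_{s_2}),
\]
after which injectivity of $\alpha(s_2)$ gives the desired equality, so $(x_s)\in\Gamma(S;D_1)$ maps to $(y_s)$. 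Strictly speaking this element-chasing presumes $\Vv$ concrete; in the general abelian (Grothendieck) setting one either phrases the same argument via generalized elements, or simply falls back on the adjunction argument of the first paragraph, which is why I would ultimately present that slicker route as the actual proof.
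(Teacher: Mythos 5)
Your main argument is exactly the paper's own proof: the paper derives left exactness from the fact that $\Gamma(S;\cdot)$ is right adjoint to the constant sheaf functor, which is precisely your identification $\Gamma(S,\cdot)=p_*$ with $p\colon S\to\pt$ together with the adjunction $p^*\dashv p_*$ of Proposition~\ref{propAdjointPair}. Your supplementary stalk-wise element chase (with the correct observation that the only nontrivial point is lifting a kernel section coherently, and the honest caveat about concreteness) is a valid elementary alternative, but the adjunction route you ultimately favor is the one the paper takes.
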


This is derived from the fact that $\Gamma(S;\cdot)$ is right adjoint to the constant sheaf functor, see~\cite[\S 2.6]{weibel1994homalg}. The abelian category $\Shvs(X_S,\Vv)$ has enough injectives by Proposition~\ref{propEnoughInjectives}, therefore the derived functors of the functor $\Gamma(X_S;\cdot)$ are well defined according to Construction~\ref{conDerivedFunctorGeneral}. The next definition duplicates Definition~\ref{definCohomologyMainPart} given in the main part of the text.

\begin{defin}\label{definSheafCohomologyDerived}
The $j$-th right derived functor $R^j\Gamma(X_S;\cdot)$ is called \emph{the functor of $j$-th cohomology of a sheaf}. We use the notation\footnote{The notation AG stands for algebraic geometry. It may rightfully refer to Alexander Grothendieck.}
\[
H^j_{AG}(X_S;\ca{F})=R^j\Gamma(X_S;\ca{F}).
\]
\end{defin}

Applying Proposition~\ref{propDerivedFunctorProps} to the functor $\Gamma(X_S;\cdot)\colon\Shvs(X_S;\Vv)\to\Vv$ of global sections, we obtain the main properties of the sheaf cohomology.

\begin{prop}\label{propCohomologyProperties}
Consider sheaves on the Alexandrov topology $X_S$ corresponding to a poset $S$. The following properties hold true for the cohomology.
\begin{enumerate}
  \item Cohomology modules $H_{AG}^j(X_S;\ca{F})$ are well-defined and functorial with respect to a sheaf-component.
  \item The 0-degree cohomology $H_{AG}^0(X_S;\ca{F})$ coincide with the global sections $\Gamma(X_S;\ca{F})$.
  \item Each short exact sequence $0\to \ca{F}_1\to \ca{F}_2\to \ca{F}_3\to 0$ of sheaves over $X_S$ induces long exact sequence in cohomology:
	\begin{equation}\label{eqLongExactDerivedCohomology}
\begin{tikzpicture}[descr/.style={fill=white,inner sep=1.5pt}]
        \matrix (m) [
            matrix of math nodes,
            row sep=1em,
            column sep=2.5em,
            text height=1.5ex, text depth=0.25ex
        ]
        { 0 & \Gamma(X_S;\ca{F}_1) & \Gamma(X_S;\ca{F}_2) & \Gamma(X_S;\ca{F}_3) & \\
            & H_{AG}^1(X_S;\ca{F}_1) & H_{AG}^1(X_S;\ca{F}_2) & H_{AG}^1(X_S;\ca{F}_3) & \\
            & H_{AG}^2(X_S;\ca{F}_1) & H_{AG}^2(X_S;\ca{F}_2) & H_{AG}^2(X_S;\ca{F}_3) & \cdots\\
        };

        \path[overlay,->, font=\scriptsize,>=latex]
        (m-1-1) edge (m-1-2)
        (m-1-2) edge (m-1-3)
        (m-1-3) edge (m-1-4)
        (m-1-4) edge[out=355,in=175] node[descr,yshift=0.3ex] {} (m-2-2)
        (m-2-2) edge (m-2-3)
        (m-2-3) edge (m-2-4)
        (m-2-4) edge[out=355,in=175] node[descr,yshift=0.3ex] {} (m-3-2)
        (m-3-2) edge (m-3-3)
        (m-3-3) edge (m-3-4)
        (m-3-4) edge (m-3-5);
\end{tikzpicture}
\end{equation}
\end{enumerate}
\end{prop}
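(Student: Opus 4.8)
The plan is to obtain all three statements as immediate specializations of the general derived-functor machinery in Proposition~\ref{propDerivedFunctorProps}, applied to the functor $\Fu = \Gamma(X_S;\cdot)$. First I would verify that the hypotheses needed to invoke that proposition hold in the present setting. The source category $\Shvs(X_S;\Vv)$ is abelian: it is identified with $\Diag(S,\Vv)$ via Proposition~\ref{propDiagSheaf}, and its abelian structure (zero object, biproducts, stalk-wise kernels and cokernels) is spelled out in Construction~\ref{conDiagIsAbelian}. It has enough injectives by Proposition~\ref{propEnoughInjectives}. Moreover, the functor $\Gamma(X_S;\cdot)\colon \Shvs(X_S;\Vv)\to\Vv$ is additive and left exact by Proposition~\ref{propGlobSectLeftExact}. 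Hence its right derived functors $R^j\Gamma(X_S;\cdot)$ are well defined by Construction~\ref{conDerivedFunctorGeneral}, and by Definition~\ref{definSheafCohomologyDerived} these are precisely the objects $H^j_{AG}(X_S;\ca{F})$.

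Granting this, property (1) is exactly Proposition~\ref{propDerivedFunctorProps}(1): the cohomology objects are independent of the chosen injective resolution up to natural isomorphism, and functoriality in the sheaf argument is part of the derived-functor construction. Property (2) is the identity $R^0\Fu = \Fu$ of Proposition~\ref{propDerivedFunctorProps}(2), which here reads $H^0_{AG}(X_S;\ca{F}) = R^0\Gamma(X_S;\ca{F}) = \Gamma(X_S;\ca{F})$; the identification $R^0\Fu=\Fu$ uses only the left-exactness already secured above. Property (3) is the long exact sequence of Proposition~\ref{propDerivedFunctorProps}(3): a short exact sequence $0\to\ca{F}_1\to\ca{F}_2\to\ca{F}_3\to 0$ of sheaves over $X_S$ induces the long exact sequence~\eqref{eqLongExactDerivedCohomology}, whose initial segment recovers the left-exact sequence of Proposition~\ref{propGlobSectLeftExact}.

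Since the heavy lifting is carried out in the general homological-algebra results already cited, there is no genuine obstacle in the argument; the only point requiring care is the bookkeeping that the concrete injective objects available on $X_S$ --- the direct sums of cone-shaped (skyscraper) sheaves from Lemma~\ref{lemElementaryInjective}, Construction~\ref{conGodementConstruction}, and Remark~\ref{remInjIsSumOfCones} --- indeed furnish the injective resolutions used implicitly in Construction~\ref{conDerivedFunctorGeneral}. This confirms that the derived functors are not merely abstractly defined but can be computed explicitly, consistent with the stalk-wise exactness of Proposition~\ref{propExactSequenceDiagrams}. No further computation is needed.
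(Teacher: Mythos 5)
Your proposal is correct and follows exactly the paper's own route: the paper obtains this proposition by applying Proposition~\ref{propDerivedFunctorProps} to the left-exact functor $\Gamma(X_S;\cdot)$, with the hypotheses secured by Propositions~\ref{propEnoughInjectives} and~\ref{propGlobSectLeftExact}, which is precisely your argument. Your added remark about the concrete cone-shaped injective resolutions is a harmless elaboration, not a divergence.
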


Next, we provide more constructive details on injective resolutions of sheaves over Alexandrov spaces and their cohomology.

\begin{con}\label{conSheafCohomDerivedStyle}
Assume $D$ is a diagram over a poset $S$ (equiv. sheaf $\ca{D}$ over the Alexandrov space $X_S$), and as before, $\Vv$ is an abelian category with enough injectives. In the abelian category $\Diag(S,\Vv)=\Shvs(X_S,\Vv)$ there exists an injective resolution
\begin{equation}\label{eqInjSheaf}
0\to \ca{D}\to\ca{I}_0\to\ca{I}_1\to\ca{I}_3\to\cdots
\end{equation}
that is an exact sequence of sheaves, in which all $\ca{I}_j$ are injective. Moreover, according to Construction~\ref{conGodementConstruction} we may assume that each $\ca{I}_j$ is a direct sum of the sheaves of the form $\low{s}{W}$ with $s\in S$ and $W$ an injective object of $\Vv$. Now we apply the functor of global sections to the non-augmented resolution:
\begin{equation}\label{eqGlobSectionsOfInjRes}
0\to\ca{D}(X_S)\to\ca{I}_0(X_S)\to\ca{I}_1(X_S)\to\ca{I}_2(X_S)\to\cdots
\end{equation}
which, in the language of posets, the same as the following sequence
\begin{equation}\label{eqGlobSectionsOfInjResLimits}
0\to\lim\limits_{\leftarrow S} D\to \lim\limits_{\leftarrow S} I_0\to\lim\limits_{\leftarrow S} I_1\to\lim\limits_{\leftarrow S} I_2\to\cdots
\end{equation}
according to Construction~\ref{conGlobalSections}. The sequence~\eqref{eqGlobSectionsOfInjRes} is obviously a cochain complex in $\Vv$. Its cohomology are called the cohomology of the sheaf $\ca{D}$, or cohomology of category $\cat(S)$ with coefficients in a functor $D$ (or simply cohomology of a diagram $D$):
\begin{equation}\label{eqCohomOfSheafAG}
H^j_{AG}(X_S;\ca{D})=H^j_{AG}(S;D)=\Ker(d\colon \ca{I}_j(X_S)\to \ca{I}_{j+1}(X_S))/\im(d\colon \ca{I}_{j-1}(X_S)\to \ca{I}_j(X_S)).
\end{equation}
\end{con}

In view of this construction, it is useful to compute the global sections of the basic ``injective block'' $\low{s}{W}$ used in the construction of injective resolution.

\begin{lem}\label{lemGlobSecOfInjectives}
For a sheaf $\low{s}{W}$ defined in Construction~\ref{conInjSheaves}, the global sections $\Gamma(X_S;\low{s}{W})=\low{s}{W}(X_S)$ are naturally isomorphic to $W$. Moreover, if $s\leq t$ in $S$ and the map $g\colon \low{s}{W}\to \low{t}{V}$ is induced by $f\colon W\to V$, that is defined on stalks by
\[
g(s')=\begin{cases}
        f\colon W=\low{s}{W}(s')\to V=\low{t}{V}, & \mbox{if } s'\leq s \\
        0, & \mbox{otherwise}.
      \end{cases}
\]
then the induced map $g_*\colon \Gamma(X_S;\low{s}{W})\to \Gamma(X_S;\low{t}{V})$ coincides with $f\colon W\to V$.
\end{lem}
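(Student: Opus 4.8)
The plan is to reduce both assertions to the explicit description of global sections as an inverse limit. By Construction~\ref{conInverseLimit}, $\Gamma(X_S;\low{s}{W})=\lim_{\leftarrow}\low{s}{W}$, which (as spelled out in Remark~\ref{remConcreteLimSets}) is the object of coherent families $(x_{s'}\mid s'\in S)$ with $x_{s'}\in\low{s}{W}(s')$ and $\low{s}{W}(s_1'\leq s_2')(x_{s_1'})=x_{s_2'}$ for all $s_1'\leq s_2'$. First I would note that $\low{s}{W}(s')=0$ forces $x_{s'}=0$ for every $s'\not\leq s$, so only the components indexed by the down-set $S_{\leq s}$ survive. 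On $S_{\leq s}$ all structure maps of $\low{s}{W}$ are $\id_W$, so the coherence relation applied to each pair $s'\leq s$ gives $x_{s'}=x_s$; hence a section is determined by its apex component $x_s\in W$. Conversely the constant family ($w$ on $S_{\leq s}$, $0$ elsewhere) is coherent, the three cases $s_1'\leq s_2'\leq s$, then $s_1'\leq s$ with $s_2'\not\leq s$, and finally $s_1'\not\leq s$ being immediate. This exhibits the apex projection $\Gamma(X_S;\low{s}{W})\to\low{s}{W}(s)=W$ as an isomorphism, and since it is built componentwise it is natural in $W$.

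As an abstract cross-check valid for any target $\Vv$ (not only concrete ones), I would invoke the identity $\low{s}{W}=(i_s)_*W$ recorded in the proof of Lemma~\ref{lemElementaryInjective}, where $i_s\colon\pt\to S$ is the inclusion of the one-point poset at $s$; Definition~\ref{defDirectImage} then gives $\Gamma(X_S;\low{s}{W})=\bigl((i_s)_*W\bigr)(X_S)=W\bigl(i_s^{-1}(X_S)\bigr)=W(\pt)=W$ at once. For the second assertion I would use functoriality of $\Gamma$ together with this identification. Given $w\in W$ with its coherent family $(x_{s'})$, the image $g_*(x)$ is the family $\bigl(g(s')(x_{s'})\bigr)_{s'}$, which is automatically coherent for $\low{t}{V}$ precisely because $g$ is a morphism of diagrams. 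To read off its value under the apex isomorphism $\Gamma(X_S;\low{t}{V})\cong V$ it suffices to compute one component lying in the support $S_{\leq t}$ of the target cone: evaluating at the source apex $s$, which lies in $S_{\leq t}$ since $s\leq t$ and on which $g$ acts as $f$, yields $g(s)(x_s)=f(w)$, and coherence propagates this constant value to the apex $t$. Thus $g_*$ is identified with $f\colon W\to V$.

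The routine parts are the three-case verification of coherence and the componentwise bookkeeping of the maps. The one point that demands care, and the only place where the relation $s\leq t$ between the apices actually enters, is ensuring the source apex $s$ lies in the target support $S_{\leq t}$, so that the coherent family $g_*(x)$ has a computable component realizing $f(w)$; once $g$ is taken to be a morphism, functoriality of $\Gamma$ supplies the coherence that propagates this value to the apex. I expect no deeper obstacle, since neither exactness nor injectivity is used here: the statement is purely the computation of an inverse limit of a skyscraper diagram together with the map it induces.
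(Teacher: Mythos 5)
Your treatment of the first assertion is correct and is precisely what the paper intends: the paper's own proof is literally the remark that this is a simple exercise, either concretely via Remark~\ref{remConcreteLimSets} (your main argument) or abstractly via universality; your cross-check through $\low{s}{W}=(i_s)_*W$ and Definition~\ref{defDirectImage} is a clean categorical version of the same computation.

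The second assertion is where your argument breaks. The step that fails is the claim that the family $\bigl(g(s')(x_{s'})\bigr)_{s'}$ is ``automatically coherent for $\low{t}{V}$ because $g$ is a morphism of diagrams'': the stalk data in the statement do \emph{not} define a morphism of diagrams when $s<t$. Naturality already fails on the single pair $s\leq t$. One needs $g(t)\circ\low{s}{W}(s\leq t)=\low{t}{V}(s\leq t)\circ g(s)$; the left-hand side is $0$ because $\low{s}{W}(t)=0$ (as $t\not\leq s$), while the right-hand side is $\id_V\circ f=f$. So such a $g$ exists only for $f=0$. Correspondingly, the family you write down is genuinely not coherent: its component at $s$ is $f(w)$ and its component at $t$ is $0$, yet the structure map of $\low{t}{V}$ along $s\leq t$ is $\id_V$, so it is not an element of $\Gamma(X_S;\low{t}{V})$ at all. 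This is consistent with the adjunction~\eqref{eqIdentificationHoms}, which gives $\Hom_{\Diag(S,\Vv)}(\low{s}{W},\low{t}{V})\cong\Hom_{\Vv}(\low{s}{W}(t),V)$, and this vanishes whenever $t\not\leq s$.

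What a careful blind attempt should have detected is that the inequality in the lemma is stated in the wrong direction: nonzero maps between cone-shaped sheaves go from the larger apex to the smaller one. For $t\leq s$, the prescription $g(s')=f$ for $s'\leq t$ and $g(s')=0$ otherwise \emph{is} a genuine morphism $\low{s}{W}\to\low{t}{V}$, and this is the form in which the lemma is actually needed: the differentials of the injective resolutions in Construction~\ref{conSheafCohomDerivedStyle} decompose into exactly such components. In that corrected setting your method works verbatim: applying $g$ componentwise to the constant section $w$ gives the constant family $f(w)$ on $S_{\leq t}$ and $0$ outside, which is coherent, and reading off the apex component identifies $g_*$ with $f$. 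So the technique is right; the missing step is the naturality check, which forces the hypothesis $s\leq t$ to be replaced by $t\leq s$ (equivalently, source and target exchanged), after which your argument is complete.
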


The proof is a simple exercise: on the universality of an inverse limit if it is proved in an arbitrary category, or on the application of Remark~\ref{remConcreteLimSets} if one prefers to stay concrete.

\begin{rem}\label{remOpinionLeader}
It follows from Construction~\ref{conSheafCohomDerivedStyle}, that higher cohomology $H^{>0}(S;\low{s}{W})$ of the basic injective sheaf vanish. Indeed, an injective resolution of $\low{s}{W}$ has the form
\[
0\to \low{s}{W}\to \low{s}{W}\to 0\to 0\to\cdots
\]
so the global sections of its reduced version give the cochain complex $0\to W\to 0\to\cdots$, with no higher order cohomology.

In subsection~\ref{subsecCohomologyMainPart} it was explained that sheaf cohomology provide a language to speak about ways of information distribution and copying in a network. In terms of these allusion, the skyscraper sheaf $\low{s}{W}$ encodes the situation, when we have a single node $s\in S$, the opinion leader, which possesses information $W$, and all lower nodes of a poset copy this information identically (follow the leader). A consensus state of such sheaf is determined entirely by a state $x_s\in \low{s}{W}(s)=W$ of the opinion leader hence we have $H^0(S;\low{s}{W})=\Gamma(S;\low{s}{W})\cong W$ as stated in Lemma~\ref{lemGlobSecOfInjectives}. Each node $t$ receives its state in $\low{s}{W}(t)$ in a single way: by copying it from $\low{s}{W}(s)$ if $t\leq s$, or by setting it to $0$ otherwise. Therefore, higher cohomology of $\low{s}{W}$ vanish. Although the implication ``injective $\Rightarrow$ acyclic'' is mathematically tautological, it is fairly consistent with the common-sense interpretation demonstrated in Example~\ref{exPathVsCircle}.

The procedure of constructing injective resolution can be informally understood as a decomposition of an arbitrary medium (sheaf) $D$ on $S$ into a combination of simpler media $\low{s}{W}$: these media have opinion leaders hence are easier to deal with\footnote{This certainly looks like a good reason for political engineers to start using injective resolutions in their work.}.
\end{rem}

\begin{rem}\label{remComputFeasibilityOfAGcohomology}
Assume that $S$ is finite, and $\Vv=\Abel$ is the category of abelian groups. Is it possible to algorithmically compute cohomology of a sheaf $\ca{F}$ on $X_S$? Potentially, yes, Construction~\ref{conSheafCohomDerivedStyle} describes the algorithm. However, there is one formal detail, which complicates the deal. Namely, we should specify a computationally feasible class of abelian groups. In topological practice, consideration is restricted to finitely generated abelian groups. However, unwinding Constructions~\ref{conSheafCohomDerivedStyle} and~\ref{conGodementConstruction}, we see the necessity to generate injective hulls in $\Vv$, which kicks us out of the class of finitely generated groups. For example, the injective hull of the simplest group $\Zo$ is $\Zo\hookrightarrow \Qo$ so we need to take the groups like $\Qo$, $\Qo/\Zo$, their direct sums, and homomorphisms between them into account. This certainly complicates the computational engine.

The situation is a bit easier in the category $\ko\FinVect$ of finite-dimensional vector spaces over a field, where every object is injective. The computational approach to cohomology, in which injective resolution of a given sheaf is built step by step, looks a bit scary. However, we notice here that our homology-slaying Algorithm~\ref{algMainAlg} to be described in the latter sections, can be reformulated as the construction of a minimal injective resolution which is, in a sense, universal for all sheaves $D$ on a given poset $S$. 
\end{rem}

\subsection{Roos complex}\label{subsecMathCohomologySimplicial}

The definition of sheaf cohomology given in Subsection~\ref{subsecMathCohomologyDerived} will be used mainly for mathematical reference: it is universal, but seems impractical when it comes to actual calculations as shown in Remark~\ref{remComputFeasibilityOfAGcohomology}. There seems to be a consensus, both in theoretical and applied community, that when it comes to (co)homology computations, it is better to deal directly with (co)chain complexes, see Subsection~\ref{subsecMathCochainRecap}.


Given a diagram $D$ on a poset $S$ (equiv. a sheaf $\ca{D}$ on a finite topological space $X_S$) we want to define a cochain complex $(C^*(S;D),d)$ which is more-or-less easily constructed from $D$, and computes cohomology of $D$, i.e.
\[
H^j(C^*(S;D),d)\cong H^j_{AG}(X_S;\ca{D}).
\]
For any finite poset $S$ there is a classical construction introduced in the paper of Roos~\cite{roos1961derlim}.

\begin{con}\label{conLargestEltOfChain}
Let $\tau$ be a chain $s_0<s_1<\cdots<s_j$ in the poset $S$, which means that $\tau$ is a $j$-dimensional simplex of the order complex $\ord(S)$. Denote by $\tau_{\max}$ the largest element $s_k$ of this chain. If $\tau\subseteq\sigma$ is a set-theoretic inclusion of chains, then clearly $\tau_{\max}\leq \sigma_{\max}$.
\end{con}

\begin{con}\label{conStandardSimplicialIncNumbers}
Assume that $\sigma$ is a $(j+1)$-dimensional simplex of $\ord(S)$, i.e. a chain $s_0<s_1<\cdots<s_{j+1}$, and $\tau$ is a $j$-dimensional simplex. Then we can define the incidence number $\inc{\sigma}{\tau}\in\Zo$ in the standard alternating fashion. If $\sigma$ is a chain $s_0<s_1<\cdots<s_{j+1}$ and $\tau=\sigma\setminus\{s_i\}$, then $\inc{\sigma}{\tau}$ is set equal to $(-1)^i$; in all other cases $\inc{\sigma}{\tau}$ is set equal to $0$. This alternating assumption easily implies that, for any $\sigma_1<\sigma_2$, $\dim\sigma_1=j-1$, $\dim\sigma_2=j+1$, we have
\begin{equation}\label{eqDiamondSimplicial}
\sum_{\tau\colon \dim\tau=j}\inc{\sigma_2}{\tau}\cdot\inc{\tau}{\sigma_1}=0.
\end{equation}
There are only two nonzero summands in this sum.
\end{con}

\begin{defin}\label{definRoosComplex}
Let $D$ be a diagram on a poset $S$. Consider the graded vector space $C^*_{Roos}(S;D)$, and a map $d_{Roos})$ where
\[
C^j_{Roos}(S;D)=\bigoplus_{\tau\in\ord(S),\dim\tau=j}D(\tau_{\max}),\mbox{ and }d_{Roos}\colon C^j_{Roos}(S;D)\to C^{j+1}_{Roos}(S;D)
\]
defined by
\[
d=\bigoplus_{\tau<\sigma, \dim\tau=j,\dim\sigma=j+1}\inc{\sigma}{\tau}F(\tau_{\max}\leq \sigma_{\max}).
\]
It follows from~\eqref{eqDiamondSimplicial} that $d^2=0$. The cochain complex $(C^j_{Roos}(S;D),d_{Roos})$ is called \emph{the Roos complex} of the diagram $D$ on $S$, and its cohomology are called \emph{the Roos cohomology}:
\[
H^j_{Roos}(S;D)=H^j(C^j_{Roos}(S;D),d_{Roos}).
\]
\end{defin}

\begin{rem}\label{remRefinedDiagram}
Given a diagram $D$ on $S$, we can naturally define a refined commutative diagram $\hat{D}\colon\cat(\ord(S))\to \Vv$, given by $\hat{D}(\sigma)=D(\sigma_{\max})$ and $\hat{D}(\sigma\subset\tau)=D(\sigma_{\max}\leq\tau_{\max})$. Roos complex is basically the cellular cochain complex associated with the cellular sheaf (local coefficient system) $\hat{D}$ on $\ord(S)$. However, since cellular sheaves and cellular cochain complex haven't been defined yet, we avoid putting this remark as the definition of Roos complex to avoid any accusation in looping the definitions.
\end{rem}

\begin{con}\label{conConstantSheafOnLowerIdeal}
Let $T\subseteq S$ be a lower order ideal of $S$, i.e. a set with the property that $s\in T$ and $s'<s$ implies $s'\in T$. Equivalently, $T$ is a closed subset of the Alexandrov topological space $X_S$. Let $V\in\Vv$ be an object of the abelian category. Consider the diagram $V_T\in \Diag(S;\Vv)$ defined by
\[
V_T(s)=\begin{cases}
         V, & \mbox{if } s\in T \\
         0, & \mbox{otherwise}
       \end{cases}\mbox{ and }
       V_T(s'\leq s) =\begin{cases}
         \id_V, & \mbox{if } s\in T \\
         0, & \mbox{otherwise}.
       \end{cases}
\]
We denote the corresponding sheaf on $X_S$ with the same letter $V_T$.
\end{con}

The following statement shows the topological nature of the Roos construction. 

\begin{prop}\label{propRoosIsSingular}
If $T\subseteq S$ is a lower order ideal, then Roos cohomology $H^*_{Roos}(X_S;V_T)$ is naturally isomorphic to the singular cohomology $H^*(|T|;V)$ of the geometrical realization.
\end{prop}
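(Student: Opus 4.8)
The plan is to show that for the diagram $V_T$ the Roos complex collapses to the ordinary simplicial cochain complex of the order complex $\ord(T)$ with coefficients in $V$, after which the statement follows from the classical comparison between simplicial and singular cohomology. The single hypothesis that does the work is that $T$ is a \emph{lower} order ideal.

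First I would identify the cochain groups. By Definition~\ref{definRoosComplex}, $C^j_{Roos}(S;V_T)=\bigoplus_{\dim\tau=j}V_T(\tau_{\max})$, the sum ranging over $j$-chains $\tau$ of $S$. Every element of a chain is $\le\tau_{\max}$, so since $T$ is downward closed, $\tau_{\max}\in T$ holds if and only if the entire chain $\tau$ lies in $T$, i.e. $\tau$ is a simplex of $\ord(T)$. Hence $V_T(\tau_{\max})=V$ exactly when $\tau\in\ord(T)$ and $=0$ otherwise, yielding a canonical summand-wise identification $C^j_{Roos}(S;V_T)\cong\bigoplus_{\tau\in\ord(T),\,\dim\tau=j}V=C^j(\ord(T);V)$, the group of simplicial $j$-cochains of $\ord(T)$ valued in $V$.

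Second I would match the differentials. The Roos differential maps the $\tau$-summand into the $\sigma$-summand, for $\tau<\sigma$ a codimension-one face, via $\inc{\sigma}{\tau}\cdot V_T(\tau_{\max}\le\sigma_{\max})$. If $\sigma\in\ord(T)$ then all its faces also lie in $\ord(T)$ and the relevant structure maps of $V_T$ are $\id_V$, so the coefficient is simply the incidence number $\inc{\sigma}{\tau}$; if $\sigma\notin\ord(T)$ the target summand is $0$. Thus, under the identification above, $d_{Roos}$ is precisely the alternating simplicial coboundary $\delta$ of $\ord(T)$, and $(C^*_{Roos}(S;V_T),d_{Roos})$ is isomorphic as a cochain complex to $(C^*(\ord(T);V),\delta)$. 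Passing to cohomology gives $H^*_{Roos}(X_S;V_T)\cong H^*(\ord(T);V)$.

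Finally I would invoke the classical fact that the simplicial cohomology of a finite simplicial complex $K$ with coefficients in $V$ is naturally isomorphic to the singular cohomology of its geometric realization $|K|$. Applying this to $K=\ord(T)$ and recalling from Definition~\ref{definGeomRealPoset} that $|T|=|\ord(T)|$ by definition yields $H^*_{Roos}(X_S;V_T)\cong H^*(|T|;V)$. For naturality, the identifications of the first two steps are canonical and choice-free, hence compatible with morphisms $V\to V'$ and with inclusions of lower order ideals $T'\subseteq T$ (which induce $\ord(T')\hookrightarrow\ord(T)$), and the simplicial--singular comparison is natural as well. I expect the only real work to be bookkeeping: checking that the incidence-number conventions of Construction~\ref{conStandardSimplicialIncNumbers} reproduce the simplicial coboundary on the nose, signs included, and pinning down exactly which morphisms the claimed naturality is with respect to.
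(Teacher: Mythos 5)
Your proposal is correct and follows essentially the same route as the paper's proof: identify $(C^*_{Roos}(S;V_T),d_{Roos})$ with the simplicial cochain complex of $\ord(T)$ with coefficients in $V$, then invoke the standard comparison between simplicial and singular cohomology of $|\ord(T)|=|T|$. The paper compresses your first two steps into the phrase ``by construction coincides,'' whereas you spell out the bookkeeping (downward closure forces $\tau_{\max}\in T$ iff $\tau\in\ord(T)$, and the relevant structure maps are identities), which is exactly the content being glossed over.
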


\begin{proof}
The Roos complex $(C^j_{Roos}(S;V_T),d_{Roos})$ by construction coincides with the simplicial cochain complex of the simplicial complex $\ord(T)$ with coefficients in $V$, hence $H^*_{Roos}(S;V_T)\cong H^*_{simp}(|T|;V)$. The latter modules are isomorphic to simplicial, and hence singular cohomology of the geometrical realization, which is a standard fact in topology~\cite[Thm.2.27]{Hatcher}. 
\end{proof}

\begin{ex}\label{exInjectiveIsCone}
The basic injective sheaf $\low{s}{W}$ defined in Construction~\ref{conInjSheaves} coincides with $W_{S_{\leq s}}$. We have $H^*_{Roos}(X_S;\low{s}{W})\cong H^*(|S_{\leq s}|;W)$. The latter cohomology is trivial in positive degrees and isomorphic to $W$ in degree $0$, as follows from contractibility of $|S_{\leq s}|$.
\end{ex}

A classical theorem asserts that Roos cohomology of a sheaf are isomorphic to cohomology defined from the injective resolution.

\begin{thm}\label{thmRoosIsAGcohomology}
For any sheaf $D$ on a poset $S$, the Roos cohomology modules are naturally isomorphic to its cohomology:
\[
H^j_{Roos}(S;D)=H^j_{AG}(X_S;\ca{D}).
\]
\end{thm}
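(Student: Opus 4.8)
The plan is to recognize this as an instance of Grothendieck's universal $\delta$-functor theorem and to verify, for the assignment $D\mapsto (C^*_{Roos}(S;D),d_{Roos})$, the criterion of Theorem~\ref{thmHonestCohomology} that it honestly computes cohomology in the sense of Definition~\ref{definHonestlyComputes}. Concretely, I would check three things: (i) the construction is functorial and exact, so that it carries a short exact sequence of diagrams into a short exact sequence of cochain complexes and hence defines a cohomological $\delta$-functor; (ii) it agrees with $\Gamma(S;-)$ in degree $0$; and (iii) it is erasable, i.e. $H^j_{Roos}(S;I)=0$ for $j>0$ whenever $I$ is injective. Once these hold, $H^*_{Roos}$ is a universal $\delta$-functor agreeing in degree $0$ with the universal $\delta$-functor $H^*_{AG}$, and the two are therefore naturally isomorphic.

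For (i), note that each Roos group $C^j_{Roos}(S;D)=\bigoplus_{\dim\tau=j}D(\tau_{\max})$ is a finite direct sum of stalks, and the differential is built componentwise from the structure maps of $D$; hence $D\mapsto C^*_{Roos}(S;D)$ is an exact functor, since exactness of sequences of diagrams is tested stalk-wise (Proposition~\ref{propExactSequenceDiagrams}). A short exact sequence $0\to D_1\to D_2\to D_3\to 0$ thus yields a short exact sequence of Roos complexes, and the Zig-zag Lemma~\ref{lemZigZag} produces the connecting maps and the long exact sequence, establishing the $\delta$-functor structure. For (ii), I would compute $\Ker d_0$ directly: for a $1$-chain $\sigma=(s_0<s_1)$ the $\sigma$-component of $d_{Roos}(x)$ is $x_{s_1}-D(s_0\leq s_1)x_{s_0}$, so the condition $d_0x=0$ is exactly the coherence system defining the inverse limit (Remark~\ref{remConcreteLimSets}); therefore $H^0_{Roos}(S;D)=\Gamma(S;D)=H^0_{AG}(X_S;\ca{D})$ naturally.

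The substantive step is (iii), and this is where the topological input enters. By Remark~\ref{remInjIsSumOfCones} every injective sheaf on $X_S$ is a finite direct sum (finiteness of $S$ is used here) of cone-shaped sheaves $\low{s}{W}$, and Roos cohomology commutes with finite direct sums, so it suffices to treat a single $\low{s}{W}$. This is precisely the content of Example~\ref{exInjectiveIsCone}: by that example $\low{s}{W}=W_{S_{\leq s}}$, and Proposition~\ref{propRoosIsSingular} identifies $H^*_{Roos}(X_S;\low{s}{W})$ with the singular cohomology $H^*(|S_{\leq s}|;W)$. Since $|S_{\leq s}|$ is the cone on $|\dd C(s)|$ with apex $s$ (Constructions~\ref{conCones} and~\ref{conInjSheaves}), it is contractible, so this cohomology vanishes in positive degrees. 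Hence $H^{>0}_{Roos}$ is erasable.

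With (i)--(iii) verified, the universal $\delta$-functor theorem finishes the argument: an effaceable $\delta$-functor is universal, and two universal $\delta$-functors agreeing in degree $0$ are canonically isomorphic. I expect the main obstacle to be (iii): the bookkeeping of (i) and (ii) is routine, whereas vanishing on injectives is the genuine geometric content --- it is exactly the assertion that the Roos (bar) resolution restricted to a down-cone computes the cohomology of a contractible space. Fortunately Proposition~\ref{propRoosIsSingular} and Example~\ref{exInjectiveIsCone} package precisely this input, so the remaining work is to assemble these pieces and invoke uniqueness of universal $\delta$-functors.
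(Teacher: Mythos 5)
Your proposal is correct and follows essentially the paper's own route: the paper proves Theorem~\ref{thmHonestCohomology} (which it explicitly describes as a reformulation of Grothendieck's delta-functor theorem) and then, in Example~\ref{exRoos}, verifies exactly your three conditions --- exactness of the Roos functor because its components are direct sums of stalks (Proposition~\ref{propExactSequenceDiagrams}), the degree-zero identification with $\Gamma(S;D)$, and acyclicity on the cone-shaped sheaves $\low{s}{W}$ via Proposition~\ref{propRoosIsSingular} together with contractibility of $|S_{\leq s}|$. The only divergence is the engine used to conclude: you invoke effaceability and uniqueness of universal $\delta$-functors directly, whereas the paper proves its criterion by a self-contained bicomplex argument (rows acyclic by exactness of $\Fu$, columns acyclic by acyclicity on the injectives $\low{s}{W}$, then the standard comparison of the first row and first column). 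These are two packagings of the same theorem, so nothing of substance changes, and your identification of (iii) as the genuinely geometric step is exactly where the paper puts the weight as well.

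One small inaccuracy to repair: Remark~\ref{remInjIsSumOfCones} does not assert that every injective sheaf \emph{is} a finite direct sum of cone-shaped sheaves; it asserts that every sheaf \emph{embeds monomorphically} into such a direct sum (Construction~\ref{conGodementConstruction}). For your step (iii) this is harmless, and can be fixed in either of two ways: (a) effaceability only requires that every sheaf embed into one on which the positive-degree functors vanish, and the embedding into a sum of cones provides precisely that; or (b) if you insist on vanishing on injectives, note that an injective $I$ embeds into a sum of cones and the embedding splits by injectivity, so $I$ is a direct summand of that sum, and since Roos cohomology commutes with finite direct sums, $H^{>0}_{Roos}(S;I)$ is a summand of zero. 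With that one-line repair your argument is complete.
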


Basically, this statement is the instance of the universal delta-functor theorem. We postpone the proof to Subsection~\ref{subsecMathHonestComputations}, where it is formulated in bigger generality. The next claim follows from Theorem~\ref{thmRoosIsAGcohomology} and Proposition~\ref{propRoosIsSingular}.

\begin{rem}\label{remRoos0globalSec}
Theorem~\ref{thmRoosIsAGcohomology} implies $H^0_{Roos}(S;D)\cong \Gamma(S;D)$. This particular statement can be seen directly: it is a simple exercise to prove that, for the Roos differential $d_{Roos}\colon C^0_{Roos}(S;D)\to C^1_{Roos}(S;D)$, the equation $d_{Roos}x=0$ coincides with coherence equations~\ref{eqCoherentStates}.
\end{rem}

A number of important corollaries follow from Theorem~\ref{thmRoosIsAGcohomology}.

\begin{cor}\label{corLowerSheafIsSingularCohomology}
If $T\subseteq S$ is a lower order ideal, then the sheaf cohomology $H^*_{AG}(X_S;V_T)$ is naturally isomorphic to the singular cohomology $H^*(|T|;V)$ of the geometrical realization.
\end{cor}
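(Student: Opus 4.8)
The plan is to obtain this corollary simply by composing the two results that immediately precede it: both halves of the desired isomorphism have already been established, so the only work is to identify $V_T$ as the common input and to check that the composite isomorphism is natural.

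First I would instantiate Theorem~\ref{thmRoosIsAGcohomology} at the particular diagram $D=V_T$ introduced in Construction~\ref{conConstantSheafOnLowerIdeal}. Since $V_T$ is a genuine sheaf on $X_S$ (equivalently a diagram on $S$), the theorem applies verbatim and yields, for every $j\geq 0$, a natural isomorphism
\[
H^j_{Roos}(S;V_T)\cong H^j_{AG}(X_S;V_T).
\]
Here I would note the harmless notational identification between $H^*_{Roos}(S;V_T)$ as in Definition~\ref{definRoosComplex} and $H^*_{Roos}(X_S;V_T)$ as written in Proposition~\ref{propRoosIsSingular}: the Roos complex depends only on the poset data, which $S$ and $X_S$ encode identically.

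Next I would invoke Proposition~\ref{propRoosIsSingular}, whose hypothesis is exactly that $T\subseteq S$ is a lower order ideal. This supplies the second natural isomorphism
\[
H^j_{Roos}(X_S;V_T)\cong H^j(|T|;V).
\]
Chaining the two displayed isomorphisms gives $H^j_{AG}(X_S;V_T)\cong H^j(|T|;V)$ in every degree, which is precisely the assertion.

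The only point requiring a moment's care, and the closest thing to an obstacle, is naturality of the composite. Each of the two constituent isomorphisms is asserted to be natural (in the sheaf argument for the Roos-versus-derived comparison of Theorem~\ref{thmRoosIsAGcohomology}, and with respect to the coefficient object $V$ and inclusions of lower ideals for the Roos-versus-singular comparison of Proposition~\ref{propRoosIsSingular}). A composition of natural transformations is natural, so the resulting isomorphism $H^*_{AG}(X_S;V_T)\cong H^*(|T|;V)$ is natural as well. I would therefore expend essentially no new calculation: the entire content of the corollary already lives in the two proved results, and the statement is their immediate formal consequence.
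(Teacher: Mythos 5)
Your proposal is correct and follows exactly the paper's own route: the paper derives Corollary~\ref{corLowerSheafIsSingularCohomology} precisely by combining Theorem~\ref{thmRoosIsAGcohomology} (Roos cohomology agrees with derived-functor cohomology) with Proposition~\ref{propRoosIsSingular} (Roos cohomology of $V_T$ agrees with singular cohomology of $|T|$). Your additional remarks on identifying $D=V_T$ and on naturality of the composite are sound elaborations of what the paper leaves implicit.
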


\begin{ex}\label{exConstantSheaf}
When $T=S$, the sheaf $V_S$ is just the constant sheaf on $S$ (see Construction~\ref{conConstantSheaf}), we denoted it by $\bar{V}$. From the Corollary~\ref{corLowerSheafIsSingularCohomology}, it follows that $H^*_{AG}(X_S;\bar{V})\cong H^*(|X|;V)$; it is nice to have all notions consistent throughout mathematics. In particular, it follows that $\Gamma(X_S;\overline{\ko^1})\cong H^0(|S|;\ko)\cong \ko^{c(|S|)}$, where $c(|S|)$ is the number of connected components of the geometrical realization $|S|$, the latter property is well known for singular (co)homology.
\end{ex}

Another important Corollary of Theorem~\ref{thmRoosIsAGcohomology} states that, for a finite poset, cohomology in sufficiently large degrees vanish.

\begin{cor}\label{corVanishing}
Let $h=\dim\ord S$ be the length of the longest chain in $S$. Then, for any sheaf $\ca{D}$ on $X_S$ we have $H^j_{AG}(X_S;\ca{D})=0$ for $j>h$.
\end{cor}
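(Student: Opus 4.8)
The plan is to reduce the statement immediately to a fact about the Roos complex via Theorem~\ref{thmRoosIsAGcohomology}, which identifies the sheaf cohomology $H^j_{AG}(X_S;\ca{D})$ with the Roos cohomology $H^j_{Roos}(S;D)$. Once this identification is in hand, the corollary becomes a purely combinatorial observation about the shape of the Roos cochain complex, so no homological machinery beyond the cited theorem is required.

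First I would recall from Definition~\ref{definRoosComplex} that the degree-$j$ term of the Roos complex is
\[
C^j_{Roos}(S;D)=\bigoplus_{\tau\in\ord(S),\ \dim\tau=j}D(\tau_{\max}),
\]
a direct sum indexed by the $j$-dimensional simplices of the order complex $\ord(S)$, that is, by the chains $s_0<s_1<\cdots<s_j$ in $S$ (which have $j+1$ elements).

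Next I would invoke the hypothesis $h=\dim\ord(S)$. By the definition of the dimension of a simplicial complex, $\ord(S)$ possesses no simplices of dimension exceeding $h$; equivalently, $S$ contains no chain with more than $h+1$ elements. Consequently, for every $j>h$ the indexing set of the direct sum displayed above is empty, whence $C^j_{Roos}(S;D)=0$ in all such degrees.

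Finally, a cochain complex all of whose terms vanish in degrees $j>h$ automatically has vanishing cohomology in those degrees, so $H^j_{Roos}(S;D)=0$ for $j>h$, and the conclusion follows by Theorem~\ref{thmRoosIsAGcohomology}. I do not anticipate any genuine obstacle; the only point meriting a moment's care is the bookkeeping of conventions — confirming that ``the length of the longest chain'' $h$ coincides with $\dim\ord(S)$, so that degree $h$ is the top nonvanishing degree of the Roos complex and everything strictly above it is forced to be zero.
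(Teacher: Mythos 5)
Your proof is correct and is essentially identical to the paper's own argument: both reduce to the Roos complex via Theorem~\ref{thmRoosIsAGcohomology} and observe that $C^j_{Roos}(S;D)=0$ for $j>h$ because $S$ has no chains of length greater than $h$, so the cohomology vanishes in those degrees. Your extra care about the convention that a chain $s_0<\cdots<s_j$ is a $j$-dimensional simplex of $\ord(S)$ is exactly the right bookkeeping and matches the paper's usage.
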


\begin{proof}
By definition, the graded components $C^j_{Roos}(S;D)$ of the Roos complex vanish for $j>h$: there are no chains of length $>h$ in $S$. Therefore cohomology $H^j_{Roos}(S;D)$ vanish as well, and the rest follows from Theorem~\ref{thmRoosIsAGcohomology}.
\end{proof}

We finish this section with a general remark.

\begin{rem}
The alternative way to reduce cohomology computation to a simplicial structure is provided by the notion of \emph{\v{C}ech cohomology}. In this approach, an arbitrary topological space is replaced with the nerve of its covering, and the computation is performed on this simplicial complex. We do not give a review of the related theory here. However, it should be mentioned that \v{C}ech cohomology of sheaves on finite posets are not necessarily isomorphic to the cohomology $H^*_{AG}$ as observed by Husainov~\cite[Sect.5.4]{Husainov}. The analogue of Theorem~\ref{thmRoosIsAGcohomology} does not hold, therefore one should be careful about which type of cohomology is used.
\end{rem}

\subsection{Algorithmic issues}\label{subsecMathFeasibility}

\begin{defin}\label{definHomologicallyFeasible}
Assume that an abelian category $\Vv$ has finite direct sums. We say that $\Vv$ is \emph{homologically feasible} if, given a triple $V_{-1}\stackrel{d_0}{\rightarrow}V_0\stackrel{d_1}{\rightarrow} V_1$ in $\Vv$ satisfying $d_0;d_1=0$, the problem of describing subquotients $H=\Ker d_1/\im d_{0}$ is algorithmically solvable.
\end{defin}

\begin{ex}\label{exVectFeasible}
The category $\ko\FinVect$ of finite dimensional vector spaces is homologically feasible --- as long as arithmetical properties of the ground field $\ko$ are described somehow. This follows from the Gauss algorithm.
\end{ex}

\begin{ex}\label{exAbelFeasible}
The category $\FinAbel$ of finitely generated abelian groups is homologically feasible as well since the classification of finitely generated abelian groups is constructive.
\end{ex}

\begin{ex}\label{exPolyModulesFeasible}
The category $\ko[t]\FinMod$ of finitely generated modules over a polynomial ring in a single variable is homologically feasible by the same reason. The ring $\ko[t]$ is a principal ideal domain, and the classification of finitely generated modules over a PiD is constructive. The latter example is of particular importance in topological data analysis, since persistence modules, in particular persistent homology, are the objects of the category of (graded) $\ko[t]$-modules.
\end{ex}

The fact that cohomology of Roos complex coincides with cohomology defined in derived manner, has an important corollary.

\begin{cor}\label{corComputabilityPosets}
Assume $X$ is a finite topology, and $\Vv$ is a homologically feasible category. Then, for any sheaf $\ca{D}$ on $X$ valued in $\Vv$, all the cohomology $H^*_{AG}(X;\ca{D})$ are computable.
\end{cor}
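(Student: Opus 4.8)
The plan is to bypass the derived-functor definition of $H^*_{AG}$ entirely and instead compute cohomology through the Roos complex, whose terms and differentials are explicitly assembled from the combinatorial data of the poset and the structure maps of the sheaf. First I would pass from the finite topology $X$ to its associated finite poset $S$ (Proposition~\ref{propPosTop} identifies $X$ with a finite preposet, and Remark~\ref{remPreposetsNotNeeded} lets me replace it by a poset without changing the category of sheaves), and reinterpret $\ca{D}$ as a diagram $D\colon\cat(S)\to\Vv$.

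The key observation is that the Roos complex $(C^*_{Roos}(S;D),d_{Roos})$ of Definition~\ref{definRoosComplex} is a \emph{finite} cochain complex whose data are algorithmically available. Since $S$ is finite, the order complex $\ord(S)$ has only finitely many simplices, so each
\[
C^j_{Roos}(S;D)=\bigoplus_{\tau\in\ord(S),\,\dim\tau=j}D(\tau_{\max})
\]
is a finite direct sum of objects of $\Vv$; such sums exist by the hypothesis that $\Vv$ has finite direct sums (Definition~\ref{definHomologicallyFeasible}). The differential $d_{Roos}$ is given blockwise by the integers $\inc{\sigma}{\tau}$ together with the structure maps $D(\tau_{\max}\leq\sigma_{\max})$, all of which are part of the input data, and by Definition~\ref{definRoosComplex} it satisfies $d_{Roos};d_{Roos}=0$. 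By Corollary~\ref{corVanishing} the complex is concentrated in degrees $0\leq j\leq h$, where $h$ is the length of the longest chain in $S$, so there are only finitely many nonzero terms to treat.

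With the complex in hand, I would compute each cohomology object degree by degree. For a fixed $j$ the triple
\[
C^{j-1}_{Roos}(S;D)\stackrel{d_{Roos}}{\to}C^{j}_{Roos}(S;D)\stackrel{d_{Roos}}{\to}C^{j+1}_{Roos}(S;D)
\]
satisfies the composability condition $d_{Roos};d_{Roos}=0$, so the homological feasibility of $\Vv$ (Definition~\ref{definHomologicallyFeasible}) guarantees that the subquotient $\Ker d_{Roos}/\im d_{Roos}=H^j_{Roos}(S;D)$ can be described algorithmically. Running this over the finitely many relevant degrees $j=0,\ldots,h$ produces all the Roos cohomology objects, and Theorem~\ref{thmRoosIsAGcohomology} identifies them naturally with $H^j_{AG}(X_S;\ca{D})$, which yields computability.

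The one point requiring care --- and essentially the only place where anything could go wrong --- is the interface between the abstract notion of feasibility for $\Vv$ and the concrete representation of the Roos terms: one must check that forming the finite direct sums $\bigoplus_\tau D(\tau_{\max})$ and the blockwise differential keeps us inside the class of objects and morphisms on which the feasibility algorithm of Definition~\ref{definHomologicallyFeasible} operates. For the motivating examples $\ko\FinVect$, $\FinAbel$, and $\ko[t]\FinMod$ (Examples~\ref{exVectFeasible}--\ref{exPolyModulesFeasible}) this is immediate, since these classes are closed under finite direct sums and their morphisms are represented by finite matrices over a constructively described base ring; in general it should be read as a standing assumption built into what "homologically feasible" is taken to mean.
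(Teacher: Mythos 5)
Your proposal is correct and follows essentially the same route as the paper's own proof: reduce the finite topology to a finite poset via Proposition~\ref{propPosTop} and Remark~\ref{remPreposetsNotNeeded}, form the Roos complex (finite because $\ord(S)$ has finitely many simplices, vanishing in high degrees), compute each subquotient using homological feasibility, and identify the result with $H^*_{AG}$ by Theorem~\ref{thmRoosIsAGcohomology}. Your closing remark about the feasible class being closed under finite direct sums is a reasonable extra precision that the paper leaves implicit, but it does not change the argument.
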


\begin{proof}
Proposition~\ref{propPosTop} tells that $X=X_S$ for some preposet $S$, and according to Remark~\ref{remPreposetsNotNeeded}, without loss of generality, $S$ can be assumed a poset. For a poset $S$, Theorem~\ref{thmRoosIsAGcohomology} claims that $H^*_{AG}(X_S;\ca{D})\cong H^*_{Roos}(S;D)$. Roos cohomology are computed as subquotients of the Roos complex $C^*_{Roos}(S;D)$. Since there are only finitely many chains in a finite poset $S$, the whole complex $C^*_{Roos}(S;D)$ is a finite direct sum of stalks. For sufficiently large $j$ (for example for $j>\#S$), the component $C^j_{Roos}(S;D)$ simply vanishes. The assumption of algorithmic feasibility implies that cohomology are computable in all degrees $j$ below $\#S$.
\end{proof}

\subsection{Cohomology of cellular sheaves}\label{subsecMathCohomologyCellular}

Cohomology computation with the use of Roos complex is universal --- it works for all sheaves on Alexandrov spaces. In the case of a finite poset and a diagram valued in finite-dimensional vector spaces, the computation of cohomology can be done algorithmically, as Corollary~\ref{corComputabilityPosets} shows. However, such computation is heavy in practice: the order complex $\ord S$ of a general poset $S$ has many simplices, so the matrices involved in the calculation of Roos cohomology are exponential in the cardinality of $S$, in the worst case.

Another approach to calculate cohomology of a sheaf is more optimal but it narrows the class of posets on which it applies. In this section we review the construction and basic theory of cellular cohomology which are well defined on cell posets, or their analogues, given by Definitions~\ref{definCellPoset}, \ref{definHomotopyCellPoset}, and \ref{definHomologyCellPoset}.
The mathematical results of this subsection were proved in~\cite{EVERITT2015134}, and the definition of a cellular cochain complex appears in all applied papers dealing with cohomology of cellular sheaves. We provide formulations in terms introduced previously in the paper.

\begin{defin}\label{definCellularSheaf}
A \emph{cellular sheaf} $D$ is a diagram on a cell poset $\ca{X}$. Equivalently, a cellular sheaf is a sheaf on the Alexandrov space corresponding to a cell poset.
\end{defin}

The property of ``being cellular'' is actually not the property of a sheaf itself, but rather the property of its domain of definition.

\begin{rem}
The previous sections, especially Proposition~\ref{propDiagSheaf} should have convinced the reader that there is not much operational difference between posets $S$ and Alexandrov topologies $X_S$ when we speak of diagrams/sheaves. To avoid the notational monster $X_{\ca{X}}$, we abuse the notation and denote by $\ca{X}$ both cell posets (or the variations of this notion) and the corresponding Alexandrov topologies.
\end{rem}

In order to define a cochain complex, which computes sheaf cohomology of cellular sheaves in a more optimal way than Roos complex, one needs an important ingredient: incidence numbers~\footnote{At this place the perfect symphony of abstract nonsense is first broken by a down-to-earth notion of a number. The harmony will be restored later on in the text, where we show that incidence numbers are homology as well.} of cells. This notion is classical in the study of CW-complexes in algebraic topology. 

Let $\sigma$ be a cell of rank $k$ of a cell poset $\ca{X}$. By definition, it means that the space $|\dd C(\sigma)|=|\ca{X}_{<\sigma}|$ has integral homology of a $(k-1)$-dimensional sphere. From the contractibility of the cone $|C(\sigma)|$ and the long exact sequence of the pair $(|C(\sigma)|,|\dd C(\sigma)|)$, it follows that
\[
H^k(|C(\sigma)|,|\dd C(\sigma)|;\Zo)\cong \Zo
\]
while all other relative homology groups vanish: $H^j(|C(\sigma)|,|\dd C(\sigma)|;\Zo)=0$ for $j\neq \rk\sigma$.

\begin{defin}\label{definCellOrientation}
A choice of a generator $\kappa_\sigma$ of the group $H^k(|C(\sigma)|,|\dd C(\sigma)|;\Zo)\cong \Zo$ is called \emph{an orientation} of a cell $\sigma\in\ca{X}$.
\end{defin}

\begin{con}\label{conIncidenceNumbers}
Assume that $\sigma,\tau\in \ca{X}$ be two cells of a cell poset $\ca{X}$ such that $\tau<\sigma$ and $\rk\tau=k$, $\rk\sigma=k+1$ for some $k=0,1,\ldots$. Consider the map $f_{\sigma,\tau}$ given by composing the sequence
\begin{equation}\label{eqMapForIncidenceNumber}
H_{k+1}(|C(\sigma)|,|\dd C(\sigma)|;\Zo)\stackrel{\cong}{\longrightarrow} \Hr_{k}(|\dd C(\sigma)|;\Zo)\to H_{k}(|\dd C(\sigma)|,|\dd C(\sigma)\setminus\{\tau\}|;\Zo)\cong H_{k}(|C(\tau)|,|\dd C(\tau)|;\Zo),
\end{equation}
where
\begin{enumerate}
  \item The first map $f_1\colon H_{k+1}(|C(\sigma)|,|\dd C(\sigma)|;\Zo)\to H_{k}(|\dd C(\sigma)|;\Zo)$ is the connecting homomorphism in the long exact sequence of singular homology of the pair $(|C(\sigma)|,|\dd C(\sigma)|)$. This map is an isomorphism since the cone $|C(\sigma)|$ is contractible.
  \item The second map $H_{k}(|\dd C(\sigma)|;\Zo)\to H_{k}(|\dd C(\sigma)|,|\dd C(\sigma)\setminus\{\tau\}|;\Zo)$ is the natural map to relative homology.
  \item The last isomorphism is due to excision property of singular homology.
\end{enumerate}
Since $\ca{X}$ is a cell complex, both groups $H_{k+1}(|C(\sigma)|,|\dd C(\sigma)|;\Zo)$ and $H_{k}(|C(\tau)|,|\dd C(\tau)|;\Zo)$ are isomorphic to $\Zo$, so the composition of~\eqref{eqMapForIncidenceNumber} belongs to $\Hom(\Zo,\Zo)\cong\Zo$. If the orientations $\kappa_\sigma$ and $\kappa_\tau$ of cells $\sigma$ and $\tau$ are fixed, the map $f_{\sigma,\tau}$ becomes a number as well.
\end{con}

\begin{defin}\label{definIncidenceNumber}
Let $\sigma,\tau\in \ca{X}$ be two cells of a cell poset $\ca{X}$ such that $\rk\sigma=\rk\tau+1$, and let $\kappa_\sigma$ and $\kappa_\tau$ be orientations of these cells. \emph{The incidence number} $\inc{\sigma}{\tau}\in\Zo$ is defined as the unique integer such that
\[
f_{\sigma,\tau}(\kappa_\sigma)=\inc{\sigma}{\tau}\kappa_\tau,
\]
--- in the case $\tau<\sigma$. If $\sigma\ngtr \tau$, we set $\inc{\sigma}{\tau}=0$.
\end{defin}

The following statement is the classical property of incidence numbers, which allows to use them in the definition of a differential in a cellular cochain complex.

\begin{lem}[Diamond property]\label{lemDiamondProperty}
Let $\sigma,\tau$ be two cells of a cell poset $\ca{X}$ such that $\sigma_1<\sigma_2$ and $\rk\sigma_2=\rk\sigma_1+2$. Then
\begin{equation}\label{eqDiamond}
\sum_{\tau\colon \sigma_1<\tau<\sigma_2}\inc{\sigma_2}{\tau}\cdot\inc{\tau}{\sigma_1}=0.
\end{equation}
\end{lem}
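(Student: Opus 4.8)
The plan is to recognise that the two families of incidence numbers appearing in~\eqref{eqDiamond} are precisely the matrix entries of two consecutive cellular boundary maps attached to the skeletal filtration of the cone $C(\sigma_2)$, and then to invoke the elementary fact that such a boundary operator squares to zero because its two constituent maps are consecutive arrows in the long exact sequence of a pair. This is the poset-theoretic incarnation of the classical proof that the cellular boundary of a CW-complex satisfies $d\circ d=0$.

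First I would restrict attention to the cell subposet $\ca{Y}=\ca{X}_{\leq\sigma_2}=C(\sigma_2)$, which is again a cell poset by Remark~\ref{remSubposet}; since incidence numbers are computed entirely inside the cones $C(\tau)$, $C(\sigma_1)$, none of the numbers in~\eqref{eqDiamond} changes when passing from $\ca{X}$ to $\ca{Y}$. Write $k=\rk\sigma_1$, so that grading forces every $\tau$ with $\sigma_1<\tau<\sigma_2$ to satisfy $\rk\tau=k+1$. Put $Y_j=|\ca{Y}_j|$ for the realisation of the $j$-skeleton (Construction~\ref{conCellPosetSkeleton}); by Proposition~\ref{propCellPosetToCpx} the filtration $Y_0\subseteq Y_1\subseteq\cdots$ is a genuine regular CW-filtration, so the standard cellular theory applies. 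Because $\sigma_2$ is the unique cell of $\ca{Y}$ of rank $k+2$, one has $Y_{k+2}=|C(\sigma_2)|$ and $Y_{k+1}=|\dd C(\sigma_2)|$. I would then consider the cellular chain groups $H_{k+2}(Y_{k+2},Y_{k+1})$, $H_{k+1}(Y_{k+1},Y_k)$, $H_k(Y_k,Y_{k-1})$, each of which decomposes by excision as a direct sum of the local groups $H_{*}(|C(\rho)|,|\dd C(\rho)|)\cong\Zo$ over the cells $\rho$ of the appropriate rank, and show that the $\tau$-component of the cellular boundary $d_{k+2}=j_{k+1}\circ\partial_{k+2}$ is $\inc{\sigma_2}{\tau}$, while the $\sigma_1$-component of $d_{k+1}=j_k\circ\partial_{k+1}$ is $\inc{\tau}{\sigma_1}$. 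This is a direct unwinding of Construction~\ref{conIncidenceNumbers}: the connecting homomorphism $\partial_{k+2}$ is exactly the isomorphism $f_1$ onto $\Hr_{k+1}(|\dd C(\sigma_2)|)$, and the inclusion-induced map $j_{k+1}$ followed by projection to the $\tau$-summand reproduces the passage to $H_{k+1}(|\dd C(\sigma_2)|,|\dd C(\sigma_2)\setminus\{\tau\}|)$ and the excision isomorphism of that construction.

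With these identifications, the $(\sigma_2,\sigma_1)$ matrix entry of the composite $d_{k+1}\circ d_{k+2}$ is exactly the left-hand side of~\eqref{eqDiamond}, so it suffices to prove $d_{k+1}\circ d_{k+2}=0$. Expanding, the composite $j_k\circ\partial_{k+1}\circ j_{k+1}\circ\partial_{k+2}$ contains the block $\partial_{k+1}\circ j_{k+1}$, where $j_{k+1}\colon H_{k+1}(Y_{k+1})\to H_{k+1}(Y_{k+1},Y_k)$ and $\partial_{k+1}\colon H_{k+1}(Y_{k+1},Y_k)\to H_k(Y_k)$ are consecutive maps in the long exact sequence of the pair $(Y_{k+1},Y_k)$; hence $\partial_{k+1}\circ j_{k+1}=0$ and the entire composite vanishes.

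I expect the main obstacle to be the bookkeeping in the second step, namely the precise comparison of two excisions: the $\tau$-summand projection of $H_{k+1}(Y_{k+1},Y_k)$ is taken relative to the $k$-skeleton $Y_k$, whereas Construction~\ref{conIncidenceNumbers} excises relative to $|\dd C(\sigma_2)\setminus\{\tau\}|$. Reconciling these requires checking that both $Y_k$ and $|\dd C(\sigma_2)\setminus\{\tau\}|$ embed in $Y_{k+1}\setminus\mathring{\tau}$ inducing the same relative $(k+1)$-homology, so that excision of the open cell $\mathring{\tau}$ yields the same local group $H_{k+1}(|C(\tau)|,|\dd C(\tau)|)$ and the two projections agree. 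This is where the cell-poset hypothesis that each $|\dd C(\rho)|$ is a homology sphere, together with the comparison between the poset realisation and its skeleta, is genuinely used; everything else is formal manipulation of long exact sequences.
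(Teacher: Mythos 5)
Your proposal is correct, and its core identification---incidence numbers as matrix entries of the boundary maps attached to the skeletal filtration---is the same one the paper uses; where you diverge is in the localization step and in how the composite is shown to vanish. The paper works with the global skeleta $|\ca{X}_k|$, identifies the direct sum of the maps $f_{\sigma,\tau}$ with the connecting homomorphism $\delta_{k+1}$ of the triple $(|\ca{X}_{k+1}|,|\ca{X}_k|,|\ca{X}_{k-1}|)$, and then gets $\delta_k\circ\delta_{k+1}=0$ by remarking that these are the $d_1$-differentials of the spectral sequence of the filtration, so $d_1^2=0$ holds for formal reasons. You instead restrict to the cone $\ca{Y}=C(\sigma_2)$ (legitimate: incidence numbers are computed inside the cones $C(\tau)$, $C(\sigma_1)$, so they are unchanged by the restriction), factor each cellular boundary as $j\circ\partial$ through the long exact sequence of a pair, and kill the composite by exactness, $\partial_{k+1}\circ j_{k+1}=0$. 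This is exactly the ``less involved proof found in any basic book on algebraic topology'' (Hatcher) that the paper itself cites right after its own argument. What each buys: your route is elementary and self-contained---the spectral-sequence identity $d_1^2=0$ is itself proved by precisely your factorization, so you avoid invoking machinery whose justification reduces to the argument at hand---while the paper's formulation is shorter once that machinery is granted and requires no localization. One caveat: the excision bookkeeping you flag as the main obstacle (comparing relativization by $Y_k$ with relativization by $|\dd C(\sigma_2)\setminus\{\tau\}|$, both mapping to $H_{k+1}(Y_{k+1},Y_{k+1}\setminus\mathring{\tau})$) is genuinely needed, but the paper's proof requires the identical bookkeeping to identify $\bigoplus f_{\sigma,\tau}$ with $\delta_{k+1}$ and also leaves it implicit, so this is not a gap relative to the paper's own standard of rigor.
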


\begin{proof}
The classical sketch of proof. Let $\rk\sigma_2=k+1$, so that $\rk\sigma_1=k-1$, and consequently $\rk\tau=k$. Since $\inc{\sigma}{\tau}=0$ when $\tau$ is incomparable with $\sigma$, we can replace the required formula~\eqref{eqDiamond} by the equivalent formula
\begin{equation}\label{eqDiamondRewritten}
\sum_{\tau\colon \rk\tau=k}\inc{\sigma_2}{\tau}\cdot\inc{\tau}{\sigma_1}=0.
\end{equation}
The direct sum $\bigoplus_{\sigma\colon \rk\sigma=k+1}H_{k+1}(|C(\sigma)|,|\dd C(\sigma)|;\Zo)$ is naturally identified with $H_{k+1}(|\ca{X}_{k+1}|,|\ca{X}_k|;\Zo)$ (recall the definition of a skeleton in Construction~\ref{conCellPosetSkeleton}). The direct sum of all maps $f_{\sigma,\tau}$ over all $\sigma$ and $\tau$ of ranks $k+1$ and $k$ respectively becomes identified with the map $\delta_{k+1}\colon H_{k+1}(|\ca{X}_{k+1}|,|\ca{X}_k|;\Zo)\to H_{k}(|\ca{X}_{k}|,|\ca{X}_{k-1}|;\Zo)$, which is the connecting homomorphism in the long exact sequence of the triple $(|\ca{X}_{k+1}|,|\ca{X}_{k}|,|\ca{X}_{k-1}|)$. The choice of orientations of the cells defines the basis of both domain and target abelian group, therefore the map $\delta_{k+1}$ becomes an integral matrix $D_{k+1}$ with entries $\inc{\sigma}{\tau}$.

Equality~\eqref{eqDiamondRewritten} is equivalent to $D_kD_{k+1}=0$. This directly follows from the fact that two connecting homomorphisms $\delta_{k+1}\colon H_{k+1}(|\ca{X}_{k+1}|,|\ca{X}_k|;\Zo)\to H_{k}(|\ca{X}_{k}|,|\ca{X}_{k-1}|;\Zo)$ and $\delta_{k}\colon H_{k}(|\ca{X}_{k}|,|\ca{X}_{k-1}|;\Zo)\to H_{k-1}(|\ca{X}_{k-1}|,|\ca{X}_{k-2}|;\Zo)$ compose to zero. Indeed, these connecting homomorphisms are nothing but the 1-st differential $d_1$ of the spectral sequence in homology, associated with the filtration
\[
\varnothing\subset |\ca{X}_0|\subset |\ca{X}_1|\subset|\ca{X}_2|\subset\cdots,
\]
therefore $d_1^2=0$ is trivially satisfied.
\end{proof}

Less involved proofs of the stated fact may be found in any basic book on algebraic topology, when the notion of cell cohomology is introduced, see e.g.~\cite[p.139]{Hatcher}.

\begin{defin}\label{definCWcochainComplex}
For a cellular sheaf $D$ on a cell poset $\ca{X}$, consider the cochain complex $(C^*_{CW}(\ca{X};D),d_{CW})$
\[
C^j_{CW}(\ca{X};D)=\bigoplus_{\sigma\in \ca{X}, \rk\sigma=j}D(s), \qquad d_{CW}\colon C^j_{CW}(\ca{X};D)\to C^{j+1}_{CW}(\ca{X};D),
\]
where $d_{CW}=\bigoplus_{\tau<\sigma, \rk\tau=j, \rk\sigma=j+1}\inc{\sigma}{\tau}F(\tau<\sigma)$. This cochain complex is called \emph{the cellular cochain complex} of the cellular sheaf $D$. The cohomology of $(C^*_{CW}(\ca{X};D),d_{CW})$ are called \emph{cellular cohomology} of a cellular sheaf $D$ and are denoted
\[
H^j_{CW}(\ca{X};D)=H^j(C^*_{CW}(\ca{X};D),d_{CW}).
\]
\end{defin}

\begin{rem}
The most nontrivial part of this definition is the property $d_{CW}^2=0$. It is easily proven from the diamond property of incidence numbers and commutativity of a diagram $D$ itself. Indeed, if we consider the block of the map $d_{CW}^2$ acting from $D(\sigma_1)\subset C^{k-1}(\ca{X};D)$ to $D(\sigma_2)\subset C^{k+1}(\ca{X};D)$ (where $\sigma_1<\sigma_2$), this block equals $D(\sigma_1<\sigma_2)$ multiplied by $\sum_{\tau\colon \sigma_1<\tau<\sigma_2}\inc{\sigma_2}{\tau}\cdot\inc{\tau}{\sigma_1}$ since we summate the compositions of maps over all saturated paths leading from $\sigma_1$ to $\sigma_2$. The latter sum vanishes due to the diamond property, Lemma~\ref{lemDiamondProperty}.
\end{rem}

\begin{thm}\label{thmCWcohomologyIsAGcohomology}
For any cellular sheaf $D$ on a cell poset $\ca{X}$, the cellular cohomology modules are naturally isomorphic to its cohomology:
\[
H^j_{CW}(\ca{X};D)=H^j_{AG}(\ca{X};D).
\]
\end{thm}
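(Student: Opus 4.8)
The plan is to prove that the cellular cochain complex honestly computes cohomology by verifying the hypotheses of the universal $\delta$-functor criterion, Theorem~\ref{thmHonestCohomology}. Concretely, I would show that the assignment $D\mapsto(C^*_{CW}(\ca{X};D),d_{CW})$ (i) is exact in the sheaf argument, so that $\{H^j_{CW}(\ca{X};-)\}_{j\ge 0}$ is a cohomological $\delta$-functor; (ii) satisfies $H^0_{CW}(\ca{X};D)\cong\Gamma(\ca{X};D)$ naturally; and (iii) is acyclic on the cone-shaped injective sheaves $\low{s}{W}$. Since $H^*_{AG}$ is by definition the universal $\delta$-functor extending $\Gamma$, and an effaceable $\delta$-functor extending $\Gamma$ is unique up to canonical isomorphism, these three facts yield the desired natural isomorphism $H^j_{CW}(\ca{X};D)\cong H^j_{AG}(\ca{X};D)$.

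For (i), observe that $C^j_{CW}(\ca{X};D)=\bigoplus_{\rk\sigma=j}D(\sigma)$ is a finite direct sum of stalk-evaluation functors $D\mapsto D(\sigma)$, each exact by Proposition~\ref{propExactSequenceDiagrams}; hence a short exact sequence $0\to D_1\to D_2\to D_3\to 0$ of sheaves produces a short exact sequence of cellular cochain complexes, and the zig-zag Lemma~\ref{lemZigZag} supplies the connecting maps and long exact sequence. For (ii), I would introduce the natural restriction-to-vertices map $\rho\colon\Gamma(\ca{X};D)\to C^0_{CW}(\ca{X};D)=\bigoplus_{\rk v=0}D(v)$, $(x_s)\mapsto(x_v)$. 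On an edge $e$ the cocycle condition reads $\inc{e}{v_1}F(v_1<e)x_{v_1}+\inc{e}{v_2}F(v_2<e)x_{v_2}=0$, which for a global section reduces to $(\inc{e}{v_1}+\inc{e}{v_2})x_e=0$ and vanishes since the two endpoints of an oriented edge carry opposite incidence numbers; thus $\rho$ lands in $H^0_{CW}=\Ker d_{CW}$. Injectivity is immediate, as every cell dominates a $0$-cell. Surjectivity is the one genuinely combinatorial point: given a $0$-cocycle $(y_v)$ I would set $x_\sigma=D(v<\sigma)y_v$ for a chosen vertex $v\le\sigma$ and show independence of $v$, using that each edge relation forces $D(e<\sigma)F(v_1<e)y_{v_1}=D(e<\sigma)F(v_2<e)y_{v_2}$ and that the $1$-skeleton of the contractible disk $|C(\sigma)|$ is connected; compositionality then gives the coherence $D(\sigma<\tau)x_\sigma=x_\tau$.

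For (iii), the key observation is that $\low{s}{W}$ is the constant-on-$C(s)$, extended-by-zero sheaf $W_{C(s)}$ of Example~\ref{exInjectiveIsCone}, so $C^*_{CW}(\ca{X};\low{s}{W})$ agrees term-by-term and differential-by-differential with the classical cellular cochain complex of the cell subposet $C(s)=\ca{X}_{\le s}$ with constant coefficients $W$. Here I would emphasize that the incidence numbers of Construction~\ref{conIncidenceNumbers} are \emph{local} — each $\inc{\sigma}{\tau}$ depends only on the homology of $|C(\sigma)|$, $|\dd C(\sigma)|$, $|C(\tau)|$, $|\dd C(\tau)|$, all contained in $C(s)$ — so the two differentials literally coincide. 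By Remark~\ref{remSubposet}, $C(s)$ is again a cell poset, and by Proposition~\ref{propCellPosetToCpx} its realization is the regular CW complex $|C(s)|\cong\Cone|\dd C(s)|\cong D^{\rk s}$, which is contractible. The classical identification of cellular with singular cohomology then gives $H^j_{CW}(\ca{X};\low{s}{W})\cong H^j(D^{\rk s};W)$, equal to $W$ for $j=0$ and $0$ for $j>0$. Combined with Lemma~\ref{lemElementaryInjective} and Remark~\ref{remInjIsSumOfCones}, this is exactly the required acyclicity on the injective generators.

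I expect the main obstacle to be twofold and essentially classical rather than formal: first, the surjectivity in step (ii), where a $0$-cocycle must be patched into a full global section, forcing an appeal to connectivity of the $1$-skeleton of each cell's boundary; and second, the use in step (iii) of the classical theorem that the cellular cochain complex of a regular CW complex computes singular cohomology, together with the verification that the homologically-defined incidence numbers restrict correctly to $C(s)$. Everything else follows formally once the criterion of Theorem~\ref{thmHonestCohomology} is invoked.
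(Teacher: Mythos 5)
Your proposal is correct and follows essentially the same route as the paper: the paper's proof (Example~\ref{exCell}) likewise verifies the hypotheses of the delta-functor criterion of Theorem~\ref{thmHonestCohomology} — exactness of $\Fu_{CW}$ via stalk-wise direct sums, and acyclicity on the cone-shaped injectives $\low{\sigma}{W}$ by identifying $C^*_{CW}(\ca{X};\low{\sigma}{W})$ with the $\Hom(\cdot,W)$-dual of the augmented cellular chain complex of the contractible disk $|C(\sigma)|$. The only deviations are minor: your direct verification in step (ii) that $H^0_{CW}(\ca{X};D)\cong\Gamma(\ca{X};D)$ for \emph{all} sheaves is redundant under the paper's criterion, which only requires the augmentation isomorphism on the injective generators (the general degree-zero statement then comes out as part of the conclusion), and your appeal to the classical cellular--singular comparison with coefficients $W$ is replaced in the paper by the skeletal-filtration spectral sequence plus the universal coefficient theorem and exactness of $\Hom(\cdot,W)$ for injective $W$, which is what makes the argument valid for sheaves valued in an arbitrary abelian category rather than only in module categories.
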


We prove both Theorems~\ref{thmRoosIsAGcohomology} and~\ref{thmCWcohomologyIsAGcohomology} with the same general argument in subsection~\ref{subsecMathHonestComputations}. The next remark explains, why Theorem~\ref{thmCWcohomologyIsAGcohomology} is important even if we don't care about higher order cohomology.

\begin{rem}\label{remCompatibilityReducesToCells}
As stated in Proposition~\ref{propCohomologyProperties}, we have $H^0_{AG}(\ca{X};D)\cong \Gamma(\ca{X};D)$, the space of global sections. In the original formulation, the set $\Gamma(\ca{X};D)$ is defined as the solution to the system of compatibility equations, see Definition~\ref{definGlobalSectionsConcrete} or Remark~\ref{remConcreteLimSets}. However, Theorem~\ref{thmCWcohomologyIsAGcohomology} implies that
\[
H^0_{AG}(\ca{X};D)\cong H^0_{CW}(\ca{X};D)=\Ker d_{CW}\colon C^0_{CW}(\ca{X};D)\to C^1_{CW}(\ca{X};D).
\]
The number of equations in the system $d_{CW}x=0$ is less than the number of all compatibility equations (even restricted to the edges of the Hasse diagram of a poset, as in Remark~\ref{remReduceEquationsToGenerators}), because it only involves relations between vertices and edges, with no higher-dimensional cells involved. Informally this means that, in a cell complex, higher-dimensional cells do not store any compatibility information --- it all lives in the 1-skeleton $\ca{X}_1$. This phenomenon reflects the cellular approximation theorem~\cite[p.349]{Hatcher}, known in algebraic topology: any ``proof of coherence'' is a path through a space, and this path can be deformed to a path lying in the 1-skeleton. The same remark holds true for cellular complexes even if a sheaf takes values in non-abelian category, such as $\Sets$: we don't need all compatibility equations~\eqref{eqCoherentStates} to describe global sections, --- equations between 0- and 1-dimensional cells do the job.
\end{rem}

\subsection{Honest cohomology computations}\label{subsecMathHonestComputations}

Let us fix a general poset $S$ and an abelian category $\Vv$. Consider an additive functor $\Fu\colon \Shvs(X_S;\Vv)\to\Cochain(\Vv)$ endowed with a natural transformation $\Gamma(X_S;\cdot)\to \Cochain(\cdot)^0$ which will be called \emph{an augmentation}. The latter means that, for any sheaf $\ca{D}$ on $X_S$ we have natural maps from the global sections $\ca{D}(S)=\lim\limits_{\leftarrow S}D$ to the 0-degree component $\Fu(\ca{D})^0$ of the cochain complex $\Fu(\ca{D})$.

\begin{defin}\label{definHonestlyComputes}
We say that $\Fu$ \emph{honestly computes sheaf cohomology}, if $H^*(\Fu(\ca{D}))$ is naturally isomorphic to $H_{AG}^*(X_S;\ca{D})$.
\end{defin}

Recall that $\low{s}{W}$ is a basic injective sheaf on $S$ defined in Construction~\ref{conInjSheaves}. The next theorem is a reformulation of Grothendieck's delta-functor theorem~\cite[Thm.2.2.2]{grothendieck1957tohoku} which we find more convenient to use in practice.

\begin{thm}\label{thmHonestCohomology}
Assume that the functor $\Fu\colon \Shvs(X_S;\Vv)\to\Cochain(\Vv)$ is exact and for every injective object $W\in\Vv$ we have $H^j(\Fu(\low{s}{W}))=0$ if $j\neq 0$ and the augmentation functor induces the natural isomorphism $W=\Gamma(S;\low{s}{W})\to H^0(\Fu(\low{s}{W}))$. Then $\Fu$ honestly computes sheaf cohomology.
\end{thm}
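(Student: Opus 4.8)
The plan is to run the standard double-complex comparison between $\Fu$ and the derived-functor definition of $H^*_{AG}$, exploiting that the cone-shaped sheaves $\low{s}{W}$ (with $W$ injective) are the only building blocks of injective resolutions. First I would fix an injective resolution $0\to\ca{D}\to\ca{I}_0\to\ca{I}_1\to\cdots$ of the sheaf $\ca{D}$ in $\Shvs(X_S;\Vv)=\Diag(S;\Vv)$ produced by Construction~\ref{conGodementConstruction}, so that by Remark~\ref{remInjIsSumOfCones} each $\ca{I}_p$ is a finite direct sum $\bigoplus_i\low{s_i}{W_i}$ with every $W_i$ injective in $\Vv$. Applying the additive functor $\Fu$ term by term yields a first-quadrant double complex $C^{p,q}=\Fu(\ca{I}_p)^q$, whose horizontal differential is induced by the resolution and whose vertical differential is the internal $\Fu$-differential. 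I would then compute the cohomology of its total complex by the two standard filtration spectral sequences; the hypotheses are exactly what makes each one collapse at the second page.

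In the first spectral sequence I compute the internal ($q$-direction) cohomology first. Since $\Fu$ is additive it commutes with the finite direct sums, so $H^q(\Fu(\ca{I}_p))\cong\bigoplus_i H^q(\Fu(\low{s_i}{W_i}))$; by hypothesis this vanishes for $q\neq 0$, and for $q=0$ the augmentation gives $H^0(\Fu(\ca{I}_p))\cong\bigoplus_i W_i\cong\Gamma(S;\ca{I}_p)$, the last step by Lemma~\ref{lemGlobSecOfInjectives} and additivity of $\Gamma$. Hence the $E_1$-page is concentrated in the row $q=0$, where it is the complex $\Gamma(S;\ca{I}_\bullet)$ obtained by applying global sections to the non-augmented resolution; naturality of the augmentation identifies its horizontal differential with the differential of that complex. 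By Construction~\ref{conSheafCohomDerivedStyle} its cohomology is exactly $H^*_{AG}(X_S;\ca{D})$, the spectral sequence collapses, and the total cohomology equals $H^*_{AG}(X_S;\ca{D})$.

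For the second spectral sequence I compute the horizontal ($p$-direction) cohomology first, and here I use exactness of $\Fu$: applied to the exact sequence of sheaves $0\to\ca{D}\to\ca{I}_0\to\ca{I}_1\to\cdots$ it yields an exact sequence in $\Cochain(\Vv)$, and since kernels and cokernels there are formed degreewise (Construction~\ref{conCategoryCochain}), for each fixed internal degree $q$ the row $0\to\Fu(\ca{D})^q\to\Fu(\ca{I}_0)^q\to\Fu(\ca{I}_1)^q\to\cdots$ is exact. Thus the horizontal cohomology vanishes for $p>0$ and recovers $\Fu(\ca{D})^q$ at $p=0$, so $E_1'$ is concentrated in the column $p=0$ and equals the complex $\Fu(\ca{D})$, whose cohomology is $H^*(\Fu(\ca{D}))$. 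This spectral sequence also collapses, so the total cohomology equals $H^*(\Fu(\ca{D}))$. Comparing the two computations gives the natural isomorphism $H^*(\Fu(\ca{D}))\cong H^*_{AG}(X_S;\ca{D})$; specializing the construction to $\Fu=C^*_{Roos}$ and to $\Fu=C^*_{CW}$ then proves Theorems~\ref{thmRoosIsAGcohomology} and~\ref{thmCWcohomologyIsAGcohomology} at once, since for those functors the vanishing and augmentation hypotheses follow from Example~\ref{exInjectiveIsCone} (and its cellular analogue).

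The genuinely load-bearing step is \emph{naturality} of the resulting isomorphism in $\ca{D}$: one must check it is independent of the chosen injective resolution and compatible with morphisms of sheaves, which is the usual comparison-theorem bookkeeping (any two injective resolutions are chain homotopy equivalent and the filtration spectral sequences are functorial in the double complex). The two remaining points, both minor, are reading ``$\Fu$ is exact'' correctly as degreewise exactness into $\Vv$, and verifying that the augmentation $\Gamma(X_S;\cdot)\to\Fu(\cdot)^0$ lands in the degree-$0$ cocycles so that it induces a map to $H^0(\Fu(\cdot))$, exactly as the hypothesis uses it. I expect no difficulty beyond naturality, since $S$ is finite: all direct sums in sight are finite, so mere additivity of $\Fu$ and $\Gamma$ suffices and no question of exactness for infinite products or convergence of the spectral sequences arises.
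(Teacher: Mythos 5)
Your proposal is correct and follows essentially the same route as the paper: the paper forms the same bicomplex $\Fu(\ca{I}_\bullet)$ from a Godement-style injective resolution by cone-shaped sheaves, observes that all rows past the augmentation are exact (exactness of $\Fu$) and all columns past the augmentation are acyclic (the hypothesis on $\low{s}{W}$), and then invokes the standard double-complex comparison that you spell out explicitly as the two filtration spectral sequences. The only difference is presentational — the paper cites the collapse argument as a known result, while you unfold it — so there is nothing substantive to reconcile.
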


\begin{proof}
Let $\ca{D}$ be a sheaf on $X_S$ ($D$ a diagram on $S$). Consider an injective resolution~\eqref{eqInjSheaf} of $\ca{D}$:
\[
0\to \ca{D}\to \ca{I}_0\to \ca{I}_1\to\cdots
\]
where each $\ca{I}_j$ is a direct sum of injectives of the form $\low{s}{W}$. Applying the functor $\Fu$ to this injective resolution we get a bicomplex
\begin{equation}\label{eqBicomplex}
\begin{CD}
@.   @. 0 @. 0 @.\\
@. @. @VVV @VVV @.\\
@. @. \ca{I}_0(S) @>>> \ca{I}_1(S) @>>> \cdots\\
@. @. @VVV @VVV @.\\
0 @>>> \Fu(\ca{D})^0 @>>> \Fu(\ca{I}_0)^0 @>>> \Fu(\ca{I}_1)^0 @>>> \cdots\\
@. @VVV @VVV @VVV @.\\
0 @>>> \Fu(\ca{D})^1 @>>> \Fu(\ca{I}_0)^1 @>>> \Fu(\ca{I}_1)^1 @>>> \cdots\\
@. @VVV @VVV @VVV @.\\
@.\vdots @.\vdots @.\vdots @.\ddots
\end{CD}
\end{equation}
Here we added the augmentation map on top of each column except the leftmost one.

In the first nontrivial row of~\eqref{eqBicomplex}, the global sections of the injective resolution of $\ca{D}$ are written. The cohomology of this complex is, by definition, the sheaf cohomology $H_{AG}^*(X_S;\ca{D})$. All other rows are acyclic, since we assumed the functor $\Fu$ is exact.

In the first nontrivial column of~\eqref{eqBicomplex} the cochain complex $\Fu(\ca{D})$ itself is written. All other columns are acyclic, because the augmented complex $0\to \ca{I}(S)\to \Fu(\ca{I})^*$ is assumed acyclic for any injective of the form $\low{s}{W}$, --- hence for any direct sum of such sheaves.

Now, the standard argument of homological algebra (see e.g.\cite[Theorem 2.15]{SpectralSeq}) shows that cohomology of the first row is naturally isomorphic to the cohomology of the first column: $H^*_{AG}(X_S;\ca{D})\cong H^*(\Fu(\ca{D}))$, which was required.
\end{proof}

\begin{rem}\label{remInjectiveAcyclic}
Notice that the statement somehow converse to Theorem~\ref{thmHonestCohomology} holds true: if $\Fu$ honestly computes cohomology, then for any injective sheaf $\low{s}{W}$, the cochain complex $\Fu(\low{s}{W})$ is acyclic in degrees $j>0$, while $H^0(\Fu(\low{s}{W}))\cong W$. Indeed, each injective sheaf is acyclic, see Remark~\ref{remOpinionLeader}, so this statement is trivial.
\end{rem}

The following are the two main classes of examples of honest cohomology computation.

\begin{ex}\label{exRoos}
Let $D$ be a diagram on a poset $S$. Roos complex $C^*_{Roos}(S;D)$ is defined, see Definition~\ref{definRoosComplex}. This construction defines a functor $\Fu_{Roos}\colon \Diag(S;\Vv)\to \Cochain(\Vv)$ --- the functoriality is easily seen from the definition. It is an exact functor, because each module $C^j_{Roos}(S;D)$ is a direct sum of stalks $D(s)$ of a sheaf, and taking a stalk is an exact functor according to Proposition~\ref{propExactSequenceDiagrams}. The augmentation map $\ca{D}(S)\to C^0_{Roos}(S;D)$ is naturally defined since $C^0_{Roos}(S;D)$ is a direct sum of stalks, and $\ca{D}(S)$ in the inverse limit $\lim\limits_{\leftarrow S}D$ of a diagram, hence maps naturally to all its stalks.

In order to apply Theorem~\ref{thmHonestCohomology}, we need to check that, for any injective $W\in\Vv$, the augmented complex $0\to\low{s}{W}(X_S)\to C^*_{Roos}(S;\low{s}{W})$ is acyclic. This is indeed the case, see Example~\ref{exInjectiveIsCone}. The assumptions of Theorem~\ref{thmHonestCohomology} are fulfilled for the Roos functor $\Fu_{Roos}$, hence $H^*_{AG}(X_S;\ca{D})\cong H_{Roos}^*(S;D)$. This proves Theorem~\ref{thmRoosIsAGcohomology}.
\end{ex}

\begin{ex}\label{exCell}
Let $D$ be a cellular sheaf on a cell poset $\ca{X}$. Cellular cochain complex $C^*_{CW}(\ca{X};D)$ is defined, see~\ref{definCWcochainComplex}. This construction defines a functor $\Fu_{CW}\colon \Diag(\ca{X};\Vv)\to \Cochain(\Vv)$ --- the functoriality is easily seen from the definition. It is an exact functor, because each module $C^j_{CW}(\ca{X};D)$ is a direct sum of stalks $D(s)$ of a sheaf, and taking a stalk is an exact functor. The augmentation map $\ca{D}(\ca{X})\to C^0_{CW}(\ca{X};D)$ is naturally defined since $C^0_{CW}(\ca{X};D)$ is a direct sum of stalks, and $\ca{D}(\ca{X})$ in the inverse limit $\lim\limits_{\leftarrow \ca{X}}D$ of a diagram, hence maps to all its stalks.

Let us check that, for any injective $W\in\Vv$, the augmented complex $0\to\low{\sigma}{W}(\ca{X})\to C^*_{CW}(\ca{X};\low{\sigma}{W})$ is acyclic. The augmented cellular cochain complex takes the form
\begin{equation}\label{eqCellCochainOfCell}
0\to W\to \bigoplus_{\tau\leq\sigma, \rk\tau=0}W\to \bigoplus_{\tau\leq\sigma, \rk\tau=1} W\to \bigoplus_{\tau\leq\sigma, \rk\tau=2} W\to \cdots
\end{equation}
Recall that precise definition of the cellular differential required the auxiliary notion of incidence numbers, which came from topological considerations. It is time to dig these considerations up. Notice that the differential complex~\eqref{eqCellCochainOfCell} is by construction the result of application of a functor $\Hom(\cdot,W)$ to the augmented chain complex
\begin{equation}\label{eqChainComplexOfCell}
0\leftarrow W\leftarrow C_0^{CW}(|C(\sigma)|;\Zo)\leftarrow C_1^{CW}(|C(\sigma)|;\Zo)\leftarrow C_2^{CW}(|C(\sigma)|;\Zo)\leftarrow \cdots
\end{equation}
where each $C_j^{CW}(|C(\sigma)|;\Zo)$ is by definition the relative homology group $C_j(|C(\sigma)_j|,|C(\sigma)_{j-1}|;\Zo)$. The homological spectral sequence of the skeletal filtration $\varnothing\subset |C(\sigma)_0|\subset|C(\sigma)_1|\subset\cdots$ degenerates at $E^2$-page by the vanishing argument. Therefore the homology of the complex~\eqref{eqChainComplexOfCell} coincides with the reduced (singular) homology of the space $|C(\sigma)|$ itself. Since $|C(\sigma)|$ is contractible, its reduced homology vanish. Therefore, the cohomology of the cochain complex~\eqref{eqCellCochainOfCell} vanish as well by the universal coefficients' theorem for cohomology.

The assumptions of Theorem~\ref{thmHonestCohomology} are fulfilled for the CW-cochain functor $\Fu_{CW}$. Therefore $H^*_{AG}(\ca{X};D)\cong H_{CW}^*(\ca{X};D)$ which proves Theorem~\ref{thmCWcohomologyIsAGcohomology}.
\end{ex}

Next example is very particular, but it illustrates that the general technique which proves Theorem~\ref{thmHonestCohomology} works pretty well on more concrete examples, which are not covered by Examples~\ref{exRoos} and~\ref{exCell}.

\begin{ex}\label{exMorse}
Let $\ca{Y}$ be a poset described in Example~\ref{exMorseExample}. It is a Morse cell poset with 3 vertices $1,2,3$ and two edges $a,b$, with the edge $a$ attached to vertices $1$ and $2$, and the edge $b$ attached to the middle of $a$ and the vertex $3$. For every sheaf $D$ on $\ca{Y}$ consider the augmented cochain complex
\[
0\to \lim\limits_{\leftarrow \ca{Y}} D\to C^0(\ca{Y};D)\to C^1(\ca{Y};D)\to 0
\]
where
\[
C^0(\ca{Y};D)=D(1)\oplus D(2)\oplus D(3), \qquad C^1(\ca{Y};D)=D(a)\oplus D(b),
\]
the augmentation is a natural map from the inverse limit, and the differential $d\colon C^0(\ca{Y};D)\to C^1(\ca{Y};D)$ is defined by the block matrix
\begin{equation}\label{eqMatrixMorseExample}
\begin{pmatrix}
  D(1<a) & -D(2<a) & 0 \\
  0 & D(2<b) & -D(3<b)
\end{pmatrix}.
\end{equation}
Then the functor $\Fu = C^*(\ca{Y};\cdot)$ honestly computes sheaf cohomology on $\ca{Y}$. Indeed, all the sheaves $\low{s}{W}$ are $\Fu$-acyclic. The most interesting case is $s=b$: in this case acyclicity of $\low{b}{W}$ can be proven by hand.

Informally, the matrix~\eqref{eqMatrixMorseExample} corresponds to first taking ``CW-approximation'' in the spirit of Whitehead theorem (see Fig.~\ref{figElementaryMorse}), and considering incidence numbers of the resulting cellular replacement. Notice however, that on the level of homological algebra, there is no need to take an actual cellular approximation. Instead of matrix~\eqref{eqMatrixMorseExample} one can define the differential to be
\begin{equation}\label{eqMatrixMorseExampleAlternative}
\begin{pmatrix}
  D(1<a) & -D(2<a) & 0 \\
  k_1D(2<b) & k_2D(2<b) & -D(3<b)
\end{pmatrix}
\end{equation}
with $k_1+k_2=1$, and this cochain complex still honestly computes sheaf cohomology on a given poset $\ca{Y}$. This procedure can be considered ``a fuzzy'' CW-approximation.
\end{ex}

The idea behind this example becomes more transparent in the next subsection, where we deal with general Morse posets.

\subsection{One-shot cohomology computations}\label{subsecMathOneShot}

There is a certain similarity shared by Examples~\ref{exRoos}, \ref{exCell}, and~\ref{exMorse}. In all cases the graded components of the complex $\Fu(\ca{D})$, the one which honestly computes cohomology of a sheaf $\ca{D}$, are direct sums of certain stalks $D(s)$ of $\ca{D}$. This motivates the following generalization. 

\begin{defin}\label{definConcreteFunctor}
A functor $\Fu\colon \Shvs(S;\Vv)\to\Cochain(\Vv)$ is called \emph{concrete}, if for any $j\geqslant 0$, the graded component $\Fu(\ca{D})_j$ is a direct sum of stalks $D(s)$ of a sheaf $\ca{D}$.
\end{defin}

Each concrete functor is exact, since taking stalks is exact. Also there exists a natural augmentation map $\ca{D}(S)\to \Fu(\ca{D})_0$, because $\ca{D}(S)=\lim\limits_{\leftarrow} D$ maps to each stalk by the universal property of the inverse limit.

However, it is important that in order for the functor to land in $\Cochain(\Vv)$ one needs to define differentials in a uniform manner. We have already seen in Example~\ref{exCell} that this definition may be tricky: in the case of cell posets ``to define differential in a uniform manner'' reads as ``to construct incidence numbers of cells''. This particular task is outsourced to basic algebraic topology books.

The cellular cochain complex $C^*_{CW}(\ca{X};D)$ is much smaller than Roos complex $C^*_{Roos}(S;D)$, hence more usable in practice. One can summarize this optimality by the following two observations. In the cellular case, each stalk $D(\sigma)$ appears in $C^*_{CW}(\ca{X};D)$ exactly once. In the Roos case, each stalk $D(s)$ appears in $C^*_{Roos}(S;D)$ multiple times. The total number of summands in $C^*_{Roos}(S;D)$ is in the worst case exponential in the cardinality $|S|$. See more precise computations in Examples~\ref{exMultiplicityCW} and~\ref{exMultiplicityRoos} below. This difference motivates the following definition.

\begin{defin}\label{definOneShotComplex}
A concrete functor $\Fu\colon \Shvs(S;\Vv)\to\Cochain(\Vv)$ is said to be \emph{one-shot} if each stalk $D(s)$ of a sheaf $\ca{D}\in\Shvs(S;\Vv)$ appears exactly once as a summand in $\Fu(\ca{D})$.
\end{defin}

Of course we are interested in one-shot concrete functors which provide honest cohomology calculations. We have already seen in Example~\ref{exCell} that cellular cochain complex provides one-shot honest cohomology calculation. On the other hand, Example~\ref{exMorse} shows that non-cell poset may support a functor $\Fu$ that is both one-shot and honestly computes cohomology of sheaves.

\begin{rem}\label{remDeletePoint}
Is it possible to honestly compute cohomology with a concrete functor $\Fu$ without using some stalk $D(s)$ at all? Somehow surprisingly, the answer is yes. Whenever $|S_{<s}|$ is acyclic, the element $s\in S$ can be dropped from the poset, and cohomology (of all sheaves) do not change, namely, $H^j(S;D)\cong H^j(S\setminus\{s\};D|_{S\setminus\{s\}})$. This can be seen as a particular case of the theorem of Oberst which holds in more general categorical setting; see~\cite{Oberst} for derived direct limits of diagrams or~\cite[Thm.3.10]{Husainov} for the cohomological version, or the paper~\cite[Prop.3.5]{SepaFR} where this result is reproved in the context of group homology and cohomology.

In the recent paper of Malko~\cite{Malko} this result is connected to beat-removals which were introduced in algebraic topology in the classical work of Stong~\cite{Stong1966FiniteTS}. The idea of beats' removal recently found an application~\cite{BoissonnatEtAl}: it can be used as a preliminary data simplification step before computing persistent homology, and it works impressively well on the data originating from ML applications. We believe, that a similar preprocessing can be applied for more general sheaf cohomology calculations. For example, if $s$ is a downbeat, i.e. there is a unique edge of the Hasse diagram, which enters $s$, then $|S_{<s}|$ is a cone, hence contractible, hence acyclic, hence $s$ can be deleted without any effect on sheaf cohomology.
\end{rem}

On the other hand, if $|S_{<s}|$ is not acyclic, then it is impossible to not to use $D(s)$ in the computations.

\begin{con}\label{conDiracDiagram}
Consider \emph{the Dirac's delta sheaf} $\delta_s(W)$: the sheaf which has a nonzero value $W$ concentrated in a single point $s$. It will be seen in the subsequent arguments, that whenever $|S_{<s}|$ is not acyclic, the cohomology of $\delta_s(W)$ are also nontrivial, hence cannot be honestly computed if $\delta_s(W)(s)=W$ is not present in the cochain complex.
\end{con}

Next we formulate and prove a new result, which indicates the precise class of posets, suitable for one-shot honest cohomology computations. 

\begin{thm}[One-shot theorem]\label{thmOneShotTheorem}
Let $\Vv$ be the category $\Abel$ of abelian groups or the category $\ko\Vect$ of vector spaces over arbitrary field. The following two conditions on a finite poset $S$ are equivalent:
\begin{enumerate}
  \item There exists a one-shot concrete functor $\Fu\colon\Shvs(S;\Vv)\to\Cochain(\Vv)$, which honestly computes cohomology.
  \item $S$ is a homology Morse cell poset (see Definition~\ref{definMorseHomologyCellPoset}).
\end{enumerate}
\end{thm}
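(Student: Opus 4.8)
The plan is to establish the two implications separately; the decisive tool in both directions is the cohomology of the Dirac delta sheaf $\delta_s(W)$ from Construction~\ref{conDiracDiagram}, which I would compute through a short exact sequence of constant sheaves supported on lower order ideals. For a fixed $s\in S$ and $W\in\Vv$, recall that $\low{s}{W}=W_{S_{\leq s}}$ is the cone-shaped (injective) sheaf of Construction~\ref{conInjSheaves} and Example~\ref{exInjectiveIsCone}, while $W_{S_{<s}}$ is the constant sheaf on the lower ideal $S_{<s}$ (Construction~\ref{conConstantSheafOnLowerIdeal}). There is a short exact sequence
\[
0\to \delta_s(W)\to W_{S_{\leq s}}\to W_{S_{<s}}\to 0,
\]
whose only nontrivial point is checking that the inclusion and projection commute with the structure maps; this is a direct stalkwise verification. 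Feeding this sequence into the long exact cohomology sequence of Proposition~\ref{propCohomologyProperties}, using that $|S_{\leq s}|$ is a contractible cone (so $H^0_{AG}(X_S;W_{S_{\leq s}})=W$ and $H^{>0}_{AG}=0$ by Corollary~\ref{corLowerSheafIsSingularCohomology}), and noting that the restriction $W=\Gamma(W_{S_{\leq s}})\to\Gamma(W_{S_{<s}})=H^0(|S_{<s}|;W)$ is the diagonal inclusion, I would obtain the clean identification
\[
H^j_{AG}(X_S;\delta_s(W))\cong \tilde{H}^{j-1}(|S_{<s}|;W)\qquad\text{for all } j\geq 0,
\]
with the convention $\tilde{H}^{-1}(\varnothing;W)=W$ covering the minimal-element case $S_{<s}=\varnothing$.

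For the implication (1) $\Rightarrow$ (2), suppose $\Fu$ is one-shot, concrete, and honestly computes cohomology. Applying $\Fu$ to $\delta_s(W)$: since this sheaf has a single nonzero stalk $W$ at $s$, and by the one-shot hypothesis that stalk appears exactly once in $\Fu(\delta_s(W))$ — say in degree $d_s$, a number depending only on $s$ and $\Fu$ — the complex $\Fu(\delta_s(W))$ is concentrated in degree $d_s$ with value $W$ and has zero differentials. Hence $H^j(\Fu(\delta_s(W)))$ is $W$ for $j=d_s$ and $0$ otherwise. Honesty and the identification above then force $\tilde{H}^{i}(|S_{<s}|;W)=W$ for $i=d_s-1$ and $0$ for $i\neq d_s-1$, for every $W\in\Vv$. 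When $\Vv=\ko\Vect$, taking $W=\ko$ and dualizing gives the $\ko$-homology of a sphere directly. When $\Vv=\Abel$, I would use the conclusion for all $W$ together with the universal coefficient theorem: $\tilde{H}^i(|S_{<s}|;W)=\Hom(\tilde{H}_i,W)\oplus\Ext^1(\tilde{H}_{i-1},W)$ vanishing for all $W$ forces $\tilde{H}_i=0$ and $\tilde{H}_{i-1}$ free; running over all $i\neq d_s-1$ kills the reduced integral homology outside degree $d_s-1$ and shows $\tilde{H}_{d_s-1}$ is free, and finally $\Hom(\tilde{H}_{d_s-1},W)\cong W$ for all $W$ pins down $\tilde{H}_{d_s-1}(|S_{<s}|;\Zo)\cong\Zo$. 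This is exactly the homology Morse cell poset condition (Definition~\ref{definMorseHomologyCellPoset}, Remark~\ref{remHomologyInContext}).

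For the converse (2) $\Rightarrow$ (1), assume $S$ is a homology Morse cell poset, so each $|S_{<s}|$ has sphere homology and carries a well-defined dimension $d_s$. I would build a Morse cochain functor with $\Fu(\ca{D})^j=\bigoplus_{s:\,d_s=j}D(s)$, which is concrete and one-shot by construction, whose differential on the block $D(s)\to D(t)$ (for $d_t=d_s+1$ and $s<t$) is $\inc{t}{s}\,D(s<t)$, where $\inc{t}{s}\in\Zo$ is an incidence number built exactly as in Construction~\ref{conIncidenceNumbers} from the connecting homomorphisms, passage to relative classes, and excision in the singular homology of the boundary spheres $|S_{<t}|$. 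To conclude I would verify the two hypotheses of Theorem~\ref{thmHonestCohomology}: the diamond property giving $d^2=0$, and the acyclicity in positive degrees (with $H^0=W$) of $\Fu(\low{s}{W})$, the latter because this complex should compute the reduced homology of the contractible cone $|S_{\leq s}|$.

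The hard part, and the main obstacle, is this converse construction: in a non-graded Morse poset the excision isomorphism and diamond identity underlying Construction~\ref{conIncidenceNumbers} no longer follow from the clean skeletal filtration available for genuine cell posets, since the link of a cell inside $|S_{<t}|$ is a join involving the possibly nontrivial upper intervals $(S_{<t})_{>s}$. To circumvent this I would either (a) replace the poset by an honest CW-approximation in the spirit of Example~\ref{exMorse} and Whitehead's theorem and transport the cellular incidence numbers back, or — more robustly and functorially — (b) apply algebraic discrete Morse theory and the homological perturbation lemma to the Roos complex $\Fu_{Roos}(\ca{D})$, which honestly computes cohomology by Theorem~\ref{thmRoosIsAGcohomology}, collapsing it naturally in $\ca{D}$ to a complex in which each $D(s)$ survives exactly once. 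Here the sphere condition on $|S_{<s}|$ is precisely what guarantees that the single-stalk summand, governed by the reduced cochain complex of $|S_{<s}|$, retracts onto one copy of $D(s)$ in degree $d_s$, the induced maps supplying the desired incidence numbers.
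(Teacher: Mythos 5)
Your implication (1)$\Rightarrow$(2) is essentially the paper's own proof (Proposition~\ref{propOneShotOnePosition}): the same short exact sequence $0\to\delta_s(W)\to\low{s}{W}\to W_{S_{<s}}\to 0$, the same observation that one-shot concreteness concentrates $\Fu(\delta_s(W))$ in a single degree with zero differentials, and the same identification of the answer with $\Hr^{\ast-1}(|S_{<s}|;W)$ via Corollary~\ref{corLowerSheafIsSingularCohomology}. The only deviation is cosmetic: the paper runs the zig-zag lemma on $\Fu$ applied to the sequence, restricts to injective $W$, and finishes with the observation that only $\Zo$ among finitely generated groups has $\Hom(G,W)\cong W$ for all injective $W$; you run the long exact sequence in $H_{AG}$ for arbitrary $W$ and finish with the full universal coefficient theorem, whose $\Ext$-vanishing argument for freeness is, if anything, cleaner. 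The converse is where you genuinely diverge. The paper never touches the Roos complex here: it builds incidence numbers by explicit induction along a topological sorting (Construction~\ref{conMorseIncidenceNumbers}), taking $\inc{\sigma_j}{\tau}$ to be the coefficients of a cycle generating the top reduced homology of $\dd C(\sigma_j)$ \emph{computed in the already-constructed Morse complex} (the bootstrap is Lemma~\ref{lemHonestHomology}), then verifies Requirement~\ref{reqOnIncidenceNumbers} and invokes the delta-functor criterion, Theorem~\ref{thmHonestCohomology}. Your route (b) instead filters the Roos complex by chain maxima along the same topological sorting; the graded pieces are $D(s)\otimes\tilde{C}^{\ast-1}(\ord(S_{<s});\Zo)$, the sphere condition (with $\Zo$-coefficients, Remark~\ref{remHomologyInContext}) makes each piece retract onto a single copy of $D(s)$ in degree $d_s$ (freeness of the sphere homology is what kills the Tor and $\Ext$ obstructions to a chain-level SDR over $\Zo$), and the perturbation lemma assembles the pieces; honesty is then \emph{inherited} from Theorem~\ref{thmRoosIsAGcohomology} rather than re-verified. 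This is sound provided you make two points explicit: the SDR data must be chosen on the coefficient complexes $\tilde{C}^{\ast}(\ord(S_{<s});\Zo)$ only, independently of $D$, so that the transferred differential is natural in $D$; and by compositionality of the sheaf its blocks automatically take the required form $n_{\sigma\tau}\,D(\tau<\sigma)$ with integer $n_{\sigma\tau}$, so the result is a concrete one-shot functor. The trade-off: the paper's induction is elementary, self-contained, and directly algorithmic (it is the template for Algorithm~\ref{algIncMatrixMain}); your transfer argument is more conceptual, makes functoriality automatic, and reuses a complex already known to be honest.

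One warning: drop your fallback option (a). For \emph{homology} Morse cell posets the links $|S_{<s}|$ are only homology spheres, and a homology equivalence cannot in general be upgraded to a homotopy equivalence (Whitehead's theorem needs a map inducing isomorphisms on homotopy groups, which fails already for a Poincar\'e-homology-sphere link), so an ``honest CW-approximation'' of the poset need not exist. Example~\ref{exMorse} only illustrates the failure of grading, not this obstruction, so it does not justify (a); your option (b) is the one that carries the proof.
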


We prove two implications separately. In both cases the arguments work in slightly bigger generality, and may be useful in their own right.

\begin{prop}\label{propOneShotOnePosition}
Assume that a concrete functor $\Fu\colon\Shvs(S;\Vv)\to\Cochain(\Vv)$ honestly computes cohomology of sheaves, and for an element $s\in S$, the stalk $D(s)$ contributes exactly once in $\Fu(\ca{D})$, namely, for some $k\geqslant 0$, the stalk $D(s)$ is a direct summand of the graded component $\Fu(\ca{D})^k$. Then $|S_{<s}|$ has the same homology as a sphere $S^{k-1}$.
\end{prop}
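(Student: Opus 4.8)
The plan is to extract everything from a single, maximally simple test sheaf: the Dirac delta sheaf $\delta_s(W)$ of Construction~\ref{conDiracDiagram}, whose only nonzero stalk is $W$, sitting at $s$. The idea is that evaluating a \emph{concrete} functor on $\delta_s(W)$ is trivial, while the genuine cohomology of $\delta_s(W)$ encodes exactly the topology of $|S_{<s}|$; comparing the two forces the conclusion.

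First I would compute $H^*_{AG}(S;\delta_s(W))$ topologically. Recalling that $\low{s}{W}$ coincides with the constant sheaf $W_{S_{\leq s}}$ on the lower ideal $S_{\leq s}$ (Example~\ref{exInjectiveIsCone}), and writing $W_{S_{<s}}$ for the constant sheaf on the lower ideal $S_{<s}$, I would set up the short exact sequence of diagrams
\begin{equation*}
0 \to \delta_s(W) \to \low{s}{W} \to W_{S_{<s}} \to 0,
\end{equation*}
which is exact stalkwise by Proposition~\ref{propExactSequenceDiagrams} (at $t=s$ it reads $0\to W\xrightarrow{\id}W\to 0\to 0$, at $t<s$ it reads $0\to 0\to W\xrightarrow{\id}W\to 0$, and it is trivial otherwise). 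Feeding this into the long exact sequence of Proposition~\ref{propCohomologyProperties}, using that $\low{s}{W}$ is injective and hence acyclic with $H^0=W$ (Remark~\ref{remOpinionLeader}), and identifying $H^*_{AG}(S;W_{S_{<s}})\cong H^*(|S_{<s}|;W)$ by Corollary~\ref{corLowerSheafIsSingularCohomology}, I obtain the degree-shifted isomorphism $H^m_{AG}(S;\delta_s(W))\cong \Hr^{m-1}(|S_{<s}|;W)$ with reduced coefficients, valid in every degree $m\geq 0$. The one delicate point is the bottom of the sequence: the map $W=H^0(\low{s}{W})\to H^0(|S_{<s}|;W)$ is the diagonal restriction, injective when $S_{<s}\neq\varnothing$, so that $H^0(\delta_s(W))=\Ker$ and $H^1(\delta_s(W))=\Coker$ assemble exactly into reduced cohomology; the case $S_{<s}=\varnothing$ is read off directly from $\delta_s(W)=\low{s}{W}$ with the convention $S^{-1}=\varnothing$, $\Hr^{-1}(\varnothing;W)=W$.

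Next I would exploit the hypotheses on $\Fu$. Since $\Fu$ is concrete (Definition~\ref{definConcreteFunctor}), each graded piece $\Fu(\delta_s(W))^j$ is a direct sum of stalks of $\delta_s(W)$; as all stalks vanish except the one at $s$, only the copies of $D(s)$ survive. By assumption $D(s)$ occurs exactly once, in degree $k$, so the cochain complex $\Fu(\delta_s(W))$ is $W$ concentrated in degree $k$ with zero differentials, whence $H^m(\Fu(\delta_s(W)))=W$ for $m=k$ and $0$ otherwise. Because $\Fu$ honestly computes cohomology (Definition~\ref{definHonestlyComputes}), the same holds for $H^m_{AG}(S;\delta_s(W))$.

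Finally I would combine the two computations: $\Hr^{m-1}(|S_{<s}|;W)$ equals $W$ for $m=k$ and $0$ otherwise, i.e. $|S_{<s}|$ has the reduced cohomology of the sphere $S^{k-1}$ for every coefficient object $W$. Running the argument with the coefficients dictated by $\Vv$ (Remark~\ref{remHomologyInContext}) — taking $W=\ko$ for $\ko\Vect$ and passing to homology by duality, or $W=\Zo$ for $\Abel$ and passing via the universal coefficient theorem, legitimate since $|S_{<s}|$ is a finite simplicial complex — converts this into the statement that $|S_{<s}|$ has the homology of $S^{k-1}$. I expect the main obstacle to be the bookkeeping at the bottom of the long exact sequence (the $\Ker$/$\Coker$ identification producing \emph{reduced} rather than unreduced cohomology, together with the empty/minimal-element edge case); everything else is formal once $\delta_s(W)$ is in play.
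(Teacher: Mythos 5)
Your proposal follows the same core mechanism as the paper's proof: the Dirac sheaf $\delta_s(W)$, the short exact sequence $0\to\delta_s(W)\to\low{s}{W}\to W_{S_{<s}}\to 0$, the observation that concreteness plus multiplicity one makes $\Fu(\delta_s(W))$ a complex concentrated in degree $k$, the degree-shifting long exact sequence, and the identification of the cohomology of $W_{S_{<s}}$ with $\Hr^{*}(|S_{<s}|;W)$ via Corollary~\ref{corLowerSheafIsSingularCohomology}. (Whether one invokes the derived-functor long exact sequence of Proposition~\ref{propCohomologyProperties} directly, as you do, or first applies the exact functor $\Fu$ to the short exact sequence and uses the zig-zag lemma, as the paper does, is immaterial; your bookkeeping at the bottom of the sequence, producing reduced cohomology and covering $S_{<s}=\varnothing$, is actually more careful than the paper's.) Where you genuinely diverge is the endgame. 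The paper keeps $W$ an arbitrary \emph{injective} abelian group throughout, concludes that $|S_{<s}|$ is a $W$-cohomology sphere for every injective $W$, and then needs a characterization step: the $\Ext$-term in the universal coefficient theorem vanishes for injective $W$, and among finitely generated groups only $\Zo$ satisfies $\Hom(G,W)\cong W$ for all injective $W$. You instead run the argument once, with $W=\ko$ (duality) or $W=\Zo$ (universal coefficients); since $|S_{<s}|$ is a finite complex, sphere-like integral cohomology forces the Betti numbers of $S^{k-1}$ and kills all torsion, so this is both valid and cleaner --- it avoids quantifying over injectives entirely.

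However, one justification in your write-up breaks for exactly the coefficients you chose and must be repaired. For $W=\Zo$ the sheaf $\low{s}{\Zo}$ is \emph{not} injective: $\Zo$ is not an injective object of $\Abel$, and Lemma~\ref{lemElementaryInjective} only gives the implication from injectivity of $W$ to injectivity of $\low{s}{W}$ (indeed, by~\eqref{eqIdentificationHoms}, exactness of $\Hom_{\Diag(S,\Vv)}(\cdot,\low{s}{\Zo})$ would force exactness of $\Hom_{\Abel}(\cdot,\Zo)$, which fails). So your appeal to ``injective hence acyclic'' via Remark~\ref{remOpinionLeader} is invalid in the $\Abel$ case. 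The fact you actually need --- that $\low{s}{W}$ is acyclic in positive degrees with $H^0=W$ --- is nevertheless true for \emph{arbitrary} $W$, because $\low{s}{W}=W_{S_{\leq s}}$ is a constant sheaf on a lower order ideal whose realization $|S_{\leq s}|$ is a cone, hence contractible: this is Example~\ref{exInjectiveIsCone} together with Corollary~\ref{corLowerSheafIsSingularCohomology} (both resting on Theorem~\ref{thmRoosIsAGcohomology}). Substituting that justification, and noting that honesty and concreteness of $\Fu$ are hypotheses about all sheaves rather than injective ones, your argument goes through completely.
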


\begin{proof}
For convenience, we restrict considerations to the abelian category $\Vv=\Abel$, the case $\Vv=\ko\Vect$ being similar, and even a bit easier. Let us denote the cochain complex $\Fu(\ca{D})$ by $C^*(S;D)$. Let $W\in\Vv$ be an arbitrary injective abelian group. Consider two sheaves: the first one is $\low{s}{W}=W_{S_{\leq s}}$ defined in Construction~\ref{conInjSheaves}, and another one is
\begin{equation}\label{eqCutLow}
\low{\bar{s}}{W}=W_{S_{<s}}.
\end{equation}
We have a short exact sequence of diagrams on $S$:
\[
0\to \delta_s(W)\to \low{s}{W}\to \low{\bar{s}}{W}\to 0
\]
where $\delta_s(W)$ is the Dirac sheaf, whose stalk in $s$ is $W$ and zero otherwise. The functor $\Fu$ is concrete, hence exact, hence we have a short exact sequence of cochain complexes
\begin{equation}\label{eqShortExactCochainDelta}
0\to C^*(S;\delta_s(W))\to C^*(S;\low{s}{W})\to C^*(S;\low{\bar{s}}{W})\to 0.
\end{equation}
By zig-zag Lemma (Lemma~\ref{lemZigZag}), we have an induced long exact sequence in cohomology
\[
\cdots\to H^j(C^*(S;\low{s}{W}))\to H^j(C^*(S;\low{\bar{s}}{W}))\to  H^{j+1}(C^*(S;\delta_s(W)))\to H^{j+1}(C^*(S;\low{s}{W}))\to\cdots
\]
Since $\Fu$ honestly computes cohomology, the groups $H^j(C^*(S;\low{s}{W}))\cong H_{AG}^j(X_S;\low{s}{W})$ vanish, since $\low{s}{W}$ is injective (it doesn't vanish in degree 0, where the map $H^j(C^*(S;\low{s}{W}))\to H^j(C^*(S;\delta_s(W)))$ is an identity isomorphism $W\to W$, such cases should be considered separately, as usual). From this vanishing, we have an isomorphism~\footnote{It can be considered a sheaf-theoretical analogue of the suspension isomorphism in singular homology.}
\begin{equation}\label{eqConeIsomorphism}
H^{j+1}(C^*(S;\delta_s(W)))\cong H^j(C^*(S;\low{\bar{s}}{W}))\mbox{ for }j>0.
\end{equation}
Now notice that, by assumption, the stalk at $s$ appears only once in the components of $\Fu$ of degree $k$, and $\delta_s(W)$ is concentrated in the point $s$. Therefore $C^k(S;\delta_s(W))=W$, while other components of this cochain complex vanish. Passing to cohomology we get $H^k(S;\delta_s(W))=W$ and all other cohomology groups vanish. Applying isomorphism~\eqref{eqConeIsomorphism} and remembering that $\Fu$ honestly computes sheaf cohomology we get that
\begin{equation}\label{eqSphereLikeVanishing}
H^{k-1}_{AG}(X_S;\low{\bar{s}}{W})\cong W, \mbox{ and } H^{j}_{AG}(X_S;\low{\bar{s}}{W})=0\mbox{ for }j\neq 0,k-1.
\end{equation}
It remains to notice that $\low{\bar{s}}{W}$ is a sheaf supported on subposet $S_{<s}$ which is a lower order ideal. Corollary~\ref{corLowerSheafIsSingularCohomology} implies that $H^*_{AG}(X_S;\low{\bar{s}}{W})$ is isomorphic to singular cohomology of $|S_{<s}|$ with coefficients in $W$. From the vanishing condition~\eqref{eqSphereLikeVanishing} it follows that
\[
\Hr^j(|S_{<s}|;W)=\begin{cases}
                    W, & \mbox{if } j=k-1 \\
                    0, & \mbox{otherwise}
                  \end{cases}
\]
for reduced cohomology groups of the geometrical realization. This shows that the space $|S_{<s}|$ is a ``$W$-cohomological $(k-1)$-sphere'' for each injective group $W$. We need to show that this latter condition is the same as $|S_{<s}|$ being a $\Zo$-homological sphere. This last step is a bit technical, and can be omitted for vector spaces $\ko\Vect$.

Applying the universal coefficient theorem in cohomology~\cite[p.195]{Hatcher} to the space $X=|S_{<s}|$ we get the short exact sequence
\[
0\to\Ext^1_{\Zo}(H_{j-1}(X;\Zo),W)\to \Hr^j(X;W)\to \Hom(\Hr_j(X;\Zo);W)\to 0.
\]
Since $W$ is injective, the $\Ext$-term vanishes, so there is an isomorphism $\Hr^j(X;W)\cong \Hom(\Hr_j(X;\Zo);W)$. For $j\neq k-1$, the module $\Hr^j(X;W)$ vanishes for any $W$, therefore $\Hr_j(X;\Zo)$ vanishes as well (a nontrivial group can always be nontrivially mapped to its injective hull). For $j=k-1$, we have $\Hom(\Hr_j(X;\Zo);W)\cong \Hr^j(X;W)\cong W$ for any injective $W$. In general there exist various abelian groups $G$ satisfying the property
\begin{equation}\label{eqHomProperty}
\Hom(G,W)\cong W \mbox{ for any injective } W,
\end{equation}
not only $G=\Zo$ (for example $G=\Qo$ works as well). Luckily for us, the space $X=|S_{<s}|$ is a finite simplicial complex, so its homology $\Hr_k(X;\Zo)$ with integral coefficients is a finitely generated abelian group. Among such groups, only $\Zo$ satisfies the property~\eqref{eqHomProperty}. This argument shows that $\Hr_*(|S_{<s}|;\Zo)\cong \Hr_*(S^{k-1};\Zo)$ as required.
\end{proof}

Proposition~\ref{propOneShotOnePosition} proves the implication $1\Rightarrow 2$ in Theorem~\ref{thmOneShotTheorem}. If every stalk $D(s)$ is used once by an honest functor $\Fu$, then every space $|S_{<s}|$ has homology of a sphere, which is the definition of a homology Morse cell poset.

\begin{rem}\label{remOneCohomologyOneBullet}
Informally, the proof of implication $1\Rightarrow 2$ in Theorem~\ref{thmOneShotTheorem} can be characterized by the following mantra. ``Let us treat stalks as bullets. If we want to shoot all cohomology with these bullets, we should be able in particular to shoot the subposets $S_{<s}$. To be able to do this in one shot, the total reduced homology of $S_{<s}$ should be 1-dimensional, i.e. $|S_{<s}|$ has homology of a sphere.''
\end{rem}

Somehow surprisingly the proof of the opposite implication is a bit more subtle. This proof utilizes certain intuitions from the proof of Proposition~\ref{propOneShotOnePosition} and eventually leads to a precise algorithm for cohomology computation for general posets to be described in the next subsection.

\begin{con}\label{conIncidenceRequirements}
Let $\ca{X}$ be a homology Morse poset. If $\sigma\in\ca{X}$ is a cell, then there exists $k\geq 0$ such that $|\dd C(\sigma)|=|\ca{X}_{<\sigma}|$ has homology of $S^{k-1}$. In Subsection \ref{subsecMathCellVariations} we called such $k$ the dimension of a cell $\sigma$. Notice that dimensions defined this way do not form a grading on a poset, they are not even monotonic: a situation may occur that $\sigma_1<\sigma_2$ while $\dim\sigma_1\geq \dim\sigma_2$ as demonstrated in Example~\ref{exMorseExample}. Nevertheless, we can try to construct a Morse chain complex $(C_*^{M}(\ca{X};\Zo),\dd)$ which computes singular homology of $|\ca{X}|$ in a geometrically meaningful way (to be made precise below). We set by definition
\[
C_j^{M}(\ca{X};\Zo)=\bigoplus_{\dim\sigma=j}\Zo,
\]
with $j$-dimensional cells $\sigma$ forming the distinguished basis of this module. The nontrivial part of the construction is to define the chain differential $\dd\colon C_j^{M}(\ca{X};\Zo)\to C_{j-1}^{M}(\ca{X};\Zo)$, i.e. the incidence numbers $\inc{\sigma}{\tau}\in\Zo$ such that $\dd(\sigma)=\sum_{\dim\tau=\dim\sigma-1}\inc{\sigma}{\tau}\tau$. We cannot just initialize incidence numbers at random if want to use them in honest cohomology calculations. They way how incidence numbers are used will tell us which properties they should satisfy.

Let's pretend that we already have some incidence numbers. Then we can construct a functor $\Fu_M\colon\Shvs(\ca{X},\Vv)\to\Cochain(\Vv)$ similar to cellular cochain complex:
\begin{equation}\label{eqMorseCochain}
\Fu_M(D)=C^*_M(\ca{X};D),\quad C^*_M(\ca{X};D)=\bigoplus_{\dim\sigma=j}D(\sigma),\quad d_M=\bigoplus_{\dim\tau=\dim\sigma-1}\inc{\sigma}{\tau} D(\tau<\sigma).
\end{equation}
\end{con}

In order for $\Fu_M$ to be well-defined and honestly compute sheaf cohomology we need three conditions to be satisfied.

\begin{req}\label{reqOnIncidenceNumbers}
\begin{enumerate}
  \item $\inc{\sigma}{\tau}=0$ if $\tau\nless\sigma$. Otherwise there are undefined summands in the differential $d_M$ in formula~\eqref{eqMorseCochain}.
  \item $\Fu_M$ should land in cochain complexes. This means $d_M^2=0$. Equivalently, incidence numbers should satisfy the diamond relation~\eqref{eqDiamond}. Equivalently, we should have $\dd^2=0$, so $C^M_*(\ca{X};\Zo)$ should be a chain complex.
  \item The augmented complex $0\to I(\ca{X})\to \Fu_M(I)$ is acyclic for any injective sheaf $I$ of the form $\low{\sigma}{W}$ with injective group $W$. This latter condition is equivalent to vanishing of the reduced Morse homology of any subposet $C(\sigma)=\ca{X}_{\leq\sigma}$ according to the universal coefficient theorem.
\end{enumerate}
\end{req}

We will use the requirements~\ref{reqOnIncidenceNumbers} to define incidence numbers inductively in Construction~\ref{conMorseIncidenceNumbers} below. To justify the induction step we need a technical construction and a lemma, which resembles a topological analogue of Theorem~\ref{thmHonestCohomology}.

\begin{con}\label{conMorseChainSubposet}
Recall from Remark~\ref{remSubposet} that (Morse) cell subposet of a (Morse) cell poset $\ca{Y}$ is any lower order ideal of $\ca{Y}$. Notice that whenever a Morse chain complex $(C_*^M(\ca{Y};\Zo), \dd_M)$ is defined on $\ca{Y}$ in the way that requirements~\ref{reqOnIncidenceNumbers} are fulfilled, it can be restricted, in a straightforward manner, to any cell subposet $\ca{Z}\subseteq\ca{Y}$. We denote the restricted chain complex by $(C^M_*(\ca{Z},\Zo),\dd_M)$ --- the incidence numbers are inherited from those on $\ca{X}$.
\end{con}

\begin{lem}\label{lemHonestHomology}
Assume that incidence numbers are defined on a homology Morse poset $\ca{Y}$ and satisfy the requirements~\ref{reqOnIncidenceNumbers}. Then, for any cell subposet $\ca{Z}\subseteq\ca{Y}$, the homology of the chain complex $(C^M_*(\ca{Z},\Zo),\dd_M)$ are isomorphic to the integral homology of the geometrical realization of $\ca{Z}$:
\[
H_i((C^M_*(\ca{Z},\Zo),\dd_M))\cong H_i(|\ca{Z}|;\Zo).
\]
\end{lem}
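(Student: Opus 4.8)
The plan is to realize the isomorphism as a natural chain homotopy equivalence obtained by the method of acyclic models, taking the cones $C(\sigma)$ as models. Work in the small poset-category $\ST{C}$ whose objects are the cell subposets $\ca{Z}\subseteq\ca{Y}$ and whose morphisms are the inclusions (each $\Hom$-set having at most one element). Consider two functors $\ST{C}\to\ChCpx(\Zo)$: the restricted Morse chain complex $F(\ca{Z})=(C^M_*(\ca{Z},\Zo),\dd_M)$ of Construction~\ref{conMorseChainSubposet}, and the simplicial chain complex $G(\ca{Z})=C^{\mathrm{simp}}_*(\ord(\ca{Z});\Zo)$ of the order complex. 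Since $|\ca{Z}|=|\ord(\ca{Z})|$ by Definition~\ref{definGeomRealPoset}, the homology of $G(\ca{Z})$ is exactly $H_*(|\ca{Z}|;\Zo)$ (the homological counterpart of Proposition~\ref{propRoosIsSingular}). It therefore suffices to produce a natural equivalence $F\simeq G$.

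First I would check that both functors are \emph{free} on the models $\{C(\sigma)\mid\sigma\in\ca{Y}\}$. In degree $j$ the group $F_j(\ca{Z})$ is free on the $j$-cells of $\ca{Z}$; each such cell $\sigma$ is the top element of $C(\sigma)\subseteq\ca{Z}$, so it is the image under the inclusion $C(\sigma)\hookrightarrow\ca{Z}$ of the distinguished generator $\sigma\in F_j(C(\sigma))$. Likewise $G_j(\ca{Z})$ is free on the $j$-simplices $s_0<\cdots<s_j$ of $\ord(\ca{Z})$; each such simplex has a unique maximal element $\sigma=s_j$ and is supported in $\ord(C(\sigma))$, hence arises from a distinguished generator of $G_j(C(\sigma))$ via the same inclusion. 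In both cases the distinguished generators live on the models and their inclusion-images form bases, which is precisely freeness.

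Next I would verify \emph{acyclicity on the models}, together with matching augmentations $\varepsilon\colon F_0,G_0\to\Zo$ (sum of coefficients). For $G$ this is immediate: $|C(\sigma)|=\Cone|\dd C(\sigma)|$ is contractible by Construction~\ref{conCones}, so the augmented complex $0\to\Zo\to G(C(\sigma))$ is acyclic. For $F$ this is exactly Requirement~\ref{reqOnIncidenceNumbers}(3): the augmented Morse complex of every cone is acyclic (this also secures $\varepsilon\dd_M=0$ globally, since the relation $\varepsilon\dd_M\sigma=0$ for a $1$-cell $\sigma$ is checked inside $C(\sigma)$). With both functors free and acyclic on the models and agreeing on $H_0$, the acyclic models theorem supplies natural chain maps $\phi\colon F\to G$ and $\psi\colon G\to F$ over the augmentation whose composites are naturally chain-homotopic to the identities by its uniqueness clause. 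Hence $F(\ca{Z})\simeq G(\ca{Z})$ for every $\ca{Z}$, yielding $H_i((C^M_*(\ca{Z},\Zo),\dd_M))\cong H_i(|\ca{Z}|;\Zo)$.

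I expect the real obstacle to be arranging the comparison \emph{naturally}. A bare induction removing a maximal cell $\sigma$ produces two long exact sequences—the Morse one for $0\to C^M_*(\ca{Z}')\to C^M_*(\ca{Z})\to Q\to 0$ with $Q=\Zo$ placed in degree $\dim\sigma$, and the topological one for the pair $(|\ca{Z}|,|\ca{Z}'|)$, whose relative term is $\tilde{H}_*(\Cone|\dd C(\sigma)|/|\dd C(\sigma)|)=\tilde{H}_{*-1}(|\dd C(\sigma)|)=\tilde{H}_{*-1}(S^{\dim\sigma-1})$—so that both relative terms are $\Zo$ concentrated in degree $\dim\sigma$. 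Closing this with the five lemma, however, requires the vertical comparison isomorphisms to commute with the connecting maps, i.e. naturality, which is awkward to supply by hand because the incidence numbers are known only abstractly through Requirement~\ref{reqOnIncidenceNumbers}. Routing the argument through acyclic models is exactly what manufactures these natural maps, so that is the step I would treat most carefully; the freeness bookkeeping for $G$ (tracking the top element of each simplex) is the other point deserving attention.
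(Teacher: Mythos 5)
Your proof is correct, and it takes a genuinely different route from the paper's. The paper dualizes instead of comparing chain complexes: for an arbitrary injective abelian group $W$ it forms the constant sheaf $W_{\ca{Z}}$ supported on the lower order ideal $\ca{Z}$ (Construction~\ref{conConstantSheafOnLowerIdeal}) and strings together
\[
\Hom(H_*(|\ca{Z}|;\Zo),W)\cong H^*(|\ca{Z}|;W)\cong H^*_{AG}(\ca{Y};W_{\ca{Z}})\cong H^*(C^*_M(\ca{Y};W_{\ca{Z}}))\cong \Hom\bigl(H_*(C_*^M(\ca{Z};\Zo),\dd_M),W\bigr),
\]
using the universal coefficient theorem (no $\Ext$ term since $W$ is injective), Corollary~\ref{corLowerSheafIsSingularCohomology}, the fact that Requirements~\ref{reqOnIncidenceNumbers} let Theorem~\ref{thmHonestCohomology} be applied to $\Fu_M$, and exactness of $\Hom(\cdot,W)$; it then concludes $H_*(|\ca{Z}|;\Zo)\cong H_*(C^M_*(\ca{Z};\Zo),\dd_M)$ from the detection principle that finitely generated abelian groups $A,B$ with $\Hom(A,W)\cong\Hom(B,W)$ for every injective $W$ are isomorphic. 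You instead stay with integral chain complexes and run acyclic models against the simplicial chains of $\ord(\ca{Z})$, taking the cones $C(\sigma)$ as models; your bookkeeping is sound — freeness of both functors holds because cells, respectively chains with prescribed maximal element, are propagated along the unique inclusions, acyclicity of the Morse side on the models is exactly the third of Requirements~\ref{reqOnIncidenceNumbers} in the homological form the paper itself records, and acyclicity of the simplicial side is contractibility of cones (your side remark that $\varepsilon\dd_M=0$ is itself enforced cone-by-cone is also right). The trade-off: the paper's argument is a few lines because it recycles machinery already proven (at the price of quantifying over all injective coefficient groups and invoking the finite-generation trick), whereas your argument is independent of the delta-functor/bicomplex argument behind Theorem~\ref{thmHonestCohomology}, never leaves $\Zo$-coefficients, and yields a formally stronger conclusion — a natural chain homotopy equivalence $C^M_*(\ca{Z};\Zo)\simeq C_*(\ord(\ca{Z});\Zo)$ rather than bare isomorphisms of homology groups, whose naturality in $\ca{Z}$ is exactly what a five-lemma induction would struggle to produce — at the cost of importing the augmented, two-sided acyclic models theorem, a tool the paper never otherwise uses.
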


\begin{proof}
We prove the lemma by clumsily reducing it to something already proven~\footnote{Right in the spirit of anecdotes about mathematicians.}. Let $W$ be any injective abelian group. Consider the sheaf $W_{\ca{Z}}$ constantly supported on $\ca{Z}$ as in Construction~\ref{conConstantSheafOnLowerIdeal}. We have a sequence of isomorphisms
\begin{equation}\label{eqSequenceOfHomo}
  \begin{split}
\Hom(H_*(|\ca{Z}|;\Zo),W) & \stackrel{1}{\cong} H^*(|\ca{Z}|;W)\\
 & \stackrel{2}{\cong} H^*_{AG}(\ca{Y};W_Z)\\
 & \stackrel{3}{\cong} H^*(C^*_M(\ca{Y};W_{\ca{Z}}))\\
 & \stackrel{4}{\cong} H^*(\Hom((C_*^M(\ca{Z};\Zo),\dd_M),W))\\
 & \stackrel{5}{\cong} \Hom(H_*(C_*^M(\ca{Z};\Zo),\dd_M),W),
\end{split}
\end{equation}
where (1) is the universal coefficients isomorphism due to $W$ being injective (hence the functor $\Hom(\cdot,W)$ is exact), (2) is by Corollary~\ref{corLowerSheafIsSingularCohomology}, (3) is by the fact that Morse complex functor $\Fu_M$ honestly computes cohomology (since incidence numbers on $\ca{Y}$ are assumed to satisfy requirements~\ref{reqOnIncidenceNumbers}, we can apply Theorem~\ref{thmHonestCohomology} to this functor), (4) is just rewriting the definition of $C^*_M(\ca{Y};\ca{Z}_W)$, and, finally, (5) is once again an exactness of $\Hom(\cdot,W)$ for injective $W$.

If two groups $A,B$ are finitely generated and satisfy $\Hom(A,W)\cong\Hom(B,W)$ for any injective $W$, then $A\cong B$. Both groups $H_*(|\ca{Z}|;\Zo)$ and $H_*(C_*^M(\ca{Y};\Zo),\dd_M)$ are finitely generated, hence the claim follows from~\eqref{eqSequenceOfHomo}.
\end{proof}

Now we are finally ready to construct incidence numbers for any finite Morse cell poset.

\begin{con}\label{conMorseIncidenceNumbers}
At first, choose a topological sorting $\sigma_1,\ldots,\sigma_m$ of all cells of the Morse cell poset $\ca{X}$, i.e. a linear order in which, for any $\sigma_i<\sigma_j$ we have $i<j$ (all cells which are smaller than $\sigma_j$ in the partial order, appear earlier in the list). Let $\ca{X}_{(j)}$ denote the subposet $\{\sigma_1,\ldots,\sigma_j\}$ of $\ca{X}$, with the induced partial order. We also formally set $\ca{X}_{(0)}=\varnothing$. All posets $\ca{X}_{(j)}$ are Morse cell posets. Indeed, $(\ca{X}_{(j)})_{<\sigma}=\ca{X}_{<\sigma}=\dd C(\sigma)$ by the defining property of a topological sorting. The poset $\ca{X}_{(j)}$ is obtained from the poset $\ca{X}_{(j-1)}$ by adding a cell $\sigma_j$. Topologically, the space $|\ca{X}_{(j)}|$ is obtained from $|\ca{X}_{(j-1)}|$ by attaching the cone $|C(\sigma_j)|$ over $\dd|C(\sigma_j)|$ with the apex in $\sigma_j$. We construct incidence numbers $\inc{\sigma_j}{\tau}$ inductively for $j\in\{1,\ldots,m\}$. The base of induction $j=0$ is superfluous: there are no cells in the empty poset, hence no need to construct incidence numbers.

Assume that incidence numbers $\inc{\sigma_i}{\tau}$ are already defined for $i\leq j-1$ in a way that requirements~\ref{reqOnIncidenceNumbers} are satisfied on the poset $\ca{X}_{(j-1)}$. We need to extend this definition to the next filtration term $\ca{X}_{(j)}$. This means that we need to define $\inc{\sigma_j}{\tau}$ for the newly added cell $\sigma_j$. Let $k=\dim\sigma_j$. If $k=0$, there is no need to construct incidence numbers $\inc{\sigma_j}{\tau}$ since there are no cells $\tau$ of dimension $-1$ in the poset. Henceforth, in the following we assume $k\geq 1$.

The first natural requirement tells that $\inc{\sigma_j}{\tau}=0$ if $\tau\nless\sigma_j$. So far, we may restrict to only those cells $\tau$ which satisfy $\tau<\sigma_j$ and $\dim\tau=k-1$. Consider the subposet $\dd C(\sigma_j)=\ca{X}_{<\sigma_j}$ --- it is a cell subposet of $\ca{X}_{j-1}$. Since incidence numbers are already constructed on $\ca{Y}=\ca{X}_{j-1}$, we are in position to apply Lemma~\ref{lemHonestHomology} to the cell subposet $\dd C(\sigma_j)$, which gives an isomorphism
\[
H_*(C^M_*(\dd C(\sigma_j);\Zo),\dd_M)\cong H_*(|\dd C(\sigma_j)|;\Zo)
\]
Now, by assumption, the space $|\ca{X}_{<\sigma_j}|$ has the same integral homology as a sphere $S^{k-1}$, i.e. the homology are concentrated in degrees $0$ and $k-1$. Let us augment the chain complex $C^M_*(\dd C(\sigma_j);\Zo)$ with the term $\Zo$ and the map
\[
\dd\colon C^M_0(\dd C(\sigma_j);\Zo)=\bigoplus_{\dim\sigma=0}\Zo\to C^M_{-1}(\dd C(\sigma_j);\Zo)=\Zo,
\]
which sends all the chosen generators $\sigma$, $\dim\sigma=0$, to $1$. Then reduced homology $\Hr_*$ (i.e. the homology of the augmented complex $0\leftarrow\Zo\leftarrow C^M_*(\dd C(\sigma_j);\Zo)$) are concentrated in a single degree $k-1$. We have
\[
\Zo\cong \Hr_{k-1}(C^M_\ast(\dd C(\sigma_j);\Zo))=\dfrac{Z_{k-1}}{B_{k-1}}=\dfrac{\Ker\dd\colon C^M_{k-1}(\dd C(\sigma_j);\Zo)\to C^M_{k-2}(\dd C(\sigma_j);\Zo)}{\im \dd\colon C^M_k(\dd C(\sigma_j);\Zo)\to C^M_{k-1}(\dd C(\sigma_j);\Zo)}.
\]
Take any chain $c\in Z_{k-1}$ of the Morse chain complex which represents a generator of $\Zo\cong\Hr_{k-1}(C_*^M(\dd C(\sigma_j);\Zo))$. By construction, $c$ is an element of
\[
C^M_{k-1}(\dd C(\sigma_j);\Zo)=\bigoplus_{\tau\colon \dim\tau=k-1, \tau<\sigma_j}\Zo
\]
hence
\[
c=\sum\limits_{\tau\colon \dim\tau=k-1, \tau<\sigma_j}b_\tau \tau.
\]
Finally, we set $\inc{\sigma_j}{\tau}=b_\tau$.
\end{con}

\begin{lem}\label{lemIncNumbersInductionStep}
The incidence numbers on $\ca{X}_{(j)}$ defined by Construction~\ref{conMorseIncidenceNumbers} satisfy the requirements~\ref{reqOnIncidenceNumbers}.
\end{lem}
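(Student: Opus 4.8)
The plan is to prove the statement by induction on $j$, taking as induction hypothesis that the incidence numbers already built on $\ca{X}_{(j-1)}$ satisfy all three conditions of Requirement~\ref{reqOnIncidenceNumbers}. Since the topological sorting guarantees that $\ca{X}_{(j-1)}$ is a lower order ideal of $\ca{X}_{(j)}$, passing to $\ca{X}_{(j)}$ only introduces the new numbers $\inc{\sigma_j}{\tau}$, and every cone $C(\sigma_i)=\ca{X}_{\leq\sigma_i}$ with $i\leq j-1$ is left unchanged. The first requirement is then immediate: by Construction~\ref{conMorseIncidenceNumbers} the chain $c=\sum b_\tau\tau$ lies in $C^M_{k-1}(\dd C(\sigma_j);\Zo)$, which is spanned only by cells $\tau<\sigma_j$, so $\inc{\sigma_j}{\tau}=b_\tau=0$ whenever $\tau\nless\sigma_j$. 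For the second requirement it suffices to check $\dd_M^2=0$ on $C^M_*(\ca{X}_{(j)};\Zo)$; the diamond relation and $d_M^2=0$ on sheaf cochains follow exactly as in the remark after Definition~\ref{definCWcochainComplex}. On the old cells $\dd_M^2$ vanishes by the induction hypothesis, and on the new cell I would compute $\dd_M^2\sigma_j=\dd_M c$, which is zero because $c$ was selected inside $Z_{k-1}=\Ker\dd_M$; here one notes that $\dd_M c$ stays within $\dd C(\sigma_j)$, so its vanishing there coincides with its vanishing in $\ca{X}_{(j)}$.

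The heart of the argument is the third requirement for the newly attached cell $\sigma_j$, i.e.\ that the reduced Morse homology of $C(\sigma_j)$ vanishes (for $i\leq j-1$ the cones are unchanged, so this part holds by induction). I would deduce it from the short exact sequence of augmented Morse chain complexes
\begin{equation*}
0\to \tilde{C}^M_*(\dd C(\sigma_j);\Zo)\to \tilde{C}^M_*(C(\sigma_j);\Zo)\to \Zo[k]\to 0,
\end{equation*}
where $\Zo[k]$ denotes the generator $\sigma_j$ placed in degree $k=\dim\sigma_j$ with zero differential. Its long exact sequence has connecting homomorphism $\Zo[k]_k=\Zo\to\Hr_{k-1}(\dd C(\sigma_j))$ sending $\sigma_j\mapsto[\dd_M\sigma_j]=[c]$. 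By the induction hypothesis the incidence numbers on $\ca{X}_{(j-1)}$ satisfy Requirement~\ref{reqOnIncidenceNumbers}, so Lemma~\ref{lemHonestHomology} applies to the cell subposet $\dd C(\sigma_j)$ and identifies its Morse homology with $H_*(|\dd C(\sigma_j)|;\Zo)$, which is that of $S^{k-1}$; since $c$ was chosen to generate $\Hr_{k-1}\cong\Zo$, the connecting map is an isomorphism. Feeding this into the long exact sequence annihilates the homology in degrees $k$ and $k-1$ and forces the remaining groups to be zero, giving $\Hr_*(C^M_*(C(\sigma_j);\Zo))=0$. The degenerate case $k=0$ (a minimal cell, where $\dd C(\sigma_j)=\varnothing$ carries the homology of $S^{-1}$) is handled separately but is trivial, as $C(\sigma_j)$ is then a single point.

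The main obstacle I anticipate is the interlocking nature of the induction: verifying Requirement~\ref{reqOnIncidenceNumbers} at stage $j$ invokes Lemma~\ref{lemHonestHomology}, whose proof relies, through Theorem~\ref{thmHonestCohomology}, on Requirement~\ref{reqOnIncidenceNumbers} holding on the already-constructed poset. The plan is to keep this dependency well-founded by applying the lemma only to $\ca{X}_{(j-1)}$ and its cell subposets, for which the requirements are precisely the induction hypothesis, so that no circularity arises. Finally, once the integral reduced Morse homology of $C(\sigma_j)$ is shown to vanish, the passage to the cochain statement that $0\to W\to \Fu_M(\low{\sigma_j}{W})$ is acyclic for every injective $W$ is only the easy direction of the universal coefficient theorem and requires no further work.
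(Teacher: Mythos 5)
Your proposal is correct and follows essentially the same route as the paper: requirement (1) holds by construction, requirement (2) follows from $\dd_M\sigma_j=c\in Z_{k-1}=\Ker\dd_M$, and requirement (3) is the statement that attaching the degree-$k$ generator $\sigma_j$ kills the sphere homology of $\dd C(\sigma_j)$, which is identified via Lemma~\ref{lemHonestHomology} applied to $\ca{X}_{(j-1)}$ under the induction hypothesis, exactly as the paper does. Your short-exact-sequence/long-exact-sequence formalization of the third step is just a more explicit rendering of the paper's informal ``kills the unique $(k-1)$-homology and dies itself'' argument, and your care about the well-foundedness of the induction (applying Lemma~\ref{lemHonestHomology} only to $\ca{X}_{(j-1)}$ and its cell subposets) matches the paper's structure.
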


\begin{proof}
We do not need to check anything for the incidence numbers $\inc{\sigma_i}{\tau}$ with $i<j$, since these numbers were inherited from $\ca{X}_{(j-1)}$ and satisfy the requirements~\ref{reqOnIncidenceNumbers} by inductive hypothesis. So we only need to check the properties for $\inc{\sigma_j}{\tau}$. The first requirement is satisfied by assumption: we have $\inc{\sigma_j}{\tau}=0$ unless $\tau<\sigma_j$. The second requirement is also satisfied: by construction we have
\[
\dd_M(\sigma_j)=\sum\limits_{\tau\colon \dim\tau=k-1, \tau<\sigma_j}b_\tau \tau=c\in Z_{k-1}=\Ker\dd_M,
\]
hence $\dd_M^2(\sigma_j)=0$. Finally, the third property is about the subposet $\ca{X}_{\leq\sigma}$ being Morse-acyclic. It holds by construction: the Morse complex of the poset $C(\sigma)$ differs from the Morse complex of $\dd C(\sigma)$ in one term $\Zo$ of degree $k=\dim\sigma$. When passing to homology, this term kills the unique $(k-1)$-homology of $\dd C(\sigma)$ (without any torsion left, since $[c]$ was chosen to be a generator of $H_{k-1}(\dd C(\sigma);\Zo)$), --- and dies itself.
\end{proof}

Lemma~\ref{lemIncNumbersInductionStep} completes the induction step in the construction of incidence numbers on a Morse cell poset $\ca{X}$. Using these incidence numbers, we define the functor $\Fu_M\colon \Shvs(\ca{X};\Vv)\to\Vv$ by~\eqref{eqMorseCochain}. It is a concrete one-shot functor which honestly computes cohomology of sheaves on $\ca{X}$. The implication $2\Rightarrow 1$ of Theorem~\ref{thmOneShotTheorem} is proven.

\begin{rem}
It is possible to consider diagrams over arbitrary finite categories: cohomology can be defined as right derived functors similar to the case of posets. In this generality, one-shot computations of sheaf cohomology can be extended to cellular categories as defined in~\cite{CellCat}.
\end{rem}

\subsection{Structure agnostic cohomology computations}\label{subsecMathMinimalComputations}

One might wonder, what prevents us from extending Remark~\ref{remDeletePoint} (about redundant stalks) and Theorem~\ref{thmOneShotTheorem} (about stalks used exactly once) to more general classes of posets? Literally nothing! We develop more general approach in this subsection and provide an algorithm for sheaf cohomology computations which satisfies two properties.
\begin{enumerate}
  \item The algorithm is applicable to sheaves over all posets. Its complexity (see Definition~\ref{definMultiplicityOfStalk}) is lower than complexity of Roos complex, and is proven to be the minimal possible.
  \item On the class of homology Morse cell posets (in particular on cell posets) this algorithm becomes the classical algorithm of cohomology calculation via cellular (or Morse) cochain complex: in this case the algorithm computes incidence numbers.
\end{enumerate}
The ideas of this subsection are similar to the idea of local homology sheaf applied in stratification learning~\cite{brown2021sheaf}, although the ultimate goals are different. While stratification learning aims to build a convenient filtration on a geometrical structure by utilizing sheaf theory~\footnote{As well as, implicitly Zeeman--McCrory spectral sequence~\cite{McCrory}}, our goal is to construct an optimized algorithm for general sheaf cohomology computation. Our considerations also have much in common with those of~\cite{CIANCI20171}, where similar arguments were applied to constant sheaves on posets.

\begin{defin}\label{definMultiplicityOfStalk}
Assume that a concrete functor $\Fu\colon \Shvs(X_S;\Vv)\to\Cochain(\Vv)$ honestly computes cohomology of sheaves on $S$. The number of times the stalk $D(s)$ appears as a direct summand in the $k$-th graded component $\Fu(\ca{D})^k$ is called \emph{the multiplicity of} $s$, and is denoted $\mult_k(s,\Fu)$. The total number
\[
c(\Fu)=\sum_{s\in S, k\in\Zo}\mult_k(s,\Fu)
\]
is called \emph{the complexity of the functor} $\Fu$.
\end{defin}

\begin{ex}\label{exMultiplicityCW}
For a cell poset $\ca{X}$ and a cellular cochain functor $\Fu_{CW}\colon \Shvs(\ca{X};\Vv)\to\Cochain(\Vv)$ we have
\[
\mult_k(\sigma,\Fu_{CW})=\begin{cases}
                           1, & \mbox{if } k=\dim\sigma \\
                           0, & \mbox{otherwise}.
                         \end{cases}
\]
for any cell $\sigma$. Notice that, whenever $\dim\sigma=0$, we necessarily have $\ca{X}_{<\sigma}=\varnothing$, and formally $\Hr_{-1}(\varnothing;\ko)=\ko$ (this homology comes from the augmentation term). Therefore $0$-dimensional cells are not exceptions in this example. The complexity $c(\Fu_{CW})$ equals the number of cells of $\ca{X}$.
\end{ex}

\begin{ex}\label{exMultiplicityMorse}
For a homology Morse poset $\ca{X}$ and a Morse cochain functor $\Fu_{M}$, similar to the previous example, we have $\mult_k(\sigma,\Fu_M)=1$ if $\dim\sigma=k$, and $0$ otherwise. The complexity $c(\Fu_{M})$ again equals the number of cells of $\ca{X}$. These are precisely all one-shot functors according to Theorem~\ref{thmOneShotTheorem}.
\end{ex}

\begin{ex}\label{exMultiplicityRoos}
For the Roos cochain functor $\Fu_{Roos}$ we have
\[
\mult_{\Fu_{Roos}}(s)=f_{k-1}(\ord(S_{<s})),
\]
where $f_{j}(\ca{K})$ denotes the number of $j$-dimensional simplices of a nonempty simplicial complex $\ca{K}$, while it is formally assumed that $f_{-1}(\ca{K})=1$ for any simplicial complex $\ca{K}$, even for $\ca{K}=\varnothing$. This formula follows easily from the count of chains $s_1<\cdots<s_k<s$ of $S$ which end in a given element $s$. The complexity $c(\Fu_{Roos})$ equals the number of simplices of $\ord(S)$.
\end{ex}

To avoid complications with torsion, for the rest of this subsection we work with the target category $\Vv=\ko\Vect$, the category of vector spaces over a fixed field $\ko$.

\begin{thm}\label{thmMorseBound}
For any functor $\Fu$ which honestly computes cohomology of sheaves on a poset $S$, and for any element $s\in S$, the multiplicity $\mult_k(s,\Fu)$ is at least the total reduced Betti number $\br_{k-1}(|S_{<s}|)=\dim\Hr_{k-1}(|S_{<s}|;\ko)$ of the down-set of $s$.
\end{thm}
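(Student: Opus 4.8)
The plan is to probe the functor $\Fu$ with a single test sheaf, the Dirac delta sheaf $\delta_s(\ko)$ of Construction~\ref{conDiracDiagram}, and exploit concreteness to read off the multiplicity directly. Since $\Fu$ is concrete (Definition~\ref{definConcreteFunctor}), every graded component $\Fu(\ca{D})^k$ is a direct sum of stalks $D(t)$, and by definition of $\mult_k(s,\Fu)$ the stalk $D(s)$ occurs $\mult_k(s,\Fu)$ times among these summands. Feeding $\delta_s(\ko)$ through $\Fu$ kills every summand except those attached to the point $s$, because $\delta_s(\ko)(t)=0$ for $t\neq s$. Hence $\Fu(\delta_s(\ko))^k\cong\ko^{\mult_k(s,\Fu)}$, so $\dim_\ko\Fu(\delta_s(\ko))^k=\mult_k(s,\Fu)$. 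As cohomology is always a subquotient of the cochain group, this yields the cheap but crucial inequality
\[
\dim_\ko H^k\bigl(\Fu(\delta_s(\ko))\bigr)\leq \mult_k(s,\Fu).
\]
Because $\Fu$ honestly computes cohomology (Definition~\ref{definHonestlyComputes}), the left-hand side equals $\dim_\ko H^k_{AG}(X_S;\delta_s(\ko))$, so it remains only to compute this latter dimension and show it equals $\br_{k-1}(|S_{<s}|)$.

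For the computation I would reuse the short exact sequence from the proof of Proposition~\ref{propOneShotOnePosition}, namely
\[
0\to \delta_s(\ko)\to \low{s}{\ko}\to \low{\bar s}{\ko}\to 0,
\]
where $\low{\bar s}{\ko}=\ko_{S_{<s}}$ is the constant sheaf supported on the lower order ideal $S_{<s}$. The middle term is injective (Lemma~\ref{lemElementaryInjective}), hence acyclic in positive degrees by Remark~\ref{remOpinionLeader}, with $H^0_{AG}(X_S;\low{s}{\ko})\cong\ko$. Applying the zig-zag Lemma~\ref{lemZigZag} to this sequence produces a long exact sequence that gives a dimension-shift isomorphism $H^{k}_{AG}(X_S;\delta_s(\ko))\cong H^{k-1}_{AG}(X_S;\ko_{S_{<s}})$ for $k\geq 2$, while Corollary~\ref{corLowerSheafIsSingularCohomology} identifies $H^{k-1}_{AG}(X_S;\ko_{S_{<s}})$ with the singular cohomology $H^{k-1}(|S_{<s}|;\ko)$. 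Over the field $\ko$ this has dimension $\beta_{k-1}(|S_{<s}|)=\br_{k-1}(|S_{<s}|)$ for $k-1\geq 1$, and combining with the inequality above closes the argument in these degrees.

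The delicate part — and the step I expect to be the main obstacle — is the bookkeeping in the bottom degrees $k=0$ and $k=1$, where reduced and unreduced cohomology diverge and the augmentation term matters. Here I would analyze the tail of the long exact sequence
\[
0\to H^0_{AG}(\delta_s)\to \ko \xrightarrow{\alpha} H^0(|S_{<s}|;\ko)\to H^1_{AG}(\delta_s)\to 0,
\]
identifying $\alpha$ as the inclusion of the constant (diagonal) section. When $S_{<s}\neq\varnothing$ the map $\alpha$ is injective, forcing $H^0_{AG}(\delta_s)=0=\br_{-1}$ and $H^1_{AG}(\delta_s)\cong\Hr^0(|S_{<s}|;\ko)$ of dimension $\br_0$; when $s$ is minimal one has $S_{<s}=\varnothing$, $\delta_s(\ko)=\low{s}{\ko}$, and the empty-set convention $\Hr^{-1}(\varnothing;\ko)\cong\ko$ makes $\dim H^0_{AG}(\delta_s)=1=\br_{-1}(\varnothing)$. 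In every case one obtains $\dim_\ko H^k_{AG}(X_S;\delta_s(\ko))=\br_{k-1}(|S_{<s}|)$, and substituting into the inequality of the first paragraph gives exactly $\mult_k(s,\Fu)\geq \br_{k-1}(|S_{<s}|)$, as required. The only genuine care needed is to keep the reduced-cohomology conventions consistent with the augmentation built into the cochain complexes, which is precisely the point where the $k=0,1$ cases could go wrong if handled carelessly.
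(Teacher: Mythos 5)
Your proposal is correct and follows essentially the same route as the paper's own proof: probing $\Fu$ with the Dirac sheaf $\delta_s(\ko)$, using the short exact sequence $0\to\delta_s(\ko)\to\low{s}{\ko}\to\low{\bar{s}}{\ko}\to 0$ together with acyclicity of the injective middle term and Corollary~\ref{corLowerSheafIsSingularCohomology}, and then concluding via the subquotient inequality $\dim H^k(\Fu(\delta_s(\ko)))\leq\dim\Fu(\delta_s(\ko))^k=\mult_k(s,\Fu)$. Your explicit bookkeeping in degrees $k=0,1$ (including the minimal-element case with the convention $\Hr_{-1}(\varnothing;\ko)\cong\ko$) is simply the unpacked version of what the paper compresses into the reduced-cohomology notation $\Hr_{AG}^{*-1}$, and it is handled correctly.
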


\begin{proof}
The proof is completely similar to Proposition~\ref{propOneShotOnePosition}. Consider a short exact sequence of sheaves on $S$:
\begin{equation}\label{eqShortExactDirac}
0\to \delta_s(\ko)\to\low{s}{\ko}\to \low{\bar{s}}{\ko}\to 0,
\end{equation}
where $\low{s}{\ko}$ is the basic injective sheaf defined in Construction~\ref{conInjSheaves}, $\low{\bar{s}}{\ko}$ is defined in~\eqref{eqCutLow}, and $\delta_s(\ko)$ is the Dirac diagram supported on $s$, see Construction~\ref{conDiracDiagram}. Notice that $\dim \Fu(\delta_s(\ko))^k=\mult_k(s,\Fu)$ --- because stalks in all points except $s$ vanish, and $\delta_s(\ko)(s)=\ko$ contributes to the $k$-th graded component of the cochain complex $\mult_k(s,\Fu)$ many times by construction.

Taking long exact sequence of cohomology induced by the short exact sequence~\eqref{eqShortExactDirac}, and noticing that the sheaf $\low{s}{\ko}$ is acyclic, we see that $H_{AG}^*(X_S;\delta_s(\ko))\cong \Hr_{AG}^{*-1}(X_S;\low{\bar{s}}{\ko})$. However, by Corollary~\ref{corLowerSheafIsSingularCohomology} we know that $\Hr_{AG}^{*-1}(X_S;\low{\bar{s}}{\ko})$ is isomorphic to the singular cohomology $\Hr^{*-1}(|S_{<s}|;\ko)$. On the other hand, $H_{AG}^*(X_S;\delta_s(\ko))\cong H^*(\Fu(\delta_s(\ko)))$, since $\Fu$ honestly computes cohomology. Since cohomology is a subquotient of the cochain complex, we have
\[
\br_{k-1}(|S_{<s}|)=\dim H^{k}(\Fu(\delta_s(\ko)))\leq \dim \Fu(\delta_s(\ko))^k=\mult_k(s,\Fu),
\]
as needed.
\end{proof}

\begin{rem}\label{remEulerChar}
It follows from the proof of Theorem~\ref{thmMorseBound} that $\sum_{k=0}^{\infty}(-1)^k\mult_{k,\Fu}=\sum_{k=0}^{\infty}(-1)^k\br_{k-1}(|S_{<s}|)$, whenever the first sum makes sense. Indeed, the first expression is the Euler characteristic of the cochain complex $\Fu(\delta_s(\ko))^k$, while the second one is the Euler characteristic of its cohomology. It is well-known that Euler characteristics of any cochain complex and that of its cohomology coincide (see e.g.~\cite[Thm.2.44]{Hatcher} or~\cite[Lm.5.1]{robinson2014topological}. Together with Theorem~\ref{thmMorseBound}, this remark allows to derive Proposition~\ref{propOneShotOnePosition} in the category $\Vv=\ko\Vect$.
\end{rem}

The next statement asserts that the estimation given by~\ref{thmMorseBound} is exact. There exists a functor $\Fu_{\min}$ which realizes the lower theoretical bound on multiplicity for all elements $s\in S$ at once, hence on the total complexity.

\begin{thm}\label{thmExactBound}
For any finite poset $S$, there exists a concrete functor
\[
\Fu_{\min}\colon\Shvs(X_S;\Vv)\to\Cochain(\Vv)
\]
which honestly computes cohomology and satisfies $\mult_k(s,\Fu_{\min})=\br_{k-1}(|S_{<s}|)$ for any $s\in S$.
\end{thm}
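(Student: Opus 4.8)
The plan is to construct $\Fu_{\min}$ by mimicking Construction~\ref{conMorseIncidenceNumbers}, but allowing each element $s\in S$ to carry several generalized cells --- one for every basis vector of the reduced homology $\Hr_{\ast}(|S_{<s}|;\ko)$ --- rather than the single cell available in the homology Morse case. Concretely, I would first build a constant-coefficient chain complex $(C^{\min}_\ast(S;\ko),\dd)$ with
\[
C^{\min}_k(S;\ko)=\bigoplus_{s\in S}\Hr_{k-1}(|S_{<s}|;\ko),
\]
so that the generators attached to $s$ in degree $k$ are indexed by a chosen basis of $\Hr_{k-1}(|S_{<s}|;\ko)$; by construction this forces $\mult_k(s,\Fu_{\min})=\br_{k-1}(|S_{<s}|)$. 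The differential $\dd$ is defined inductively along a topological sorting $\sigma_1,\dots,\sigma_m$ of $S$, exactly as in Construction~\ref{conMorseIncidenceNumbers}: having defined $\dd$ on $S_{(j-1)}=\{\sigma_1,\dots,\sigma_{j-1}\}$, I add the cells at $\sigma_j$ and declare the boundary of each new degree-$k$ generator to be a cycle in $C^{\min}_{k-1}(S_{<\sigma_j};\ko)$ representing the corresponding basis class of $\Hr_{k-1}(|S_{<\sigma_j}|;\ko)$.

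To make sense of ``a cycle representing the class'' I need the inductive analogue of Lemma~\ref{lemHonestHomology}: its proof, namely the chain of isomorphisms~\eqref{eqSequenceOfHomo}, uses only that the functor is concrete --- hence exact --- together with acyclicity of cones (the third item of Requirement~\ref{reqOnIncidenceNumbers}), so it applies verbatim to any lower ideal of $S_{(j-1)}$ once those requirements are known there. Thus the inductive hypothesis ``Requirement~\ref{reqOnIncidenceNumbers} holds on $S_{(j-1)}$'' yields $H_\ast(C^{\min}_\ast(S_{<\sigma_j};\ko),\dd)\cong H_\ast(|S_{<\sigma_j}|;\ko)$, which is precisely the data from which the representing cycles are drawn. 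After adjoining all cells at $\sigma_j$ I would check the three requirements: item (1) is automatic since every boundary is supported on $S_{<\sigma_j}$; item (2), that is $\dd^2=0$, holds because each boundary was chosen to be a cycle; and item (3), acyclicity of the cone $C^{\min}_\ast(S_{\leq\sigma_j};\ko)$, follows from the standard coning-off computation over a field: since the new generators in degree $k$ biject with a basis of $\Hr_{k-1}(|S_{<\sigma_j}|;\ko)$ and map onto representatives of those classes, they annihilate exactly the reduced homology of $S_{<\sigma_j}$ and create none of their own.

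With the constant-coefficient data in hand I would then set $\Fu_{\min}(\ca{D})^k=\bigoplus_{s}\Hr_{k-1}(|S_{<s}|;\ko)\otimes D(s)$, with differential assembled from the incidence coefficients (the entries of $\dd$) and the structure maps $D(\tau<\sigma)$, exactly as the cellular differential is assembled in Definition~\ref{definCWcochainComplex}. The identity $d^2=0$ for the sheaf-valued complex then reduces, block by block from $\rho$ to $\sigma$, to $\bigl(\sum_{\rho<\tau<\sigma}\text{(incidence)}\cdot\text{(incidence)}\bigr)\,D(\rho<\sigma)=0$, where the scalar factor vanishes by $\dd^2=0$ and the intermediate structure maps collapse to $D(\rho<\sigma)$ by compositionality of $D$ --- the same mechanism as in the discussion following Definition~\ref{definCWcochainComplex}. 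Finally $\Fu_{\min}$ is concrete by construction, hence exact, and Requirement~\ref{reqOnIncidenceNumbers}(3) says exactly that the augmented complex $0\to\low{s}{W}(X_S)\to\Fu_{\min}(\low{s}{W})$ is acyclic for every injective $W$; so Theorem~\ref{thmHonestCohomology} applies and $\Fu_{\min}$ honestly computes cohomology, with the prescribed multiplicities. The main obstacle I anticipate is threading the simultaneous induction cleanly, so that the three requirements, the correct homology identification on $S_{<\sigma_j}$, and the acyclicity of the freshly coned-off subposet are each available exactly when the next is constructed; this coning step is also where $\Vv=\ko\Vect$ is essential, since over a field reduced homology has a genuine basis whose representatives span a complement to the boundaries, whereas over $\Abel$ torsion would obstruct the collapse.
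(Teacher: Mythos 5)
Your proposal is correct and follows essentially the same route as the paper: the paper packages your inductive construction (topological sorting, one generator per basis class of the reduced homology of each down-set computed in the already-built restricted complex, boundary set to a representing cycle) as Algorithm~\ref{algIncMatrixMain}, then assembles the sheaf-valued complex and verifies the hypotheses of Theorem~\ref{thmHonestCohomology} exactly as you do, including the inductive identification $H_*(C|_{<s})\cong H_*(|S_{<s}|;\ko)$ via Corollary~\ref{corLowerSheafIsSingularCohomology}. The subtleties you flag (threading the simultaneous induction, and the field hypothesis $\Vv=\ko\Vect$ needed for the coning-off step) are precisely the points the paper handles, the latter being deferred in Remark~\ref{remExternalSolverIntegral} for the integral case.
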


To prove this statement we provide an algorithm which constructs the required functor $\Fu_{\min}$ given a finite poset $S$. First we give some specifications and definitions to simplify the implementation of this algorithm.

\begin{con}\label{conChainCpxDataStructure}
The main step, given an arbitrary poset $S$, is to construct a chain complex $(C_*(S;\ko),\dd)$ which stores information on ``incidence numbers'' (more preciesly, their analogues when no assumption on topology of $|S_{<s}|$ is posed). 

To encode a (finite-dimensional) chain complex $(C_*,d)$ over a field $\ko$ we use the data structure $\ChCpx$ which consists of
\begin{enumerate}
  \item a finite set $A$, denoting a set of homogeneous generators of the total space $\bigoplus_{j\geq 0}C_j$;
  \item a grading $\deg\colon A\to \Zg$;
  \item a map $M\colon A\times A\to \ko$, which denotes the matrix of an operator $\dd$ in the chosen basis $A$ (first component stands for input=indicates the columns, the second stands for the output and indicates rows).
\end{enumerate}
The contract on $\ChCpx$ (assumed but not checked explicitly) is that (1) for each $a,c\in A$ we have $\sum_{b\in A}M(a,b)M(b,c)=0$ (i.e. $d^2=0$), (2) for each $a,b\in A$ with $\deg(a)\neq \deg(b)+1$, $M(a,b)=0$ (the differential lowers degree by $1$).

We also have the augmented version of this structure, $\AChCpx$, where the grading takes values in $\{-1\}\sqcup\Zg$. The cochain complex $\CochCpx$ can be defined by transposing the matrix $M$, i.e. inverting the order of arguments (the only difference from chain complexes is that cohomological differential raises degree by $1$).
\end{con}

Two auxiliary procedures will be used in the algorithm, whose contracts are described below.

\begin{enumerate}
  \item A function HomologyBasis consumes $\ChCpx$ (resp., $\AChCpx$) and outputs representing chains of generators of its homology, i.e. an indexing set $H$ together with the degree function $\deg\colon H\to \Zg$ (resp. $\{-1\}\sqcup\Zg$), and the representative chains $RC\colon H\to (A\to \ko)$, satisfying the condition that $\deg(a)=\deg(i)$ for any nonzero coefficient $RC(i)(a)\neq 0$. The procedure HomologyBasis has standard realizations based on variations of Gauss algorithm, see~\cite{KACZYNSKI199859} and the implementation available in~\cite{Sage}.
  \item A function Restrict consumes an augmented chain complex $(C_*,d)$, on a set $A$ of generators, and a subset $B\subset A$, and outputs the restricted complex, that is simply $B\times B$-block of the matrix of the differential. We assume the restriction thus defined makes sense, i.e. the differential preserves the vector subspace spanned by $B$.
  \item A function TopoSort consumes a poset $S$ and outputs an ordered list $(s_1,\ldots,s_m)$ of its elements which is a linearization of $S$.
\end{enumerate}

Returning back to the complex associated with a poset, a vector space $C_*(S;\ko)$ will have type $\AChCpx$ and consist of the triple $(G^a,\deg,I)$, where $G^a$ is the set of generators and $I$ is the matrix of ``incidence numbers''. All generators from $G^a$ except the augmentation term will be stitched to specific elements of $S$, so we fix a decomposition $G^a=\{\varnothing\}\sqcup G$, and $G=\bigsqcup_{s\in S}G_s$.
%

The set $G^a$ of generators and the matrix $I$ are constructed inductively. We first choose a topological sorting of $S$ and proceed by computing and killing homology of all down-sets $|S_{<s}|$ one by one, according to the following algorithm. The values of $I(a,b)$ which are not explicitly specified are set equal to $0$. Whether this procedure is actually needed depends on the representation format of matrices. For example, if the matrices are represented in sparse format, this procedure is unnecessary. However, in the subsequent algorithm of sheaf cohomology computation, it will be more convenient to work with matrices represented as dense matrices, or double arrays.

\begin{algorithm}
    \caption{Incidence matrix computation}\label{algIncMatrixMain}
    \begin{algorithmic}[1]
    \REQUIRE Poset $S$.
    \STATE \textbf{procedure} $S_{sorted}$=TopoSort($S$)
    \STATE \textbf{create} generator $\varnothing$ with $\deg(\varnothing)=-1$
    \STATE \textbf{define} $C=(\{\varnothing\},\deg,I)$ of type $\AChCpx$
    \FOR{each $s$ in $S_{sorted}$}
        \STATE \textbf{define} $G_s=\{\}$
        \STATE \textbf{define} $S_{<s}=\{t\in S\mid t<s\}$
        \STATE \textbf{define} $G|_{<s}=\{\varnothing\}\sqcup\bigsqcup_{t\in S_{<s}}G_t$
        \STATE \textbf{procedure} $C|_{<s}$=Restrict($C$, $G|_{<s}$)
        \STATE \textbf{procedure} $(H,\deg,RC)$=HomologyBasis($C|_{<s}$)
        \FOR{each homology generator $i$ in $H$}
            \STATE \textbf{create} generator $g$ with $\deg(g)$ equal to $\deg(i)+1$
            \STATE \textbf{append} $g$ to $G_s$
            \FOR{each $h$ in $G$}
                \STATE \textbf{set} $I(h,g)$ equal to $0$
                \IF{$h$ lies in $G|_{<s}$}
                    \STATE \textbf{set} $I(g,h)$ equal to $RC(i)(h)$
                \ELSE
                    \STATE \textbf{set} $I(g,h)$ equal to $0$
                \ENDIF
            \ENDFOR
        \ENDFOR
        \STATE \textbf{append} $G_s$ to $G^a$
    \ENDFOR
    \STATE \textbf{output} set of generators $G^a$, matrix $I$
    \end{algorithmic}
\end{algorithm}

\begin{proof}[Proof of Theorem~\ref{thmExactBound}]
As long as the set of generators $G^a$ and the incidence matrix $I$ are computed by Algorithm~\ref{algIncMatrixMain}, they can be used to compute cohomology of any sheaf $D$ on $S$. Since we have a decomposition $G^a=\{\varnothing\}\sqcup G$ and $G=\bigsqcup_{s\in S}G_s$, it will be convenient to adopt a convention from dependent type theory: if $g\in G$, then $\pr_S(g)\in S$ denotes the index of the corresponding component of the union, and $\pr_G(g)\in G_{\pr_S(g)}$ denotes the corresponding element of this component.

Recall that $I(g,h)$ is the element of the matrix $I$, lying in $g$-th column and $h$-th row. Mathematically, given a diagram $D$ on $S$ (a sheaf $\ca{D}$ on the corresponding Alexandrov topological space $X_S$) we define the cochain complex $(C^*_{\min}(S;D), d_{\min})$ by the formulas
\begin{equation}\label{eqMinComplex}
C^j_{\min}(S;D)=\bigoplus_{g\in G,\deg g=j}D(\pr_S(g)), \qquad d_{\min}^j\colon C^j_{\min}(S;D)\to C^{j+1}_{\min}(S;D)
\end{equation}
\begin{equation}\label{eqMinComplexDifferential}
d_{\min}^j=\bigoplus_{\substack{g_1,g_2\in G}{\deg(g_1)=j,\deg(g_2)=j+1}}I(g_2,g_1)D(\pr_S(g_1)<\pr_S(g_2)).
\end{equation}
for any $j=0,1,\ldots$. We set $\Fu_{\min}(D)=(C^*_{\min}(S;D), d_{\min})$. This construction satisfies the following properties.

\begin{enumerate}
  \item The differential is well-defined since $\inc{g_2}{g_1}$ may be nonzero only if $\pr_S(g_1)<\pr_S(g_2)$, as can be seen directly from Algorithm~\ref{algIncMatrixMain}.
  \item The differential $d_{\min}$ satisfies $d_{\min}^2=0$. The proof is similar to Construction~\ref{conMorseIncidenceNumbers}. Hence $\Fu_{\min}$ lands in $\Cochain(\Vv)$.
  \item Since all summands in~\eqref{eqMinComplex} are functorial, $\Fu_{\min}$ is actually a functor from $\Diag(S;\Vv)$ to $\Cochain(\Vv)$.
  \item Finally, we have $H^*(W\to\Fu_{\min}(\low{s}{W}))=0$ for any $s\in S$ and any vector space $W=\ko^n$ (which is automatically injective). Indeed, we have $\Hr_*(C_*(S_{\leq s};\ko),\dd)=0$ by construction of the incidence matrix in Algorithm~\ref{algIncMatrixMain}. Applying the exact functor $\Hom(\cdot,W)$ we get
      \begin{multline*}
      H^*(W\to\Fu_{\min}(\low{s}{W}))=H^*(\Hom(C_*(S_{\leq s};\ko)\to\ko,W))= \\
      \Hom(H_*(C_*(S_{\leq s};\ko)\to\ko),W)=0.
      \end{multline*}
\end{enumerate}

The assumption of Theorem~\ref{thmHonestCohomology} are satisfied, therefore $\Fu_{\min}$ honestly computes cohomology of sheaves on $S$. The Algorithm~\ref{algIncMatrixMain} can be backtracked and it is seen that the multiplicity
\[
\mult_j(s,\Fu_{\min})=\#G_s
\]
of a stalk $D(s)$ in the vector space $C^j_{\min}(S;D)$ equals the rank $\dim \Hr_j'$ where
\[
\Hr_j'=\Hr_j(C|_{<s}),
\]
is the homology of the complex restricted to generators associated with elements of $S_{<s}$. Notice that $S_{<s}$ is a subset of $S'=\{s_1,\ldots,s\}$, the interval of topological sorting which have already been constructed. By induction, the incidence numbers already defined on $S'$ give honest sheaf cohomology computation for sheaves on $S'$. According to Corollary~\ref{corLowerSheafIsSingularCohomology} (also see details in the proof of Lemma~\ref{lemIncNumbersInductionStep}), the singular homology of lower order ideals of $S'$ can be honestly computed with the same incidence numbers (the restricted chain complex). This is applied to the lower order ideal $S_{<s}\subset S'$. Therefore, it is proven by induction, that
\[
\Hr_j(C|_{<s})\cong \Hr_j(|S_{<s}|;\ko).
\]
Therefore $\mult_j(s,\Fu_{\min})=\dim \Hr_j(|S_{<s}|;\ko)$ which is the predicted lower bound for the multiplicity. Theorem is finally proved.
\end{proof}

\begin{rem}\label{remExternalSolverIntegral}
Notice, that the problem of computing chains representing a basis of homology is outsourced to an external solver HomologyBasis. If the solver can deal with integral matrices, and if it is known that homology of $|S_{<s}|$ are torsion-free, then it is natural to require that the solver outputs not just any basis of homology, but an integral basis, i.e. the set of integral generators. However, things become more tricky if $|S_{<s}|$ has torsion in homology: in this case more generators are required to kill the homology. It is still possible to construct an integral analogue of the functor $\Fu_{\min}$ algorithmically, however, we postpone such description for the future research.
\end{rem}

\begin{rem}\label{remSparsity}
In the case of coefficients in a field, it is natural to define some optimization priors on the work of the homology solver. Notice that the solver should output the generators of a homology group $H_j(C|_{<s})$ which are classes of chains of the corresponding complex. There is a freedom to choose the representatives. A natural prior would be to require the sparsity of the resulting representative chains. The more sparse is the incidence matrix, the easier is the subsequent (co)homology computation. This leads to an interesting practical problem in applied linear algebra.
\end{rem}

\begin{probl}\label{problSparseSolution}
Consider a couple of vector subspaces $B\subset Z\subset C=\ko^n$. Find vectors $c^{(1)},\ldots,c^{(k)}\in C$ such that
\begin{enumerate}
  \item the set $\{[c^{(1)}],\ldots,[c^{(k)}]\}$ is a basis of $Z/B$,
  \item the total number $\sum_{i=1}^{k}\#\{j\in[n]\mid c_j^{(i)}\}$ of vanishing components is maximized.
\end{enumerate}
\end{probl}

An algorithm, which effectively solves Problem~\ref{problSparseSolution} should be preferred in a system which computes cohomology of sheaves.

\begin{rem}\label{remMorseExampleAndApproximation}
Recalling Example~\ref{exMorse}, we see a freedom in the definition of the incidence numbers, namely, we can set $\dd(b)=k_1\mathbf{1}+k_2\mathbf{2}-\mathbf{3}$ with any choice of parameters $(k_1,k_2)\in\Zo^2$ such that $k_1+k_2=1$, and it leads to the honest cohomology computations. According to Remark~\ref{remSparsity}, we should prefer sparse solutions, which are either $(k_1,k_2)=(1,0)$ or $(k_1,k_2)=(0,1)$. Notice that these two ways of defining a differential neatly correspond to the two geometrically meaningful ways of turning a Morse poset $\ca{Y}$ into a cell poset, see Fig.~\ref{figElementaryMorse}. In certain sense, the requirement of sparsity of solutions at each step somehow resembles cellular approximation theorem --- in a homological language. 
\end{rem}

\begin{rem}
It follows from Algorithm~\ref{algIncMatrixMain} that, whenever $|S_{<s}|$ is acyclic, there are no generators $g$ related to the element $s$. This means that the stalk $D(s)$ does not contribute to $C_{\min}^*(S;D)$ at all. In particular, it follows that cohomology of any sheaf $D$ do not depend on the stalk $D(s)$ with acyclic downset $D_{<s}$. This is well-aligned with Remark~\ref{remDeletePoint}.
\end{rem}

For convenience we formulate an algorithm to compute cohomology of arbitrary diagram $D$ on a finite poset $S$.

\begin{con}
First of all, we encode a diagram $D$ by specifying the following data structures
\begin{enumerate}
  \item for each $s\in S$, a finite set $N_s$ is specified. This corresponds to a basis of the stalk $D(s)$;
  \item for each pair $s,t\in S$ such that $s<t$ in $S$, a function $D_{st}\colon N_s\times N_t\to \ko$ is specified. This function encodes the matrix of the operator $D(s<t)$ written in the chosen bases.
\end{enumerate}
Notice that there are many redundant matrices in this structure: since the compositionality holds for a sheaf, it is sufficient to define the matrices $D_{st}$ only for the edges $(s,t)$ of the Hasse diagram of $S$ (see Construction~\ref{conHasseDiag}), while all other matrices can be recovered as their products. However, it will be more convenient to work with this redundant structure: the several-hop matrices
\[
D(s_0<\cdots<s_k)=D(s_{k-1}<s_k)\cdots D(s_0<s_1)
\]
may appear explicitly in the calculation.
\end{con}

It will be assumed that the set $G=\bigsqcup_{s\in S}G_s$ of generating elements, and the incidence matrix $I\colon G\times G\to\ko$ are already precomputed by Algorithm~\ref{algIncMatrixMain}. The following algorithm is mathematically straightforward: we form the structure $C'$ of type $\CochCpx$ representing the cochain complex $(C^*_{\min}(S;D),d_{\min})$, send send it to HomologyBasis which outputs the set of cochains representing cohomology generators together with their degrees. Recall from Construction~\ref{conChainCpxDataStructure} that $C'$ is a triple $(N,\deg',\Dif)$, where $N$ is a finite set of basic vectors, $\deg\colon N\to\Zg$ is the degree function, and $\Dif\colon N\times N\to \ko$ is the matrix of the differential, assumed to shift degrees by $+1$.

As follows from the definition of the cochain complex, we have $N=\bigsqcup_{g\in G}N_{\pr_S(g)}$. Again, given $n\in N$, the notation $\pr_G(n)\in G$ stands for the corresponding index, and $\pr_N(n)\in N_{\pr_S(g)}$ the element of the corresponding component.

\begin{algorithm}
    \caption{Sheaf cohomology computation}\label{algMainAlg}
    \begin{algorithmic}[1]
    \STATE \textbf{set} $N$ equal to $\bigsqcup_{g\in G}N_{\pr_S(g)}$
    \FOR{each $n$ in $N$}
        \STATE \textbf{set} $\deg(n)$ equal to $\deg(\pr_G(n))$
    \ENDFOR
    \FOR{each $(n_1,n_2)$ in $N\times N$}
        \STATE \textbf{set} $\Dif(n_1,n_2)$ equal to $I(\pr_G(n_2),\pr_G(n_1))\cdot D_{\pr_S(\pr_G(n_1))\pr_S(\pr_G(n_2))}(\pr_N(n_1),\pr_N(n_2))$
    \ENDFOR
    \STATE \textbf{define} $C'=(N,\deg,\Dif)$ of type $\CochCpx$
    \STATE \textbf{procedure} $(H',\deg,RC')$=HomologyBasis($C'$)
    \STATE \textbf{output} $(H,\deg,RC)$
    \end{algorithmic}
\end{algorithm}

Notice that the matrix $\Dif$ is well defined. The factor
\[
D_{\pr_S(\pr_G(n_1))\pr_S(\pr_G(n_2))}(\pr_N(n_1),\pr_N(n_2))
\]
is undefined when $\pr_S(\pr_G(n_1))\nless\pr_S(\pr_G(n_2))$. However, in this case the factor
\[
I(\pr_G(n_2),\pr_G(n_1))
\]
vanishes, as guaranteed by Algorithm~\ref{algIncMatrixMain}, so their product is set equal to $0$.

\begin{rem}\label{remAgnosticComputations}
Notice that Theorems~\ref{thmMorseBound} and~\ref{thmExactBound} do not guarantee that, given a poset $S$ and a sheaf $D$, the complex $\Fu_{\min}(D)$ is the most optimal way to compute cohomology of $D$. As stupid example: if $D$ identically vanishes, the cohomology vanish as well, so there is no need to compute matrices $I$ and $\Dif$. Instead, the theorems assert that, given a poset $S$, the complex $\Fu_{\min}(D)$ is the most optimal for all possible diagrams $D$ on $S$. So the application of Algorithms~\ref{algIncMatrixMain} and~\ref{algMainAlg} makes sense in the context when $S$ is given, and there is a need to compute cohomology of several possible sheaves on $S$; the matrix $I$ depends only on $S$ hence can be reused in the calculations.

Certainly, in specific situation, the matrix $I$ can be found without using Algorithm~\ref{algIncMatrixMain}. For example, if $S$ is known to be a simplicial complex, the incidence numbers are defined in the alternating fashion, as in Construction~\ref{conStandardSimplicialIncNumbers}, which doesn't require the calculation of homology of the down-sets. Algorithm~\ref{algIncMatrixMain} is applicable to any finite poset, hence we call it structure agnostic.

The development of algorithms which efficiently compute cohomology for a single diagram $D$ on $S$ is a challenging problem, which is a subject of the future work.
\end{rem}

\section{Laplacians, diffusion, and beyond}\label{secMathLaplacians}

In this section we restrict to the category $\Ro\Vect$ of real vector spaces. The field $\Ro$ has two important features, distinguishing it from general fields $\ko$ and making it suitable for practical applications. (1) It is ordered. (2) It is a complete metric space. By utilizing the first property one can define the general combinatorial Hodge theory. The second property leads to heat diffusion on sheaves which underlies the usage of sheaves in the design of neural networks. Most mathematical claims about Laplacians follow from the basic linear algebra reviewed in the next subsection. 

\subsection{Linear algebra preliminaries}\label{subsecMathLinAlg}

A euclidean space is a real vector space $V\in\Ro\Vect$ with the chosen inner product $\langle\cdot,\cdot\rangle$. Each (finite-dimensional) euclidean space $V$ is naturally identified with its dual $V^*$ via the isomorphism $v\mapsto \langle v,\cdot\rangle$. Any map $f\colon V\to W$ between euclidean spaces induces the conjugate map $f^*\colon W=W^*\to V=V^*$. If a map $f$ is written as a matrix $F$ in some orthogonal basis, then its conjugate $F^*$ is written with the transpose matrix $F^t$. If $V=\Ro^n$ is the arithmetical space, then we assume the scalar product is chosen in a standard way $\langle v,u\rangle=\sum_{i=1}^{n}v_iu_i$.

The next lemma is the classical statement, to be found in any elementary book on linear algebra.

\begin{lem}\label{lemKerFFisKerF}
For a linear map $f\colon V\to W$, we have $\Ker f=\Ker f^*f$.
\end{lem}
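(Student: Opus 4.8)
The plan is to prove the two inclusions $\Ker f \subseteq \Ker f^*f$ and $\Ker f^*f \subseteq \Ker f$ separately, with the first being immediate and the second relying on the positive-definiteness of the inner product. First I would observe that if $v \in \Ker f$, then $f(v) = 0$, so $f^*f(v) = f^*(0) = 0$, giving $v \in \Ker f^*f$; this direction uses nothing beyond linearity.

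For the reverse inclusion, suppose $v \in \Ker f^*f$, i.e. $f^*f(v) = 0$. The key trick is to pair this with $v$ using the inner product and exploit the defining property of the adjoint, namely $\langle f^*w, v\rangle = \langle w, f v\rangle$ for all $w \in W$, $v \in V$. Applying this with $w = f(v)$ yields
\begin{equation}
0 = \langle f^*f(v), v\rangle = \langle f(v), f(v)\rangle = \|f(v)\|^2.
\end{equation}
Since the inner product on $W$ is positive-definite, $\|f(v)\|^2 = 0$ forces $f(v) = 0$, hence $v \in \Ker f$.

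Combining both inclusions gives $\Ker f = \Ker f^* f$, as required. The only subtlety, which is hardly an obstacle in the finite-dimensional euclidean setting assumed here, is that the argument genuinely uses positive-definiteness of $\langle \cdot,\cdot\rangle$: over a field without a positive-definite form (or if one merely worked with a nondegenerate bilinear form), the step $\|f(v)\|^2 = 0 \Rightarrow f(v) = 0$ can fail. Since the ambient category is $\Ro\Vect$ with a genuine inner product, this causes no difficulty, and no appeal to bases or matrix representations is needed — the proof is entirely coordinate-free.
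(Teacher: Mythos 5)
Your proof is correct and takes essentially the same route as the paper: the easy inclusion by linearity, then pairing $f^*fv=0$ against $v$ via the adjoint identity to get $\|fv\|^2=0$. If anything, your closing remark is slightly more precise than the paper's own wording, which attributes the final step to the inner product being ``non-degenerate'' when, as you note, positive-definiteness is what is genuinely used.
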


\begin{proof}
If $fv=0$, then $f^*fv=0$, so the inclusion $\Ker f\subseteq\Ker f^*f$ is obvious. On the converse, if $f^*fv=0$, then
$0=\langle f^*fv,v\rangle=\langle fv,fv\rangle$, which implies $fv=0$ since the inner product is chosen non-degenerate.
\end{proof}

\begin{con}\label{conSelfAdjoint}
Recall that an operator $A\colon V\to V$ on euclidean space is called self-adjoint (symmetric matrix) if $A^*=A$. Every operator of the form $f^*f$ or $gg^*$ is obviously self-adjoint. The sum of self-adjoint operators is self-adjoint. The spectral theorem asserts that any self-adjoint operator can be diagonalized, i.e. there exists an orthogonal basis of $V$ in which $A$ is written as a diagonal matrix $\Lambda=\diag(\lambda_1,\ldots,\lambda_n)$. In other words, for any real symmetric matrix $A$ there exists a spectral decomposition $A=Q^t\Lambda Q$ where $Q$ is orthogonal, and $\Lambda$ is diagonal. The diagonal entries of $\Lambda$ are eigenvalues of $A$, and the columns of $Q$ are the eigenvectors of $A$. The problem of finding $Q$ and $\Lambda$ for a given symmetric matrix $A$ is called the symmetric diagonalization problem. It can be solved by approximate algorithms of numerical linear algebra with floating point, see e.g.~\cite{NumLinAlg}.
\end{con}

\begin{con}\label{conNonNegative}
An operator $A\colon V\to V$ is called nonnegative~\footnote{We use this term as a shorter version of a more standard ``positive semidefinite''.}, if $\langle Av,v\rangle\geq 0$. The proof of Lemma~\ref{lemKerFFisKerF} shows that any operator of the form $f^*f$ or $gg^*$ is nonnegative. The sum of nonnegative operators is easily seen to be nonnegative. If $A$ is self-adjoint, then $A$ is nonnegative if and only if all its eigenvalues $\lambda_i$ are nonnegative real numbers. It can be seen from the spectral theorem, that if $A$ is nonnegative self-adjoint, then
\begin{equation}\label{eqKernelIsQuadraticMinimum}
\Ker A=\{v\in V \mid \langle Av,v\rangle=0\}.
\end{equation}
An important observation follows from formula~\eqref{eqKernelIsQuadraticMinimum}. With any nonnegative symmetric matrix $A$, we can associate a function $Q_A\colon V\to\Ro$, $Q_A(x)=\langle Ax,x\rangle$, which is (1) quadratic, (2) nonnegative, (3) convex, (4) most importantly,
\begin{equation}\label{eqQuadraticMinimization}
Q_A(x)=0 \Leftrightarrow x\in\Ker A.
\end{equation}
Hence the problem of finding a solution to the equation $Ax=0$ can be faithfully reformulated as a problem of minimization of $Q_A$. 

Notice that, whenever $A=f^*f$, we get 
\begin{equation}\label{eqStupidObservation}
Q_A(x)=\langle x,f^*fx\rangle = \|fx\|^2.
\end{equation}
\end{con}

\begin{rem}\label{remKernelsIntersect}
If $A_1,A_2$ are two nonnegative self-adjoint operators, then so is $A_1+A_2$. Since $Q_{A_1+A_2}=Q_{A_1}+Q_{A_2}$, it easily follows from the considerations of Construction~\ref{conNonNegative}, that
\[
\Ker(A_1+A_2)=\Ker A_1\cap \Ker A_2,
\]
whenever $A_1,A_2$ are nonnegative self-adjoint.
\end{rem}

With all this been written, the next important lemma is not much more complicated than Lemma~\ref{lemKerFFisKerF}. It constitutes the basis of Hodge theory in the finite-dimensional case.

\begin{lem}\label{lemQuotientIsHarmonic}
For a sequence of two linear maps $U\stackrel{g}{\rightarrow} V\stackrel{f}{\rightarrow} W$ between euclidean spaces define an operator $L=f^*f+gg^*\colon V\to V$. Then we have an isomorphism
\[
\Ker L\cong \Ker f/(\Ker f\cap \im g).
\]
\end{lem}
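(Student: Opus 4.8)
The plan is to peel $L$ apart into its two nonnegative self-adjoint summands, identify the kernel stalk-wise, and then recognize the result as an orthogonal complement. First I would observe that by Construction~\ref{conNonNegative} both $f^*f$ and $gg^*$ are nonnegative self-adjoint operators on $V$, so Remark~\ref{remKernelsIntersect} applies and gives
\[
\Ker L = \Ker(f^*f + gg^*) = \Ker(f^*f) \cap \Ker(gg^*).
\]
Next, Lemma~\ref{lemKerFFisKerF} yields $\Ker(f^*f) = \Ker f$ directly, and applying the same lemma to the map $g^*$ (using $(g^*)^* = g$) gives $\Ker(gg^*) = \Ker g^*$. Thus $\Ker L = \Ker f \cap \Ker g^*$.

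The second ingredient is the standard adjoint identity $\Ker g^* = (\im g)^\perp$: a vector $v$ satisfies $g^*v = 0$ if and only if $\langle v, gu\rangle = \langle g^*v, u\rangle = 0$ for all $u \in U$, i.e. if and only if $v \perp \im g$. Combining this with the previous paragraph I obtain the clean description
\[
\Ker L = \Ker f \cap (\im g)^\perp.
\]

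It remains to match this with the quotient $\Ker f/(\Ker f \cap \im g)$. Here I would use the relation $f\circ g = 0$, which holds in every application since $f$ and $g$ are consecutive differentials of a cochain complex; it gives $\im g \subseteq \Ker f$, so that $\Ker f \cap \im g = \im g$. Writing $A = \Ker f$ and $B = \im g \subseteq A$, the subspace $\Ker f \cap (\im g)^\perp$ is exactly the orthogonal complement of $B$ inside the euclidean space $A$, and the inclusion followed by the quotient projection $A \cap B^\perp \hookrightarrow A \twoheadrightarrow A/B$ is the desired isomorphism. Injectivity is immediate, since the kernel of this composite is $B \cap B^\perp = \{0\}$. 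For surjectivity I would take any $w \in A$ and decompose it orthogonally as $w = a + b$ with $a \in \im g$ and $b \in (\im g)^\perp$; then $a \in \im g \subseteq \Ker f$ forces $b = w - a \in \Ker f \cap (\im g)^\perp$, while $[b] = [w]$ in $A/B$ because $a \in B$.

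The main obstacle is precisely this final matching step: the operator-theoretic computation delivers $\Ker f \cap (\im g)^\perp$, whereas the target quotient involves $\Ker f \cap \im g$. Without the relation $f\circ g = 0$ the two descriptions need not agree — already for a generic pair of maps they differ, as one sees by taking $\im g$ and $\Ker f$ to be distinct lines meeting only at the origin, where $\Ker L$ collapses to $0$ but the quotient is one-dimensional. So the crux of the argument is recognizing that the complex condition $\im g \subseteq \Ker f$ is exactly what turns $(\im g)^\perp \cap \Ker f$ into the orthogonal complement of $\Ker f \cap \im g$ within $\Ker f$, thereby making the projection surjective.
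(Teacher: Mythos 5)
Your proof is correct, and its first half is exactly the paper's: both compute
\[
\Ker L=\Ker(f^*f)\cap\Ker(gg^*)=\Ker f\cap\Ker g^*=\Ker f\cap(\im g)^\perp
\]
via Remark~\ref{remKernelsIntersect}, Lemma~\ref{lemKerFFisKerF}, and the adjoint identity $\Ker g^*=(\im g)^\perp$. The divergence is in the final step, and there your instinct is sounder than the paper's text. The paper concludes by asserting that for \emph{any} two subspaces $A,B\subseteq V$ the map $a\mapsto[a]$ is an isomorphism $A\cap B^\perp\cong A/(A\cap B)$; that assertion is false in general, and your counterexample (made precise) refutes it: take $A=\Ker f=\mathrm{span}(e_1)$ and $B=\im g=\mathrm{span}(e_1+e_2)$ in $V=\Ro^2$, so that $A\cap B^\perp=0$ while $A/(A\cap B)\cong\Ro$. (One caveat on your phrasing: ``distinct lines'' is not quite enough, since for \emph{orthogonal} distinct lines one gets $\Ker L=\Ker f$ and both sides are one-dimensional; the lines must be non-orthogonal, which is what ``generic'' buys you.) Consequently the lemma as literally stated, with no hypothesis relating $f$ and $g$, is false, and what you did --- importing $f\circ g=0$, hence $\im g\subseteq\Ker f$, so that $\Ker f\cap(\im g)^\perp$ becomes the orthogonal complement of $\im g$ \emph{inside} $\Ker f$ and your injectivity/surjectivity argument closes --- is not an optional extra but a necessary repair. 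The added hypothesis costs nothing in context: in every use the paper makes of the lemma (Definition~\ref{definLaplaceGeneralCochain}, Corollary~\ref{corHarmonicIsCohomology}) the maps are consecutive differentials $f=d_j$, $g=d_{j-1}$ of a cochain complex, so $f\circ g=0$ holds automatically. The statement ought to be amended to include $f\circ g=0$ (equivalently $\im g\subseteq\Ker f$); with that amendment, your proof and the paper's agree in skeleton, but yours is the one that is actually complete.
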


\begin{proof}
Both $L_1=gg^*$ and $L_2=f^*f$ are nonnegative self-adjoint operators on $V$. We have
\begin{equation}\label{eqLinAlg}
\Ker L=\Ker(L_2+L_1)=\Ker L_2\cap \Ker L_1=\Ker f\cap \Ker g^*=\Ker f\cap (\im g)^\bot.
\end{equation}
Whenever $A,B$ are two subspaces of the euclidean space $V$, we have an isomorphism $A\cap B^{\bot}\cong A/(A\cap B)$, given by $a\mapsto [a]$. Therefore, the rightmost space in~\eqref{eqLinAlg} is naturally isomorphic to $\Ker f/(\Ker f\cap \im g)$.
\end{proof}

\subsection{Laplacians and combinatorial Hodge theory}\label{subsecMathHodge}

Recall the definition of cochain complex from subsection~\ref{subsecMathCochainRecap}.

\begin{defin}\label{definLaplaceGeneralCochain}
Consider a connective cochain complex $(C^*,d)$ in the category $\Ro\Vect$:
\[
0\to C^0\stackrel{d_0}{\to} C^1 \stackrel{d_1}{\to} C^2 \stackrel{d_2}{\to}\cdots
\]
Assume an inner product $\langle\cdot,\cdot\rangle$ is somehow chosen on every vector space $C_i$. Then we can define the sequence of nonnegative operators:
\[
\Delta_j=d_j^*d_j+d_{j-1}d_{j-1}^*\colon C^j\to C^j,
\]
called \emph{Laplace operators (or Laplacians)}. The vector subspace $\ca{H}_j=\Ker\Delta_j$ is called \emph{the harmonic subspace} of $C^j$, and its elements --- \emph{harmonic cochains}.
\end{defin}

We have a natural corollary of Lemma~\ref{lemQuotientIsHarmonic}

\begin{cor}\label{corHarmonicIsCohomology}
The $j$-th harmonic subspace of the euclidean cochain complex $(C^*,d)$ is (non-naturally) isomorphic to its $j$-th cohomology module of $(C^*,d)$:
\begin{equation}\label{eqHarmonicIsCohomology}
\ca{H}_j\cong H^j(C^*,d).
\end{equation}
\end{cor}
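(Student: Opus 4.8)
The plan is to apply Lemma~\ref{lemQuotientIsHarmonic} directly to the two consecutive differentials of the cochain complex meeting at degree $j$, and then to identify the resulting quotient with the cohomology module. Concretely, I would set $U=C^{j-1}$, $V=C^j$, $W=C^{j+1}$, with $g=d_{j-1}\colon C^{j-1}\to C^j$ and $f=d_j\colon C^j\to C^{j+1}$. With these identifications the operator $L=f^*f+gg^*$ of Lemma~\ref{lemQuotientIsHarmonic} is exactly the Laplacian $\Delta_j=d_j^*d_j+d_{j-1}d_{j-1}^*$ of Definition~\ref{definLaplaceGeneralCochain}, so $\ca{H}_j=\Ker\Delta_j=\Ker L$.

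The second step is to read off what Lemma~\ref{lemQuotientIsHarmonic} yields in this case. It gives
\[
\ca{H}_j=\Ker L\cong \Ker f/(\Ker f\cap \im g)=\Ker d_j/(\Ker d_j\cap \im d_{j-1}).
\]
Here I would invoke the fact that $(C^*,d)$ is a genuine cochain complex: the relation $d_{j-1};d_j=0$ (Definition~\ref{definCochainCpx}) means $\im d_{j-1}\subseteq \Ker d_j$, so the intersection $\Ker d_j\cap \im d_{j-1}$ is simply $\im d_{j-1}$. Hence the quotient collapses to $\Ker d_j/\im d_{j-1}$, which is precisely the definition of $H^j(C^*,d)$ (Definition~\ref{definCohomologyCochain}). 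This establishes the isomorphism~\eqref{eqHarmonicIsCohomology}.

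There is essentially no hard part here; the substance of the argument lives entirely in Lemma~\ref{lemQuotientIsHarmonic}, and the corollary is just a matter of matching notation and using the complex condition to simplify the intersection. The only point deserving a word of care is the non-naturality claim: the isomorphism arises from the general identification $A\cap B^\bot\cong A/(A\cap B)$ used in the proof of Lemma~\ref{lemQuotientIsHarmonic}, which depends on the choice of inner products on the $C^i$ (equivalently, on the orthogonal complement $(\im d_{j-1})^\bot$ inside $\Ker d_j$). Since cohomology $H^j(C^*,d)$ is defined independently of any metric while $\ca{H}_j$ is not, the isomorphism cannot be natural in general, which is exactly why the statement hedges with ``(non-naturally)''. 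I would flag this explicitly rather than prove naturality, since naturality genuinely fails.
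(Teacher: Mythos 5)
Your proposal is correct and is exactly the paper's argument: the paper presents the statement as an immediate corollary of Lemma~\ref{lemQuotientIsHarmonic}, and your instantiation $g=d_{j-1}$, $f=d_j$, together with the observation that $d_{j-1};d_j=0$ collapses $\Ker d_j\cap\im d_{j-1}$ to $\im d_{j-1}$, is precisely the matching-of-notation the paper leaves implicit. Your remark on non-naturality (the isomorphism depends on the chosen inner products via the identification $A\cap B^{\bot}\cong A/(A\cap B)$) correctly explains the paper's parenthetical ``(non-naturally)''.
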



\begin{con}\label{conInnerProductCellCpx}
Let $D$ be a cellular sheaf defined on a cell poset $\ca{X}$, see Definitions~\ref{definCellPoset} and~\ref{definCellularSheaf}. Assume an inner product is chosen on each stalk $D(\sigma)$. This gives a canonical inner product on the cellular cochain spaces $C^j_{CW}(\ca{X};D)=\bigoplus_{\dim\sigma=j}D(\sigma)$, where direct summands are assumed orthogonal.
\end{con}

\begin{defin}\label{definCellSheafLaplacian}
The Laplacian $\Delta_j$ of the cochain complex $(C^j_{CW}(\ca{X};D),d_{CW})$ is called the \emph{$j$-th cellular sheaf Laplacian}.
\end{defin}

The assumptions needed to define euclidean structure on the cellular cochain complex can be generalized in a natural way.

\begin{defin}\label{definEuclideanSheaf}
A \emph{euclidean sheaf} on a poset $S$ is a sheaf (diagram) $D$ together with a choice of inner product on each stalk $D(s)$, $s\in S$.
\end{defin}

\begin{con}\label{conEuclideanSheaf}
Let $D$ be a euclidean sheaf on $S$. Let $\Fu\colon\Shvs(S;\Ro\Vect)\to\Cochain(\Ro\Vect)$ be a concrete functor in the sense of Definition~\ref{definConcreteFunctor}. Then the cochain complex $(C^*,d)=\Fu(D)$ naturally attains the inner product. Indeed, each graded component $C^j$ is a direct sum of stalks, taken with some multiplicities:
\begin{equation}\label{eqJthComponentOfF}
C^j=\Fu(D)^j=\bigoplus_{s\in S}D(s)^{\oplus \mult_j(s,\Fu)}.
\end{equation}
The inner product on $C^j$ is defined by setting all direct summands pairwise orthogonal, while the inner product in each summand is summand $D(s)$ is given by assumption.

Since we have a euclidean structure on $(C^*,d)$, we can define \emph{$\Fu$-based Laplacians} in the natural way:
\begin{equation}\label{eqFuValuedLaplacians}
\Delta_j^{\Fu}\colon C^j\to C^j,\qquad \Delta_j^{\Fu}=d^*d+dd^*.
\end{equation}
\end{con}

Recall that a concrete functor $\Fu\colon\Shvs(S;\Ro\Vect)\to\Cochain(\Ro\Vect)$ is said to honestly compute sheaf cohomology, if cohomology of $\Fu(D)$ is isomorphic to the sheaf cohomology, see Definition~\ref{definHonestlyComputes}. This definition and Corollary~\ref{corHarmonicIsCohomology} easily imply the next statement.

\begin{prop}\label{propFuBasedLaplacianKernel}
Let $\Fu\colon\Shvs(S;\Ro\Vect)\to\Cochain(\Ro\Vect)$ be a concrete functor that honestly computes sheaf cohomology, and let $D$ be a euclidean sheaf on a poset $S$. Then
\[
\Ker\Delta_j^{\Fu} \cong H^j(S;D).
\]
\end{prop}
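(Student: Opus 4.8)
The plan is to obtain the claimed isomorphism as a short composition of two facts already established in the excerpt: the finite-dimensional Hodge isomorphism of Corollary~\ref{corHarmonicIsCohomology}, and the defining property of honest cohomology computation (Definition~\ref{definHonestlyComputes}). No new machinery is needed; the work consists in checking that the hypotheses of these two results are met by $\Fu(D)$ and then concatenating their conclusions.

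First I would record that, by Construction~\ref{conEuclideanSheaf}, the cochain complex $(C^*,d)=\Fu(D)$ carries a canonical inner product. Since $\Fu$ is concrete, each component $C^j=\Fu(D)^j$ decomposes as a direct sum of stalks $D(s)$ with multiplicities $\mult_j(s,\Fu)$, as in~\eqref{eqJthComponentOfF}, and declaring these summands pairwise orthogonal while using the given inner products on each $D(s)$ makes every $C^j$ a euclidean space. Thus $(C^*,d)$ is a euclidean cochain complex in the sense of subsection~\ref{subsecMathHodge}, and the $\Fu$-based Laplacian of~\eqref{eqFuValuedLaplacians} is precisely the Laplacian $\Delta_j=d_j^*d_j+d_{j-1}d_{j-1}^*$ of Definition~\ref{definLaplaceGeneralCochain} for this complex, with adjoints taken relative to this inner product.

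Next, Corollary~\ref{corHarmonicIsCohomology} applies verbatim to $(C^*,d)$ and yields
\[
\Ker\Delta_j^{\Fu}=\ca{H}_j\cong H^j(C^*,d)=H^j(\Fu(D)).
\]
Because $\Fu$ honestly computes sheaf cohomology by hypothesis, the right-hand side is isomorphic to $H^j_{AG}(X_S;\ca{D})$, which is the sheaf cohomology $H^j(S;D)$ of Definitions~\ref{definCohomologyMainPart} and~\ref{definSheafCohomologyDerived}. Concatenating the two isomorphisms gives $\Ker\Delta_j^{\Fu}\cong H^j(S;D)$, as required.

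The hard part, such as it is, has been front-loaded into the results being invoked: Corollary~\ref{corHarmonicIsCohomology} rests on the purely linear-algebraic Lemma~\ref{lemQuotientIsHarmonic}, and the honest-computation hypothesis is exactly what Examples~\ref{exRoos}, \ref{exCell}, and the one-shot and minimal constructions verify for the functors of interest. What remains is only bookkeeping: confirming that the orthogonal direct-sum inner product genuinely makes each $C^j$ euclidean so that the adjoints in~\eqref{eqFuValuedLaplacians} are well defined, and flagging that the Hodge isomorphism of Corollary~\ref{corHarmonicIsCohomology} is non-natural, so the resulting isomorphism $\Ker\Delta_j^{\Fu}\cong H^j(S;D)$ should not be asserted to be natural in $D$.
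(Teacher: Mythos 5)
Your proof is correct and follows exactly the route the paper intends: the paper states that the result follows immediately from Definition~\ref{definHonestlyComputes} together with Corollary~\ref{corHarmonicIsCohomology}, which is precisely your composition of the Hodge isomorphism $\Ker\Delta_j^{\Fu}\cong H^j(\Fu(D))$ with the honest-computation isomorphism $H^j(\Fu(D))\cong H^j_{AG}(X_S;\ca{D})$. Your additional bookkeeping (the orthogonal direct-sum inner product from Construction~\ref{conEuclideanSheaf} and the caveat about non-naturality) is accurate and only makes the argument more careful.
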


\begin{rem}\label{remFuLaplacianToOptimization}
It follows from Construction~\ref{conNonNegative}, that the linear subspace $\Ker\Delta_j^{\Fu}$ coincides with the zero-set of the nonnegative quadratic function $Q_{\Delta_j^{\Fu}}$ defined on the space $\Fu(D)^j$, described in~\eqref{eqJthComponentOfF}. Harmonic $j$-cochains can be found by minimizing this function. In particular, the space of global sections $\Gamma(S;D)\cong H^0(\Fu(D))$ can be found by minimizing the function
\[
Q_{\Delta_0^{\Fu}}(x)=\|d_0x\|^2,
\]
according to~\eqref{eqStupidObservation}. Here $d_0\colon C^0(S;D)\to C^1(S;D)$ is the leftmost differential in the cochain complex $\Fu(D)$.
\end{rem}

\subsection{Normalization}\label{subsecMathNormalize}

Computing cohomology reducing to finding $\Ker L_j$ which can be done by diagonalizing the matrix $L$. However, when it comes to the study of Laplacians, the nonnegative symmetric matrices $L_j$ are valuable and informative in their own right. Sometimes, the matrix $L_j$ is replaced with another matrix $L_j'$, which is better behaved from computational perspective, but preserves the main properties of $L_j$, namely, (1) $\Ker L_j'=\Ker L_j$, (2) $L_j'$ is nonnegative symmetric. Such matrix $L_j'$ are usually called \emph{normalizations} of $L_j$. This notion originated in spectral graph theory~\cite{Chung}, where many properties of general graphs are more naturally formulated in terms of eigenvalues of $L_j'$ rather than eigenvalues of $L_j$. It should be noticed however, that in the case of general sheaves, a normalization can be performed in a variety of ways, which we review below in the case of cellular sheaves.

\begin{con}\label{conNormalizationOfLaplacian}
Consider a cellular sheaf $D$ on a cell poset $\ca{X}$, and let $\sigma\in\ca{X}$ be a cell. Let $k_\sigma$ denote the dimension of a stalk $D(\sigma)$. Assume that an orthogonal basis $v_{\sigma,1},\ldots,v_{\sigma,k_\sigma}$ is chosen in each stalk $D(\sigma)$. Then the collection of vectors $\bigsqcup_{\sigma\colon \dim\sigma=j}\{v_{\sigma,1},\ldots,v_{\sigma,k_\sigma}\}$ becomes an orthogonal basis of $C^j_{CW}(\ca{X};D)$, for each $j\geq 0$. In this basis, the Laplacian $\Delta_j$ is written as a symmetric nonnegatively defined matrix, which we denote by $L_j$ in order to distinguish from the corresponding operator. Notice, that $L_j$ has a natural block structure; given two cells $\sigma,\tau$ of dimension $j$, we denote by $(L_j)_{\sigma,\tau}$ the block of $L_j$ formed by the intersection of columns indexed by $(\sigma,1),\ldots,(\sigma,k_\sigma)$ and rows indexed by $(\tau,1),\ldots,(\tau,k_\tau)$ (i.e. the matrix corresponding to $\Delta_j$ restricted to $D(\sigma)$ in the domain, and $D(\tau)$ in the target space).
\begin{enumerate}
  \item Naive coordinate-wise normalization. Consider the diagonal matrix $K$ filled with the values
  \[
  K_{(\sigma,i),(\sigma,i)}=\begin{cases}
                              (L_j)_{(\sigma,i),(\sigma,i)}^{-1}, & \mbox{if } (L_j)_{(\sigma,i),(\sigma,i)}\neq 0 \\
                              1, & \mbox{otherwise},
                            \end{cases}
  \]
  --- i.e. the pseudoinverse of the diagonal part of $L_j$. Then the matrix
  \begin{equation}\label{eqWeaklyNormalized}
    L_j'=K^{1/2}L_jK^{1/2}
  \end{equation}
  is symmetric nonnegative, has the same kernel as $L_j$, but the diagonal of $L_j'$ is only filled with $0$'s and $1$'s. We call $L_j'$ the weakly normalized sheaf Laplacian.
  \item Stalk-wise normalization. Let $\sigma$ be a cell of $\ca{X}$, and $A_\sigma=(L_j)_{\sigma,\sigma}$ be the corresponding diagonal block of $L_j$. Since $\Delta_j$ is a nonnegative operator, so is its restriction to the subspace $D(\sigma)$, therefore the symmetric matrix $A_\sigma$ is a matrix of a nonnegative operator. By the orthogonal diagonalization, there exists an orthogonal matrix $Q_\sigma$ such that $Q_\sigma^tA_\sigma Q_\sigma$ is a diagonal matrix $\diag(\lambda_{(\sigma,1)},\ldots,\lambda_{(\sigma,k_\sigma)})$ with nonnegative entries. Again, consider the pseudoinverse $N_\sigma=\diag(n_{(\sigma,1)},\ldots, n_{(\sigma,k_\sigma)})$, where $n_{(\sigma,i)}=1/\lambda_{(\sigma,i)}^{-1}$ if the denominator is nonzero, and $1$ otherwise. Let us form a block diagonal orthogonal matrix $Q$ with blocks $Q_\sigma$ and diagonal matrix $N$ with blocks $N_\sigma$. Then the matrix
  \begin{equation}\label{eqStronglyNormalized}
  L''_j=N^{1/2}Q^tLQ
  \end{equation}
  is symmetric nonnegative, has the same kernel as $L_j$, but all the diagonal blocks $(L''_j)_{\sigma,\sigma}$ are diagonal matrices of the form $\diag(1,\ldots,1,0,\ldots,0)$. We call $L''_j$ the strongly normalized Laplacian.
  \item We may take a matrix $M$ which is pseudoinverse of $L_j$ itself. Then $L_j'''=K^{1/2}L_jK^{1/2}$ is a matrix of the form $\diag(1,\ldots,1,0,\ldots,0)$. The computation of $M$ is equivalent to diagonalization of $L_j$, so this case is usually not considered a normalization, despite the fact it is completely analogous to the previous two examples. However, we find it amusing that the study of sheaf Laplacians has explicit hierarchical structure.
\end{enumerate}
\end{con}

\subsection{Heat diffusion}\label{subsecMathDiffusion}

Assume a nonnegative self-adjoint operator $A\colon V\to V$ is defined on a euclidean vector space $V$. According to Construction~\ref{conNonNegative} the question of finding $\Ker A$ is equivalent to minimizing the corresponding quadratic function $Q_A\colon V\to\Rg$. The latter can be done e.g. by means of gradient descent.

\begin{con}\label{conGradientHeat}
Consider a function $Q_A(x)=\langle Ax,x\rangle$ defined on a finite-dimensional euclidean vector space. Its gradient equals
\begin{equation}\label{eqGradientOfQuadratic}
\nabla Q_A(x_0)=(A+A^*)x_0.
\end{equation}
In the following, it will be assumed that $A$ is self-adjoint, so that $\nabla Q_A=2A$. Henceforth, the continuous gradient descent flow of $Q_A$ with a parameter $\eta>0$ takes the form
\begin{equation}\label{eqGradFlowContinuous}
\dot{x}=-2\eta Ax,\qquad x(0)=x_0,
\end{equation}
this is a linear system of differential equations. Its solution is $x(t)=\exp(-2\eta At)x_0$. If $A$ is nonnegative, then there exists $x_{+\infty}=\lim_{t\to+\infty} x(t)$ and it lies in $\Ker A$.

Similarly, the discrete gradient descent for $Q_A(x)$ has the form
\begin{equation}\label{eqGradFlowDiscrete}
x_{k+1}=x_k-2\eta Ax_k\qquad k=0,1,2,\ldots.
\end{equation}
The solution of this cascade is $x_k=(1-2\eta A)^kx_0$. Again, for nonnegative $A$, there exists $x_{+\infty}=\lim_{k\to+\infty} x_k$ and this vector lies in $\Ker A$.
\end{con}

Estimation of the convergence rate of either continuous or discrete gradient flow is a straightforward exercise, but it emphasizes the role of eigenvalues of $A$.

\begin{con}\label{conSolutionDiagonalized}
If $A=Q^t\Lambda Q$ is a spectral decomposition with the diagonal matrix $\Lambda=\diag(\lambda_1,\ldots,\lambda_n)$, as in Construction~\ref{conSelfAdjoint}, then the solution to equation~\eqref{eqGradFlowContinuous} has the form
\[
x(t)=\exp(-2\eta At)x_0=Q^t\diag(e^{-2\eta\lambda_1t},\ldots, e^{-2\eta\lambda_nt})Qx_0.
\]
Assume that $A$ is nonnegative, i.e. $\lambda_i\geq 0$ for all $i$. As $t\to+\infty$, the exponent $e^{-2\eta \lambda_it}$ is constant whenever $\lambda_i=0$, and decreases to $0$ when $\lambda_i>0$. The exponent with the least nonzero $\lambda_i$ has the slowest convergence rate.

Similarly, the solution to a cascase~\eqref{eqGradFlowDiscrete} has the form
\[
x_k=Q^t\diag((1-2\eta\lambda_1)^k,\ldots, (1-2\eta\lambda_n)^k)Qx_0,
\]
and the main term of the asymptotics is given by the one with the smallest positive $\lambda_i$. These considerations prove the following statement.
\end{con}

\begin{prop}\label{propConvergenceRate}
Let $A\neq 0$ be nonnegative self-adjoint operator, and $\lambda_{\min}>0$ be its least positive eigenvalue. Then the convergence rate of the equation~\eqref{eqGradFlowContinuous} is asymptotically equivalent to $e^{-2\eta\lambda_{\min}t}$, for a generic initial value $x_0$. The rate of convergence of the cascade~\eqref{eqGradFlowDiscrete} is asymptotically equivalent to $(1-2\eta\lambda_{\min})^k$.
\end{prop}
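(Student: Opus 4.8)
The plan is to diagonalize $A$ and reduce everything to a scalar asymptotic analysis, exactly along the lines already laid out in Construction~\ref{conSolutionDiagonalized}. By the spectral theorem (Construction~\ref{conSelfAdjoint}) I would fix an orthonormal basis $u_1,\ldots,u_n$ of eigenvectors of $A$ with $Au_j=\lambda_j u_j$, ordered so that $\lambda_j=0$ precisely for $j$ in some index set $Z$, while $\lambda_j\geq\lambda_{\min}>0$ for $j\notin Z$. Expanding the initial vector as $x_0=\sum_j a_j u_j$ with $a_j=\langle x_0,u_j\rangle$, the explicit solution of Construction~\ref{conSolutionDiagonalized} reads $x(t)=\sum_j a_j e^{-2\eta\lambda_j t}u_j$. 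The components with $\lambda_j=0$ are constant in $t$, while those with $\lambda_j>0$ decay to $0$; hence $x_{+\infty}=\sum_{j\in Z}a_j u_j$, the orthogonal projection of $x_0$ onto $\Ker A$, consistent with the description in Construction~\ref{conNonNegative}.

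Next I would compute the decay of the error. Since the $u_j$ are orthonormal, Parseval's identity gives
\[
\|x(t)-x_{+\infty}\|^2=\sum_{j\notin Z}a_j^2\, e^{-4\eta\lambda_j t}.
\]
Factoring out the slowest-decaying exponential $e^{-4\eta\lambda_{\min}t}$ leaves $\sum_{j\notin Z}a_j^2 e^{-4\eta(\lambda_j-\lambda_{\min})t}$, in which every term with $\lambda_j>\lambda_{\min}$ tends to $0$, so the sum converges to $C^2:=\sum_{j:\,\lambda_j=\lambda_{\min}}a_j^2$. Taking square roots yields $\|x(t)-x_{+\infty}\|\sim C\,e^{-2\eta\lambda_{\min}t}$. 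The constant $C$ is nonzero exactly when the projection of $x_0$ onto the $\lambda_{\min}$-eigenspace is nonvanishing; this is the precise meaning of ``generic initial value'', since the exceptional set $\{x_0 : C=0\}$ is a proper linear subspace of $V$.

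The discrete cascade is handled identically, starting from $x_k=\sum_j a_j(1-2\eta\lambda_j)^k u_j$ and $x_{+\infty}=\sum_{j\in Z}a_j u_j$, so that $\|x_k-x_{+\infty}\|^2=\sum_{j\notin Z}a_j^2(1-2\eta\lambda_j)^{2k}$. Here lies the only genuine subtlety, and what I expect to be the main obstacle: identifying which multiplier dominates. Convergence at all requires $|1-2\eta\lambda_j|<1$ for every positive eigenvalue, i.e. $\eta<1/\lambda_{\max}$; and to guarantee that the factor $1-2\eta\lambda_{\min}$ (rather than some $|1-2\eta\lambda_j|$ arising from a large eigenvalue near $-1$) governs the asymptotics, one must further assume $\eta$ small enough that $1-2\eta\lambda_{\min}\geq|1-2\eta\lambda_j|$ for all $j$, which a short computation shows holds whenever $\eta\leq 1/(\lambda_{\min}+\lambda_{\max})$. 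Under this step-size restriction the same factoring argument, now pulling out $(1-2\eta\lambda_{\min})^{2k}$ and using that squaring makes all multipliers nonnegative so that any ties merely enlarge $C$, gives $\|x_k-x_{+\infty}\|\sim C(1-2\eta\lambda_{\min})^k$ with the same constant and the same genericity condition. I would state this restriction on $\eta$ explicitly, as it is implicit in the formulation of the proposition.
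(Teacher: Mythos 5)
Your proof is correct and follows essentially the same route as the paper: Construction~\ref{conSolutionDiagonalized} likewise diagonalizes $A$ via the spectral theorem and reads off the asymptotics from the slowest-decaying exponential (resp.\ multiplier), though you carry out the Parseval computation and the identification of the constant $C$ and of ``generic'' ($x_0$ having nonzero projection onto the $\lambda_{\min}$-eigenspace) explicitly, where the paper merely gestures. Your point about the discrete cascade is a genuine catch: the paper's claim that $x_k=(1-2\eta A)^k x_0$ converges with rate $(1-2\eta\lambda_{\min})^k$ silently presupposes a step-size bound, and your condition $\eta\leq 1/(\lambda_{\min}+\lambda_{\max})$ (ensuring $1-2\eta\lambda_{\min}\geq|1-2\eta\lambda_j|$ for all $j$) is exactly what is needed and is nowhere stated in Constructions~\ref{conGradientHeat} or~\ref{conSolutionDiagonalized}.
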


It remains to apply these standard arguments to sheaf Laplacians or their normalized versions. Let $\Fu$ be a concrete functor which honestly computes cohomology, let $\Delta_j^{\Fu}$ be the $\Fu$-based Laplacian of a euclidean sheaf $D$ defined in Construction~\ref{conEuclideanSheaf}, let $L_j$ be a matrix of this Laplacian (in some orthogonal basis), and $L_j'$ --- its normalized version.

\begin{defin}\label{definHeatDiffusion}
The function $Q_j(x)=\langle L_j'x,x\rangle$ is called \emph{the ($\Fu$-based, $j$-th order, normalized, Dirichlet) energy} of the euclidean sheaf $D$. Its gradient descent flow with parameter $\eta$, --- either continuous
\[
\dot{x}=-2\eta L_j'(x),\qquad x(0)=x_0,
\]
or discrete
\[
x_{k+1}=(1-2\eta L_j')x_k,\qquad k=0,1,2,\ldots
\]
--- is called a \emph{heat diffusion} on a sheaf $D$.
\end{defin}

By definition, the heat diffusion is the process that minimizes the sheaf's energy, which is physically meaningful. Proposition~\ref{propConvergenceRate} implies the following.

\begin{cor}\label{corSpeedOfHeatDiffusion}
The convergence rate of the heat diffusion on a sheaf $D$ is determined asymptotically by the least nonzero eigenvalue $\lambda_{\min}$ of the corresponding (normalized) sheaf Laplacian $L_j'$.
\end{cor}

\begin{rem}\label{remOtherEigenvalues}
The zero eigenspace of $L_j'$ coincides with $\Ker\Delta_j^{\Fu}$ (see Construction~\ref{conNormalizationOfLaplacian}) which is isomorphic to $H^j(S;D)$ (see Proposition~\ref{propFuBasedLaplacianKernel}). Therefore, the multiplicity of $0$-th eigenvalue of $L_j'$ equals the $j$-th Betti number of a sheaf $D$, i.e. $\dim H^j(S;D)$. The smallest positive eigenvalue $\lambda_{\min}$ measures ``the worst'' asymptotical term of the heat diffusion process. Other eigenvalues of $L_j'$ contain information on lower terms of the asymptotics for the heat diffusion process. It is believed that these eigenvalues reflect some geometrical properties of the underlying poset $S$, which is supported by a variety of results in spectral graph theory~\cite{Chung}. It should be noticed however, that there are no mathematical results relating eigenvalues of higher order Laplacians with higher dimensional discrete structures, such as simplicial complexes and cell posets.
\end{rem}


\subsection{Examples of diffusion}\label{subsecMathDiffuseExamples}

We end up with two main examples which demonstrate how the terminology and results introduced in the previous subsections motivate particular constructions used in topological deep learning. The definitions of sheaf diffusion and message passing were given in the main part of the text, see subsection~\ref{subsecSheafLearning}. Here we use cellular cochain complex and Roos cochain complex described in subsections~\ref{subsecMathCohomologyCellular} and~\ref{subsecMathCohomologySimplicial} respectively.

\textbf{Diffusion in cellular sheaves.} Let $S$ be a cell poset, e.g. a poset given by a simple graph, as in Example~\ref{exGraphToPoset}. In this case, cellular cochain complex $\Fu_{CW}(D)=(C^*_{CW}(S;D),d_{CW})$ is defined and honestly computes cohomology.

\begin{con}\label{conCellComplexDiffusion}
By Theorem~\ref{thmCWcohomologyIsAGcohomology}, we have
\[
\Gamma(S;D)\cong H^0(S;D)\cong H^0(C^*_{CW}(S;D),d_{CW})=\Ker d_{CW}\colon C^0_{CW}(S;D)\to C^1_{CW}(S;D).
\]
This formula shows that global sections of $D$ can be defined as a subspace of $C^0_{CW}(S;D)=\bigoplus_{\rk s=0}D(s)$. For any element $s\in S$ of rank 1 (that is an edge), the poset $S_{<s}$ is by definition homeomorphic to a $0$-dimensional sphere, i.e. a pair of points. In other words, in a cell poset, each edge $s$ has two endpoints\footnote{This should not surprise anyone who ever seen an interval.}, vertices $s'$ and $s''$. Coherence relations reduce to the linear system $d_{CW}x=0$ given by
\[
f_{s's}(x_{s'})-f_{s''s}(x_{s''})=0 \mbox{ for each edge }s\mbox{ with endpoints }s',s'',
\] 
where $f_{ts}=D(t<s)$ denote the structure maps of the diagram $D$. The two signs in the above formula alternate, because only this choice of signs (incidence numbers) guarantees that $\Fu_{CW}$ honestly computes cohomology, in particular in degree 0. A global section can be found by minimizing the sheaf energy function $Q_{\Delta_0^{\Fu_{CW}}}$ corresponding to the functor $\Fu_{CW}$; we denote it by $Q$ for simplicity:
\[
Q\colon C^0_{CW}(S;D)\to \Ro,\qquad Q(x)=\sum_{s\colon \rk s=1}\|f_{s's}(x_{s'})-f_{s''s}(x_{s''})\|^2.
\]
See Remark~\ref{remFuLaplacianToOptimization} for the derivation of this expression. 

Notice that, even in the case $\dim |S|>1$, i.e. if a cell poset $S$ has elements other than vertices and edges, the global sections of a sheaf are defined through vertices and edges alone, see Remark~\ref{remCompatibilityReducesToCells}. The variables used in the system live over the vertices of $S$, while the edges are needed to define relations, and to perform linear message passing. 
\end{con}

\begin{rem}\label{remDiffusionOnMorse}
The situation with Morse cell posets described in subsection~\ref{subsecMathOneShot} is completely similar, but the functor $\Fu_{M}$ is used instead of $\Fu_{CW}$. This gives a definition of sheaf diffusion on non-graphical structures similar to the one shown on Fig.~\ref{figElementaryMorse}.
\end{rem}

\textbf{Diffusion in binary relations.} Let $R\subseteq A\times B$ be a binary relation (or a bipartite graph), which can be considered as a poset $S_R$ on $A\sqcup B$, see Example~\ref{exBinRelToPoset}. In particular, $R$ may encode a hypergraph, where $A$ is the set of vertices, $B$ is the set of hyperedges, and $R$ is the relation of inclusion. 

\begin{con}\label{conHypergraphDiffusion}
A diagram $D$ on $S_R$ is, by the general definition, an assignment, to each vertex $a\in A$, a real vector space $D(a)$, to each vertex $b$, a space $D(b)$, and, whenever $aRb$, a linear map $f_{ab}\colon D(a)\to D(b)$. No compositionality restrictions are posed on these maps since $\dim |S_R|=1$.

The graded components of the Roos complex $C^*_{Roos}(S;D)$ have the form:
\[
C^0_{Roos}(S_R;D)=\bigoplus_{a\in A}D(a)\oplus\bigoplus_{b\in B}D(b),\quad C^1_{Roos}(S;D)=\bigoplus_{ab\in R}D(b),
\]
while the differential $d_{Roos}\colon C^0_{Roos}(S_R;D)\to C^1_{Roos}(S_R;D)$ is defined by
\[
d_{Roos}(x_a)=-f_{ab}(x_a) \mbox{ for }x_a\in D(a);\quad d_{Roos}(x_b)=x_b \mbox{ for }x_b\in D(b).
\]
All components $C^j_{Roos}(S;D)$ with $j>1$ vanish. The global sections $\Gamma(S_R;D)\cong \Ker d_{Roos}$ are the vectors $\bigoplus_{a\in A}x_a\oplus \bigoplus_{b\in B}x_b\in C^0_{Roos}(S_R;D)$ such that $f_{ab}(x_a)=x_b$ for any $aRb$. These are simply the coherence relations for the diagram $D$: this is not surprising due to Remark~\ref{remRoos0globalSec}. 

The theory developed in subsection~\ref{subsecMathHodge} claims that $\Ker d_{Roos}=\Ker \Delta$ where $\Delta=d_{Roos}^*d_{Roos}$. The heat diffusion is given by minimization of Dirichlet's energy $Q(x)=\langle x,\Delta x\rangle$ given by
\begin{equation}\label{eqRelationDiriclet}
Q(x)=\sum_{ab\in R}\|x_b-f_{ab}(x_a)\|^2,
\end{equation}
according to Remark~\ref{remFuLaplacianToOptimization}. 
\end{con}

\begin{ex}\label{exSingleToManyHypergraph}
Assume that $A=\{a_1,\ldots,a_m\}$, $B=\{b\}$ is a singleton in Construction~\ref{conHypergraphDiffusion}, and the values $x_a\in D(a)$ are forced to stay constant during the heat diffusion. Then minimization of the energy $Q$ given by~\eqref{eqRelationDiriclet} makes $x_b$ tend to the barycenter of $f(x_{a_1}),\ldots,f(x_{a_m})$, since the barycenter minimizes the sum of squared distances to a fixed finite set.
\end{ex}

\begin{rem}\label{remRelationGrouping}
In general, when $B=\{b_1,\ldots,b_k\}$, each letter $b_j$ can be treated as a hyperedge $A_j=\{a\mid aRb_j\}\subset A$. The minimization of $Q$ makes the vectors $\{f_{ab_j}(x_a)\mid a\in A_j\}$ group around their barycenter $x_{b_j}$, for any hyperedge $b_j\in B$. The barycenters $x_b$ can be removed from consideration: instead of minimizing $Q$ given by~\eqref{eqRelationDiriclet}, one can instead minimize the function
\begin{equation}\label{eqHypergraphDirichlet}
Q(x)=\sum_{b_j\in B} \frac{1}{|A_j|}\sum_{a',a''\in A_j}\|f_{a'b_j}(x_{a'})-f_{a''b_j}(x_{a''})\|^2.
\end{equation}
The latter process makes the vectors $f_{ab_j}(x_a)$ group together for any hyperedge $A_j$. The above formula appears in the paper~\cite{duta2024sheaf} introducing hypergraph sheaf neural networks. Essentially, this is the same function as~\eqref{eqRelationDiriclet} (at least if $x_b$ is set equal to the barycenter of the corresponding vectors). Indeed, given $l$ vectors in $\Ro^d$, the sum of their pairwise squared distances equals $l$ times the sum of squared distances to their barycenter. However, the formula~\eqref{eqRelationDiriclet} is a bit more optimal since it has fewer summands. 
\end{rem}

\begin{rem}\label{remInternalStructureOfHyperedges}
The above construction utilizes Roos complex: it is permutation invariant in the sense that the formulas for the energy function $Q(x)$ (either~\eqref{eqRelationDiriclet} or~\eqref{eqHypergraphDirichlet}) do not depend on the order of variables $a\in A_j$ in each hyperedge $A_j$. It is possible to model the heat diffusion by using smaller number of summands. E.g. in Example~\ref{exSingleToManyHypergraph} we could minimize the function $Q(x)=\sum_{i=1}^{m-1}\|f_{a_{i+1}b}(x_{a_{i+1}})-f_{a_ib}(x_{a_i})\|^2$, the energy defined over a minimal functor $\Fu_{\min}$. However, this expression depends on the linear order of $a_i$'s, hence not permutation invariant anymore. Application of such formulas in constructions of neural architectures may be reasonable if the problem's formulation subsumes some internal structure on hyperedges of a hypergraph.
\end{rem}

\textbf{Derived perspective on binary relations.} Treating hypergraphs as binary relations (or bipartite graphs) has a conceptual advantage: the symmetric role of vertices $A$ and hyperedges $B$ becomes more transparent. Given a relation $R\subset A\times B$, we can swap $A$ and $B$ and get a relation $R^{\op}\subset B\times A$, such that $bR^{\op}a\Leftrightarrow aRb$. The corresponding poset $S_{R^{\op}}$ coincides with $S_R^{\op}$ obtained from $S_R$ by reversing the order. As obvious from the definition of geometrical realization, we have $|S_R^{\op}|=|S_R|$. A sheaf theoretical counterpart of this claim should be mentioned.

\begin{rem}\label{remDerivedOpposite}
It was proved in~\cite[Cor.4.18]{LADKANI2008435} that the derived categories $\Der^b(S_R)$ and $\Der^b(S_{R^{\op}})$ are equivalent, see Remark~\ref{remDerivedCatOnPoset}. This can be seen as an instance of a more general fact: Coxeter functors (see~\cite{BernGelfPonomarev}) provide the derived equivalence of the categories of representations~\cite[Thm.3.19]{Longbottom}. In the absence of compositionality restrictions, the category of quiver representations coincides with the category of sheaves on a poset which implies the claim.

The stated claim has a straightforward practical implication. Whenever a hypergraph is studied by homological methods, one can swap the roles of its elements: consider hyperedges as vertices, and vertices as hyperedges. The equivalence of derived categories guarantees that no essential information is lost. Such swap makes sense if calculations with $R^{\op}$ are less costly than those with $R$.    
\end{rem}

\begin{con}\label{conDowker}
The construction of sheaves on a hypergraph is utilizing the poset $S_R$ which is 1-dimensional; therefore all cohomology in degrees $>1$ of all sheaves on $S_R$ vanish, see Corollary~\ref{corVanishing}. However, in algebraic topology there is another way to transform a binary relation into a higher-order structure, the Dowker complex. A binary relation (a hypergraph) can be turned into a simplicial complex by treating hyperedges as simplices and adding their proper subsets. Dowker complex better resembles the intuitions behind applied papers, since hyperedges with $>2$ vertices actually become higher-dimensional structures and there may be nontrivial higher-order cohomology. 

Swapping the arguments of a binary relation leads to a Dowker duality. The simplicial complexes constructed from $R$ and $R^{\op}$ are homotopically equivalent; an elegant functorial proof of this claim can be found in~\cite{BrunSalbu}. Therefore, the homotopy type of a binary relation is well defined and symmetric in the arguments of the relation. It seems an interesting research problem: to define sheaf theory on binary relations in the way that remembers higher order structure of Dowker complex. 
\end{con}

\textbf{Diffusion in general.} For a general poset $S$, the somehow canonical way to define Laplacian and diffusion on sheaves on $S$ is by using Roos complex $\Fu_{Roos}$, since this construction is universal. This idea of combining Laplacians with general posets was proposed in~\cite{HypergraphsSimpSets} motivated by the particular case of binary relations and hypergraphs. 

Another way to introduce Laplacians and diffusion over arbitrary posets is by using a minimal functor $\Fu_{\min}$. Let $S_{low}$ denote the subset of minimal elements of $S$. It can be seen from the inductive construction of $\Fu_{\min}$ presented in subsection~\ref{subsecMathMinimalComputations}, that $C^0_{\min}(S;D)=\bigoplus_{s\in S_{low}}D(s)$, and the number of summands of $C^1_{\min}(S;D)$ equals, by Definition~\ref{definMultiplicityOfStalk}, to the sum of multiplicities $\sum_{s\in S} \mult_1(s,\Fu_{\min})$, i.e. the number of generators of degree $1$ output by Algorithm~\ref{algIncMatrixMain}. The formula for the energy function $Q_{\Delta_0^{\Fu_{\min}}}$ is determined by this generating set.

We finish with the general remark addressed to the specialists in deep learning.

\begin{rem}
In general, the mathematical pipeline described in Appendix sections~\ref{secMathCohomology} and~\ref{secMathLaplacians}, consists of the following steps 
\begin{enumerate}
  \item choice of cochain complex which honestly computes cohomology (or just global sections),
  \item choice of euclidean structure on a sheaf,
  \item choice of normalization for a Laplacian.
\end{enumerate}
Each of this steps subsumes picking real parameters~\footnote{Sometimes some discrete optimization is involved, like choosing the optimal dimensions of stalks, but in all cases there is a bunch of real numbers to optimize.} hence can be learned end-to-end by gradient descent, or constructed by hand based on the specifics of the problem at hand. 
\end{rem}

\begin{ex}
The mechanism of sheaf attention networks mentioned in Section~\ref{secReviewShvsML} is an example of learning the euclidean structure. The standard euclidean structure on stalks of edges is scaled by the learnable attention weights. 
\end{ex}

\begin{ex}
If hyperedges of a hypergraph have an unknown internal structure, then it can be captured by the learnable cochain complex as outlined in Remark~\ref{remInternalStructureOfHyperedges}. The only mathematical restriction on the complex is that it should honestly compute cohomology, otherwise the whole theory doesn't make much sense.
\end{ex}

\section{Connection sheaves}\label{secMathConnection}

In this section we briefly recall the basic notions related to connection sheaves. Here, similar to Appendices~\ref{secMathCohomology} and~\ref{secMathLaplacians}, $\Vv$ is an abelian category, with $\Vv=\Ro\Vect$ being the main example. For a more detailed overview of the mathematical results related to posets we refer to~\cite{Barmak2012GcoloringsOP}.

\begin{defin}\label{definConnectionSheaf}
A $\Vv$-valued diagram $D$ on a poset $S$ (a sheaf $\ca{D}$ on an Alexandrov space $X_S$) is called a \emph{connection sheaf} if all structure maps $D(s_1\leq s_2)$ are isomorphisms in $\Vv$. More generally, let $G$ be a subgroup of the monoid $\Hom_{\Vv}(V,V)$ for some $V\in\Vv$. Then a \emph{$G$-connection sheaf} on $S$ is a diagram $D$ in which all structure maps $D(s_1\leq s_2)$ belong to $G$.
\end{defin}

In classical algebraic topology connection sheaves are known as local systems; they were introduced by Steenrod in~\cite{Steenrod1943HomologyWL}.

\begin{rem}\label{remDifGeomConnections}
Connection sheaf on a finite poset is a suitable discretization for the notion of a bundle over a manifold, such as tangent bundle of a smooth manifold. In a bundle, we have a real vector space of the same dimension attached to each point of a manifold. In order to take derivatives of tensor fields, the spaces attached to infinitely closed points are related by a certain isomorphism; this is formalized in the notion of connection from differential geometry. This basic and extremely powerful object of differential geometry leads to the notions of parallel transport and Riemann curvature tensor.

A straightforward way to discretize this setting is to take a poset $S$, assign, to each element $s\in S$, a real vector space $D(s)$ of the same dimension, and, whenever $s_1<s_2$ (which is treated as closeness of points) assign an isomorphism between $D(s_1)$ and $D(s_2)$. This seemingly leads to Definition~\ref{definConnectionSheaf} of a sheaf. However, the analogy between connection sheaves on posets and differential connections on manifolds may be a bit misleading if a poset is more complicated than just a graph. In sheaf theory compositionality $D(s_1<s_2);D(s_2<s_3)=D(s_1<s_3)$ is required. In differential geometry, the discrepancy between ``various ways to move around a point'' measures the curvature and may be nontrivial. In this sense, connection sheaves may be seen as discrete models of flat manifolds. General quiver representations with isomorphic maps can model arbitrary manifolds.
\end{rem}

Assume that $S$ is connected (meaning that the geometrical realization $|S|$ is a connected topological space). This means that for any two points $s',s''\in S$ there is a path
\[
p=(s'\lessgtr s_1\lessgtr s_2\lessgtr \cdots \lessgtr s_n\lessgtr s'')
\]
where $a\lessgtr b$ denotes either $a\leq b$ or $a\geq b$. The reversed path $p^{-1}$ from $s''$ to $s'$ is defined in a straightforward manner.

\begin{con}\label{conParallelTransport}
Given a connection sheaf $D$ on $S$ and a path $p$ between $s'$ and $s''$ as above, we can define \emph{the parallel transport} operator
\[
T_p\colon D(s')\to D(s''),\qquad T_p=D(s'\lessgtr s_1);D(s'\lessgtr s_1);\cdots;D(s_n\lessgtr s''),
\]
where we set by definition
\[
D(a\lessgtr b)=\begin{cases}
                 D(a\leq b), & \mbox{if } a\lessgtr b \mbox{ reads as }a\leq b\\
                 D(b\leq a)^{-1}, & \mbox{if } a\lessgtr b \mbox{ reads as }a\geq b.
               \end{cases}
\]
We obviously have $T_{p^{-1}}=T_p^{-1}$. The definition of a sheaf~\ref{definSheafDiagramMainText} directly implies $T_{p'}=T_{p''}$ if a path $p''$ is obtained from $p'$ by deletion of fragment $(s_i=s_{i+1})$ or substituting a fragment $(s_{i-1}\leq s_{i}\leq s_{i+1})$ with $(s_{i-1}\leq s_{i+1})$ or similar operation for the reversed relation $\geq$, --- we call such operations triangular reductions. Therefore, it can be assumed without loss of generality that a parallel transport is determined along paths composed of alternating strict inequalities. This argument proves the following.
\end{con}

\begin{lem}\label{lemHomotopicPaths}
If $p',p''$ are two homotopic paths from $s'$ to $s''$ in $S$, then $T_{p'}=T_{p''}$.
\end{lem}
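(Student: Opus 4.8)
The plan is to recognize the parallel transport $T_p$ as a functor on the edge-path groupoid of the order complex $\ord(S)$ and to show that it descends to the fundamental groupoid of $|S| = |\ord(S)|$. First I would translate the combinatorial data. By Remark~\ref{remSimplexOfOrd} a comparable pair $s_i \lessgtr s_{i+1}$ is precisely an edge (a $1$-simplex) of $\ord(S)$, so once the triangular reductions of Construction~\ref{conParallelTransport} have removed repetitions $s_i = s_{i+1}$, a poset-path $p$ becomes literally an edge-path in the $1$-skeleton of $\ord(S)$. The assignment $p \mapsto T_p$ is compatible with concatenation, and Construction~\ref{conParallelTransport} already records that $T_{p^{-1}} = T_p^{-1}$; here ``homotopic'' is understood as homotopic rel endpoints in the topological space $|S|$.

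The core of the argument invokes the classical identification of the edge-path group of a simplicial complex with its topological fundamental group (see e.g.~\cite{KozlovBook,Hatcher}): two edge-paths with the same endpoints represent homotopic rel endpoints loops in $|\ord(S)|$ if and only if one can be transformed into the other by a finite sequence of two elementary moves, namely (i) backtrack cancellation $\cdots\,a\,b\,a\,\cdots \leftrightarrow \cdots\,a\,\cdots$, and (ii) the $2$-simplex move, in which, whenever $\{a,b,c\}$ spans a $2$-simplex of $\ord(S)$ --- equivalently, whenever $a,b,c$ form a chain in $S$ --- the subpath $a\,b\,c$ may be replaced by $a\,c$. It therefore suffices to verify that $T_p$ is invariant under both moves.

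Next I would check invariance under each move. Move (i) reduces at once to $T_e;T_{e^{-1}} = T_e;T_e^{-1} = \id$, which is immediate from Construction~\ref{conParallelTransport}. For move (ii), let $a < b < c$ be the chain spanning the $2$-simplex. The key case is the monotone traversal $(a \leq b \leq c)$, where invariance is exactly the compositionality equation~\eqref{eqCompositionalityMainPart}, $D(a\leq b);D(b\leq c) = D(a\leq c)$; this is the triangular reduction already stated in Construction~\ref{conParallelTransport}. Every other orientation pattern of the three pairwise comparable vertices (for instance a traversal entering and leaving the triangle along reversed relations, such as $a \geq b \leq c$) can be brought to the monotone case by inserting or deleting backtracks and applying $T_{p^{-1}} = T_p^{-1}$ together with the invertibility of each $D(s\leq t)$. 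The hypothesis that $D$ is a connection sheaf is essential at precisely this point, since the inverse maps $D(s\leq t)^{-1}$ are what make transport along reversed edges defined in the first place.

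The main obstacle I anticipate is purely bookkeeping: enumerating the orientation patterns arising in move (ii) and confirming that each collapses, without circularity, to the single compositionality relation using only backtrack cancellation and invertibility. Once this finite case analysis is complete, the invariance of $T_p$ under both elementary moves, combined with the edge-path/fundamental-group theorem, yields $T_{p'} = T_{p''}$ for any pair $p',p''$ of paths from $s'$ to $s''$ that are homotopic rel endpoints, which is the assertion of the lemma.
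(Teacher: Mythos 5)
Your proof is correct and takes essentially the same route as the paper: both reduce homotopy invariance to invariance under elementary moves in the $2$-skeleton of $|\ord S|$ (backtrack cancellations and triangle moves for chains $s_0<s_1<s_2$), with the algebraic verification resting on the compositionality equation~\eqref{eqCompositionalityMainPart} and invertibility of the structure maps exactly as in Construction~\ref{conParallelTransport}. The only difference is packaging --- you cite the classical edge-path group theorem where the paper cites the cellular approximation theorem --- and your explicit case analysis of mixed-orientation traversals is a slightly more careful rendering of what the paper compresses into ``a homotopy is a sequence of triangular reductions or their inverses.''
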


\begin{proof}
Construction~\ref{conParallelTransport} shows that two paths differing by a triangle $(s_0<s_1<s_2)$ in $|\ord S|=|S|$ give the same parallel transport operators. By the cellular approximation theorem, every homotopy between $s'$ to $s''$ lies in a 2-skeleton of $|S|$. Since all 2-cells of $|S|$ have the form $(s_0<s_1<s_2)$ a homotopy is a sequence of triangular reductions or their inverses, hence preserves the parallel transport operator.
\end{proof}

\begin{rem}\label{remMonodromy}
If the initial and end-points coincide, a path from $s$ to itself is called a loop, and its homotopy class is, by definition, the element of the fundamental group $\pi_1(|S|,s)$. Lemma~\ref{lemHomotopicPaths} implies that the parallel transport map $T_p$ provides a well-defined homomorphism from $\pi_1(|S|,s)$ to $\Iso(D(s))$, which is called a \emph{monodromy} of a connection sheaf. In case $D$ is $G$-connection sheaf, the monodromy is valued in the subgroup $G$. A connection sheaf is called \emph{flat} if its monodromy is a trivial representation. This means that traversing any loop is identical on $D(s)$.
\end{rem}

More generally, let $\ConDiag(S,\Vv)$ denote the full subcategory of connection sheaves in $\Diag(S,\Vv)$. The next proposition and its proof are subsumed by the previous remark, and seem to be folklore. 

\begin{prop}\label{propFundGrpRepresentations}
For a connected poset $S$, the category $\ConDiag(S,\Vv)$ is equivalent to the category $\Repr(\pi_1(|S|),\Vv)$ of $\Vv$-valued representations of the fundamental group $\pi_1(|S|)$.
\end{prop}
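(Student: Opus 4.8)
The plan is to construct a pair of mutually inverse (up to natural isomorphism) functors between $\ConDiag(S,\Vv)$ and $\Repr(\pi_1(|S|),\Vv)$, building directly on the monodromy construction from Remark~\ref{remMonodromy}. First I would fix a basepoint $s_0\in S$, which makes sense since $S$ is connected so $|S|$ is path-connected and the isomorphism type of $\pi_1(|S|,s_0)$ is independent of the choice. The monodromy construction of Construction~\ref{conParallelTransport} and Lemma~\ref{lemHomotopicPaths} already provides the object-level assignment in one direction: to a connection sheaf $D$ one assigns the representation $\rho_D\colon \pi_1(|S|,s_0)\to \Iso(D(s_0))$ sending the homotopy class $[p]$ of a loop to the parallel transport $T_p$. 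Lemma~\ref{lemHomotopicPaths} guarantees this is well-defined on homotopy classes, and the composition law $T_{p*q}=T_q\circ T_p$ (which follows from the concatenation definition of parallel transport together with compositionality of the sheaf) shows $\rho_D$ is a group homomorphism into $\Iso(D(s_0))$.

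Next I would verify functoriality of $D\mapsto \rho_D$ on morphisms. A morphism of connection sheaves $\phi\colon D\to E$ is a natural transformation, so it consists of maps $\phi_s\colon D(s)\to E(s)$ commuting with all structure maps. Naturality along each relation $s\lessgtr t$ forces $\phi$ to intertwine the parallel transports, i.e. $\phi_{s_0}\circ T_p^{D}=T_p^{E}\circ\phi_{s_0}$ for every loop $p$ at $s_0$; hence $\phi_{s_0}$ is a morphism of representations $\rho_D\to\rho_E$. For the reverse functor I would use the standard ``associated sheaf'' construction: given a representation $\rho\colon\pi_1(|S|,s_0)\to\Iso(V)$, define a connection sheaf $D_\rho$ whose stalk at each $s$ is a copy of $V$ indexed by homotopy classes of paths from $s_0$ to $s$ modulo the $\rho$-action, with structure maps induced by path concatenation. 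Concretely, for $s\leq t$ one extends a chosen path to $s$ by the edge $s\leq t$, and the resulting map is an isomorphism because the edge is invertible in the connection sheaf; well-definedness modulo $\rho$ uses exactly that homotopic paths give equal transports.

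I would then check the two natural isomorphisms $D_{\rho_D}\cong D$ and $\rho_{D_\rho}\cong\rho$. The second is essentially tautological from the construction, since the monodromy of $D_\rho$ recovers $\rho$ by tracing a loop through the path-indexing. The first requires producing, for each $s$, an isomorphism $D_{\rho_D}(s)\cong D(s)$ natural in $s$ and in $D$; the natural candidate sends the class of a path $p$ from $s_0$ to $s$ to $T_p$ applied to the $s_0$-component, and Lemma~\ref{lemHomotopicPaths} again ensures this is independent of the representative path. The main obstacle I anticipate is the careful handling of basepoint-independence and the coherence of the path-indexed construction: one must confirm that the associated sheaf $D_\rho$ is genuinely a \emph{diagram} on $S$ (all compositionality squares commute, not merely the edge maps being isomorphisms) and that the equivalence does not secretly depend on auxiliary choices of paths. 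This is where the cellular-approximation argument underlying Lemma~\ref{lemHomotopicPaths}---that every homotopy in $|S|=|\ord S|$ reduces to a sequence of triangular reductions along $2$-cells of the form $(s_0<s_1<s_2)$---does the real work, so I would invoke that lemma repeatedly rather than reprove path-homotopy invariance by hand.
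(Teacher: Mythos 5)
Your proposal is correct, and in the sheaf-to-representation direction it coincides with the paper's proof (Construction~\ref{conConSheafToRepr}): both take parallel-transport monodromy at a basepoint, with Lemma~\ref{lemHomotopicPaths} supplying homotopy invariance. The difference is in the reverse functor. You build the associated sheaf $D_\rho$ whose stalk at $s$ consists of copies of $V$ indexed by homotopy classes of paths from $s_0$ to $s$, quotiented by the $\rho$-action --- the canonical, choice-free, universal-cover-style construction --- whereas the paper (Construction~\ref{conReprToConSheaf}) first fixes a spanning tree $G$ of the 1-skeleton of $|\ord S|$, sets every stalk literally equal to $V$, and defines the structure map for $s_1<s_2$ to be $R([p_{s_1};(s_1<s_2);p_{s_2}^{-1}])$ using the tree paths $p_s$. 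Your version buys naturality: there is no auxiliary tree, the isomorphism $D_{\rho_D}\cong D$ is written down directly via $[(p,v)]\mapsto T_p(v)$, and no independence-of-choices check is needed. The paper's version buys concreteness and stays inside an arbitrary abelian category $\Vv$: every stalk is an object of $\Vv$ on the nose. That last point is the one caveat for your route: in an abstract abelian $\Vv$, ``a copy of $V$ indexed by path classes modulo the $\rho$-action'' is not literally an object of $\Vv$; you need either a concrete cocomplete target (fine for $\Ro\Vect$, the case of interest, where the quotient exists) or you must choose a path class to each $s$ to trivialize the stalk --- at which point your construction specializes to the paper's, with arbitrary chosen paths in place of tree paths. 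Your verification of compositionality is the right one and is exactly the mechanism behind Lemma~\ref{lemHomotopicPaths}: for $s_1<s_2<s_3$ the 2-simplex $(s_1<s_2<s_3)$ of $\ord S$ makes the concatenation of the two short edges homotopic rel endpoints to the long edge, so the structure maps of $D_\rho$ genuinely commute and $D_\rho$ is a diagram, not merely a quiver representation with invertible edge maps.
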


\begin{rem}
By McCord's theorem we have $\pi_1(|S|)\cong \pi_1(X_S)$, see Remark~\ref{remMcCord}. The claim of Proposition~\ref{propFundGrpRepresentations} can thus be formulated as an equivalence of the categories $\ConDiag(S,\Vv)$ and $\Repr(\pi_1(X_S),\Vv)$.
\end{rem}

In view of Remark~\ref{remDifGeomConnections} on relation between connection sheaves and connections in differential geometry, this proposition may be considered as a discrete version of Riemann--Hilbert correspondence (rather naive though). To prove it, we choose a base point $s_0\in S$ and a spanning tree $G$ of the 1-skeleton $|(\ord S)_1|$. For any point $s\in S$, there is a unique path $p_s$ from $s_0$ to $s$ lying in $G$. These data will be assumed fixed. The following two constructions emulate the procedure of collapsing/decolapsing a spanning skeleton into one point, and prove Proposition~\ref{propFundGrpRepresentations}.

\begin{con}\label{conConSheafToRepr}
Given a connection sheaf $D$ on $S$, we construct a representation of $\pi_1(|S|,s_0)$ on the space $D(s_0)$. For an element $g=[p]\in\pi_1(|S|,s_0)$ represented by a loop $p$ from $s_0$ to itself, set the action of $g$ on $D(s_0)$ defined by $T_p$. This is well-defined on homotopy classes by Lemma~\ref{lemHomotopicPaths}. This is a group homomorphism from $\pi_1(|S|,s_0)$ to $\End(D(s_0))$ by the definition of the parallel transport.
\end{con}

\begin{con}\label{conReprToConSheaf}
Given a fundamental group representation $R\colon \pi_1(|S|,s_0)\to \End(V)$, we construct a connection sheaf $\bar{V}$ on $S$. Set $\bar{V}(s)=V$ for any $s$. Then for $s_1<s_2$ set
\[
\bar{V}(s_1<s_2)=R([p_{s_1};(s_1<s_2);p_{s_2}^{-1}])\colon V=\bar{V}(s_1)\to V=\bar{V}(s_2),
\]
where $[p_{s_1};(s_1<s_2);p_{s_2}]\in \pi_1(|S|,s_0)$ is the homotopy class of the loop obtained by concatenation of the chosen path $p_{s_1}$ from $s_0$ to $s_1$, the shortcut from $s_1$ to $s_2$, and the reversed chosen path $p_{s_2}^{-1}$ from $s_2$ to $s_0$. It is a simple exercise to prove that $\bar{V}$ is a well-defined connection sheaf.
\end{con}

The proof that Constructions~\ref{conConSheafToRepr} and~\ref{conReprToConSheaf} are inverses to each other up to natural isomorphism (given the tree $G$) is rather straightforward and left as an exercise. Of course, a more conceptual and mathematically correct way to state Proposition~\ref{propFundGrpRepresentations} is via the fundamental groupoid of $|S|$.

\begin{rem}\label{remConnectionCohomology}
It should be noted that the category $\Repr(\pi_1(|S|),\Vv)$ (and hence $\ConDiag(S,\Vv)$ according to Proposition~\ref{propFundGrpRepresentations}) is an abelian category itself, and it has enough injectives if so does $\Vv$. However, the subcategory $\ConDiag(S,\Vv)$ does not inherit all its structure from $\Diag(S;\Vv)$. Injective sheaves on $S$ may fail to be connection sheaves, and injective connection sheaves may fail to be injective sheaves of $\Diag(S,\Vv)$. In particular, the notion of cohomology in $\Diag(S,\Vv)$ and $\ConDiag(S,\Vv)$ vary. When speaking about cohomology of connection sheaves, one should be extremely careful in terminology.
\end{rem}

\begin{ex}\label{exConstantCohomologyConnection}
As an example, consider the constant sheaf $\overline{\Ro^1}$ on $S$. Its cohomology, as defined in the category $\Diag(S,\Ro\Vect)$, coincides with the singular cohomology of the geometrical realization $H^*(|S|;\Ro)$, as discussed in Example~\ref{exConstantSheaf}. However, $\overline{\Ro^1}$ is actually a connection sheaf, $\overline{\Ro^1}\in\ConDiag(S,\Vv)$. In terms of Proposition~\ref{propFundGrpRepresentations} it corresponds to the trivial $\pi_1(|S|)$-module $\Ro^1$, so its cohomology coincides with the group cohomology $H^*(\pi_1(|S|);\Ro^1)$. The modules $H^*(|S|;\Ro)$ and $H^*(\pi_1(|S|);\Ro^1)$ may be different, e.g. if $|S|$ is homeomorphic to 2-sphere.

Similarly, if $D$ is an arbitrary connection sheaf, then $H^*(S;D)$ computed in $\Diag(S;\Vv)$ is the cohomology of $S$ with local coefficients $D$, while $H^*(S;D)$ computed in $\ConDiag(S,\Vv)$ is the cohomology of the $\pi_1(S,s_0)$-module $D(s_0)$. In a sense, passage from the category $\Diag(S;\Vv)$ to the subcategory $\ConDiag(S,\Vv)$ forgets higher homotopical information encoded in $S$.
\end{ex}

We have the following simple corollary of Proposition~\ref{propFundGrpRepresentations} which may seem counterintuitive at first glance.

\begin{cor}\label{corConnSheafOnSimplyConnected}
If $|S|$ is simply connected, then every connection sheaf on $S$ is isomorphic to the constant sheaf with the same stalks' dimension.
\end{cor}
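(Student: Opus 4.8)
The plan is to deduce the statement immediately from the equivalence of categories in Proposition~\ref{propFundGrpRepresentations}. Since $|S|$ is simply connected it is in particular connected, so the proposition applies and $\ConDiag(S,\Vv)$ is equivalent to $\Repr(\pi_1(|S|),\Vv)$. But $\pi_1(|S|)$ is the trivial group, and the category of representations of the trivial group is just $\Vv$ itself: a representation is nothing but a choice of object $V\in\Vv$ with the identity action. Under Construction~\ref{conReprToConSheaf}, the connection sheaf corresponding to the trivial representation on $V$ has every stalk equal to $V$ and every structure map $\bar{V}(s_1<s_2)=R([p_{s_1};(s_1<s_2);p_{s_2}^{-1}])=\id_V$, i.e. it is exactly the constant sheaf $\bar{V}$. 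Hence every object of $\ConDiag(S,\Vv)$ is isomorphic to a constant sheaf, with $V=D(s_0)$.

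For the dimension count I would note that a connection sheaf on a connected poset has all stalks of equal dimension, because any two stalks are joined by a chain of isomorphisms along a path in the connected realization; in particular $\dim D(s)=\dim D(s_0)=\dim V$ for all $s$, so the constant sheaf produced above has the same stalk dimension as $D$, as claimed.

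Alternatively, and perhaps more transparently, I would give the trivialization directly. Fix a basepoint $s_0$ and a spanning tree $G$ of the $1$-skeleton of $|\ord S|$, and for each $s$ let $p_s$ be the unique $G$-path from $s_0$ to $s$, as in the setup preceding Construction~\ref{conConSheafToRepr}. Define $\psi_s=T_{p_s}^{-1}\colon D(s)\to D(s_0)=:V$ using the parallel transport of Construction~\ref{conParallelTransport}; each $\psi_s$ is an isomorphism. To check that $\psi=(\psi_s)$ is a morphism $D\to\bar{V}$ I must verify $\psi_{s_2}\circ D(s_1<s_2)=\psi_{s_1}$ for every covering relation $s_1<s_2$. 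This amounts to showing that the parallel transport around the loop $p_{s_1};(s_1<s_2);p_{s_2}^{-1}$ based at $s_0$ is the identity on $V$; since $|S|$ is simply connected this loop is null-homotopic, so Lemma~\ref{lemHomotopicPaths} gives $T_{p_{s_2}^{-1}}\circ D(s_1<s_2)\circ T_{p_{s_1}}=\id_V$, which rearranges to the required identity. As every $\psi_s$ is invertible, $\psi$ is an isomorphism $D\cong\bar{V}$.

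The only point requiring genuine care is the interaction between the chosen normalization (basepoint and spanning tree) and the hypothesis: the identity $\psi_{s_2}\circ D(s_1<s_2)=\psi_{s_1}$ is precisely the vanishing of monodromy around each elementary loop, and this is where simple connectivity --- via Lemma~\ref{lemHomotopicPaths} and the cellular approximation argument already used there --- is indispensable. I expect no further obstacles; the dimension statement and the verification that the representation-side object $\bar{V}$ really is the constant sheaf are routine.
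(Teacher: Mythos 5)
Your proposal is correct and follows the paper's route: the paper derives this corollary directly from Proposition~\ref{propFundGrpRepresentations}, exactly as in your first paragraph, since a simply connected $|S|$ has trivial $\pi_1$ and representations of the trivial group are just objects of $\Vv$, which Construction~\ref{conReprToConSheaf} sends to constant sheaves. Your ``alternative'' direct trivialization via the spanning tree, parallel transport, and Lemma~\ref{lemHomotopicPaths} is not really a different argument --- it is precisely the content of Constructions~\ref{conConSheafToRepr} and~\ref{conReprToConSheaf} unwound in this special case, and both it and the dimension count are sound.
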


\begin{rem}\label{remWhyLearnConnectionSheaf}
The natural question arises: if the connection sheaves are just the constant sheaf, then what's the point in ``learning'' a connection sheaf as described in Subsection~\ref{subsecSheafLearning} and the Section~\ref{secReviewShvsML} of the review? There are several reasonable answers.
\begin{enumerate}
  \item Data structures, on which connection sheaf learning is performed are usually graphs, and graphs are rarely simply connected.
  \item If data structures are highly dimensional, we actually learn connection quiver representations rather than ordinary connection sheaf and simply forget compositionality relations.
\end{enumerate}
Anyhow, the question once again leads to Problem~\ref{problRelations} from the list. There won't be any actual higher-dimensional topology in deep learning, until the relations are taken into account.
\end{rem}

\section{Information capacity of sheaves}\label{secMathSpaceRestoredFromShvs}

In this section we discuss to which extent the category of sheaves on a space remembers the space. All the claims made in this subsection are standard from mathematical perspective, but they rigorify the usage of sheaves in geometrical problems and provide a general fundament for sheaf learning. We couldn't find the mentions of these claims in the literature, and add them in our review for completeness.

\subsection{Topos of sheaves}

Given a topological space $X$ and a chosen target category $\Vv$, we can form a category $\Shvs(X,\Vv)$ of $\Vv$-valued sheaves on $X$. The space $X$ is a bare geometrical structure, no computations are available on $X$ itself. We need to borrow certain ``types of computations'' which exist in the target category $\Vv$, and put them into $\Shvs(S,\Vv)$ in order to be able to say something about $X$. The natural question arises: how much information do we loose, when passing from $X$ to the category $\Shvs(X;\Vv)$? In case $\Vv=\Sets$ this is a classical question and the answer is well-known.

\begin{con}\label{conToposOfSheaves}
We refer to~\cite{MacLaneMoerdijk} for the definition of a topos. The category $\Sets$ is a topos, and, for any topological space $X$, the category $\Shvs(X,\Sets)$ is also a topos. Given two topoi $\ST{T}_1,\ST{T}_2$, a geometrical morphism $F$ from $\ST{T}_1$ to $\ST{T}_2$ consists of a pair of adjoint functors $f_*\colon \ST{T}_1\rightleftarrows \ST{T}_2\colon f^*$ such that $f^*$ preserves finite limits. The class of all toposes with geometrical morphisms between them forms a category, which will be denoted $\Topoi$. A continuous map of topological spaces $f\colon X\to Y$, induces direct and inverse image functors $f_*\colon \Shvs(X;\Sets)\rightleftarrows\Shvs(Y;\Sets)\colon f^*$ as described in subsection~\ref{subsecMathFunctorial}, they form a geometrical morphism in the category $\Topoi$. Hence, passing to a topos of $\Sets$-valued sheaves defines a functor
\begin{equation}\label{eqSpaceToTopos}
\Shvs(\cdot,\Sets)\colon \Top\to \Topoi,\quad X\mapsto \Shvs(X,\Sets).
\end{equation}
\end{con}

Topological space $X$ is called \emph{sober} if, roughly speaking, it can be reconstructed from its lattice, or locale, $\OpSets(X)$ of open subsets. See formal definition e.g. in~\cite[p.477]{MacLaneMoerdijk}.

\begin{prop}\label{propTopoiFullyFaithful}
The functor $\Shvs(\cdot,\Sets)$ is fully faithful on the full subcategory $\Top_{sober}$ of sober topological spaces.
\end{prop}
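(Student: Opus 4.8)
The plan is to factor the functor $\Shvs(\cdot,\Sets)$ through the category of locales and to exhibit each factor as fully faithful on the relevant subcategory. Recall the frame (locale) of open sets: sending a space $X$ to its frame $\OpSets(X)$ and a continuous map $f$ to the inverse-image frame homomorphism $f^{-1}$ defines a functor $\OpSets\colon \Top\to \ST{Loc}$, where $\ST{Loc}=\ST{Frm}^{\op}$ is the category of locales. This functor has a right adjoint $\pt$ sending a locale to its space of points, and a space is \emph{sober} precisely when the unit $X\to\pt(\OpSets(X))$ is a homeomorphism; see \cite[Ch.~IX, \S2--3]{MacLaneMoerdijk}. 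The first step is therefore the standard fact that the restriction $\OpSets\colon\Top_{sober}\to\ST{Loc}$ is fully faithful: on sober spaces the adjunction unit is invertible, so $X$ may be recovered functorially from $\OpSets(X)$, and the embedding identifies $\Top_{sober}$ with the spatial locales.

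The second ingredient is that the topos $\Shvs(X,\Sets)$ depends only on the frame $\OpSets(X)$. Indeed, a $\Sets$-valued sheaf on $X$ is exactly a sheaf on the site whose underlying category is $\cat(\OpSets(X))$ equipped with the covering topology generated by open covers, and this datum uses nothing about $X$ beyond the lattice structure of its opens. Hence $\Shvs(X,\Sets)\cong \Shvs(\OpSets(X))$ naturally, and the functor of Construction~\ref{conToposOfSheaves} factors as $\Top\xrightarrow{\OpSets}\ST{Loc}\xrightarrow{\Shvs(-)}\Topoi$.

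The heart of the argument, and the step I expect to be the main obstacle, is the \emph{localic reflection theorem}: the functor $\Shvs(-)\colon\ST{Loc}\to\Topoi$ is fully faithful, i.e. for locales $L,M$ the geometric morphisms $\Shvs(L)\to\Shvs(M)$ correspond naturally and bijectively (up to isomorphism of geometric morphisms) to locale maps $L\to M$. This identifies $\ST{Loc}$ with the full subcategory of localic topoi inside $\Topoi$, and is proved in \cite[Ch.~IX, \S5]{MacLaneMoerdijk}; the nontrivial content is that the inverse-image part $f^*$ of a geometric morphism, being left exact and cocontinuous, is determined by its effect on the subterminal objects (equivalently on $\OpSets$), so that it arises from a unique frame homomorphism. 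A mild point of care is that $\Topoi$ is genuinely a $2$-category, so ``fully faithful'' must be read as an equivalence of the hom-groupoids (or as a bijection after identifying isomorphic geometric morphisms); with this reading the statement is exactly the cited theorem, and it is consistent with the $1$-categorical description of $\Topoi$ given in Construction~\ref{conToposOfSheaves}.

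Finally, I would compose the two factors: on $\Top_{sober}$ the functor $\OpSets$ is fully faithful by the first step, $\Shvs(-)$ is fully faithful by the localic reflection theorem, and a composite of fully faithful functors is fully faithful. Hence $\Shvs(\cdot,\Sets)\colon\Top_{sober}\to\Topoi$ is fully faithful, which is the assertion of the proposition.
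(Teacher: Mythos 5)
Your proposal is correct and follows essentially the same route as the paper: the paper's proof is precisely a citation of the two facts you invoke, namely the full faithfulness of $\OpSets$ on sober spaces (\cite[Cor.4, p.481]{MacLaneMoerdijk}) and the localic reflection theorem identifying locale maps with geometric morphisms between the sheaf topoi (\cite[Prop.2, p.491]{MacLaneMoerdijk}). Your write-up merely makes explicit the factorization through $\ST{Loc}$ and the $2$-categorical reading of ``fully faithful'' that the paper leaves implicit.
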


The proof follows from~\cite[Cor.4,p.481]{MacLaneMoerdijk} and~\cite[Prop.2, p.491]{MacLaneMoerdijk}. It is not surprising that the lattice $\OpSets(X)$ is what actually matters: presheaves were defined as diagrams on $\OpSets(X)^{\op}$.

\begin{cor}\label{corToposIsoThenSoberHomeo}
If $X,Y$ are sober topological spaces, and $\Shvs(X,\Sets)$ is isomorphic as a topos to $\Shvs(Y,\Sets)$, then $X\cong Y$.
\end{cor}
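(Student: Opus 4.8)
The plan is to deduce the corollary formally from Proposition~\ref{propTopoiFullyFaithful}, using the elementary fact that a fully faithful functor reflects isomorphisms of objects.

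First I would record the purely categorical lemma: if $F\colon \ST{C}\to\ST{D}$ is fully faithful and $F(X)$ is isomorphic to $F(Y)$ in $\ST{D}$, then $X$ is isomorphic to $Y$ in $\ST{C}$. The argument is standard. Choose mutually inverse isomorphisms $\phi\colon F(X)\to F(Y)$ and $\psi\colon F(Y)\to F(X)$. By fullness there are morphisms $f\colon X\to Y$ and $g\colon Y\to X$ with $F(f)=\phi$ and $F(g)=\psi$. Then $F(g\circ f)=\psi\circ\phi=\id_{F(X)}=F(\id_X)$, so faithfulness forces $g\circ f=\id_X$, and symmetrically $f\circ g=\id_Y$. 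Hence $f$ is an isomorphism $X\cong Y$.

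Next I would apply this lemma with $F=\Shvs(\cdot,\Sets)$ restricted to the full subcategory $\Top_{sober}$, which is fully faithful by Proposition~\ref{propTopoiFullyFaithful}. Given the hypothesis that $\Shvs(X,\Sets)$ and $\Shvs(Y,\Sets)$ are isomorphic topoi, the lemma produces an isomorphism between $X$ and $Y$ in $\Top_{sober}$; since $\Top_{sober}$ is a \emph{full} subcategory of $\Top$, such an isomorphism is precisely a homeomorphism $X\cong Y$, which is the desired conclusion.

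The one point requiring care — and the main obstacle — is the precise meaning of \textbf{``isomorphic as a topos''}, since $\Topoi$ is naturally a $2$-category: its $1$-cells are geometric morphisms, which are comparable only up to natural isomorphism. I would therefore be explicit that ``isomorphism of topoi'' here means an invertible geometric morphism, equivalently an isomorphism in the $1$-category $\Topoi$ as set up in Construction~\ref{conToposOfSheaves} (after passing to isomorphism classes of geometric morphisms). With this reading the lemma applies verbatim: fullness guarantees that the two inverse geometric morphisms realizing the isomorphism both lie in the image of $F$, and faithfulness forces their composites, being carried to identities, to be identities themselves. If one instead prefers to read ``isomorphic'' as ``equivalent'', the same reflection argument goes through once Proposition~\ref{propTopoiFullyFaithful} is read $2$-categorically (i.e.\ $F$ induces equivalences on $\Hom$-categories), which is the form in which the cited results of MacLane--Moerdijk are customarily stated; I would flag this to keep the exposition honest, but no work beyond the formal reflection of isomorphisms is needed.
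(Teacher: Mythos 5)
Your proof is correct and is essentially the paper's own (implicit) argument: the corollary is stated there as an immediate consequence of Proposition~\ref{propTopoiFullyFaithful}, via exactly the standard fact that a fully faithful functor reflects isomorphisms, with sobriety ensuring the isomorphism in $\Top_{sober}$ is a genuine homeomorphism. Your additional care about the $2$-categorical meaning of ``isomorphic as a topos'' goes beyond what the paper spells out and is a sound clarification, not a deviation in method.
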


\begin{rem}\label{remRecoverXfromTopos}
Unwinding the formal proof of Proposition~\ref{propTopoiFullyFaithful} we can notice that the whole big category $\Shvs(X,\Sets)$ is not actually needed to recover a sober space $X$. Basically, the lattice $\OpSets(X)$ coincides with the collection of subsheaves of the constant $\ast$-valued sheaf, up to isomorphism. Here $\ast$ is the singleton, the final object of $\Sets$.
\end{rem}

\begin{rem}\label{remFiniteT0SpaceSetSheaves}
Any finite $T_0$-space can be shown to be sober. Corollary~\ref{corToposIsoThenSoberHomeo} gives a strict mathematical ground for the belief that $\Sets$-valued diagrams over finite posets (resp., sheaves over corresponding $T_0$-spaces, according to Proposition~\ref{propPosTop}) store the complete information about the underlying topology. It should be noticed however, that not every Alexandrov $T_0$-space is sober: such space is sober if and only if the corresponding poset is Artinian~\cite[Thm.C.6]{Lindenhovius}.
\end{rem}

\begin{rem}\label{remPreposetsNotNeededLocalic}
If $S$ is just a preordered set, and $\bar{S}$ is its corresponding poset, as in Construction~\ref{conPreposetToPoset}, then we have $\OpSets(X_S)\cong \OpSets(X_{\bar{S}})$. Therefore Alexandrov topology $X_S$ and the corresponding $T_0$-topology $X_{\bar{S}}$ are indistinguishable by means of topoi of sheaves. This is the main conceptual reason why we restrict consideration to posets, and not general preordered sets, see Remark~\ref{remPreposetsNotNeeded}.
\end{rem}

The natural question arises in abelian setting (see definition of abelian category in Appendix~\ref{secMathCohomology}). To which extent the abelian category $\Shvs(X,\ko\Vect)$ remembers a space $X$? This situation differs from the case of a topos: it appears that the abelian categories are not that ``rigid'' objects as topoi as shown in the next remark.

\begin{rem}\label{remToAbCatNotFaithful}
The functor $\Shvs(\cdot,\ko\Vect)\colon \Top\to\AbCat$ is not faithful (even on sober spaces). Indeed, there is exactly one continuous map from a singleton $\ast$ to itself, but there are infinitely many additive endofunctors on $\ko\Vect$ (which is identified with $\Shvs(\ast,\ko\Vect)$). E.g. we have an infinite series $V\mapsto V^{\oplus n}$.

It could be noticed that, for sheaves valued in an abelian category, both direct $f_*$ and inverse image $f^*$ maps are additive functors. We could wonder, if it is possible to make category $\AbCat$ more rigid, by considering adjoint pairs of additive functors $(f_*,f^*)$ as morphisms instead of a single additive morphism, i.e. by ``simulating'' a topos. However, this doesn't resolve the previous counterexample: the functor $f_*\colon V\mapsto V^{\oplus n}$ has an adjoint $f^*\colon V\mapsto V^{\oplus n}$.
\end{rem}

Nevertheless, the category of $\ko\Vect$-diagrams on a poset still remembers a poset. 

\begin{prop}\label{propRecoverXfromAbCat}
The locale $\OpSets(X)$ can be recovered from $\Shvs(X,\ko\Vect)$. The functor $\Shvs(\cdot,\ko\Vect)\colon \Top\to\AbCat$ is full on sober spaces.
\end{prop}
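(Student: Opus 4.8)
The plan is to recover the locale $\OpSets(X)$ intrinsically from the abelian category $\Shvs(X,\ko\Vect)$ by imitating the topos-theoretic recipe of Remark~\ref{remRecoverXfromTopos}, replacing the constant $\ast$-valued sheaf by the constant $\overline{\ko^1}$-valued sheaf. First I would recall that open subsets $U\subseteq X$ are in bijection with subsheaves of $\overline{\ko^1}$ of a particular kind: to each open $U$ we associate the subsheaf $\ko^1_U\hookrightarrow\overline{\ko^1}$ which equals $\ko^1$ on (stalks over) points of $U$ and $0$ elsewhere, as in Construction~\ref{conConstantSheafOnLowerIdeal}. These are precisely the subsheaves $F\hookrightarrow\overline{\ko^1}$ each of whose stalks $F_x$ is either $0$ or all of $\ko^1$ --- equivalently, those subsheaves that are themselves \emph{idempotent} with respect to the lattice operations on subobjects. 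The key point is that these distinguished subsheaves can be characterized using only the abelian-categorical data, so that the assignment $U\mapsto\ko^1_U$ identifies $\OpSets(X)$ with a sublattice of the subobject lattice $\mathrm{Sub}(\overline{\ko^1})$ in $\Shvs(X,\ko\Vect)$.

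The central technical step is the characterization of which subobjects of $\overline{\ko^1}$ arise as $\ko^1_U$ for open $U$. A subsheaf $F\hookrightarrow\overline{\ko^1}$ is of this form if and only if the induced inclusion on every stalk is either an isomorphism or zero; purely categorically this is the condition that $F\hookrightarrow\overline{\ko^1}$ is a subobject which coincides with its own ``closure'' in the sense that the unique morphism $F\otimes_{\ko}F\to F$ (if one wishes, expressed via the canonical idempotents on the stalks of the unital object $\overline{\ko^1}$) is an isomorphism. Since the stalk functors $(\cdot)_x\colon\Shvs(X,\ko\Vect)\to\ko\Vect$ are exact (Proposition~\ref{propExactSequenceDiagrams}) and jointly detect isomorphisms, these conditions are testable stalkwise, and the collection of such subobjects forms a complete lattice. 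This lattice is isomorphic to $\OpSets(X)$: meets correspond to intersections of open sets, joins to unions, which is verified stalkwise. I would then observe that this construction is visibly invariant under equivalences of abelian categories, whence $\OpSets(X)$ is recovered from $\Shvs(X,\ko\Vect)$ up to canonical isomorphism of locales, proving the first assertion.

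For fullness on sober spaces, I would combine the locale recovery with Proposition~\ref{propTopoiFullyFaithful}. A sober space $X$ is determined by its locale $\OpSets(X)$, and a continuous map $f\colon X\to Y$ between sober spaces corresponds bijectively to a morphism of locales $\OpSets(Y)\to\OpSets(X)$. Given any additive functor $\Phi\colon\Shvs(X,\ko\Vect)\to\Shvs(Y,\ko\Vect)$ that arises as an inverse-image functor, I would show it must send the distinguished subsheaves $\ko^1_U$ to distinguished subsheaves and respect their lattice structure, hence induce a locale morphism $\OpSets(Y)\to\OpSets(X)$; by sobriety this locale morphism comes from a unique continuous map $f\colon X\to Y$, and one checks $\Phi\cong f^*$. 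The main obstacle I anticipate is precisely the issue flagged in Remark~\ref{remToAbCatNotFaithful}: a bare additive functor carries too little information, since endofunctors like $V\mapsto V^{\oplus n}$ exist on $\Shvs(\ast,\ko\Vect)=\ko\Vect$. Thus fullness cannot mean that \emph{every} additive functor is realized by a continuous map; the statement must be read as: the morphisms on objects coming from continuous maps are exactly those additive functors preserving the unital constant sheaf $\overline{\ko^1}$ together with its subobject lattice structure. Making this qualification precise --- pinning down exactly which categorical structure on $\Shvs(X,\ko\Vect)$ must be preserved so that the recovery is functorial and fullness holds in the intended sense --- is the delicate part of the argument, and I would handle it by insisting that the relevant functors preserve $\overline{\ko^1}$ as a distinguished object and commute with the stalk functors up to natural isomorphism.
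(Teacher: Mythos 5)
You take essentially the same route as the paper: recover $\OpSets(X)$ from the subobjects of the constant sheaf $\overline{\ko^1}$, then obtain fullness on sober spaces from the correspondence between sober spaces and their locales. Two corrections would tighten your argument. First, your ``central technical step'' is vacuous: since every stalk of $\overline{\ko^1}$ is one-dimensional, \emph{every} subsheaf $F\hookrightarrow\overline{\ko^1}$ automatically has stalks $0$ or $\ko$, is determined by those stalks, and has open support (a nonzero germ of a locally constant section remains nonzero on a neighbourhood), so the \emph{entire} subobject lattice of $\overline{\ko^1}$ is already isomorphic to $\OpSets(X)$ --- which is exactly what the paper's proof uses; no characterization of a distinguished sublattice is needed. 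In particular, you should drop the criterion involving $F\otimes_\ko F\to F$: the tensor product is monoidal rather than abelian-categorical data, so if that condition were genuinely load-bearing you would no longer be recovering the locale from the abelian category alone --- fortunately it is not needed. (A minor slip: Construction~\ref{conConstantSheafOnLowerIdeal} produces sheaves supported on \emph{closed} lower sets, which are quotients of $\overline{\ko^1}$, not subsheaves; the subobjects you want are the extensions by zero over \emph{open} sets.) As for fullness, your treatment matches the paper's --- both reduce to the equivalence between sober spaces and spatial locales --- and your explicit caveat that ``full'' cannot mean that every additive functor is realized (because of the endofunctors $V\mapsto V^{\oplus n}$ of Remark~\ref{remToAbCatNotFaithful}) is a fair, and indeed more careful, reading of what the paper leaves implicit.
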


\begin{proof}
We only need to prove the first claim, the second follows from the equivalence between the category of spatial locales and sober topological spaces, see~\cite[Cor.4,p.481]{MacLaneMoerdijk}. There is a distinguished element $\ko\in\ko\Vect$, which is a unique up to isomorphism simple object of the abelian category $\ko\Vect$. Hence we can construct a constant $\ko$-valued sheaf $\bar{\ko}\in\Shvs(X,\ko\Vect)$. Subsheaves of $\bar{\ko}$, defined up to isomorphism, form a poset which is isomorphic to the locale $\OpSets(X)$.
\end{proof}

This proof is similar to Remark~\ref{remRecoverXfromTopos}, but now we use $\ko$ instead of $\ast$ and $0$ instead of $\varnothing$. This observation mimics the boolean logic (inherent to the topos $\Sets$) in terms of linear algebra (inherent to the abelian category $\ko\Vect$).

\begin{rem}\label{remTheoryIs1dimStrange}
Proof of Proposition~\ref{propRecoverXfromAbCat} shows that, from mathematical perspective, the complete information about finite $T_0$-topology can be extracted from sheaves, whose stalks are no more than 1-dimensional. This is somehow orthogonal to the deep learning practice, where it is assumed that a single sheaf with higher-dimensional stalks (or a few such sheaves used in the layers of a network) is already expressive enough to describe the properties of the space, see Section~\ref{secReviewShvsML}. This discrepancy between theory and practice seems interesting and deserves further investigation.
\end{rem}

\begin{rem}\label{remDerivedCatOnPoset}
The topos $\Shvs(X;\Sets)$ is interesting in its own right. However, when working in abelian setting, it is common in algebraic geometry to consider not the category $\Shvs(X;\Abel)$ of sheaves itself, but its derived versions which are better suited for homological applications. For a poset $S$, or the corresponding Alexandrov space $X_S$, the bounded derived category $\Der^b(S)$ of finite-dimensional sheaves is defined. A comprehensive study of derived categories of $\Abel$-valued sheaves on posets was done by Ladkani~\cite{LADKANI2008435}. There exist examples of non-isomorphic finite posets which are derived-equivalent.
\end{rem}

\nocite{*}
\printbibliography

\end{document}